\newcommand{\Q}{\mathbb{Q}}
\newcommand{\A}{\mathbb{A}}
\newcommand{\R}{\mathbb{R}}
\providecommand{\C}{\mathbb{C}}
\renewcommand{\C}{\mathbb{C}}
\newcommand{\Qp}{\mathbb{Q}_p}
\newcommand{\Qbar}{\overline{\mathbb{Q}}}
\newcommand{\Fp}{\mathbb{F}_p}
\newcommand{\F}{\mathbb{F}}
\newcommand{\Zp}{\mathbb{Z}_p}
\newcommand{\Z}{\mathbb{Z}}
\newcommand{\GL}{\mathrm{GL}}
\newcommand{\SL}{\mathrm{SL}}
\newcommand{\SO}{\mathrm{SO}}
\newcommand{\End}{\mathrm{End}}
\newcommand{\Hom}{\mathrm{Hom}}
\newcommand{\Gal}{\mathrm{Gal}}
\newcommand{\Res}{\mathrm{Res}}
\newcommand{\Ind}{\mathrm{Ind}}
\newtheorem{theo}{Theorem}[subsection]
\newtheorem{lemm}[theo]{Lemma}
\newtheorem{assu}[theo]{Assumption}
\newtheorem{coro}[theo]{Corollary}
\newtheorem{prop}[theo]{Proposition}
\newtheorem{rema}[theo]{Remark}
\newtheorem{exam}[theo]{Example}
\numberwithin{equation}{subsection}
\begin{document}

\baselineskip=16pt

\author{Olivier Taïbi}
\title{Dimensions of spaces of level one automorphic forms for split classical groups using the trace formula}
\date{June 16, 2014}

\maketitle
\begin{abstract}
We consider the problem of explicitly computing dimensions of spaces of automorphic or modular forms in level one, for a split classical group $\mathbf{G}$ over $\mathbb{Q}$ such that $\mathbf{G}(\R)$ has discrete series.

Our main contribution is an algorithm calculating orbital integrals for the characteristic function of $\mathbf{G}(\mathbb{Z}_p)$ at torsion elements of $\mathbf{G}(\mathbb{Q}_p)$.
We apply it to compute the geometric side in Arthur's specialisation of his invariant trace formula involving stable discrete series pseudo-coefficients for $\mathbf{G}(\mathbb{R})$.
Therefore we explicitly compute the Euler-Poincaré characteristic of the level one discrete automorphic spectrum  of $\mathbf{G}$ with respect to a finite-dimensional representation of $\mathbf{G}(\mathbb{R})$.

For such a group $\mathbf{G}$, Arthur's endoscopic classification of the discrete spectrum allows to analyse precisely this Euler-Poincaré characteristic.
For example one can deduce the number of everywhere unramified automorphic representations $\pi$ of $\mathbf{G}$ such that $\pi_{\infty}$ is isomorphic to a given discrete series representation of $\mathbf{G}(\mathbb{R})$.
Dimension formulae for the spaces of vector-valued Siegel modular forms are easily derived.
\end{abstract}

\newpage
\tableofcontents
\newpage

\section{Introduction}

Let $\mathbf{G}$ be a Chevalley reductive group over $\Z$ admitting discrete series at the real place, i.e.\ one of $\mathbf{SO}_{2n+1}$, $\mathbf{Sp}_{2n}$ or $\mathbf{SO}_{4n}$ for $n \geq 1$.
We give an algorithm to compute the geometric side in Arthur's ``simple'' trace formula in \cite{ArthurL2} (see also \cite{GoKoMPh}) for $\mathbf{G}$ and the trivial Hecke operator in level one at the finite places, that is the characteristic function of $\mathbf{G}(\widehat{\Z})$.
There are essentially three steps to compute the geometric side of the trace formula:
\begin{enumerate}
\item for any prime $p$, compute the local orbital integrals of the characteristic function of $\mathbf{G}(\Zp)$ at torsion elements $\gamma_p$ in $\mathbf{G}(\Qp)$ (with respect to a Haar measure on the connected centraliser of $\gamma_p$),
\item for any semisimple elliptic and torsion conjugacy class $\gamma \in \mathbf{G}(\Q)$ with connected centraliser $\mathbf{I}$, use the Smith-Minkowski-Siegel mass formula to compute $\mathrm{Vol}(\mathbf{I}(\Q) \backslash \mathbf{I}(\A))$,
\item analyse the character of stable (averaged) discrete series on arbitrary maximal tori of $\mathbf{G}(\R)$ to express the parabolic terms using elliptic terms for groups of lower semisimple rank.
\end{enumerate}
We explain how to compute local orbital integrals for special orthogonal groups (resp.\ symplectic groups) in sections \ref{conjclassZpSection} and \ref{compOrbIntSection}, using quadratic and hermitian (resp.\ alternate and antihermitian) lattices over cyclotomic extensions of $\Zp$.
To compute the volumes appearing in local orbital integrals we rely on the local density formulae for such lattices given in \cite{GanYu}, \cite{ChoQuad} and \cite{ChoHermi}.
We use the formulation of \cite{GrossMot} for the local and global volumes (see section \ref{sectionVolumes}).
For the last step we follow \cite{GoKoMPh}, and we only add that for the trivial Hecke operator the general formula for the archimedean factor of each parabolic term simplifies significantly (Proposition \ref{charavDScpct}).
Long but straightforward calculations lead to explicit formulae for the parabolic terms (see section \ref{explicitpara}).

Thus for any irreducible algebraic representation $V_{\lambda}$ of $\mathbf{G}_{\C}$ characterised by its highest weight $\lambda$, we can compute the spectral side of the trace formula, which we now describe.
Let $K_{\infty}$ be a maximal compact subgroup of $\mathbf{G}(\R)$ and let $\mathfrak{g} = \C \otimes_{\R} \mathfrak{g}_0$ where $\mathfrak{g}_0 = \mathrm{Lie}(\mathbf{G}(\R))$.
For an irreducible $(\mathfrak{g}, K_{\infty})$-module $\pi_{\infty}$, consider the Euler-Poincaré characteristic
$$ \mathrm{EP}(\pi_{\infty} \otimes V_{\lambda}^*) = \sum_i (-1)^i\dim H^i \left((\mathfrak{g},K_{\infty}),\ \pi_{\infty} \otimes V_{\lambda}^* \right) $$
where $V_{\lambda}$ is seen as a representation of $\mathbf{G}(\R)$.
Let $\Pi_{\mathrm{disc}}(\mathbf{G})$ be the set of isomorphism classes of irreducible $(\mathfrak{g}, K_{\infty}) \times \mathbf{G}(\A_f)$-modules occurring in the discrete automorphic spectrum of $\mathbf{G}$.
For $\pi \in \Pi_{\mathrm{disc}}(\mathbf{G})$ denote by $m_{\pi} \in \Z_{\geq 1}$ the corresponding multiplicity.
Let $\Pi^{\mathrm{unr}}_{\mathrm{disc}}(\mathbf{G})$ be the set of $\pi \in \Pi_{\mathrm{disc}}(\mathbf{G})$ which are unramified at all the finite places of $\Q$.
For any dominant weight $\lambda$ the set of $\pi \in \Pi^{\mathrm{unr}}_{\mathrm{disc}}(\mathbf{G})$ such that $H^{\bullet}((\mathfrak{g}, K_{\infty}), \pi_{\infty} \otimes V_{\lambda}^*) \neq 0$ is finite.
The spectral side of Arthur's trace formula in \cite{ArthurL2} for our choice of function at the finite places is
\begin{equation} \label{introspecside} \sum_{\pi \in \Pi^{\mathrm{unr}}_{\mathrm{disc}}(\mathbf{G})} m_{\pi} \mathrm{EP}(\pi_{\infty} \otimes V_{\lambda}^*). \end{equation}
This integer is interesting but it is only an alternate sum.
To obtain subtler information, e.g.\ the sum of $m_{\pi}$ for $\pi_{\infty}$ isomorphic to a given $(\mathfrak{g}, K_{\infty})$-module, we use Arthur's endoscopic classification of the discrete automorphic spectrum for symplectic and special orthogonal groups \cite{Arthur}.
Arthur's work allows to parametrise the representations $\pi$ contributing to the spectral side \ref{introspecside} using self-dual automorphic representations for general linear groups.
Denote $W_{\R}$ the Weil group of $\R$ and $\epsilon_{\C / \R}$ the character of $W_{\R}$ having kernel $W_{\C} \simeq \C^{\times}$.
For $w \in \frac{1}{2}\Z$ define the bounded Langlands parameter $I_w : W_{\R} \rightarrow \mathrm{GL}_2(\C)$ as
$$ \mathrm{Ind}_{W_{\C}}^{W_{\R}} \left( z \mapsto (z/|z|)^{2w} \right) $$
so that $I_0 \simeq 1 \oplus \epsilon_{\C / \R}$.
The three families that we are led to consider are the following.
\begin{enumerate}
\item For $n \geq 1$ and $w_1, \dots, w_n \in \frac{1}{2}\Z \smallsetminus \Z$ such that $w_1 > \dots > w_n > 0$, define $S(w_1, \dots, w_n)$ as the set of self-dual automorphic cuspidal representations of $\mathbf{GL}_{2n} / \Q$ which are unramified at all the finite places and with Langlands parameter at the real place
$$ I_{w_1} \oplus \dots \oplus I_{w_n}. $$
Equivalently we could replace the last condition by ``with infinitesimal character having eigenvalues $\{ \pm w_1, \dots, \pm w_n \}$''.
Here $S$ stands for ``symplectic'', as the conjectural Langlands parameter of such a representation should be symplectic.
\item For $n \geq 1$ and integers $w_1 > \dots > w_n > 0$ define $O_o(w_1, \dots, w_n)$ as the set of self-dual automorphic cuspidal representations of $\mathbf{GL}_{2n+1} / \Q$ which are everywhere unramified and with Langlands parameter at the real place
$$ I_{w_1} \oplus \dots \oplus I_{w_n} \oplus \epsilon_{\C / \R}^n. $$
Equivalently we could replace the last condition by ``with infinitesimal character having eigenvalues $\{ \pm w_1, \dots, \pm w_n , 0\}$''.
Here $O_o$ stands for ``odd orthogonal''.
\item For $n \geq 1$ and integers $w_1 > \dots > w_{2n-1} > w_{2n} \geq 0$ define $O_e(w_1, \dots, w_{2n})$ as the set of self-dual automorphic cuspidal representations of $\mathbf{GL}_{4n} / \Q$ which are everywhere unramified and with Langlands parameter at the real place
$$ I_{w_1} \oplus \dots \oplus I_{w_{2n}}. $$
In this case also we could replace the last condition by ``with infinitesimal character having eigenvalues $\{ \pm w_1, \dots, \pm w_{2n} \}$'', even in the slightly singular case where $w_{2n}=0$.
Here $O_e$ stands for ``even orthogonal''.
\end{enumerate}
Following Arthur using these three families we can define, for any $\mathbf{G}$ and $\lambda$ as above, a set $\Psi(\mathbf{G})^{\mathrm{unr}, \lambda}$ of ``formal Arthur-Langlands parameters'' which parametrises the representations $\pi \in \Pi_{\mathrm{disc}}^{\mathrm{unr}}(\mathbf{G})$ contributing to \ref{introspecside}.
We stress that for a given $\mathbf{G}$ all three families take part in these formal parameters.
Among these formal parameters, one can distinguish a subset $\Psi(\mathbf{G})^{\mathrm{unr}, \lambda}_{\mathrm{sim}}$ of ``simple'' parameters, that is the tempered and non-endoscopic ones.
When $\mathbf{G} = \mathbf{SO}_{2n+1}$ (resp.\ $\mathbf{Sp}_{2n}$, resp.\ $\mathbf{SO}_{4n}$), this set is exactly $S(w_1, \dots, w_n)$ (resp.\ $O_o(w_1, \dots, w_n)$, resp.\ $O_o(w_1, \dots, w_{2n})$) where $(w_i)_i$ is determined by $\lambda$.
The contribution of any element of $\Psi(\mathbf{G})^{\mathrm{unr}, \lambda}_{\mathrm{sim}}$ to the spectral side \ref{introspecside} is a non-zero number depending only on $\mathbf{G}(\R)$.
Therefore it is natural to attempt to compute the cardinalities of the sets $S(\cdot)$, $O_o(\cdot)$ and $O_e(\cdot)$ inductively, the induction being on the dimension of $\mathbf{G}$.
More precisely we have to compute the contribution of $\Psi(\mathbf{G})^{\mathrm{unr}, \lambda} \smallsetminus \Psi(\mathbf{G})^{\mathrm{unr}, \lambda}_{\mathrm{sim}}$ to \ref{introspecside} to deduce the cardinality of $\Psi(\mathbf{G})^{\mathrm{unr}, \lambda}_{\mathrm{sim}}$.

When the highest weight $\lambda$ is regular, any element of $\Psi(\mathbf{G})^{\mathrm{unr}, \lambda}$ is tempered and consequently any $\pi \in \Pi_{\mathrm{disc}}^{\mathrm{unr}}(\mathbf{G})$ contributing to the spectral side is such that $\pi_{\infty}$ is a discrete series representation having same infinitesimal character as $V_{\lambda}$.
Thanks to the work of Shelstad on real endoscopy and using Arthur's multiplicity formula it is not difficult to compute the contribution of $\Psi(\mathbf{G})^{\mathrm{unr}, \lambda} \smallsetminus \Psi(\mathbf{G})^{\mathrm{unr}, \lambda}_{\mathrm{sim}}$ to the Euler-Poincaré characteristic on the spectral side in this case (see section \ref{sectionDS}).
The general case is more interesting because we have to consider non-tempered representations $\pi_{\infty}$.
Since Arthur's construction of non-tempered Arthur packets at the real place in \cite{Arthur} is rather abstract, we have to make an assumption (see Assumption \ref{assumAJweak}) in order to be able to compute explicitly the non-tempered contributions to the Euler-Poincaré characteristic.
This assumption is slightly weaker than the widely believed Assumption \ref{assumAJ}, which states that the relevant real non-tempered Arthur packets at the real place coincide with those constructed long ago by Adams and Johnson in \cite{AdJo}.

Thus we obtain an algorithm to compute the cardinalities of the sets $S(w_1, \dots, w_n)$, $O_o(w_1, \dots, w_n)$ and $O_e(w_1, \dots, w_{2n})$, under assumption \ref{assumAJweak} when $\lambda$ is singular.
For the computer the hard work consists in computing local orbital integrals.
Our current implementation, using Sage \cite{sage}, allows to compute them at least for $\mathrm{rank}(\mathbf{G}) \leq 6$.
See section \ref{tablestwGL} for some values.

Once these cardinalities are known we can \emph{count} the number of $\pi \in \Pi_{\mathrm{disc}}^{\mathrm{unr}}(\mathbf{G})$ such that $\pi_{\infty}$ is isomorphic to a given $(\mathfrak{g}, K_{\infty})$-module having same infinitesimal character as $V_{\lambda}$ for some highest weight $\lambda$.
A classical application is to compute dimensions of spaces of (vector-valued) Siegel cusp forms.
For a genus $n \geq 1$ and $m_1 \geq \dots \geq m_n \geq n+1$, let $r$ be the holomorphic (equivalently, algebraic) finite-dimensional representation of $\GL_n(\C)$ with highest weight $(m_1, \dots, m_n)$.
Let $\Gamma_n = \mathrm{Sp}_{2n}(\Z)$.
The dimension of the space $S_r(\Gamma_n)$ of level one vector-valued cuspidal Siegel modular forms of weight $r$ can then be computed using Arthur's endoscopic classification of the discrete spectrum for $\mathbf{Sp}_{2n}$.
We emphasise that this formula depends on Assumption \ref{assumAJ} when the $m_k$'s are not pairwise distinct, in particular when considering scalar-valued Siegel cusp forms, of weight $m_1 = \dots = m_n$.
Our current implementation yields a dimension formula for $\dim S_r(\Gamma_n)$ for any $n \leq 7$ and any $r$ as above, although for $n \geq 3$ it would be absurd to print this huge formula.
See the table in section \ref{scalarSiegel} for some values in the scalar case.
The case $n=1$ is well-known: $\bigoplus_{m \geq 0} M_m(\Gamma_1) = \C[E_4,E_6]$ where the Eisenstein series $E_4,E_6$ are algebraically independant over $\C$, and the dimension formula for $S_m(\Gamma_1)$ follows.
Igusa \cite{Igusa} determined the ring of scalar Siegel modular forms and its ideal of cusp forms when $n=2$, which again gives a dimension formula.
Tsushima \cite{TsuExp}, \cite{TsuPf} gave a formula for the dimension of $S_r(\Gamma_2)$ for almost all representations $r$ as above (that is for $m_1 > m_2 \geq 5$ or $m_1 = m_2 \geq 4$) using the Riemann-Roch-Hirzebruch formula along with a vanishing theorem.
It follows from Arthur's classification that Tsushima's formula holds for any $(m_1,m_2)$ such that $m_1>m_2\geq 3$.
In genus $n=3$ Tsuyumine \cite{Tsuyumine} determined the structure of the ring of scalar Siegel modular forms and its ideal of cusp forms.
Recently Bergström, Faber and van der Geer \cite{BFG} studied the cohomology of certain local systems on the moduli space $\mathcal{A}_3$ of principally polarised abelian threefolds, and conjectured a formula for the Euler-Poincaré characteristic of this cohomology (as a motive) in terms of Siegel modular forms.
They are able to derive a conjectural dimension formula for spaces of Siegel modular cusp forms in genus three.
Our computations corroborate their conjecture, although at the moment we have only compared values and not the formulae.

Of course the present work is not the first one to attempt to use the trace formula to obtain spectral information, and we have particularly benefited from the influence of \cite{GrossPollack} and \cite{ChRe}.
In \cite{GrossPollack} Gross and Pollack use a simpler version of the trace formula, with hypotheses at a finite set $S$ of places of $\Q$ containing the real place and at least one finite place.
This trace formula has only elliptic terms.
They use the Euler-Poincaré function defined by Kottwitz in \cite{KoTam} at the finite places in $S$.
These functions have the advantage that their orbital integrals were computed conceptually by Kottwitz.
At the other finite places, they compute the stable orbital integrals indirectly, using computations of Lansky and Pollack \cite{LanskyPollack} for inner forms which are compact at the real place.
They do so for the groups $\mathbf{SL}_2$, $\mathbf{Sp}_4$ and $\mathbf{G}_2$.
Without Arthur's endoscopic classification it was not possible to deduce the number of automorphic representations of a given type from the Euler-Poincaré characteristic on the spectral side, even for a regular highest weight $\lambda$.
The condition $\mathrm{card}(S) \geq 2$ forbids the study of \emph{level one} automorphic representations.
More recently, Chenevier and Renard \cite{ChRe} computed dimensions of spaces of level one \emph{algebraic} automorphic forms in the sense of \cite{GrossAlg}, for the inner forms of the groups $\mathbf{SO}_7$, $\mathbf{SO}_8$ and $\mathbf{SO}_9$ which are split at the finite places and compact at the real place.
They used Arthur's classification to deduce the cardinalities of the sets $S(w_1,w_2,w_3)$ and $S(w_1,w_2,w_3,w_4)$ and, using the conjectural dimension formula of \cite{BFG}, $O_e(w_1,w_2,w_3,w_4)$.
Unfortunately the symplectic groups do not have such inner forms, nor do the special orthogonal groups $\mathbf{SO}_n$ when $n \mod 8 \not\in \{-1,0,1\}$.
Thus our main contribution is thus the direct computation of local orbital integrals.

Finally I would like to heartily thank Gaëtan Chenevier for suggesting that I work on this problem, for his enthusiasm and for enlightening discussions.

\section{Notations and definitions}
Let us precise some notations.
Let $\A_f$ denote the finite adèles $\prod'_p \Qp$ and $\A = \R \times \A_f$.
We will use boldface letters to denote linear algebraic groups, for example $\mathbf{G}$.
For schemes we denote base change using simply a subscript, for example $\mathbf{G}_{\Qp}$ instead of $\mathbf{G} \times_{\Q} \Qp$ where $\mathbf{G}$ is defined over $\Q$.
For a reductive group $\mathbf{G}$ we abusively call ``Levi subgroup of $\mathbf{G}$'' any Levi subgroup of a parabolic subgroup of $\mathbf{G}$, i.e.\ the centraliser of a split torus.
Rings are unital.
If $R$ is a ring and $\Lambda$ a finite free $R$-module, $\mathrm{rk}_R (\Lambda)$ denotes its rank.
If $G$ is a finite abelian group $G^{\wedge}$ will denote its group of characters.

Let us define the reductive groups that we will use.
For $n \geq 1$, let $q_n$ be the quadratic form on $\Z^n$ defined by
$$ q_n(x) = \sum_{i=1}^{\lfloor (n+1)/2 \rfloor} x_ix_{n+1-i}. $$
Let $\mathbf{O}_n$ be the algebraic group over $\Z$ representing the functor
\begin{align*}
\text{Category of commutative rings} & \rightarrow \text{Category of groups} \\
A & \mapsto \left\{ g \in \GL_n(A)\ |\ q_n \circ g = q_n \right\}.
\end{align*}
For $n$ odd define $\mathbf{SO}_n$ as the kernel of $\det : \mathbf{O}_n \rightarrow \mu_2$.
For n even, $\det : \mathbf{O}_n \rightarrow \mu_2$ factors through the Dickson morphism $\mathrm{Di} : \mathbf{O}_n \rightarrow \Z/2\Z$ (constant group scheme over $\Z$) and the morphism $\Z/2\Z \rightarrow \mu_2$ ``mapping $1 \in \Z/2\Z$ to $-1 \in \mu_2$''.
In that case $\mathbf{SO}_n$ is defined as the kernel of $\mathrm{Di}$.
For any $n \geq 1$, $\mathbf{SO}_n \rightarrow \mathrm{Spec}(\Z)$ is reductive in the sense of \cite{SGA3-III}[Exposé XIX, Définition 2.7].
It is semisimple if $n \geq 3$.

For $n \geq 1$ the subgroup $\mathbf{Sp}_{2n}$ of $\mathbf{GL}_{2n}/ \Z$ defined as the stabiliser of the alternate form
$$ (x,y) \mapsto \sum_{i=1}^n x_i y_{2n+1-i} - x_{2n+1-i} y_i $$
is also semisimple over $\Z$ in the sense of \cite{SGA3-III}[Exposé XIX, Définition 2.7].

If $\mathbf{G}$ is one of $\mathbf{SO}_{2n+1}$ ($n \geq 1$), $\mathbf{Sp}_{2n}$ ($n \geq 1$) or $\mathbf{SO}_{2n}$ ($n \geq 2$), the diagonal matrices form a split maximal torus $\mathbf{T}$, and the upper-triangular matrices form a Borel subgroup $\mathbf{B}$.
We will simply denote by $\underline{t} = (t_1, \dots, t_n)$ the element of $\mathbf{T}(A)$ ($A$ a commutative ring) whose first $n$ diagonal entries are $t_1, \dots, t_n$.
For $i \in \{1, \dots, n\}$, let $e_i \in X^*(\mathbf{T})$ be the character $\underline{t} \mapsto t_i$.
The simple roots corresponding to $\mathbf{B}$ are
$$ \begin{cases}
e_1 - e_2, \dots, e_{n-1}-e_n, e_n & \text{if } \mathbf{G} = \mathbf{SO}_{2n+1}, \\
e_1 - e_2, \dots, e_{n-1}-e_n, 2e_n & \text{if } \mathbf{G} = \mathbf{Sp}_{2n}, \\
e_1 - e_2, \dots, e_{n-1}-e_n, e_{n-1} + e_n & \text{if } \mathbf{G} = \mathbf{SO}_{2n}. \\
\end{cases} $$
In the first two cases (resp.\ third case), the dominant weights in $X^*(\mathbf{T})$ are the $\underline{k} = \sum_{i=1}^n k_i e_i$ with $k_1 \geq \dots \geq k_n \geq 0$ (resp.\ $k_1 \geq \dots \geq k_{n-1} \geq |k_n|$).

\section{Computation of the geometric side of Arthur's trace formula}

Arthur's invariant trace formula \cite{ArthurITF} for a reductive group $\mathbf{G}/\Q$ simplifies and becomes more explicit when $\mathbf{G}(\R)$ has discrete series and a ``nice'' smooth compactly supported distribution $f_{\infty}(g_{\infty})dg_{\infty}$ is used at the real place, as shown in \cite{ArthurL2} (see also \cite{GoKoMPh} for a topological proof).
In section \ref{subsectionEllTerms} we recall the elliptic terms $T_{\mathrm{ell}}\left(f_{\infty}(g_{\infty}) dg_{\infty} \prod_p f_p(g_p)dg_p\right)$ on the geometric side of this trace formula, where $\prod_p f_p(g_p)dg_p$ is a smooth compactly supported distribution on $\mathbf{G}(\A_f)$.
Then (section \ref{subsectionCompEll}) we give an algorithm to compute these elliptic terms when $\mathbf{G}$ is a split classical group and for any prime $p$, $f_p(g_p)dg_p$ is the trivial element of the unramified Hecke algebra.
Finally (section \ref{subsectionCompPara}) we give explicit formulae for the parabolic terms using the elliptic terms for groups of lower semisimple rank.

\subsection{Elliptic terms}
\label{subsectionEllTerms}

\subsubsection{Euler-Poincaré measures and functions}
\label{EPinftySection}

Let $\mathbf{G}$ be a reductive group over $\R$.
Thanks to \cite{EPSerre}, we have a canonical signed Haar measure on $\mathbf{G}(\R)$, called the Euler-Poincaré measure.
It is non-zero if and only if $\mathbf{G}(\R)$ has discrete series, that is if and only if $\mathbf{G}$ has a maximal torus defined over $\R$ which is anisotropic.

So assume that $\mathbf{G}(\R)$ has discrete series.
Let $K$ be a maximal compact subgroup of $\mathbf{G}(\R)$, $\mathfrak{g}_0 = \mathrm{Lie}(\mathbf{G}(\R))$ and $\mathfrak{g} = \C \otimes_{\R} \mathfrak{g}_0$.
Let $V_{\lambda}$ be an irreducible algebraic representation of $\mathbf{G}_{\C}$, parametrised by its highest weight $\lambda$.
We can see $V_{\lambda}$ as an irreducible finite-dimensional representation of $\mathbf{G}(\R)$, or as an irreducible $(\mathfrak{g}, K)$-module.
If $\pi$ is a $(\mathfrak{g}, K)$-module of finite length, consider
$$ \mathrm{EP}( \pi, \lambda) := \sum_i (-1)^i \dim H^i\left( (\mathfrak{g}, K), \pi \otimes V_{\lambda}^* \right). $$
Clozel and Delorme \cite{CloDel}[Théorème 3] show that there is a smooth, compactly supported distribution $f_{\lambda}(g)dg$ on $\mathbf{G}(\R)$ such that for any $\pi$ as above,
$$ \mathrm{Tr} \left(\pi\left(f_{\lambda}(g)dg\right)\right) = \mathrm{EP}( \pi, \lambda). $$
If $\pi$ is irreducible and belongs to the L-packet $\Pi_{\mathrm{disc}}(\lambda)$ of discrete series having the same infinitesimal character as $V_{\lambda}$, this number is equal to $(-1)^{q(\mathbf{G}(\R))}$ where $2 q(\mathbf{G}(\R)) = \dim \mathbf{G}(\R) - \dim K$.
If $\pi$ is irreducible and tempered but does not belong to $\Pi_{\mathrm{disc}}(\lambda)$ it is zero.

These nice spectral properties of $f_{\lambda}$ allow Arthur to derive nice geometric properties, similarly to the $p$-adic case in \cite{KoTam}.
If $\gamma \in \mathbf{G}(\R)$, the orbital integral $O_{\gamma}(f_{\lambda}(g)dg)$ vanishes unless $\gamma$ is elliptic semisimple, in which case, letting $\mathbf{I}$ denote the connected centraliser of $\gamma$ in $\mathbf{G}$:
$$ O_{\gamma}(f_{\lambda}(g)dg) = \mathrm{Tr} \left( \gamma | V_{\lambda} \right) \mu_{\mathrm{EP}, \mathbf{I}(\R)}. $$
In fact \cite{ArthurL2}[Theorem 5.1] computes more generally the invariant distributions $I_{\mathbf{M}}(\gamma, f_{\lambda})$ occurring in the trace formula (here $\mathbf{M}$ is a Levi subgroup of $\mathbf{G}$), and the orbital integrals above are just the special case $\mathbf{M} = \mathbf{G}$.
These more general invariant distributions will be used in the parabolic terms.

\subsubsection{Orbital integrals for $p$-adic groups}
\label{OrbIntSection}

We recall more precisely the definition of orbital integrals for the $p$-adic groups.
Let $p$ be a prime and $\mathbf{G}$ a reductive group over $\Qp$.
Let $K$ be a compact open subgroup of $\mathbf{G}(\Qp)$, $\gamma \in \mathbf{G}(\Qp)$ a semisimple element, and $\mathbf{I}$ its connected centraliser in $\mathbf{G}$.
Lemma 19 of \cite{HC} implies that for any double coset $KcK$ in $\mathbf{G}(\Qp)$, the set $X$ of $[g] \in K \backslash \mathbf{G}(\Qp) / \mathbf{I}(\Qp)$ such that $g \gamma g^{-1} \in KcK$ is finite.
Let $\mu$ (resp.\ $\nu$) be a Haar measure on $\mathbf{G}(\Qp)$ (resp.\ $\mathbf{I}(\Qp)$).
Then the orbital integral at $\gamma$ of the characteristic function of $KcK$
\[ O_{\gamma}(\mathbf{1}_{KcK}, \mu, \nu) = \int_{\mathbf{G}(\Qp) / \mathbf{I}(\Qp)} \mathbf{1}_{KcK}\left(g \gamma g^{-1}\right) \frac{d \mu}{d \nu}(g)  \]
is equal to
$$ \sum_{[g] \in X} \frac{\mu(K)}{\nu\left(g^{-1}Kg \cap \mathbf{I}(\Qp)\right)}. $$
The Haar measure $O_{\gamma}(\mathbf{1}_{KcK}, \mu, \nu) \nu$ is canonical, i.e.\ it does not depend on the choice of $\nu$.
Thus $O_{\gamma}$ canonically maps the space of smooth compactly supported complex valued distributions on $\mathbf{\mathbf{G}}(\Qp)$ (i.e.\ linear combinations of distributions of the form $\mathbf{1}_{KcK}(g) d\mu(g)$) to the one-dimensional space of complex Haar measures on $\mathbf{I}(\Qp)$.
\begin{rema} \label{RemaMeasures}
Note that any automorphism of the algebraic group $\mathbf{I}$ preserves $\nu$, and thus if $\mathbf{I}$ and $\nu$ are fixed, for any algebraic group $\mathbf{I}'$ isomorphic to $\mathbf{I}$, there is a well-defined corresponding Haar measure on $\mathbf{I}'$.
\end{rema}

\subsubsection{Definition of the elliptic terms}

Let $\mathbf{G}$ be a reductive group over $\Q$ such that $\mathbf{G}(\R)$ has discrete series.
Let $\lambda$ be a highest weight for the group $\mathbf{G}_{\C}$.
Choose a Haar measure $dg_{\infty}$ on $\mathbf{G}(\R)$, and let $f_{\infty}$ be a smooth compactly supported function on $\mathbf{G}(\R)$ such that the distribution $f_{\infty, \lambda}(g_{\infty})dg_{\infty}$ computes the Euler-Poincaré characteristic with respect to $V_{\lambda}$ as in \ref{EPinftySection}.
Let $\prod_p f_p(g_p)dg_p$ be a smooth compactly supported distribution on $\mathbf{G}(\A_f)$.
For almost all primes $p$, $\mathbf{G}_{\Qp}$ is unramified, $f_p = \mathbf{1}_{K_p}$ and $\int_{K_p} dg_p = 1$ where $K_p$ is a hyperspecial maximal compact subgroup in $\mathbf{G}(\Qp)$.
Let $C$ be the set of semisimple conjugacy classes $\mathrm{cl}(\gamma)$ in $\mathbf{G}(\Q)$ such that $\gamma$ belongs to an anisotropic maximal torus in $\mathbf{G}(\R)$.
For $\mathrm{cl}(\gamma) \in C$, denote by $\mathbf{I}$ the connected centraliser of $\gamma$ in $\mathbf{G}$.
Given such a $\gamma$, for almost all primes $p$, $\mathbf{I}_{\Qp}$ is unramified and $O_{\gamma}(f_p(g_p)dg_p)$ is the Haar measure giving measure one to a hyperspecial maximal compact subgroup of $\mathbf{I}(\Qp)$ (see \cite[Corollary 7.3]{KottEllSing}).
Thus $\prod_p O_{\gamma}(f_p(g_p)dg_p)$ is a well-defined complex Haar measure on $\mathbf{I}(\A_f)$.
Let $f(g)dg = f_{\infty, \lambda}(g_{\infty}) dg_{\infty} \prod_p f_p(g_p) dg_p$.
The elliptic part of the geometric side of Arthur's trace formula is
\begin{equation} \label{formulaTell} T_{\mathrm{ell}}(f(g)dg) = \sum_{\mathrm{cl}(\gamma) \in C} \frac{\mathrm{Vol}(\mathbf{I}(\Q) \backslash \mathbf{I}(\A))}{\mathrm{card}\left( \mathrm{Cent}(\gamma, \mathbf{G}(\Q)) / \mathbf{I}(\Q) \right)}  \mathrm{Tr}(\gamma\,|\,V_{\lambda}) \end{equation}
where $\mathbf{I}(\R)$ is endowed with the Euler-Poincaré measure, $\mathbf{I}(\A_f)$ the complex Haar measure $\prod_p O_{\gamma}(f_p(g_p)dg_p)$ and $\mathbf{I}(\Q)$ the counting measure.
The set of $\mathrm{cl}(\gamma) \in C$ such that for any prime $p$, $\gamma$ is conjugate in $\mathbf{G}(\Qp)$ to an element belonging to the support of $f_p$ is finite, so that the sum has only a finite number of nonzero terms.

\subsection{Computation of the elliptic terms in the trace formula}
\label{subsectionCompEll}

Our first task is to explicitly compute $T_{\mathrm{ell}}(f(g) dg)$ when $\mathbf{G}$ is one of $\mathbf{SO}_{2n+1}$, $\mathbf{Sp}_{2n}$ or $\mathbf{SO}_{4n}$ and moreover for any prime $p$, $f_p = \mathbf{1}_{\mathbf{G}(\Zp)}$ and $\int_{\mathbf{G}(\Zp)} d g_p = 1$.
In this case any $\gamma \in \mathbf{G}(\Q)$ whose contribution to $T_{\mathrm{ell}}(f(g) dg)$ is nonzero is torsion ($\gamma^r=1$ for some integer $r>0$), since $\gamma$ is compact in $\mathbf{G}(\Q_v)$ for any place $v$.
Here ``compact'' means that the smallest closed subgroup of $\mathbf{G}(\Q_v)$ containing $\gamma$ is compact, and it is equivalent to the fact that the eigenvalues of $\gamma$ in any faithful algebraic representation of $\mathbf{G}_{\overline{\Q_v}}$ have norm one.

First we describe the semisimple conjugacy classes in $\mathbf{G}(\Q)$ and their centralisers, a necessary first step to compute the set $C$ and the groups $\mathbf{I}$.
Then we explain how to enumerate the conjugacy classes of torsion elements in the group $\mathbf{G}(\Zp)$.
To be precise we can compute a collection of subsets $(Y_s)_s$ of $\mathbf{G}(\Zp)$ such that
$$ \{ g \in \mathbf{G}(\Zp)\,|\, \exists r>0,\,g^r=1 \} = \bigsqcup_s \{ xyx^{-1}\,|\, y \in Y_s,\, x \in \mathbf{G}(\Zp) \}. $$
Note that this leaves the possibility that for a fixed $s$, there exist distinct $y,y' \in Y_s$ which are conjugated under $\mathbf{G}(\Zp)$.
Thus it seems that to compute local orbital integrals we should check for such cases and throw away redundant elements in each $Y_s$, and then compute the measures of the centralisers of $y$ in $\mathbf{G}(\Zp)$.
This would be a computational nightmare.
Instead we will show in section \ref{compOrbIntSection} that the fact that such orbital integrals are masses (as in ``mass formula'') implies that we only need to compute the cardinality of each $Y_c$.
Finally the Smith-Minkowski-Siegel mass formulae of \cite{GanYu} provide a means to compute the global volumes.

\subsubsection{Semisimple conjugacy classes in classical groups}
\label{conjclassfieldSection}

Let us describe the absolutely semisimple conjugacy classes in classical groups over a field, along with their centralisers.
It is certainly well-known, but we could not find a reference.
We explain in detail the case of quadratic forms (orthogonal groups).
The case of alternate forms (symplectic groups) is similar but simpler since characteristic $2$ is not ``special'' and symplectic automorphisms have determinant $1$.
The case of (anti-)hermitian forms (unitary groups) is even simpler but it will not be used hereafter.

Let $V$ be a vector space of finite dimension over a (commutative) field $K$, equipped with a regular (``ordinaire'' in the sense of \cite[Exposé XII]{SGA7.2}) quadratic form $q$.
Let $\gamma \in \mathrm{O}(q)$ be absolutely semisimple, i.e.\ $\gamma \in \End_K(V)$ preserves $q$ and the finite commutative $K$-algebra $K[\gamma]$ is étale.
Since $\gamma$ preserves $q$, the $K$-automorphism $\tau$ of $K[\gamma]$ sending $\gamma$ to $\gamma^{-1}$ is well-defined: if $\dim_K V$ is even or $2 \neq 0$ in $K$, $\tau$ is the restriction to $K[\gamma]$ of the antiautomorphism of $\End_K(V)$ mapping an endomorphism to its adjoint with respect to the bilinear form $B_q$ corresponding to $q$, defined by the formula $B_q(x,y):=q(x+y)-q(x)-q(y)$.

In characteristic $2$ and odd dimension, $(V, q)$ is the direct orthogonal sum of its $\gamma$-stable subspaces $V' = \ker (\gamma-1)$ and $V'' = \ker P(\gamma)$ where $(X-1)P(X) \in K[X] \setminus \{0\}$ is separable and annihilates $\gamma$.
If $V''$ were odd-dimensional, the kernel of $B_q|_{V'' \times V''}$ would be a $\gamma$-stable line $Kx$ with $q(x) \neq 0$, which imposes $\gamma(x) = x$, in contradiction with $P(1) \neq 0$.
Thus $K[\gamma] = K\left[ \gamma|_{V''} \right] \times K$ if $V'' \neq 0$, and $\tau$ is again well-defined.

Thanks to $\tau$ we have a natural decomposition as a finite product:
\[ (K[\gamma], \gamma) = \prod_i (A_i, \gamma_i) \]
where for any $i$, $A_i$ is a finite étale $K$-algebra generated by $\gamma_i$ such that $\gamma_i \mapsto \gamma_i^{-1}$ is a well-defined $K$-involution $\tau_i$ of $A_i$ and $F_i = \{ x \in A_i\,|\, \tau_i(x)=x \}$ is a field.
Moreover the minimal polynomials $P_i$ of $\gamma_i$ are pairwise coprime.
For any $i$, either:
\begin{itemize}
\item
$\gamma_i^2 = 1$ and $A_i = K$,
\item
$\gamma_i^2 \neq 1$ and $A_i$ is a separable quadratic extension of $F_i$, $\Gal(A_i/F_i) = \{1,\tau_i\},$
\item
$\gamma_i^2 \neq 1$, $A_i \simeq F_i \times F_i$ and $\tau_i$ swaps the two factors.
\end{itemize}
Let $I_{\mathrm{triv}}$, $I_{\mathrm{field}}$ and $I_{\mathrm{split}}$ be the corresponding sets of indices.
There is a corresponding orthogonal decomposition of $V$:
\[ V = \bigoplus_i V_i \]
where $V_i$ is a projective $A_i$-module of constant finite rank.


\begin{lemm}
For any $i$, there is a unique $\tau_i$-hermitian (if $\tau_i$ is trivial, this simply means quadratic) form $h_i : V_i \rightarrow F_i$ such that for any $v \in V_i$, $q(v) = \mathrm{Tr}_{F_i/K}\left( h_i(v) \right)$.
\end{lemm}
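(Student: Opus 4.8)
The plan is to handle the three types of indices $I_{\mathrm{triv}},I_{\mathrm{field}},I_{\mathrm{split}}$ separately. The case $i\in I_{\mathrm{triv}}$ is immediate: then $A_i=F_i=K$ and $\tau_i=\mathrm{id}$, so $\mathrm{Tr}_{F_i/K}$ is the identity and $h_i:=q|_{V_i}$ is the unique solution. For the other two types I would use two facts. First, since $\gamma$ is absolutely semisimple, $K[\gamma]$ and hence each factor $A_i$ is étale over $K$, and $F_i$, being a subfield of the étale $K$-algebra $A_i$, is a finite separable extension of $K$; so the trace forms $(x,y)\mapsto\mathrm{Tr}_{A_i/K}(xy)$ on $A_i$ and $(x,y)\mapsto\mathrm{Tr}_{F_i/K}(xy)$ on $F_i$ are nondegenerate. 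Second, since $\tau$ is (a restriction of) the adjunction antiautomorphism of $(\End_K(V),B_q)$ and respects the product decomposition of $K[\gamma]$, while $A_i$ annihilates $V_j$ for $j\neq i$, one has $B_q(av,w)=B_q(v,\tau_i(a)w)$ for all $a\in A_i$ and $v,w\in V_i$. I will also use that a $\tau_i$-hermitian form $h$ on $V_i$ is the same datum as an $A_i$-sesquilinear form $H\colon V_i\times V_i\to A_i$ with $H(w,v)=\tau_i(H(v,w))$, related by $h(v)=H(v,v)\in F_i$, and that $\mathrm{Tr}_{F_i/K}\circ h$ is then automatically a $K$-quadratic form on $V_i$ with polar bilinear form $(v,w)\mapsto\mathrm{Tr}_{A_i/K}(H(v,w))$. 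Uniqueness follows in every characteristic: if $\mathrm{Tr}_{F_i/K}(H(v,v))=0$ for all $v$, polarising gives $\mathrm{Tr}_{A_i/K}(H(v,w))=0$, hence $\mathrm{Tr}_{A_i/K}(a\,H(v,w))=\mathrm{Tr}_{A_i/K}(H(av,w))=0$ for all $a\in A_i$, so $H=0$ by nondegeneracy of the trace form on $A_i$.

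For existence when $2\neq 0$ in $K$ I would descend $B_q$ directly: for $v,w\in V_i$ let $H(v,w)\in A_i$ be the unique element with $\mathrm{Tr}_{A_i/K}(a\,H(v,w))=B_q(av,w)$ for all $a\in A_i$. The adjunction identity and the symmetry of $B_q$ make $H$ sesquilinear and $\tau_i$-hermitian (a routine verification), and taking $a=1$ yields $\mathrm{Tr}_{A_i/K}(H(v,v))=B_q(v,v)=2q(v)$. Since $\mathrm{Tr}_{A_i/K}=\mathrm{Tr}_{F_i/K}\circ\mathrm{Tr}_{A_i/F_i}$ and $\mathrm{Tr}_{A_i/F_i}$ restricted to $F_i\ni H(v,v)$ is multiplication by $[A_i:F_i]=2$, this reads $2q(v)=2\,\mathrm{Tr}_{F_i/K}(H(v,v))$, so $h_i:=(v\mapsto H(v,v))$ works.

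The characteristic $2$ case is the crux, since there $q$ is not determined by $B_q$. Let $L/K$ be a finite Galois extension splitting $A_i$, so that $A_i\otimes_K L\cong\prod_\sigma L$ over the embeddings $\sigma\colon A_i\hookrightarrow L$, on which $\tau_i\otimes 1$ acts by $\sigma\mapsto\sigma\circ\tau_i$ without fixed points (as $\tau_i\neq\mathrm{id}$); write $V_i\otimes_K L=\bigoplus_\sigma V^\sigma$ for the eigenspace decomposition, with $\gamma$ acting on $V^\sigma$ by the scalar $\sigma(\gamma_i)$, and note $\sigma(\gamma_i)\neq 1$ since $\gamma_i\neq 1$. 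The one delicate point — and the heart of the proof — is that each $V^\sigma$ is totally isotropic \emph{for $q\otimes L$}, not merely for its polar form: the latter follows from $B_q(\gamma x,\gamma y)=B_q(x,y)$ and $\sigma(\gamma_i)^2\neq 1$, while for the former one observes that, for $x\in V^\sigma$, the map $c\mapsto(q\otimes L)(cx)$ on $L$ is additive (because $B_q\otimes L$ vanishes on $V^\sigma$), satisfies $(q\otimes L)(\lambda cx)=\lambda^2(q\otimes L)(cx)$, and is unchanged under multiplication by $\sigma(\gamma_i)$ (because $q$ is $\gamma$-invariant), so evaluating at $c=1$ and using $\sigma(\gamma_i)\neq 1$ forces $(q\otimes L)(x)=0$. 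Since moreover $B_q\otimes L$ pairs $V^\sigma$ with $V^{\sigma\tau_i}$ and annihilates $V^\sigma$ against $V^{\sigma'}$ for $\sigma'\notin\{\sigma,\sigma\tau_i\}$, one groups the embeddings into $\tau_i$-pairs: over each block $L\times L$ a hermitian form amounts to a bilinear pairing $V^\sigma\times V^{\sigma\tau_i}\to L$, for which one takes $B_q\otimes L$, and the isotropy just proved makes the resulting contributions add up to $q\otimes L$. This assembles a $(\tau_i\otimes 1)$-hermitian form $\tilde h$ on $V_i\otimes_K L$ with $(\mathrm{Tr}_{F_i/K}\otimes L)\circ\tilde h=q\otimes L$; by the uniqueness already proved over $L$, $\tilde h$ is the unique such form, hence $\Gal(L/K)$-invariant, hence descends to a $\tau_i$-hermitian $h_i$ on $V_i$, and $\mathrm{Tr}_{F_i/K}\circ h_i=q|_{V_i}$ since this identity of $K$-quadratic forms may be checked after $\otimes_K L$. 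The main obstacle is thus exactly this characteristic $2$ isotropy statement — that the $\gamma$-eigenspaces over a splitting field are isotropic for the quadratic form itself and not just for its polar form; the type $I_{\mathrm{triv}}$ case, the uniqueness, the odd-characteristic construction, and the Galois descent are routine once the trace-pairing and adjunction formalism is in place.
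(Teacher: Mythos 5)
Your proof is correct, but its existence half takes a genuinely different route from the paper's. The uniqueness step is the same in both: polarisation plus nondegeneracy of the trace form on the étale algebra $A_i$ (this is exactly the paper's injectivity of the map $h_i \mapsto \mathrm{Tr}_{F_i/K} \circ h_i$). For existence, however, the paper never constructs $h_i$: it notes that $q|_{V_i}$ is a $\gamma_i$-invariant $K$-quadratic form and shows the injective map is bijective by a dimension count — both the space of $\tau_i$-hermitian forms and the space of $\gamma_i$-invariant quadratic forms on $V_i$ have dimension $dn^2$, the latter computed after diagonalising $\gamma_i$ over a separable extension, using precisely the eigenspace facts you isolate (each eigenspace is isotropic for the quadratic form itself, and the polar form only pairs the $t$-eigenspace with the $t^{-1}$-eigenspace). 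That argument is uniform in the characteristic and needs no descent. You instead exhibit $h_i$ explicitly: by trace duality applied to $B_q$ and division by $2$ when $2 \neq 0$, and in characteristic $2$ by assembling the hermitian form over a Galois splitting field from the pairings $B_q$ induces between paired eigenspaces and descending via uniqueness; this buys an explicit formula for $h_i$, at the cost of a characteristic case split and a Galois descent step. Two small remarks: your ``crux'' — isotropy of $V^{\sigma}$ for $q \otimes L$ itself — has a one-line proof, $q_L(x) = q_L(\gamma x) = \sigma(\gamma_i)^2 q_L(x)$ with $\sigma(\gamma_i)^2 \neq 1$, which is what the paper invokes ($q'|_{U_k^{\pm}} = 0$ since $t_k^2 \neq 1$), so the additivity detour is unnecessary; and in the split case $A_i \simeq F_i \times F_i$ the needed inequality $\sigma(\gamma_i)^2 \neq 1$ does not follow merely from $\gamma_i^2 \neq 1$ (there $\sigma$ is not injective), but rather from the fact that $\sigma$ is determined by $\sigma(\gamma_i)$ while $\sigma \neq \sigma \circ \tau_i$, a point worth making explicit.
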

\begin{proof}
If $i \in I_{\mathrm{triv}}$ this is obvious, so we can assume that $\dim_{F_i} A_i = 2$.
Let us show that the $K$-linear map
\begin{eqnarray*}
T : \{\tau_i\text{-hermitian forms on } V_i\} & \longrightarrow & \{K\text{-quadratic forms on } V_i \text{ preserved by } \gamma_i\} \\
h_i & \mapsto & \left( v \mapsto \mathrm{Tr}_{F_i/K} h_i(v) \right)
\end{eqnarray*}
is injective.
If $h_i$ is a $\tau_i$-hermitian form on $V_i$, denote by $B_{h_i}$ the unique $\tau_i$-sesquilinear map $V_i \times V_i \rightarrow A_i$ such that for any $v,w \in V_i$, $h_i(v+w)-h_i(v)-h_i(w) = \mathrm{Tr}_{A_i/F_i} B_{h_i}(v,w)$, so that in particular $h_i(v) = B_{h_i}(v,v)$.
Moreover for any $v,w \in V_i$, $B_{T(h_i)}(v,w) = \mathrm{Tr}_{A_i/K} B_{h_i}(v,w)$.
If $h_i \in \ker T$, then $B_{T(h_i)} = 0$ and by non-degeneracy of $\mathrm{Tr}_{A_i/K}$ we have $B_{h_i} = 0$ and thus $h_i=0$.

To conclude we have to show that the two $K$-vector spaces above have the same dimension.
Let $d = \dim_K F_i$ and $n = \dim_{A_i} V_i$, then $\dim_K \{\tau_i\text{-hermitian forms on } V_i\} = d n^2$.
To compute the dimension of the vector space on the right hand side, we can tensor over $K$ with a finite separable extension $K'/K$ such that $\gamma_i$ is diagonalizable over $K'$.
Since $\gamma_i^2 \neq 1$ the eigenvalues of $1 \otimes \gamma_i$ on $K' \otimes_K V_i$ are $t_1, t_1^{-1}, \dots, t_d, t_d^{-1}$ where the $t_k^{\pm 1}$ are distinct and $\neq 1$.
Furthermore each eigenspace $U_k^+ := \ker (1 \otimes \gamma_i - t_k \otimes 1), U_k^- := \ker (1 \otimes \gamma_i - t_k^{-1} \otimes 1)$ has dimension $n$ over $K'$.
If $q'$ is a $K'$-quadratic form on $K' \otimes_K V_i$ preserved by $1 \otimes \gamma_i$, then:
\begin{itemize}
\item for any $k$, $q'|_{U_k^{\pm}} = 0$ since $t_k^2 \neq 1$,
\item for any $k \neq l$, $B_{q'}|_{U_k^{\pm} \times U_l^{\pm}} = 0$ since $t_k/t_l, t_k t_l \neq 1$.
\end{itemize}
Hence $q'$ is determined by the restrictions of $B_{q'}$ to $U_k^+ \times U_k^-$, and conversely any family of $K'$-bilinear forms $U_k^+ \times U_k^- \rightarrow K'$ ($k \in \{1, \dots, d\}$) give rise to a $K'$-quadratic form on $K' \otimes_K V_i$ preserved by $1 \otimes \gamma_i$, and we conclude that the dimension is again $dn^2$.
\end{proof}

The regularity of $q$ implies that of $h_i$ (when $\gamma_i^2 \neq 1$, regularity means non-degeneracy of $B_{h_i}$).
In the split case, $V_i$ can be more concretely described as a pair $(W_i, W_i')$ of vector spaces over $F_i$ having the same dimension, $h_i$ identifies $W_i'$ with the dual $W_i^*$ of $W_i$ over $F_i$, and thus the pair $(V_i, h_i)$ is isomorphic to $((W_i, W_i^*), (w,f) \mapsto f(w))$. 

If instead of $q$ we consider a non-degenerate alternate form $\langle \cdot, \cdot \rangle$, we have the same kind of decomposition for $(K[\gamma], \gamma)$.
Moreover the above lemma still holds if instead of considering hermitian forms $h_i$ we consider $\tau_i$-sesquilinear forms $B_i : V_i \times V_i \rightarrow A_i$ such that for any $v \in V_i$, $\mathrm{Tr}_{A_i/F_i}(B_i(v,v)) = 0$.

\begin{prop}
\label{conjandcentss}
Two absolutely semisimple elements $\gamma, \gamma'$ of $\mathrm{O}(V,q)$ are conjugate if and only if there is a bijection $\sigma$ between their respective sets of indices $I$ and $I'$ and compatible isomorphisms $\left(A_i, \gamma_i\right) \simeq \left(A'_{\sigma(i)}, \gamma'_{\sigma(i)}\right)$ and $\left(V_i, h_i\right) \simeq \left(V'_{\sigma(i)}, h'_{\sigma(i)}\right)$.
Moreover the algebraic group $\mathrm{Cent}(\gamma, \mathbf{O}(V, q))$ is naturally isomorphic to
$$ \prod_{i \in I_{\mathrm{triv}}} \mathbf{O}(V_i, h_i) \times \prod_{i \in I_{\mathrm{field}}} \Res_{F_i/K} \mathbf{U}(V_i, h_i) \times \prod_{i \in I_{\mathrm{split}}} \Res_{F_i/K} \mathbf{GL}(W_i). $$
\end{prop}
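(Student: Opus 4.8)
The plan is to exploit the canonicity of the decomposition $(K[\gamma],\gamma)=\prod_i(A_i,\gamma_i)$ together with the orthogonal decomposition $V=\bigoplus_i V_i$ and the uniqueness statement in the Lemma. For the conjugacy criterion, interpret ``compatible isomorphisms'' as a bijection $\sigma\colon I\to I'$ together with $K$-algebra isomorphisms $\phi_i\colon A_i\xrightarrow{\sim}A'_{\sigma(i)}$ with $\phi_i(\gamma_i)=\gamma'_{\sigma(i)}$ and $\phi_i$-semilinear isometries $\psi_i\colon(V_i,h_i)\xrightarrow{\sim}(V'_{\sigma(i)},h'_{\sigma(i)})$. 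The direction ``$\Leftarrow$'' is then immediate: since $q=\sum_i\mathrm{Tr}_{F_i/K}\circ h_i$ and each $\psi_i$ respects traces, $\psi=\bigoplus_i\psi_i$ is a $K$-linear isometry of $(V,q)$ onto $(V',q')$ conjugating $\gamma$ to $\gamma'$, hence (as the statement concerns a single group $\mathrm{O}(V,q)$) the two elements are conjugate. For ``$\Rightarrow$'', let $g\in\mathrm{O}(V,q)$ satisfy $g\gamma g^{-1}=\gamma'$. Conjugation by $g$ gives a $K$-algebra isomorphism $c_g\colon K[\gamma]\xrightarrow{\sim}K[\gamma']$ sending $\gamma$ to $\gamma'$, and because $g$ is an isometry it intertwines the adjoint anti-automorphisms, hence carries $\tau$ to $\tau'$ (a point that requires a little care in the characteristic-$2$ odd-dimensional case, where one uses that $g$ respects the canonical splitting $\ker(\gamma-1)\oplus\ker P(\gamma)$). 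As the factors $(A_i,\gamma_i)$ are read off canonically from $(K[\gamma],\tau)$ — they are the $\tau$-orbits of the primitive idempotents — $c_g$ induces the bijection $\sigma$ and the isomorphisms $(A_i,\gamma_i)\simeq(A'_{\sigma(i)},\gamma'_{\sigma(i)})$; writing $V_i=e_iV$ for the corresponding idempotent $e_i\in K[\gamma]$, and noting that $ge_ig^{-1}$ is the idempotent cutting out $V'_{\sigma(i)}$, $g$ restricts to a $\phi_i$-semilinear isomorphism $V_i\xrightarrow{\sim}V'_{\sigma(i)}$ carrying $q|_{V_i}$ to $q|_{V'_{\sigma(i)}}$. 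Finally, $v\mapsto\phi_i^{-1}\big(h'_{\sigma(i)}(gv)\big)$ is a $\tau_i$-hermitian form on $V_i$ with trace to $K$ equal to $q|_{V_i}$, so by the injectivity of $T$ in the Lemma it equals $h_i$; thus $g|_{V_i}$ is an isometry of hermitian spaces, which is what was needed.

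For the centralizer I would work at the level of $R$-points for an arbitrary commutative $K$-algebra $R$. An element of $\mathrm{O}(V\otimes_K R,q\otimes R)$ commuting with $\gamma\otimes 1$ commutes with all of $K[\gamma]\otimes_K R$, in particular with each $e_i$, hence preserves every $V_i\otimes_K R$ and acts $A_i\otimes_K R$-linearly there; conversely a tuple of such automorphisms glues. So $\mathrm{Cent}(\gamma,\mathbf{O}(V,q))\simeq\prod_i\mathbf{Aut}_{A_i}(V_i,q|_{V_i})$, where $\mathbf{Aut}_{A_i}(V_i,q|_{V_i})$ denotes the $K$-group scheme of $A_i$-linear isometries of $(V_i,q|_{V_i})$. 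It remains to identify each factor. If $i\in I_{\mathrm{triv}}$ then $A_i=F_i=K$ and $h_i=q|_{V_i}$, so the factor is $\mathbf{O}(V_i,h_i)$. If $i\in I_{\mathrm{field}}$, an $A_i$-linear $u$ preserving $q|_{V_i}$ sends $h_i$ to another $\tau_i$-hermitian form with the same $K$-trace, hence fixes $h_i$ by the Lemma; the factor is therefore the unitary group $\mathbf{U}(V_i,h_i)$ over $F_i$, i.e.\ $\Res_{F_i/K}\mathbf{U}(V_i,h_i)$. If $i\in I_{\mathrm{split}}$, writing $V_i=(W_i,W_i^*)$ with $h_i\big((w,f)\big)=f(w)$, an $A_i$-linear automorphism is a pair $(a,b)\in\mathbf{GL}(W_i)\times\mathbf{GL}(W_i^*)$, and $\mathrm{Tr}_{F_i/K}\big(b(f)(a(w))\big)=\mathrm{Tr}_{F_i/K}\big(f(w)\big)$ for all $w,f$ forces $b={}^{t}a^{-1}$; the factor is thus $\mathbf{GL}(W_i)$ over $F_i$, i.e.\ $\Res_{F_i/K}\mathbf{GL}(W_i)$.

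I expect the main obstacle to be the characteristic-$2$ bookkeeping rather than anything conceptual: one must be sure that the factorization of $(K[\gamma],\tau)$ and the reconstruction $h_i\leftrightarrow q|_{V_i}$ of the Lemma are compatible with base change to an arbitrary $R$, so that the displayed identification of centralizers is genuinely an isomorphism of group schemes over $K$ and not merely a statement about field-valued points. Since the proof of the Lemma exhibits $T$ as an isomorphism of $K$-vector spaces — hence one respecting base change — and since commuting with $\gamma$ is equivalent to commuting with the idempotents $e_i$ and being $A_i$-linear on each $V_i$, the remaining verifications are routine; the split case is the only one in which the involution genuinely mixes two $\mathbf{GL}$-factors, and there the identity $b={}^{t}a^{-1}$ settles it at once.
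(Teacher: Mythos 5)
Your proposal is correct, and it follows exactly the route the paper intends: the paper states Proposition \ref{conjandcentss} without a separate proof, regarding it as a consequence of the canonical $\tau$-stable factorisation of $(K[\gamma],\gamma)$, the orthogonal decomposition $V=\bigoplus_i V_i$, and the uniqueness (injectivity of $T$) in the preceding Lemma, which is precisely the argument you spell out, including the identification of the three types of factors in the centraliser and the base-change point needed to get an isomorphism of group schemes. So this is the same approach, with the routine verifications the paper leaves implicit written out.
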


If $\dim_K V$ is odd $\mathbf{O}(V,q) = \mathbf{SO}(V,q) \times \mu_2$, so this proposition easily yields a description of absolutely semisimple conjugacy classes in $\SO(V,q) = \mathbf{SO}(V,q)(K)$ and their centralisers.
If $\dim_K V$ is even the proposition still holds if we replace $\mathbf{O}(V,q)$ by $\mathbf{SO}(V,q)$ and $\prod_{i \in I_{\mathrm{triv}}} \mathbf{O}(V_i, h_i)$ by $\mathbf{S}\left(\prod_{i \in I_{\mathrm{triv}}} \mathbf{O}(V_i, h_i) \right)$ and add the assumption $I_{\mathrm{triv}} \neq \emptyset$.
If $\dim_K V$ is even and $I_{\mathrm{triv}} = \emptyset$, the datum $(A_i, \gamma_i, V_i, h_i)_{i \in I}$ determines two conjugacy classes in $\SO(V,q)$.

In the symplectic case there is a similar proposition, but now the indices $i \in I_{\mathrm{triv}}$ yield symplectic groups.

Note that if $K$ is a local or global field in which $2 \neq 0$, the simple and explicit invariants in the local case and the theorem of Hasse-Minkowski (and its simpler analogue for hermitian forms, see \cite{JacobsonHerm}) in the global case allow to classify the semisimple conjugacy classes explicitely.
For example if $K = \Q$, given $M>0$ one can enumerate the semisimple conjugacy classes in $\SO(V,q)$ annihilated by a non-zero polynomial having integer coefficients bounded by $M$.

\subsubsection{Semisimple conjugacy classes in hyperspecial maximal compact subgroups}
\label{conjclassZpSection}

To compute orbital integrals in the simplest case of the unit in the unramified Hecke algebra of a split classical group over a $p$-adic field, it would be ideal to have a similar description of conjugacy classes and centralisers valid over $\Zp$.
It is straightforward to adapt the above description over any ring (or any base scheme).
However, it is not very useful as the conjugacy classes for which we would like to compute orbital integrals are not all ``semisimple over $\Zp$'', i.e.\ $\Zp[\gamma]$ is not always an étale $\Zp$-algebra.
Note that the ``semisimple over $\Zp$'' case is covered by \cite[Corollary 7.3]{KottEllSing} (with the natural choice of Haar measures, the orbital integral is equal to $1$).
Nevertheless using the tools of the previous section, we give in this section a method to exhaust the isomorphism classes of triples $(\Lambda, q, \gamma)$ where $\Lambda$ is a finite free $\Zp$-module, $q$ is a regular quadratic form on $\Lambda$ and $\gamma \in \mathrm{SO}(\Lambda, q)$.
The symplectic case is similar.
This means that we will be able to enumerate them, but a priori we will obtain some isomorphism classes several times.
In the next section we will nonetheless see that the results of this section can be used to compute the orbital integrals, without checking for isomorphisms.

Let $\Lambda$ be a free $\Zp$-module of finite rank endowed with a regular quadratic form $q$, and let $\gamma \in \mathrm{Aut}_{\Zp}(\Lambda)$ preserving $q$ and semisimple over $\Qp$.
We apply the notations and considerations of section \ref{conjclassfieldSection} to the isometry $\gamma$ of $\Qp \otimes_{\Zp} \Lambda$, to obtain quadratic or hermitian spaces $\left(\Qp \otimes_{\Zp} \Lambda \right)_i$.
Consider the lattices
$$ \Lambda_i := \Lambda \cap \left(\Qp \otimes_{\Zp} \Lambda \right)_i = \ker \left( P_i(\gamma)\ |\ \Lambda \right). $$
Let $N \geq 0$ be such that $p^N$ belongs to the ideal of $\Zp[X]$ generated by the $\prod_{j \neq i} P_j$ for all $i$.
Then $\Lambda/\left(\oplus_i \Lambda_i \right)$ is annihilated by $p^N$, so this group is finite.
Since $\Lambda_i$ is saturated in $\Lambda$ and $q$ is regular, for any $v \in \Lambda_i \smallsetminus p \Lambda_i$,
\begin{equation} \label{ModCond} \begin{cases} p^N \in B(v,\Lambda_i) & \mbox{if $p \geq 3$ or $\mathrm{rk}_{\Zp} \Lambda_i$ is even,} \\ p^N \in B(v,\Lambda_i) \mbox{ or } q(v) \in \Z_2^{\times} & \mbox{if $p = 2$ and $\mathrm{rk}_{\Zp} \Lambda_i$ is odd.} \end{cases} \end{equation}
The $\Zp[\gamma_i]$-module $\Lambda_i$ is endowed with a hermitian (quadratic if $\gamma_i^2=1$) form $h_i$ taking values in $F_i$.
The sesquilinear (bilinear if $\gamma_i^2=1$) form $B_i : \Lambda_i \times \Lambda_i \rightarrow A_i$ associated with $h_i$ has the property that for all $v,w \in \Lambda_i$,
$$ B(v,w) = \mathrm{Tr}_{A_i / \Qp} \left( B_i(v,w) \right). $$
From now on we assume for simplicity that $\Zp[\gamma_i]$ is normal (i.e.\ either it is the integer ring of an extension of $\Qp$, or the product of two copies of such an integer ring), as it will be the case in our global situation which imposes that the $\gamma_i$'s be roots of unity.
The structure of quadratic or hermitian modules over such rings is known: see \cite{OMeara} for the quadratic case, \cite{Jacobowitz} for the hermitian case.
The ``split'' case amounts to the comparison of two lattices in a common vector space (isomorphism classes of such pairs are parametrised by ``invariant factors'').
Choose a uniformiser $\varpi_i$ of $\Zp[\gamma_i]$ (by definition, in the split case $\varpi_i$ is a uniformiser of $\mathcal{O}_{F_i}$).
In all cases, there is a (non-canonical) orthogonal decomposition $\Lambda_i = \bigoplus_{r \in \Z} \Lambda_i^{(r)}$ such that $\varpi_i^{-r} B_i|_{\Lambda_i^{(r)} \times \Lambda_i^{(r)}}$ is integral and non-degenerate.
If $(\varpi_i^{d_i})$ is the different of $\Zp[\gamma_i]/\Zp$ and $(p)=(\varpi_i^{e_i})$, condition \ref{ModCond} imposes (but in general stays stronger than) the following:
\begin{equation} \label{ModCondHerm} \begin{cases} \Lambda_i^{(r)} = 0 \mbox{ unless } -d_i \leq r \leq -d_i+Ne_i & \mbox{if $p \geq 3$ or $\mathrm{rk}_{\Zp} \Lambda_i$ is even,} \\ \Lambda_i^{(r)} = 0 \mbox{ unless } 0 \leq r \leq \max(1,N) & \mbox{if $p = 2$ and $\mathrm{rk}_{\Zp} \Lambda_i$ is odd.} \end{cases} \end{equation}
Note that in the second case $\gamma_i^2=1$ and $h_i$ is a quadratic form over $\Z_2$.
These conditions provide an explicit version of the finiteness result in section \ref{OrbIntSection}, since for any $i$ and $r$ there is a finite number of possible isomorphism classes for $\Lambda_i^{(r)}$, and when the $\Lambda_i$'s are fixed, there is only a finite number of possible $\gamma$-stable $q$-regular $\Lambda$'s since
\[ \bigoplus_i \Lambda_i \subset \Lambda \subset p^{-\max(1,N)} \bigoplus_i \Lambda_i. \]

For efficiency it is useful to sharpen these conditions.
Denote by $o$ an orbit of $\Z/2\Z \times \Gal\left( \overline{\Fp} / \Fp \right)$ acting on $\overline{\Fp}^{\times}$, where the non-trivial element of $\Z/2\Z$ acts by $x \mapsto x^{-1}$.
Concretely, $o$ is an orbit in the set of primitive $m$-th roots of unity ($m$ coprime to $p$) under the subgroup $\langle p, -1 \rangle$ of $\left(\Z/ m\Z\right)^{\times}$.
Let $I_o$ be the set of indices $i$ such that $\gamma_i$ modulo some (at most two possibilities) maximal ideal of $\Zp[\gamma_i]$ belongs to $o$.
Then for $o \neq o'$, $\prod_{i \in I_o} P_i$ and $\prod_{i \in I_{o'}} P_i$ generate the unit ideal in $\Zp[X]$, thus $\Lambda = \bigoplus_o \Lambda_{I_o}$ where
\[ \Lambda_{I_o} = \mathrm{Sat}_{\Lambda}\left(\bigoplus_{i \in I_o} \Lambda_i \right) = \ker \left(\prod_{i \in I_o} P_i(\gamma)\ |\ \Lambda \right). \]
Here $\mathrm{Sat}_{\Lambda}(\Lambda')$, the \emph{saturation} of $\Lambda'$ in $\Lambda$, is defined as $\Lambda \cap \left( \Qp \Lambda' \right)$.
Our task is now to enumerate the $\gamma$-stable $q$-regular lattices containing $\bigoplus_{i \in I_o} \Lambda_i$ in which each $\Lambda_i$ is saturated.
For $i \in I_o$, there is a canonical (``Jordan-Chevalley over $\Zp$'') decomposition $\gamma_i = \alpha_i \beta_i$ where $\Phi_m(\alpha_i)=0$ ($m$ associated with $o$ as above) and
\[ \beta_i^{p^n} \xrightarrow[n \rightarrow + \infty]{} 1 . \]
Since we assumed that $\Zp[\gamma_i]=\Zp[\alpha_i][\beta_i]$ is normal, either $\beta_i \in \Zp[\alpha_i]$ or over each factor of $\Qp[\alpha_i]$, $\Qp[\gamma_i]$ is a non-trivial totally ramified field extension and $\beta_i-1$ is a uniformiser.
In any case, define $h_i' := \mathrm{Tr}_{F_i/\Qp[\alpha_i+\alpha_i^{-1}]} (h_i)$, a quadratic or hermitian (with respect to $\tau_i : \alpha_i \mapsto \alpha_i^{-1}$) form on the $\Zp[\alpha_i]$-module $\Lambda_i$.
On $\Lambda_{I_o}$, $\gamma = \alpha_{I_o} \beta_{I_o}$ as above, the restriction of $\alpha_{I_o}$ to $\Lambda_i$ ($i \in I_o$) is $\alpha_i$, and the minimal polynomial of $\alpha_i$ over $\Qp$ does not depend on $i \in I_o$.
Thus we can see the $\Lambda_i$, $i \in I_o$ as finite free quadratic or hermitian modules over the same ring $\Zp\left[\alpha_{I_o}\right]$, each of these modules being endowed with an automorphism $\beta_i$ satisfying $\beta_i^{p^n} \rightarrow 1$.
Moreover since $\Zp\left[\alpha_{I_o}\right]$ is an étale $\Zp$-algebra, the regularity of $q$ (restricted to $\Lambda_{I_o}$) is equivalent to the regularity of $h' = \oplus_i h'_i$ on $\Lambda_{I_o}$.
Knowing the $\Lambda_i$'s, finding the possible $\Lambda_{I_o}$'s amounts to finding the $\beta$-stable $h'$-regular lattices containing $\bigoplus_{i \in I_o} \Lambda_i$ in which each $\Lambda_i$ is saturated, where $\beta = \oplus_i \beta_i$.

Let us now specialise to the case where each $\gamma_i$ is a root of unity, i.e.\ $\beta_i^{p^n} = 1$ for some $n \geq 0$.
Denote by $\Phi_r$ the $r$-th cyclotomic polynomial.
\begin{lemm}
Let $m \geq 1$ be coprime to $p$.
In $\Zp[X]$, for any $k \geq 1$, $p$ belongs to the ideal generated by $\Phi_{p^km}(X)$ and $\Phi_m\left(X^{p^{k-1}}\right)$.
\end{lemm}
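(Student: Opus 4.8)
The plan is to reduce the general statement to the case $k=1$ by the substitution $X\mapsto X^p$, and to dispatch that base case with one elementary identity among cyclotomic polynomials, making the needed combination explicit.

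First I would record two facts, both consequences of $\prod_{d\mid N}\Phi_d(X)=X^N-1$ and both using $p\nmid m$. Since the divisors of $pm$ are exactly the $d\mid m$ together with the $pd$ for $d\mid m$, comparing $X^{pm}-1=\prod_{d\mid pm}\Phi_d(X)$ with $X^m-1=\prod_{d\mid m}\Phi_d(X)$ gives $X^{pm}-1=(X^m-1)\prod_{d\mid m}\Phi_{pd}(X)$, that is,
\[ \Phi_p(X^m)=\prod_{d\mid m}\Phi_{pd}(X). \]
Secondly, for $a\ge 1$ the substitution $X\mapsto X^p$ sends $\Phi_{p^am}$ to $\Phi_{p^{a+1}m}$: both are monic of degree $\varphi(p^{a+1}m)$, and the $p$-th roots of a primitive $p^am$-th root of unity are precisely the primitive $p^{a+1}m$-th roots of unity, each arising once, so both polynomials equal $\prod_\omega(X-\omega)$ over those $\omega$. (For $a=0$ one instead has $\Phi_m(X^p)=\Phi_{pm}(X)\Phi_m(X)$, but only the case $a\ge 1$ is used below.)

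For $k=1$: since $X^m-1$ divides $\Phi_p(X^m)-\Phi_p(1)=\sum_{i=0}^{p-1}(X^{mi}-1)$ and $\Phi_p(1)=p$, the first identity yields, for some $g\in\Zp[X]$,
\[ p=\prod_{d\mid m}\Phi_{pd}(X)-(X^m-1)\,g(X)=\Phi_{pm}(X)\!\!\prod_{\substack{d\mid m\\ d<m}}\!\!\Phi_{pd}(X)\;-\;(X^m-1)\,g(X); \]
as $\Phi_m(X)\mid X^m-1$, both terms lie in the ideal $(\Phi_{pm}(X),\Phi_m(X))$, which is the assertion for $k=1$ (here $X^{p^0}=X$). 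For the inductive step, assuming the statement for $k-1\ge 1$, I would apply the $\Zp$-algebra endomorphism $X\mapsto X^p$ of $\Zp[X]$ to the containment $p\in(\Phi_{p^{k-1}m}(X),\Phi_m(X^{p^{k-2}}))$, obtaining $p\in(\Phi_{p^{k-1}m}(X^p),\Phi_m(X^{p^{k-1}}))$; since $k-1\ge 1$, the second identity gives $\Phi_{p^{k-1}m}(X^p)=\Phi_{p^km}(X)$, which is the statement for $k$.

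The only point requiring care is the bookkeeping in the two cyclotomic identities — which divisors of $p^am$ occur, and the fact that $X\mapsto X^p$ behaves differently at $a=0$ — but these are routine manipulations with $X^N-1=\prod_{d\mid N}\Phi_d(X)$, so I do not expect any genuine obstacle.
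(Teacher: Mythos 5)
Your proof is correct, but your treatment of the base case $k=1$ is genuinely different from the paper's. The paper starts from the identity $\Phi_m(X^p)=\Phi_{pm}(X)\Phi_m(X)$ (valid since $p\nmid m$), differentiates it, and reduces modulo $\Phi_m(X)$: in the finite \emph{étale} $\Zp$-algebra $\Zp[X]/\Phi_m(X)$ one gets $\Phi_{pm}(X)=pX^{p-1}\Phi_m'(X^p)/\Phi_m'(X)=p\times\text{unit}$, where the coprimality of $m$ and $p$ is used precisely to make $\Phi_m'$ (and $X$) invertible in the quotient; this yields $U\Phi_{pm}+V\Phi_m=p$. You instead use the factorisation $\Phi_p(X^m)=\prod_{d\mid m}\Phi_{pd}(X)$ together with the elementary congruence $\Phi_p(X^m)\equiv\Phi_p(1)=p\pmod{X^m-1}$, which gives an explicit Bézout-type expression for $p$ in terms of $\Phi_{pm}$ and $X^m-1$ (hence $\Phi_m$) without any differentiation or appeal to étaleness; your identity even holds over $\Z$, the only use of $p\nmid m$ being the description of the divisors of $pm$. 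For the passage from $k=1$ to general $k$ the two arguments are essentially the same substitution trick: the paper substitutes $X\mapsto X^{p^{k-1}}$ once, using $\Phi_{p^km}(X)=\Phi_{pm}\bigl(X^{p^{k-1}}\bigr)$, while you iterate $X\mapsto X^p$ by induction via $\Phi_{p^am}(X^p)=\Phi_{p^{a+1}m}(X)$ for $a\geq 1$; these are equivalent. What the paper's route buys is brevity and a slightly sharper local statement (the image of $\Phi_{pm}$ in $\Zp[X]/\Phi_m$ generates exactly $p\,\Zp[X]/\Phi_m$); what yours buys is a completely elementary, derivative-free argument with an explicit witness for the ideal membership.
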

\begin{proof}
For $k=1$, since $\Phi_m(X^p) = \Phi_{pm}(X) \Phi_m(X)$, by derivating we obtain the following equality in the finite étale $\Zp$-algebra $\Zp[X]/\Phi_m(X)$:
\[ \Phi_{pm}(X)=p X^{p-1} \Phi_m'(X^p)/\Phi_m'(X) = p \times \text{unit}. \]
Hence there exists $U,V \in \Zp[X]$ such that $\Phi_{pm}(X)U(X) + \Phi_m(X)V(X) = p$.
For any $k \geq 1$ we have $\Phi_{p^km}(X) = \Phi_{pm}\left(X^{p^{k-1}} \right)$, and the general case follows.
\end{proof} 
Having chosen quadratic or hermitian lattices $\left(\Lambda_i\right)_{i \in I_o}$, there is a natural order in which to proceed to enumerate the possible $\Lambda_{I_o}$.
Let us focus on one orbit $o$.
To lighten notation name the indices $I_o = \{1, \ldots , s \}$ in such a way that for $1 \leq t \leq s$, $P_t | \Phi_{mp^{k_t}}$ where $0 \leq k_1 < \ldots < k_s$.
Having fixed $o$ we also drop the indices $I_o$ from our notations.
The lemma tells us that for any $1 \leq t < s$, $p$ annihilates
$$ \mathrm{Sat}_{\Lambda} \left( \Lambda_1 \oplus \ldots \oplus \Lambda_{t+1} \right) / \left( \mathrm{Sat}_{\Lambda} \left( \Lambda_1 \oplus \ldots \oplus \Lambda_t \right) \oplus \Lambda_{t+1} \right) $$
and thus we also have that $p^{s-t}$ annihilates
$$ \Lambda / \left( \mathrm{Sat}_{\Lambda} \left( \Lambda_1 \oplus \ldots \oplus \Lambda_t \right) \oplus \Lambda_{t+1} \oplus \dots \Lambda_s \right).$$
This will provide a sharper version of condition \ref{ModCond}.
Let $B'$ be the sesquilinear (bilinear if $\alpha^2=1$) form on $\Lambda$ associated with $h'$.
For any $i \in I_o$ there is an orthogonal decomposition with respect to $B'$: $ \Lambda_i = \bigoplus_r L_i^{(r)}$ where each $L_i^{(r)}$ is $p^r$-modular for $B'$, i.e.\ $p^{-r}B'|_{L_i^{(r)} \times L_i^{(r)}}$ takes values in $\Zp[\alpha]$ and is non-degenerate.
For $1 \leq t \leq s$ denote $M_t = \mathrm{Sat}_{\Lambda} \left( \Lambda_1 \oplus \ldots \oplus \Lambda_t \right)$, which can similarly be decomposed orthogonally with respect to $B'$: $ M_t = \bigoplus_r M_t^{(r)}$.
Note that $M_1 = \Lambda_1$.
Analogously to condition \ref{ModCond}, for $1 \leq t < s$ we have
\begin{equation}\label{ModCondCyclo}
L_{t+1}^{(r)} = M_t^{(r)} = 0 \quad\mbox{ unless } 0 \leq r \leq s-t.
\end{equation}
and if $s=1$ we simply have that the hermitian (or quadratic) module $(\Lambda_1, h')$ over $\Zp[\alpha]$ is regular.
We can deduce a sharper version of condition \ref{ModCondHerm}.
If $s>1$ then
\begin{align}\label{ModCondHermCyclo1}
\Lambda_1^{(r)} = 0 & \mbox{ unless } -d_1 \leq r \leq -d_1 + (s-1) e_1 \\\label{ModCondHermCyclo2}
\mbox{for } 1 < t \leq s,\ \Lambda_t^{(r)} = 0 & \mbox{ unless } -d_t \leq r \leq -d_t + (s-t+1) e_t.
\end{align}
while for $s=1$:
\begin{equation}\label{ModCondHermCyclo3} \begin{cases}
\Lambda_1^{(r)} = 0 \mbox{ if } r \neq -d_1 & \mbox{if $p\geq 3$ or $m>1$,} \\
\Lambda_1 \mbox{ is a regular quadratic $\Z_2$-module} & \mbox{if $p=2$ and $m=1$.}
\end{cases} \end{equation}

Let us recapitulate the algorithm thus obtained to enumerate \emph{non-uniquely} the isomorphism classes of triples $(\Lambda, q, \gamma)$ such that $(\Lambda,q)$ is regular and $\gamma$ is torsion.
Begin with a datum $(A_i, \gamma_i)_{i \in I}$, i.e.\ fix the characteristic polynomial of $\gamma$.
For any orbit $o$ for which $s = \mathrm{card}(I_o) > 1$:
\begin{enumerate}
\item
For any $i \in I_o$, enumerate the isomorphism classes of quadratic or hermitian $\Zp[\alpha_i]$-modules $\Lambda_i$ subject to conditions \ref{ModCondHermCyclo1} and \ref{ModCondHermCyclo2}, compute $B'$ on $\Lambda_i \times \Lambda_i$ and throw away those which do not satisfy condition \ref{ModCondCyclo}.
\item
For any such family $(\Lambda_i)_{i \in I_o}$, enumerate inductively the possible $\mathrm{Sat}_{\Lambda} \left( \Lambda_1 \oplus \ldots \oplus \Lambda_t \right)$.
At each step $t=1, \dots, s$, given a candidate $M_t$ for $\mathrm{Sat}_{\Lambda} \left( \Lambda_1 \oplus \ldots \oplus \Lambda_t \right)$, we have to enumerate the candidates $M_{t+1}$ for $\mathrm{Sat}_{\Lambda} \left( \Lambda_1 \oplus \ldots \oplus \Lambda_t \right)$, i.e.\ the $\beta$-stable lattices containing $M_t \oplus \Lambda_{t+1}$ such that
\begin{enumerate}
\item $h'$ is integral on $M_{t+1}$,
\item both $M_t$ and $\Lambda_{t+1}$ are saturated in $M_{t+1}$,
\item if $t<s-1$, $M_{t+1}$ satisfies condition \ref{ModCondCyclo},
\item if $t=s-1$, $M_{t+1}$ (a candidate for $\Lambda$) is regular for $h'$.
\end{enumerate}
\end{enumerate}
\begin{rema}
The first step can be refined, since already over $\Qp$ there are obstructions to the existence of a regular lattice.
These obstructions exist only when $h' = q$ is a quadratic form, i.e.\ $\alpha_{I_o}^2 = 1$, so let us make this assumption for a moment.
Consider its discriminant $D=\mathrm{disc}(q) \in \Qp^{\times} / \mathrm{squares}(\Qp^{\times})$.
If $\mathrm{rk}_{\Zp} \Lambda = 2n$ is even, then $\Qp[X]/(X^2-(-1)^nD)$ is unramified over $\Qp$.
If $\mathrm{rk}_{\Zp} \Lambda$ is odd, the valuation of $\mathrm{disc}(q)/2$ is even.
Moreover in any case, once we fix the discriminant, the Hasse-Witt invariant of $q$ is determined.
We do not go into more detail.
A subtler obstruction is given by the spinor norm of $\gamma$.
Assume that $N = \mathrm{rk}_{\Zp} \Lambda$ is at least $3$, and for simplicity assume also that $\det (\gamma) = 1$.
The regular lattice $(\Lambda, q)$ defines a reductive group $\mathbf{SO}(q)$ over $\Zp$.
The fppf exact sequence of groups over $\Zp$
$$ 1 \rightarrow \mathbf{\mu}_2 \rightarrow \mathbf{Spin}(q) \rightarrow \mathbf{SO}(q) \rightarrow 1$$
yields for any $\Zp$-algebra $R$ the spinor norm $\mathbf{SO}(q)(R) \rightarrow H^1_{\mathrm{fppf}}(R, \mathbf{\mu}_2)$ whose kernel is the image of $\mathbf{Spin}(q)(R)$.
Moreover if $\mathrm{Pic}(R)=1$ (which is the case if $R=\Qp$ or $\Zp$) we have $H^1_{\mathrm{fppf}}(R, \mathbf{\mu}_2) = R^{\times} / \mathrm{squares}(R^{\times})$.
Thus another obstruction is that the spinor norm of $\gamma$ must have even valuation.
We can compute the spinor norm of each $\gamma_i$ easily.
If $\gamma_i = -1$ its spinor norm is simply the discriminant of the quadratic form $h_i$.
If $i \not\in I_{\mathrm{triv}}$ a straightforward computation shows that the spinor norm of $\gamma_i$ is $N_{A_i / \Qp}(1+\gamma_i)^{\dim_{A_i} V_i}$.
Note that it does not depend on the isomorphism class of the hermitian form $h_i$.
\end{rema}

Let us elaborate on the second step of the algorithm.
For an orbit $o$ for which $s=1$, we simply have to enumerate the modules $\Lambda_1$ satisfying \ref{ModCondHermCyclo3} and such that the resulting quadratic form $q$ (equivalently, $h'$) is regular.

We have not given an optimal method for the case $s>1$.
A very crude one consists in enumerating all the free $\Fp[\alpha]$-submodules in $p^{-1} \Zp / \Zp \otimes_{\Zp} (M_t \oplus \Lambda_{t+1})$ and keeping only the relevant ones.
The following example illustrates that one can do much better in many cases.

\begin{exam} \label{examples2}
Consider the ``second simplest'' case $s=2$.
Assume for simplicity that $p>2$ or $m>1$.
Then condition \ref{ModCondCyclo} shows that for any pair $((\Lambda_1, h_1), (\Lambda_2, h_2))$ found at the first step of the algorithm, we have
$$ \Lambda_1 = L_1^{(0)} \oplus L_1^{(1)} \qquad \text{and} \qquad \Lambda_2 = L_2^{(0)} \oplus L_2^{(1)} $$
where each $L_i^{(r)}$ is $p^r$-modular.
Moreover for any $i \in \{1,2\}$ the topologically unipotent automorphism $\beta_i$ stabilises
$$ pL_i^{(0)} \oplus L_i^{(1)} = \{ v \in \Lambda_i\,|\, \forall w \in \Lambda_i,\,B_i'(v,w) \in p \Zp[\alpha] \} $$
and thus $\beta_i$ induces a unipotent automorphism $\overline{\beta_i}$ of $(V_i, \eta_i)$ where $V_i = L_i^{(1)}/pL_i^{(1)}$ and $\eta_i$ is a the non-degenerate quadratic or hermitian form $p^{-1} h'_i \mod p$ on $V_i$.
It is easy to check that any relevant $\Lambda \supset \Lambda_1 \oplus \Lambda_2$ is such that
$$ p\Lambda / (p\Lambda_1 \oplus p \Lambda_2 ) = \{ v_1 \oplus f(v_1)\,|\,v_1 \in V_1 \} $$
for a unique isomorphism $f : (V_1, \eta_1, \beta_1) \rightarrow (V_2, -\eta_2, \beta_2)$.
Conversely such an isomorphism yields a relevant $\Lambda$.
\end{exam}
For $p=2$ and $m=1$ there is a similar but a bit more complicated description of the relevant lattices $\Lambda \supset \Lambda_1 \oplus \Lambda_2$.
In that case each form $\eta_i$ is a ``quadratic form modulo $4$'', i.e.\ $x \mapsto \langle x,x \rangle \mod 4$ where $\langle \cdot, \cdot \rangle$ is a symmetric bilinear form on a free $\Z_2$-module $N$.
Note that $\langle x,x \rangle \mod 4$ only depends on the class of $x$ in $\F_2 \otimes N$.
A further complication comes into play when $\mathrm{rk}_{\Z_2} (\Lambda_1) + \mathrm{rk}_{\Z_2} (\Lambda_2)$ is odd, but we do not go into more detail.

In the case of an arbitrary $s>1$, the observation made in example \ref{examples2} still applies at the last step $t=s-1$, replacing $(\Lambda_1, \Lambda_2)$ with $(M_{s-1}, \Lambda_s)$.
We do not go into the details of our implementation of the previous steps ($t<s-1$).
We merely indicate that in general $pM_{t+1}/(M_t \oplus \Lambda_{t+1})$ is still described using an isomorphism $f$ between a $\beta$-stable subspace of $\bigoplus_{r \geq 1} M_t^{(r)} \mod p$ and a $\beta$-stable subspace of $\bigoplus_{r \geq 1} L_t^{(r)} \mod p$.

\begin{rema}
Regarding all the results of this section, the symplectic case is similar, replacing ``quadratic'' by ``symplectic'' and ``hermitian'' by ``antihermitian'', and even simpler because the prime $2$ is ``less exceptional''.
More precisely, the classification of hermitian modules for e.g.\ the quadratic extension $\Z_p[\zeta_{p^k}] / \Z_p[\zeta_{p^k}+\zeta_{p^k}^{-1}]$ is more involved for $p=2$ than for the other primes (see \cite{Jacobowitz}), but once we have enumerated the possible isomorphism classes of $\Lambda_i$'s, the enumeration of the relevant $\Lambda \supset \oplus_i \Lambda_i$ can be done uniformly in $p$.
\end{rema}

\subsubsection{Orbital integrals for the unit in the unramified Hecke algebra of a $p$-adic classical group}
\label{compOrbIntSection}

In this section we show that thanks to the fact that orbital integrals are formally sums of masses (where ``mass'' takes the same meaning as in ``mass formula'', or in overly fancy terms, the ``measure of a groupoid''), they can be computed by counting instead of enumerating and checking isomorphisms.
As before we focus on the case of special orthogonal groups, the case of symplectic groups being easier.

Let $\Lambda_0$ be a free $\Zp$-module of finite rank endowed with a regular quadratic form $q_0$ and consider the algebraic group $\mathbf{G} = \mathbf{SO}(\Lambda_0, q_0)$ which is reductive over $\Zp$.
Let $f = \mathbf{1}_{\mathbf{G}(\Zp)}$ be the characteristic function of $\mathbf{G}(\Zp)$ and fix the Haar measure on $\mathbf{G}(\Qp)$ such that $\int_{\mathbf{G}(\Zp)} dg = 1$.
Let $\gamma_0 \in \mathbf{G}(\Qp)$ be semisimple (for now we do not assume that it is torsion), and let $\mathbf{I}_0$ be its connected centraliser in $\mathbf{G}_{\Qp}$.
Fix a Haar measure $\nu$ on $\mathbf{I}_0(\Qp)$.
Consider the isomorphism classes of triples $(\Lambda, q, \gamma)$ such that
\begin{itemize}
\item
$\Lambda$ is a free $\Zp$-module of finite rank endowed with a regular quadratic form $q$,
\item
$\gamma \in \SO(\Lambda, q)$,
\item
there exists an isomorphism between $(\Qp \otimes_{\Zp} \Lambda, q, \gamma)$ and $(\Qp \otimes_{\Zp} \Lambda_0, q_0, \gamma_0)$.
\end{itemize}
We apply the previous section's notations and results to such $(\Lambda, q, \gamma)$.
The last condition can be expressed simply using the classical invariants of quadratic (over $\Qp$) or hermitian (over  $\Qp[\gamma_i]$) forms, as in Proposition \ref{conjandcentss}.
It implies that $\mathbf{I}_0$ and the connected centraliser $\mathbf{I}$ of $\gamma$ in $\mathbf{SO}(\Qp \otimes_{\Zp} \Lambda, q)$ are isomorphic, and by Remark \ref{RemaMeasures} we can see $\nu$ as a Haar measure on $\mathbf{I}(\Qp)$.
Then
$$ O_{\gamma_0}(f(g)dg) = \left(\sum_{(\Lambda, q, \gamma)} \nu\left( \mathbf{I}(\Qp) \cap \mathrm{SO}(\Lambda, q)\right)^{-1} \right) \nu $$
where the sum ranges over isomorphism classes as above.
Note that $\mathbf{I}(\Qp) \cap \mathrm{SO}(\Lambda, q)$ stabilises each $\Lambda_i$, so that it is a subgroup of $\prod_i \Gamma_i \subset \mathbf{I}(\Qp)$ where
$$ \Gamma_i = \begin{cases} \mathrm{SO}(\Lambda_i, h_i) & \mbox{if $i \in I_{\mathrm{triv}}$} \\ \mathrm{U}(\Lambda_i, h_i) & \mbox{if } i \in I_{\mathrm{field}} \cup I_{\mathrm{split}}. \end{cases} $$
In fact $\mathbf{I}(\Qp) \cap \mathrm{SO}(\Lambda, q)$ is the stabiliser of $\Lambda / \bigoplus_i \Lambda_i$ for the action of $\prod_i \Gamma_i$ on $(\Qp / \Zp) \otimes_{\Zp} \left( \bigoplus_i \Lambda_i \right)$.
Grouping the terms in the above sum according to the isomorphism classes of the quadratic or hermitian modules $\Lambda_i$, we obtain
\begin{equation} \label{formulaorbint} O_{\gamma_0}(f(g)dg) = \left( \sum_{\left(\Lambda_i, h_i\right)_{i \in I}} \frac{\mathrm{ext} \left( (\Lambda_i, h_i)_i \right)}{\nu\left(\prod_i \Gamma_i \right)} \right) \nu .\end{equation}
Now the sum ranges over the isomorphism classes of quadratic or hermitian lattices $(\Lambda_i, h_i)$ over $\Zp[\gamma_i]$, which become isomorphic to the corresponding datum for $(\Qp \otimes_{\Zp} \Lambda_0, q_0, \gamma_0)$ when $p$ is inverted, and
$$ \mathrm{ext} \left( (\Lambda_i, h_i)_i \right) := \mathrm{card} \left\{ q\text{-regular } (\oplus_i\gamma_i)\text{-stable }\Lambda \supset \bigoplus_i \Lambda_i \ |\ \forall i,\ \Lambda_i \text{ saturated in } \Lambda\right\}. $$
We will study the volumes appearing at the denominator below, but for the moment we consider these numerators.
Motivated by the global case, assume from now on that $\gamma_0$ is torsion as in the end of the previous section.
It is harmless to restrict our attention to a single orbit $o$, and assume $I=I_o$.
For the computation of orbital integrals, the benefit resulting from the transformation above is that instead of enumerating the possible $M_{t+1}$ knowing $M_t$ at the last step $t=s-1$, we only have to count them.
Let us discuss the various cases that can occur, beginning with the simplest ones.

The unramified case corresponds to $s=1$ and $A_1 = \Qp[\gamma_1] = \Qp[\alpha]$, and in that case there is a unique relevant isomorphism class $(\Lambda_1, h_1)$.
It is easy to check that we recover Kottwitz's result \cite{KottEllSing}[Corollary 7.3] that the orbital integral equals $1$ for the natural choice of Haar measures.

The case where $s=1$ but $\Qp[\gamma_1] / \Qp[\alpha]$ can be non-trivial (i.e.\ ramified) is not much harder: the algorithm given in the previous section identifies the relevant isomorphism classes $(\Lambda_1, h_1)$ appearing below the sum, and $\mathrm{ext}(\Lambda_1, h_1)=1$.
In this case we have reduced the problem of computing the orbital integral by that of computing the volume of the stabilisers of some lattices.
When $\mathbf{G} = \mathbf{Sp}_2 = \mathbf{SL}_2$ it is the worst that can happen.

The first interesting case is $s=2$.
Assume for simplicity that $p>2$ or $m>1$, and let us look back at example \ref{examples2}, using the same notations.
Then $\mathrm{ext}((\Lambda_1, h_1), (\Lambda_2, h_2))=0$ unless $(V_1, \eta_1, \beta_1) \simeq (V_2, -\eta_2, \beta_2)$, in which case $\mathrm{ext}((\Lambda_i, h_i)_i) = \mathrm{card}\left( \mathrm{Aut}(V_1, \eta_1, \beta_1)\right)$.
This group is the centraliser of a unipotent element in a classical group over a finite field.
Results of Wall \cite{Wall} give the invariants of such conjugacy classes as well as formulae for their centralisers.
In many cases (e.g.\ if $\mathrm{rk}_{\Zp}(\Lambda) < p^2-1$) the automorphism $\beta_1$ of $V_1$ is trivial, and thus we do not need the general results of Wall, but merely the simple cardinality formulae of finite classical groups.
For $\mathbf{G} = \mathbf{Sp}_4$ or $\mathbf{SO}_4$ we have $s \leq 2$ and $\beta_1|_{V_1} = 1$ at worst.

When $s>2$ the situation is of course more complicated, and it seems that we cannot avoid the enumeration of successive lattices $M_{t+1} \supset M_t \oplus \Lambda_{t+1}$ for $t<s-1$, although the last step $t=s-1$ is identical to the above case.
Note however that these ``very ramified'' cases are rare in low rank.
More precisely $\mathrm{rk}_{\Zp} \Lambda \geq p^{s-1}$, e.g.\ in rank less than $25$ it can happen that $s>2$ only for $p=2,3$.
Thus the ``worst cases'' have $p=2$.
This is fortunate because for fixed $k$ and $n$ the number of $k$-dimensional subspaces in an $n$-dimensional vector space over a finite field with $q$ elements increases dramatically with $q$.

\begin{rema}
In the case where $\mathbf{G}$ is an \emph{even} special orthogonal group, some of the semisimple conjugacy classes in $\mathbf{G}(\Qp)$ were parametrised only up to outer conjugation.
Since $\mathbf{G}(\Zp)$ is invariant by an outer automorphism of $\mathbf{G}$, for any $\gamma_0, \gamma_0' \in \mathbf{G}(\Qp)$ which are conjugate by an outer automorphism of $\mathbf{G}_{\Qp}$, the orbital integrals for $f(g_p)dg_p$ at $\gamma_0$ and $\gamma_0'$ are equal.
Of course the above formula for the orbital integral is valid for both.
\end{rema}

\subsubsection{Local densities and global volumes}
\label{sectionVolumes}

To complete the computation of adèlic orbital integrals we still have to evaluate the denominators in formula \ref{formulaorbint} and the global volumes.
Formulae for local densities and Smith-Minkowski-Siegel mass formulae are just what we need.
But we will use the point of view suggested by \cite{GrossMot} and used in \cite{GrossPollack}, i.e.\ fix canonical Haar measures to see local orbital integrals as numbers.
For this we need to work in a slightly more general setting than cyclotomic fields.

If $k$ is a number field or a $p$-adic field, denote by $\mathcal{O}_k$ its ring of integers.
If $k$ is a number field $\A_k = k \otimes_{\Q} \A$ will denote the adèles of $k$.

Let $k$ be a number field or a local field of characteristic zero, and let $K$ be a finite commutative étale $k$-algebra such that $\dim_k K \leq 2$, i.e.\ $K =k$ or $k \times k$ or $K$ is a quadratic field extension of $k$.
Let $\tau$ be such that $\mathrm{Aut}_k(K) = \{\mathrm{Id}_K, \tau \}$.
This determines $\tau$.
Let $V$ be a vector space over $K$ of dimension $r \geq 0$.
Let $\alpha \in \{ 1, -1\}$, and assume that $\alpha = 1$ if $\dim_k K = 2$.
Assume that $V$ is endowed with a non-degenerate $\tau$-sesquilinear form $\langle \cdot, \cdot \rangle$ such that for any $v_1,v_2 \in V$ we have $\langle v_2, v_1 \rangle = \alpha \tau\left( \langle v_1, v_2 \rangle \right)$.
Let $\mathbf{G} = \mathbf{Aut}(V, \langle \cdot, \cdot \rangle)^0$ be the connected reductive group over $k$ associated with this datum.
Then $\mathbf{G}$ is a special orthogonal ($K=k$ and $\alpha=1$), symplectic ($K=k$ and $\alpha=-1$), unitary ($K/k$ is a quadratic field extension and $\alpha=1$) or general linear ($K = k \times k$ and $\alpha=1$) group.

If $k$ is a number field, by Weil \cite{Weil} the Tamagawa number $\tau(\mathbf{G})$ equals $2$ (resp.\ $1$) in the orthogonal case if $r \geq 2$ and $V$ is not a hyperbolic plane (resp.\ if $r = 1$ or $V$ is a hyperbolic plane), $1$ in the symplectic case, $2$ in the unitary case if $r > 0$ and $1$ in the general linear case.

If $k$ is a $p$-adic field, consider a lattice $N$ in $V$, i.e.\ a finite free $\mathcal{O}_K$-module $N \subset V$ such that $V = K N$.
Denote $N^{\vee} = \{ v \in V\,|\,\forall w \in N,\, \langle v, w \rangle \in \mathcal{O}_K \}$.
If $\langle \cdot, \cdot \rangle|_{N \times N}$ takes values in $\mathcal{O}_K$ then $N^{\vee} \supset N$ and we can consider $[N^{\vee} : N]$, i.e.\ the cardinality of the finite abelian group $N^{\vee} / N$.
In general define $[N^{\vee} : N]$ as $[N^{\vee} : N^{\vee} \cap N ] / [N : N^{\vee} \cap N]$.
Recall also \cite{GanYu}[Definition 3.5] the \emph{density} $\beta_{N}$ associated with $(N, \langle \cdot, \cdot \rangle)$.

In \cite{GrossMot} Gross associates a motive $M$ of Artin-Tate type to any reductive group over a field.
For the groups $\mathbf{G}$ defined above, letting $n$ be the rank of $\mathbf{G}$, we have
$$ M = \begin{cases} \bigoplus_{x=1}^n \Q(1-2x) & \text{orthogonal case with } r \text{ odd and symplectic case,} \\ \chi\Q(1-n) \oplus \bigoplus_{x=1}^{n-1} \Q(1-2x) & \text{orthogonal case with } r>0 \text{ even,} \\ \bigoplus_{x=1}^n \chi^x \Q(1-x) & \text{unitary and general linear cases.} \end{cases} $$
In the orthogonal case with $r>0$ even let $(-1)^n D$ be the discriminant of $(V, \langle \cdot, \cdot \rangle)$ (i.e.\ the determinant of the Gram matrix), then $\chi$ is defined as the character $\mathrm{Gal}(k(\sqrt{D})/k) \rightarrow \{ \pm 1 \}$ which is non-trivial if $D$ is not a square in $k$.
In the general linear case $\chi$ is trivial, and in the unitary case $\chi$ is the non-trivial character of $\mathrm{Gal}(K/k)$.
For L-functions and $\epsilon$-factors we will use the same notations as \cite{GrossMot}.

If $k$ is a number field $D_k$ will denote the absolute value of its discriminant.
For $K=k$ or $K = k \times k$ denote $D_{K/k} = 1$, whereas for a quadratic field extension $K$ of $k$ we denote $D_{K/k} = |N_{K/\Q}(\mathfrak{D}_{K/k})|$ where $\mathfrak{D}_{K/k}$ is the different ideal of $K/k$ and the absolute value of the ideal $m\Z$ of $\Z$ is $m$ if $m \geq 1$.
There are obvious analogues over any $p$-adic field, and $D_k$ (resp.\ $D_{K/k}$) is the product of $D_{k_v}$ (resp.\ $D_{K_v/k_v}$ where $K_v = k_v \otimes_k K$) over the finite places $v$ of $k$.

For $(k, K, \alpha, V \langle \cdot, \cdot \rangle)$ (local or global) as above define as in \cite{GanYu}
$$ n(V) = \begin{cases} r + \alpha & \text{if } K=k,\\ r & \text{if } \dim_k K = 2 \end{cases} $$
and
$$ \mu = \begin{cases} 2^r & \text{in the orthogonal case with } r \text{ even,} \\ 2^{(r+1)/2} & \text{in the orthogonal case with } r \text{ odd,} \\ 1 & \text{in the symplectic, unitary and general linear cases.} \end{cases} $$
Finally, consider the case where $k = \R$ and $\mathbf{G}(\R)$ has discrete series, i.e.\ the Euler-Poincaré measure on $\mathbf{G}(\R)$ is non-zero, i.e.\ $\mathbf{G}$ has a maximal torus $\mathbf{T}$ which is anisotropic.
Recall Kottwitz's sign $e(\mathbf{G}) = (-1)^{q(\mathbf{G})}$ and the positive rational number $c(\mathbf{G})$ defined in \cite{GrossMot}[§8].
Explicitly,
$$ c(\mathbf{G}) = \begin{cases} 1 & \text{in the symplectic case,} \\ 2^n / \binom{n}{\lfloor a/2 \rfloor} & \text{in the orthogonal case with signature }(a,b), b \text{ even,} \\ 2^n / \binom{n}{a} & \text{in the unitary case with signature }(a,b). \end{cases} $$

The following theorem is a reformulation of the mass formula \cite{GanYu}[Theorem 10.20] in our special cases.
\begin{theo}
Let $k$ be a totally real number field and let $K$, $\alpha$, $(V, \langle \cdot, \cdot \rangle)$ and $\mathbf{G}$ be as above.
Let $M$ denote the Gross motive of $\mathbf{G}$.
Assume that for any real place $v$ of $k$, $\mathbf{G}(k_v)$ has discrete series.
Define a signed Haar measure $\nu = \prod_v \nu_v$ on $\mathbf{G}(\A_k)$ as follows.
For any real place $v$ of $k$, $\nu_v$ is the Euler-Poincaré measure on $\mathbf{G}(k_v)$.
For any finite place $v$ of $k$, $\nu_v$ is the canonical measure $L_v(M^{\vee}(1)) |\omega_{\mathbf{G}_{k_v}}|$ on $\mathbf{G}(k_v)$ (see \cite{GrossMot}[§4]).
In particular, for any finite place $v$ such that $\mathbf{G}_{k_v}$ is unramified, the measure of a hyperspecial compact subgroup of $\mathbf{G}(k_v)$ is one.
Then for any $\mathcal{O}_K$-lattice $N$ in $V$,
\begin{align*} \int_{\mathbf{G}(k) \backslash \mathbf{G}(\A_k)} \nu &= \tau(\mathbf{G}) \times L(M) \times \frac{D_k^{\dim \mathbf{G} / 2} D_{K/k}^{r(r+1)/4}}{\epsilon(M)} \times \prod_{v | \infty} \frac{(-1)^{q(\mathbf{G}_{k_v})}}{c(\mathbf{G}_{k_v})} \\
& \qquad \times \mu^{\dim_{\Q} k} \prod_{v \text{ finite}} \frac{[N_v^{\vee} : N_v]^{n(V)/2} \times \nu_v\left( \mathbf{G}(k_v) \cap \mathrm{GL}(N_v) \right)}{L_v(M^{\vee}(1)) \beta_{N_v}} \end{align*}
\end{theo}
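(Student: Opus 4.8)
The plan is to obtain this identity by unwinding the Smith--Minkowski--Siegel mass formula of Gan--Yu \cite{GanYu}[Theorem 10.20] in the normalisation of Haar measures prescribed by the Gross motive, so that the real content is a place-by-place matching of measures. I would organise the argument around three comparisons feeding into the mass formula. First, the Tamagawa volume $\int_{\mathbf{G}(k)\backslash\mathbf{G}(\A_k)}|\omega|_{\mathrm{Tam}}$ equals $\tau(\mathbf{G})$, whose value in each of the four cases (orthogonal, symplectic, unitary, general linear) is recorded above following Weil \cite{Weil}; here $|\omega|_{\mathrm{Tam}} = D_k^{-\dim\mathbf{G}/2}\prod_v|\omega|_v$ for a fixed invariant top-degree form $\omega$ on $\mathbf{G}$ over $k$. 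Second, Gross's comparison in \cite{GrossMot}[§4] between $|\omega|_{\mathrm{Tam}}$ and the product of canonical measures $\nu^{\mathrm{can}}=\prod_v\nu_v^{\mathrm{can}}$ (at finite $v$, $\nu_v^{\mathrm{can}} = L_v(M^\vee(1))|\omega_{\mathbf{G}_{k_v}}|$) introduces the motivic factor $L(M)/\epsilon(M)$ — via the functional equation of $L(M,s)$, the finite and archimedean local $L$-factors assembling into this ratio — together with the two discriminant powers $D_k^{\dim\mathbf{G}/2}$ and $D_{K/k}^{r(r+1)/4}$, the latter coming, in the hermitian case and in the quadratic case over a quadratic algebra, from the different of $K/k$ entering the Gram matrix. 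Third, at each real place Gross \cite{GrossMot}[§8] relates the Euler--Poincaré measure to the canonical one by $\nu_v^{\mathrm{EP}} = (-1)^{q(\mathbf{G}_{k_v})}c(\mathbf{G}_{k_v})^{-1}\nu_v^{\mathrm{can}}$, the sign $(-1)^{q(\mathbf{G}_{k_v})}$ being Kottwitz's and accounting for the signed nature of $\nu$; since $k$ is totally real this contributes $\prod_{v\mid\infty}(-1)^{q(\mathbf{G}_{k_v})}/c(\mathbf{G}_{k_v})$, and the archimedean local densities of the chosen form contribute the remaining $\mu^{\dim_\Q k} = \prod_{v\mid\infty}\mu$.

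It then remains to rewrite the finite-place contribution of the mass formula in terms of the data $(N,\langle\cdot,\cdot\rangle)$. For a finite place $v$ one has $\nu_v(\mathbf{G}(k_v)\cap\GL(N_v)) = L_v(M^\vee(1))|\omega_{\mathbf{G}_{k_v}}|(\mathbf{G}(k_v)\cap\GL(N_v))$, so after cancelling the factors $L_v(M^\vee(1))$ the finite-place line of the formula becomes $\prod_{v\text{ finite}}[N_v^\vee:N_v]^{n(V)/2}\,|\omega_{\mathbf{G}_{k_v}}|(\mathbf{G}(k_v)\cap\GL(N_v))/\beta_{N_v}$, which I would identify, using the definition of the local density $\beta_{N_v}$ in \cite{GanYu}[Definition 3.5] as the normalised volume of the lattice stabiliser, as exactly the local correction term relating Gan--Yu's density normalisation to Gross's canonical measure; it is a well-defined finite product since $N_v$ is self-dual for all but finitely many $v$. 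Multiplying $\tau(\mathbf{G})$, the factors $L(M)$ and $1/\epsilon(M)$, the two discriminant powers, the archimedean ratios and this finite-place line then yields the asserted identity. The place-specific subtlety is the even orthogonal case, where one checks that the character $\chi$ entering $M$ is the one cut out by $(-1)^nD$ with $D$ the discriminant of $(V,\langle\cdot,\cdot\rangle)$ — so that $L(M)$, $\epsilon(M)$ and the exponent $r(r+1)/4$ are the correct ones — and that the power of $2$ in $\mu$ absorbs the discrepancy between the densities for $\mathbf{O}$ and for $\mathbf{SO}$ (together with the hypothesis $I_{\mathrm{triv}}\neq\emptyset$-type normalisation issues in passing from $\mathbf{O}$ to its identity component $\mathbf{G}$).

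I expect the main obstacle to be precisely this bookkeeping of normalisation constants: reconciling the conventions for local densities in \cite{GanYu} with those for canonical measures in \cite{GrossMot}, tracking every power of $2$ hidden in $\mu$ and every discriminant exponent ($\dim\mathbf{G}/2$, $r(r+1)/4$, $n(V)/2$) through the three comparisons, and confirming that the archimedean Euler--Poincaré normalisation really contributes $c(\mathbf{G}_{k_v})^{-1}$ with Kottwitz's sign rather than some variant. No conceptual input is needed beyond Weil's Tamagawa numbers, Gross's measure comparisons and Gan--Yu's local density formulae, all of which are available; the work is to assemble them so that every factor of \cite{GanYu}[Theorem 10.20] lands in its correct slot in the formula above.
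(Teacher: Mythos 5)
Your plan coincides with the paper's own proof: the theorem is obtained by rewriting Gan--Yu's mass formula (Theorem 10.20 of \cite{GanYu}) using the comparison of Euler--Poincaré and canonical measures at the real places from \cite{GrossMot} (Proposition 7.6), the fact that $L_v(M^{\vee}(1))\beta_{N_v}=1$ at almost all finite places (so the finite product is well defined), and the functional equation $\Lambda(M)=\epsilon(M)\Lambda(M^{\vee}(1))$ -- exactly the three ingredients you assemble. The only differences are expository: the paper does not route explicitly through the Tamagawa measure, and note that the factor $\mu^{\dim_{\Q}k}$ is best viewed (as the paper remarks after the theorem) as local at the dyadic places in the orthogonal case rather than as an archimedean contribution, a point of bookkeeping your sketch would have to adjust but which does not change the approach.
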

\begin{proof}
To get this formula from \cite{GanYu}[Theorem 10.20], use the comparison of measure at real places \cite{GrossMot}[Proposition 7.6], the fact that $L_v(M^{\vee}(1)) \beta_{N_v} = 1$ for almost all finite places of $k$, and the functional equation $\Lambda(M) = \epsilon(M) \Lambda(M^{\vee}(1))$ (see \cite{GrossMot}[9.7]).
\end{proof}
Note that the choice of $\nu$ at the finite places does not play any role.
This choice was made to compare with the very simple formula \cite{GrossMot}[Theorem 9.9]:
\begin{equation} \label{Grossthm99} \int_{\mathbf{G}(k) \backslash \mathbf{G}(\A_k)} \nu\ =\ \tau(\mathbf{G}) \times L(M) \times \prod_{v | \infty} \frac{(-1)^{q(\mathbf{G}_{k_v})}}{c(\mathbf{G}_{k_v})}. \end{equation}
We obtain that under the hypotheses of the theorem,
\begin{equation} \label{compGYwithG} \prod_{v \text{ finite}} \nu_v\left( \mathbf{G}(k_v) \cap \mathrm{GL}(N_v) \right) = \frac{\epsilon(M) \mu^{-\dim_{\Q} k}}{D_k^{\dim \mathbf{G} / 2} D_{K/k}^{r(r+1)/4}} \prod_{v \text{ finite}} \frac{L_v(M^{\vee}(1)) \beta_{N_v}}{[N_v^{\vee} : N_v]^{n(V)/2}}. \end{equation}
We can compute explicitely
$$ \frac{\epsilon(M)}{D_k^{\dim \mathbf{G} / 2} D_{K/k}^{r(r+1)/4}} = \begin{cases} D_{K/k}^{-n/2} & \text{in the unitary case if } r=n \text{ is even,} \\ \left|N_{k/\Q}(\delta) \right|^{n-1/2} & \text{in the orthogonal case if } r \text{ is even,} \\ 1 & \text{otherwise,} \end{cases} $$
where in the second case $(-1)^nD$ is the discriminant of $\langle \cdot, \cdot \rangle$ and $\delta$ is the discriminant of $k(\sqrt{D})/k$.
As the proof of the following proposition shows, the factor $\mu^{-\dim_{\Q} k}$, which is nontrivial only in the orthogonal cases, is local at the dyadic places.
\begin{prop} \label{proplocalmeasure}
Let $p$ be a prime.
Let $k_0$ be a $p$-adic field and let $(K_0, \alpha, V_0, \langle \cdot, \cdot \rangle_0)$ and $\mathbf{G}_0$ be as above.
Let $\nu_0$ be the canonical Haar measure $L(M^{\vee}(1)) | \omega_{\mathbf{G}_0}|$ on $\mathbf{G}_0(k_0)$.
If $p=2$, $K_0=k_0$ and $\alpha=1$, let $x_0 = \mu^{-\dim_{\Q_2} k_0}$, otherwise let $x_0 = 1$.
Then for any $\mathcal{O}_{K_0}$-lattice $N_0$ in $V_0$,
\begin{align*} \nu_0\left( \mathbf{G}_0(k_0) \cap \mathrm{GL}(N_0) \right) & = L(M^{\vee}(1)) \times x_0 \times \beta_{N_0} \times [N_0^{\vee} : N_0]^{-n(V_0)/2} \\ & \quad \times \begin{cases} D_{K_0/k_0}^{-n/2} & \text{in the unitary case if } r=n \text{ is even,} \\ \left|N_{k_0/\Qp}(\delta_0) \right|^{n-1/2} & \text{in the orthogonal case if } r \text{ is even,} \\ 1 & \text{otherwise,} \end{cases} \end{align*}
where in the second case $(-1)^nD_0$ is the discriminant of $\langle \cdot, \cdot \rangle_0$ and $\delta_0$ is the discriminant of $k_0(\sqrt{D_0})/k_0$.
\end{prop}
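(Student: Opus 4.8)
The plan is to derive this purely local identity from the global mass formula of the preceding Theorem, in the shape of equation \ref{compGYwithG}, by comparing the two sides place by place. The first step is to observe that, after the explicit evaluation of $\epsilon(M)/\big(D_k^{\dim\mathbf{G}/2}D_{K/k}^{r(r+1)/4}\big)$ already recorded above, \emph{both} sides of \ref{compGYwithG} are products of purely local factors indexed by the finite places $v$ of $k$. Indeed $L_v(M^\vee(1))$ is local by definition; $\beta_{N_v}$ and $[N_v^\vee:N_v]$ are local; one has $\mu^{-\dim_{\Q}k}=\prod_{v\mid 2}\mu^{-[k_v:\Q_2]}$ because $\mu$ depends only on $(r,\alpha)$ whereas $\dim_{\Q}k=\sum_{v\mid 2}[k_v:\Q_2]$ — this is the sense in which the factor $\mu$ is supported on the dyadic places; and, using $D_k=\prod_vD_{k_v}$, $D_{K/k}=\prod_vD_{K_v/k_v}$ and the multiplicativity of norms of differents, the quantity $\epsilon(M)/\big(D_k^{\dim\mathbf{G}/2}D_{K/k}^{r(r+1)/4}\big)$ is the product over $v$ of exactly the local discriminant factor occurring in the statement. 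Thus \ref{compGYwithG} reads $\prod_{v\text{ finite}}A_v=\prod_{v\text{ finite}}B_v$, with $A_v=\nu_v\big(\mathbf{G}(k_v)\cap\GL(N_v)\big)$ and $B_v$ the right-hand side of the asserted formula evaluated at $v$.

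The second step is a globalisation. Given the $p$-adic datum $(k_0,K_0,\alpha,V_0,\langle\cdot,\cdot\rangle_0)$ and the lattice $N_0$, I would choose a totally real number field $k$ with a finite place $v_0$ such that $k_{v_0}\simeq k_0$, an étale quadratic $k$-algebra $K$ with $K\otimes_k k_{v_0}\simeq K_0$, and a non-degenerate $\tau$-sesquilinear space $(V,\langle\cdot,\cdot\rangle)$ over $K$ of the same type $\alpha$ together with an $\mathcal{O}_K$-lattice $N$, subject to: at $v_0$, the completed datum is isomorphic to the given one; at every real place of $k$, $\mathbf{G}(k_v)$ has discrete series (a condition on the signatures only, vacuous unless we are in the even orthogonal case, and satisfiable); and at every finite place $v\neq v_0$, the quadratic/hermitian space $(V_v,\langle\cdot,\cdot\rangle_v)$ is split and $N_v$ standard, so that $\mathbf{G}_{k_v}$ and $K_v/k_v$ are unramified and $\GL(N_v)\cap\mathbf{G}(k_v)$ is hyperspecial. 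The existence of such $k$, $K$, $(V,\langle\cdot,\cdot\rangle)$ and $N$ rests on weak approximation for number fields, on the construction of quadratic field extensions with prescribed local behaviour unramified outside a prescribed finite set, and on the Hasse–Minkowski theorem together with its hermitian analogue (\cite{JacobsonHerm}), the product-formula constraints being absorbed at one auxiliary place.

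With such a globalisation in hand, at each finite place $v\neq v_0$ one has $A_v=1$ directly from the normalisation of the canonical measure $\nu_v$ (a hyperspecial subgroup has volume one), and $B_v=1$ from the classical evaluation of the local density of a split quadratic or hermitian lattice; the only subtlety is that at a dyadic place this local density carries a factor $\mu^{[k_v:\Q_2]}$, which cancels exactly the factor $x_v=\mu^{-[k_v:\Q_2]}$ in $B_v$ — this being the sole rôle of $\mu$, and it is supported on the dyadic places. Feeding $A_v=B_v=1$ for $v\neq v_0$ into $\prod_{v\text{ finite}}A_v=\prod_{v\text{ finite}}B_v$ leaves $A_{v_0}=B_{v_0}$, which is precisely the asserted formula. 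If one is unwilling to force $K/k$ (and the form) to be split and unramified everywhere away from $v_0$, one may allow a single extra bad finite place $v_1$; running the argument with two lattices that coincide away from $v_0$ shows that $A_{v_0}/B_{v_0}$ is independent of the chosen lattice over $(k_0,K_0,V_0,\langle\cdot,\cdot\rangle_0)$, so it suffices to verify the identity for one convenient lattice there (for instance a self-dual one when $\mathbf{G}_0$ is unramified), the contribution of $v_1$ disappearing upon dividing the two instances of \ref{compGYwithG}. As an alternative to the whole globalisation, one can instead observe that Gan–Yu's local density (\cite{GanYu}[Definition 3.5]) is itself a local volume, so that the statement amounts to matching the various normalisation conventions — Gross's canonical measure $L_v(M^\vee(1))|\omega_{\mathbf{G}_0}|$, the discriminant factors and the dyadic factor $\mu$ — directly from their local definitions.

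The main obstacle I expect is precisely this controlled globalisation: realising the given $p$-adic datum over a totally real field so that \emph{every} finite place other than $v_0$ is split and unramified simultaneously for the group, the étale algebra and the lattice, while keeping $k_{v_0}\simeq k_0$ on the nose and discrete series at all the real places — and then carrying out faithfully the bookkeeping of normalisations ($L$- and $\epsilon$-factors, the discriminant terms $D_k$ and $D_{K/k}$, and the dyadic factor $\mu$) needed to isolate, on each side of \ref{compGYwithG}, the single local factor at $v_0$. The purely local alternative trades this for the (more elementary but still delicate) dyadic computation of split local densities.
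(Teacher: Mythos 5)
Your overall strategy --- deducing the local identity from the global comparison \ref{compGYwithG} by globalising the datum and isolating the factor at $v_0$ --- is exactly the paper's, but the step you yourself flag as the main obstacle is a genuine gap, and it is precisely where the paper does something different. A globalisation that is split/unramified at \emph{every} finite place other than $v_0$, with prescribed completion at $v_0$, need not exist: the discriminant (and, in the unitary case, the algebra $K$) must be realised by a global element (resp.\ a global quadratic extension) that is a unit (resp.\ unramified) outside $v_0$, and the image of the $\{v_0\}$-units in $k_0^{\times}/(k_0^{\times})^2$ can be a proper subgroup. Over $k=\Q$ with $v_0=2$ this already fails: the classes hit by $\{\pm 1,\pm 2\}$ miss $\pm 5,\pm 10$, and no quadratic field unramified outside $\{2,\infty\}$ completes at $2$ to the unramified quadratic extension of $\Q_2$; so ``Hasse--Minkowski plus absorbing the product formula at one auxiliary place'' does not suffice (that remark only handles the Hasse-invariant constraint, not the discriminant/\'etale-algebra one). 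Your fallback (one extra bad place $v_1$ plus varying the lattice to show $A_{v_0}/B_{v_0}$ is lattice-independent) does not close the gap either, because it still needs the identity for one lattice in the \emph{given} space at $v_0$, and no ``convenient'' base case exists when $\mathbf{G}_0$ is ramified or when $p=2$: the paper explicitly notes that even an integer-valued non-degenerate bilinear lattice at $p=2$ does not make the equality obvious, since $\beta_{N_0}$ is defined via the bilinear, not the quadratic, form. Relatedly, for $p\neq 2$ your evaluation $B_v=1$ at the auxiliary dyadic places is itself the dyadic case of the proposition for a standard lattice, so as written the argument is circular unless that case is established first.

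The paper's proof avoids all of this with a different device: it does \emph{not} ask the globalisation to be nice away from $v_0$. For $p=2$ it takes (by Krasner) a totally real $k$ with a \emph{unique} dyadic place $v_0$ realising $(k_0,V_0,\langle\cdot,\cdot\rangle_0)$, lets $S$ be the remaining ramified finite places, and passes to an auxiliary extension $k'/k$, totally split at $v_0$ and at the real places, chosen so that over each $v\in S$ the algebra $k_v\otimes_k k'$ is a product of fields over which the form becomes unramified. Applying \ref{compGYwithG} over $k'$, every finite factor away from the places above $v_0$ equals $1$ by the easy odd unramified case, while the places above $v_0$ all carry copies of the original datum; one gets the desired identity raised to the power $\mathrm{card}(S_0)$ and concludes because all terms are positive real numbers. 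The odd-$p$ case is then done similarly, now legitimately using the established dyadic case at the dyadic places. To salvage your route you would either have to prove the strong globalisation statement over a suitably chosen $k$ (a nontrivial class-field-theoretic exercise you have not carried out), or adopt the paper's split base change together with the power-and-positivity trick.
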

\begin{proof}
We apologise for giving a global proof of this local statement.
We only give details for the hardest case of orthogonal groups.

When $p>2$ and the symmetric bilinear form $\langle \cdot, \cdot \rangle_0 |_{N_0 \times N_0}$ is integer-valued and non-degenerate, $\mathbf{G}_0$ is the generic fiber of a reductive group over $\mathcal{O}_{k_0}$ and the equality is obvious.
Note that this does not apply for $p=2$, even assuming further that the quadratic form $v \mapsto \langle v, v \rangle_0/2$ is integer-valued on $N_0$, because the local density $\beta_{N_0}$ is defined using the bilinear form $\langle \cdot, \cdot \rangle_0$, not the quadratic form $v \mapsto \langle v, v \rangle_0 / 2$.

Next consider the case $p=2$ and $N_0$ arbitrary.
By Krasner's lemma there exists a totally real number field $k$ and a quadratic vector space $(V, \langle \cdot, \cdot \rangle)$ which is positive definite at the real places of $k$ and such that $k$ has a unique dyadic place $v_0$ and $(k_0, V_0, \langle \cdot, \cdot \rangle_0) \simeq (k_{v_0}, k_{v_0} \otimes_k V, \langle \cdot, \cdot \rangle)$.
Let $S$ be the finite set of finite places $v \neq v_0$ of $k$ such that $(k_v \otimes_k V, \langle \cdot, \cdot \rangle)$ is ramified, i.e.\ does not admit an integer-valued non-degenerate $\mathcal{O}_{k_v}$-lattice.
For any $v \in S$ there is a finite extension $E^{(v)}$ of $k_v$ over which $(k_v \otimes_k V, \langle \cdot, \cdot \rangle)$ becomes unramified.
By Krasner's lemma again there exists a finite extension $k'$ of $k$ which is totally split over the real places of $k$ and over $v_0$ and such that for any $v \in S$, the $k_v$-algebra $k_v \otimes_k k'$ is isomorphic to a product of copies of $E^{(v)}$.
Let $S_0$ be the set of dyadic places of $k'$, i.e. the set of places of $k'$ above $v_0$.
There exists a lattice $N'$ in $k' \otimes_k V$ such that for any finite $v \not\in S_0$ the symmetric bilinear form $\langle \cdot, \cdot \rangle|_{N'_v \times N'_v}$ is integer-valued and non-degenerate, and for any $v \in S_0$ we have $\langle \cdot, \cdot \rangle_{N'_v \times N'_v} \simeq \langle \cdot, \cdot \rangle_0|_{N_0 \times N_0}$.
Applying formula \ref{compGYwithG} we obtain the desired equality to the power $\mathrm{card}(S_0)$, which is enough because all the terms are positive real numbers.
Having established the dyadic case, the general case can be established similarly.

The unitary case is similar but simpler, because the dyadic places are no longer exceptional and it is sufficient to take a quadratic extension $k'/k$ in the global argument.
The symplectic and general linear cases are even simpler.
\end{proof}
\begin{rema}
\begin{enumerate}
\item
In this formula, one can check case by case that the product of $[N_0^{\vee} : N_0]^{-n(V_0)/2}$ and the last term is always rational, as expected since all other terms are rational by definition.
\item
We did not consider the case where $\alpha=-1$ and $K/k$ is a quadratic field extension, i.e.\ the case of antihermitian forms, although this case is needed to compute orbital integrals for symplectic groups.
If $y \in K^{\times}$ is such that $\tau(y)=-y$, multiplication by $y$ induces a bijection between hermitian and antihermitian forms, and of course the automorphism groups are equal.
\item
There are other types of classical groups considered in \cite{GanYu} and which we left aside.
For a central simple algebra $K$ over $k$ with $\dim_k K = 4$ (i.e.\ $K=M_2(k)$ or $K$ is a quaternion algebra over $k$) they also consider hermitian (resp.\ antihermitian) forms over a $K$-vector space.
The resulting automorphism groups are inner forms of symplectic (resp.\ even orthogonal) groups.
Using the same method as in the proof of the proposition leads to a formula relating the local density $\beta_{N_0}$ to the canonical measure of $\mathrm{Aut}(N_0)$ in these cases as well.
\end{enumerate}
\end{rema}

We use the canonical measure defined by Gross (called $\nu_v$ above) when computing local orbital integrals.
In the previous section we explained how to compute the numerators in formula \ref{formulaorbint} for the local orbital integrals.
Proposition \ref{proplocalmeasure} reduces the computation of the denominators to that of local densities.
Using an elegant method of explicitly constructing smooth models, Gan and Yu \cite{GanYu} give a formula for $\beta_{N_0}$ for $p>2$ in general and for $p=2$ only in the case of symplectic and general linear groups and in the case of unitary groups if $K_0/k_0$ is unramified.
Using a similar method Cho \cite{ChoQuad} gives a formula in the case of special orthogonal groups when $p=2$ and $k_0 / \Q_2$ is unramified.
This is enough for our computations since we only need the case $k_0 = \Q_2$.
For $m \geq 1$ and $\zeta = \zeta_m$ the quadratic extension $\Q(\zeta)/\Q(\zeta+\zeta^{-1})$ is ramified over a dyadic place if and only if $m$ is a power of $2$.
In this case the different $\mathfrak{D}_{\Q_2(\zeta)/\Q_2(\zeta+\zeta^{-1})}$ is generated by a uniformiser of $\Q_2(\zeta+\zeta^{-1})$, which is the minimal ramification that one can expect from a ramified quadratic extension in residue characteristic $2$.
Cho \cite{ChoHermi}[Case 1] also proved an explicit formula for the local density in this case.
To be honest \cite{ChoHermi} only asserts it in the case where $k_0$ is unramified over $\Q_2$.
Nevertheless the proof in ``Case 1'' does not use this assumption.
This completes the algorithm to compute the local orbital integrals in all cyclotomic cases over $\Q$.
Note that the result is rational and the computations are exact (i.e.\ no floating point numbers are used).

Finally, the global volume is evaluated using Gross' formula \ref{Grossthm99}.
The value of $L(M)$ is known to be rational and computable by \cite{SiegelLfunc}.
However, we only need the values of $L(M)$ for $M$ which is a direct sum of Tate twists of \emph{cyclotomic} Artin motives (concretely, representations of $\mathrm{Gal}(E/F)$ where $E$ is contained in a cyclotomic extension of $\Q$).
Thus we only need the values of Dirichlet L-functions at non-negative integers, i.e.\ the values of generalised Bernoulli numbers (see e.g.\ \cite{Washington}).

\begin{rema}
Formally it is not necessary to use the results of \cite{GrossMot} to compute the factors $\mathrm{Vol}(\mathbf{I}(\Q) \backslash \mathbf{I}(\A))$ in formula \ref{formulaTell}, the mass formula in \cite{GanYu} along with the formulae for the local densities $\beta_{N_0}$ would suffice.
Apart from the fact that it is less confusing and more elegant to clearly separate local and global measures, using Gross' canonical measure, which is compatible between inner forms by definition, allows to compute $\kappa$-orbital integrals once we have computed orbital integrals.
The fundamental lemma gives a meaningful way to check the results of computations of orbital integrals.
More precisely we need the formulation of the fundamental lemma for semisimple singular elements \cite{KottEllSing}[Conjecture 5.5] which has been reduced to the semisimple regular case by \cite{KoTam}[§3] and \cite{LanSheGF}[Lemma 2.4.A].
For an unramified endoscopic group the fundamental lemma for the unit of the unramified Hecke algebra at regular semisimple elements is a consequence of the work of Hales, Waldspurger and Ngô.
The case of a ramified endoscopic group is \cite{KottEllSing}[Proposition 7.5]: the $\kappa$-orbital integral simply vanishes.
\end{rema}

\subsubsection{Short description of the global algorithm}
\label{SummaryAlgo}

Let $\mathbf{G}$ be one of $\mathbf{SO}_{2n+1}$ or $\mathbf{Sp}_{2n}$ or $\mathbf{SO}_{4n}$ over $\Z$, let $\prod_p f_p$ be the characteristic function of $\mathbf{G}(\widehat{\Z})$ and $\prod_p dg_p$ the Haar measure on $\mathbf{G}(\A_f)$ such that $\mathbf{G}(\widehat{\Z})$ has measure one.
Let $\lambda$ be a dominant weight for $\mathbf{G}_{\C}$ and let $f_{\infty, \lambda}(g_{\infty}) dg_{\infty}$ be the distribution on $\mathbf{G}(\R)$ defined in section \ref{EPinftySection}.
Denote $f(g)dg = f_{\infty, \lambda}(g_{\infty}) dg_{\infty} \prod_p f_p(g_p) dg_p$.
We give a short summary of the algorithm computing $T_{\mathrm{ell}}(f(g)dg)$ for a family of dominant weights $\lambda$, by outlining the main steps.
Realise $\mathbf{G}$ as $\mathbf{SO}(\Lambda, q)$ (resp.\ $\mathbf{Sp}(\Lambda, a)$) where $\Lambda$ is a finite free $\Z$-module endowed with a regular quadratic form $q$ (resp.\ nondegenerate alternate form $a$).
Denote $N = \mathrm{rank}_{\Z}(\Lambda)$.
\begin{enumerate}
\item Enumerate the possible characteristic polynomials in the standard representation of $\mathbf{G}$ for $\gamma \in C(\mathbf{G}(\Q))$.
That is, enumerate the polynomials $P \in \Q[X]$ unitary of degree $d$ such that all the roots of $P$ are roots of unity, and the multiplicity of $-1$ as root of $P$ is even.
\item For each such $P$, and for any prime number $p$, in $\Qp[X]$ write $P = \prod_i P_i$ as in section \ref{conjclassfieldSection}.
For any $i$, enumerate the finite set of isomorphism classes of quadratic or hermitian (resp.\ alternate or antihermitian) lattices $(\Lambda_i, h_i)$ as in section \ref{conjclassZpSection}.
For almost all primes $p$, the minimal polynomial $\mathrm{rad}(P) = P / \mathrm{gcd}(P,P')$ is separable modulo $p$, there is a unique isomorphism class $(\Lambda_i, h_i)$ to consider and $h_i$ is non-degenerate.
Thus we only need to consider a finite set of primes.
\item The combinations of these potential local data determine a finite set of conjugacy classes in $\mathbf{G}(\Q)$.
\item For any such conjugacy class over $\Q$, compute the local orbital integrals using section \ref{compOrbIntSection} and Proposition \ref{proplocalmeasure}.
Compute the global volumes using Gross' formula \ref{Grossthm99}.
\item Let $C'$ be the set of $\mathbf{G}(\Qbar)$-conjugacy classes in $C(\mathbf{G}(\Q))$.
For $c \in C'$ define the ``mass'' of $c$
$$m_c = \sum_{\mathrm{cl}(\gamma) \in c} \frac{\mathrm{Vol}(\mathbf{I}(\Q) \backslash \mathbf{I}(\A))}{\mathrm{card}(\mathrm{Cent}(\gamma, \mathbf{G}(\Q)) / \mathbf{I}(\Q)))} $$
so that
$$T_{\mathrm{ell}}(f(g)dg) = \sum_{c \in C'} m_c \mathrm{Tr}(c\,|\,V_{\lambda}).$$
Using Weyl's character formula, we can finally compute $T_{\mathrm{ell}}(f(g)dg)$ for the dominant weights $\lambda$ we are interested in.
Some conjugacy classes $c \in C'$ are singular, so that a refinement of Weyl's formula is needed: see \cite{CheClo}[Proposition 1.9] and \cite{ChRe}[Proposition 2.3].
\end{enumerate}

We give tables of the masses $m_c$ in section \ref{tablesMasses}, for the groups of rank $\leq 4$.
Our current implementation allows to compute these masses at least up to rank $6$ (and for $\mathbf{Sp}_{14}$ also), but starting with rank $5$ they no longer fit on a single page.

\begin{rema}
In the orthogonal case the group $\mathbf{G}$ is not simply connected and thus in $\mathbf{G}(\Q)$ there is a distinction between stable conjugacy and conjugacy in $\mathbf{G}(\Qbar)$.
However, if $\gamma,\gamma' \in C(\mathbf{G}(\Q))$ both contribute non-trivially to $T_{\mathrm{ell}}(f(g)dg)$ and are conjugated in $\mathbf{G}(\Qbar)$, then they are stably conjugate.
Indeed their spinor norms have even valuation at every finite prime, and are trivial at the archimedean place since they each belong to a compact connected torus, therefore their spinor norms are both trivial.
This implies that they lift to elements $\tilde{\gamma}, \tilde{\gamma}'$ in the spin group $\mathbf{G}_{\mathrm{sc}}(\Q)$, and moreover we can assume that $\tilde{\gamma}$ and $\tilde{\gamma}'$ are conjugated in $\mathbf{G}_{\mathrm{sc}}(\Qbar)$, which means that they are stably conjugate.

This observation allows to avoid unnecessary computations: if the spinor norm of $\gamma$ is not equal to $1$, the global orbital integral $O_{\gamma}(f(g)dg)$ vanishes.
\end{rema}

\subsection{Computation of the parabolic terms using elliptic terms for groups of lower semisimple rank}
\label{subsectionCompPara}

In the previous sections we gave an algorithm to compute the elliptic terms in Arthur's trace formula in \cite{ArthurL2}.
After recalling the complete geometric side of the trace formula, i.e.\ the parabolic terms, we explain how the archimedean contributions to these terms simplify in our situation where the functions $f_p$ at the finite places have support contained in a compact subgroup.
The result is that we can express the parabolic terms very explicitely (perhaps too explicitely) using elliptic terms for groups of lower semisimple rank in section \ref{explicitpara}.

\subsubsection{Parabolic terms}
\label{ParaTermsSection}

Let us recall the geometric side of the trace formula given in \cite{ArthurL2}[§6].
We will slightly change the formulation by using Euler-Poincaré measures on real groups instead of transferring Haar measures to compact inner forms.
The translation is straightforward using \cite{KoTam}[Theorem 1].
Let $\mathbf{G}$ be one of $\mathbf{SO}_{2n+1}$, $\mathbf{Sp}_{2n}$ or $\mathbf{SO}_{4n}$.
Of course the following notions and Arthur's trace formula apply to more general groups.

First we recall the definition of the constant term at the finite places.
Let $p$ be a finite prime, and denote $K = \mathbf{G}(\Zp)$.
Let $\mathbf{P} = \mathbf{M} \mathbf{N}$ be a parabolic subgroup of $\mathbf{G}$ having unipotent radical $\mathbf{N}$ admitting $\mathbf{M}$ as a Levi subgroup.
Since $K$ is a hyperspecial maximal compact subgroup of $\mathbf{G}(\Qp)$ it is ``good'': there is an Iwasawa decomposition $\mathbf{G}(\Qp) = K \mathbf{P}(\Qp)$.
When $p$ is not ambiguous write $\delta_{\mathbf{P}}(m) = |\det(m\,|\,\mathrm{Lie}(\mathbf{N}))|_p$.
In formulae we require the Haar measures on the unimodular groups $\mathbf{G}(\Qp)$, $\mathbf{M}(\Qp)$ and $\mathbf{N}(\Qp)$ to be compatible in the sense that for any continuous $h : \mathbf{G}(\Qp) \rightarrow \C$ having compact support,
$$ \int_{\mathbf{G}(\Qp)}\!\!\!\! h(g) dg = \int_{K \times \mathbf{N}(\Qp) \times \mathbf{M}(\Qp)}\!\!\!\!\!\!\!\!\!\!\!\! h(knm)\,dk\,dn\,dm = \int_{K \times \mathbf{N}(\Qp) \times \mathbf{M}(\Qp)}\!\!\!\!\!\!\!\!\!\!\!\! h(kmn) \delta_{\mathbf{P}}(m) \,dk\,dn\,dm.$$
If $f_p(g)dg$ is a smooth compactly supported distribution on $\mathbf{G}(\Qp)$, the formula
$$ f_{p,\mathbf{M}}(m) = \delta_{\mathbf{P}}(m)^{1/2} \int_K \int_{\mathbf{N}(\Qp)} f_p(kmnk^{-1}) dn dk $$
defines a smooth compactly supported distribution $f_{p,\mathbf{M}}(m)dm$ on $\mathbf{M}(\Qp)$.
Although it seems to depend on the choice of $\mathbf{N}$ and the good compact subgroup $K$, the orbital integrals of $f_{p, \mathbf{M}}(m)dm$ at semisimple $\mathbf{G}$-regular elements of $\mathbf{M}(\Qp)$ only depend on $f_p$ (see \cite{vanDijk}[Lemma 9]).
The case of arbitrary semisimple elements follows using \cite{KazhdanCusp}[Theorem 0].
When $f_p$ is the characteristic function $\mathbf{1}_{\mathbf{G}(\Zp)}$ of $\mathbf{G}(\Zp)$ (and $\mathrm{vol}(\mathbf{G}(\Zp))=1$), the fact that $\mathbf{T}_0$ is defined over $\Zp$ and the choice $K = \mathbf{G}(\Zp)$ imply that for any choice of $\mathbf{N}$, $ f_{p, \mathbf{M}} = \mathbf{1}_{\mathbf{M}(\Zp)}$ (if $\mathrm{vol}(\mathbf{M}(\Zp))=1$).

We can now define the factors appearing on the geometric side of the trace formula.
As for elliptic terms, consider a smooth compactly supported distribution $\prod_p f_p(g_p)dg_p$ on $\mathbf{G}(\A_f)$.
Fix a split maximal torus $\mathbf{T}_0$ of $\mathbf{G}$ (over $\Z$).
The geometric side is a sum over Levi subgroups $\mathbf{M}$ containing $\mathbf{T}_0$, they are also defined over $\Z$.
For such $\mathbf{M}$, denote by $\mathbf{A}_{\mathbf{M}}$ the connected center of $\mathbf{M}$ and let $C(\mathbf{M}(\Q))$ be the set of semisimple conjugacy classes of elements $\gamma \in \mathbf{M}(\Q)$ which belong to a maximal torus of $\mathbf{M}_{\R}$ which is anisotropic modulo $(\mathbf{A}_{\mathbf{M}})_{\R} = \mathbf{A}_{\mathbf{M}_{\R}}$.
If $\gamma$ is (a representative of) an element of $C(\mathbf{M}(\Q))$, let $\mathbf{I}$ denote the connected centraliser of $\gamma$ in $\mathbf{M}$.
Define $\iota^{\mathbf{M}}(\gamma) = |\mathrm{Cent}(\gamma, \mathbf{M}(\Q))/\mathbf{I}(\Q)|$.
For any finite prime $p$, to $f_p(g_p) dg_p$ we associate the complex Haar measure $O_{\gamma}(f_{p, \mathbf{M}})$ on $\mathbf{I}(\Qp)$.
For $p$ outside a finite set (containing the primes at which $\mathbf{I}$ is ramified), the measure of a hyperspecial maximal compact subgroup of $\mathbf{I}(\Qp)$ is $1$.
Define a complex Haar measure on $\mathbf{I}(\mathbb{A})/\mathbf{A}_{\mathbf{M}}(\mathbb{A})$ as follows:
\begin{itemize}
\item
Give $\mathbf{I}(\R)/\mathbf{A}_{\mathbf{M}}(\R)$ its Euler-Poincaré measure.
It is nonzero by our assumption on $\gamma$.
\item
Give $\mathbf{A}_{\mathbf{M}}(\Qp)$ its Haar measure such that its maximal compact subgroup (in the case at hand $\mathbf{A}_{\mathbf{M}}(\Zp)$) has measure $1$, and endow $\mathbf{I}(\Qp)/\mathbf{A}_{\mathbf{M}}(\Qp)$ with the quotient measure.
\end{itemize}

Now fix a dominant weight $\lambda$ for $\mathbf{G}$ and denote $\tau = \lambda + \rho$ (where $2 \rho$ is the sum of the positive roots) the associated infinitesimal character.
For $f(g)dg=f_{\infty,\lambda}(g_{\infty}) dg_{\infty} \prod_p f_p(g_p)dg_p$, the last ingredient occurring in $T_{\mathrm{geom}}(f(g)dg)$ is the continuous function $\gamma \mapsto \Phi_{\mathrm{M}}(\gamma, \tau)$ defined for semisimple $\gamma \in \mathbf{M}(\R)$ which belong to a maximal torus of $\mathbf{M}_{\R}$ which is anisotropic modulo $(\mathbf{A}_{\mathbf{M}})_{\R}$.
This function will be defined in terms of characters of discrete series and studied at compact elements $\gamma$ in section \ref{SectionCharDS}.
If $\gamma$ does not satisfy these properties define $\Phi_{\mathrm{M}}(\gamma, \tau)=0$.

The geometric side $T_{\mathrm{geom}}(f(g)dg)$ of the trace formula is
\begin{equation} \label{formulaTgeom}
\sum_{\mathbf{M} \supset \mathbf{T}_0} \left(\frac{-1}{2}\right)^{\dim \mathbf{A}_{\mathbf{M}}} \frac{|W(\mathbf{T}_0, \mathbf{M})|}{|W(\mathbf{T}_0, \mathbf{G})|} \sum_{\gamma \in C(\mathbf{M}(\mathbb{Q}))} \frac{\mathrm{vol}\left(\mathbf{I}(\Q) \backslash \mathbf{I}(\A) / \mathbf{A}_{\mathbf{M}}(\A) \right)}{\mathrm{card}\left(\mathrm{Cent}(\gamma, \mathbf{M}(\Q))/\mathbf{I}(\Q) \right)} \Phi_{\mathbf{M}}(\gamma, \tau).
\end{equation}
After the definition of the function $\Phi_{\mathbf{M}}$ it will be clear that the term corresponding to $\mathbf{M} = \mathbf{G}$ is $T_{\mathrm{ell}}(f(g)dg)$.

\subsubsection{Sums of averaged discrete series constants}

Harish-Chandra gave a formula for the character of discrete series representations of a real reductive group at regular elements of any maximal torus.
This formula is similar to Weyl's character formula but it also includes certain integers which can be computed inductively.
In the case of averaged discrete series this induction is particularly simple.
We recall the characterisation of these integers given in \cite{GoKoMPh}[§3] and compute their sum and alternate sum.
When the support of $\prod_p f_p(g_p)dg_p$ is contained in a compact subgroup of $\mathbf{G}(\A_f)$, in the trace formula only these alternate sums need to be computed, not the individual constants.

Let $X$ be a real finite-dimensional vector space and $R$ a reduced root system in $X^*$.
Assume that $-\mathrm{Id} \in W(R)$, i.e. any irreducible component of $R$ is of type $A_1$, $B_n$ ($n \geq 2$), $C_n$ ($n \geq 3$), $D_{2n}$ ($n \geq 2$), $E_7$, $E_8$, $F_4$ or $G_2$.
If $R_1$ is a subsystem of $R$ having the same property, letting $R_2$ be the subsystem of $R$ consisting of roots orthogonal to all the roots in $R_1$, $-\mathrm{Id}_{\R R_2} \in W(R_2)$ by \cite{BouLie456}[ch. V, §3, Proposition 2], and $\mathrm{rank}(R) = \mathrm{rank}(R_1)+\mathrm{rank}(R_2)$.
In particular for $\alpha \in R$, $R_{\alpha} := \{ \beta \in R\ |\ \alpha(\beta^{\vee})=0 \}$ is a root system in $Y^*$ where $Y = \ker \alpha$.

Recall that $X_{\mathrm{reg}} := \{x \in X\ |\ \forall \alpha \in R,\ \alpha(x) \neq 0 \}$, and define $X^*_{\mathrm{reg}}$ similarly with respect to $R^{\vee}$.
For $x \in X_{\mathrm{reg}}$ we denote by $\Delta_x$ the basis of simple roots of $R$ associated with the chamber containing $x$.
There is a unique collection of functions $\bar{c}_R : X_{\mathrm{reg}} \times X^*_{\mathrm{reg}} \rightarrow \Z$ for root systems $R$ as above such that:
\begin{enumerate}
\item $\bar{c}_{\emptyset}(0,0)=1$,
\item for all $(x,\lambda) \in X_{\mathrm{reg}} \times X^*_{\mathrm{reg}}$ such that $\lambda(x)>0$, $\bar{c}_R(x, \lambda) = 0$,
\item for all $(x,\lambda) \in X_{\mathrm{reg}} \times X^*_{\mathrm{reg}}$ and $\alpha \in \Delta_x$, $\bar{c}_R(x, \lambda) + \bar{c}_R(s_{\alpha}(x), \lambda) = 2 \bar{c}_{R_{\alpha}}(y, \lambda|_Y)$ where $Y = \ker \alpha$ and $y = (x+s_{\alpha}(x))/2$.
\end{enumerate}
In the third property note that for any $\beta \in R \smallsetminus \{ \pm \alpha \}$ such that $\beta(x)>0$, $\beta(y)>0$: writing $\beta = \sum_{\gamma \in \Delta_x} n_{\gamma} \gamma$ with $n_{\gamma} \geq 0$, we have
\begin{equation} \label{betaypos} \beta(y) = \beta(x) - \frac{\alpha(x) \beta(\alpha^{\vee})}{2} = \sum_{\gamma \in \Delta_x \smallsetminus \{ \alpha \}} n_{\gamma} \left(\gamma(x) - \frac{\gamma(\alpha^{\vee}) \alpha(x)}{2} \right) > 0. \end{equation}
In the second property we could replace ``$\lambda(x)>0$'' by the stronger condition that $R \neq \emptyset$ and $x$ and $\lambda$ define the same order: $\{\alpha \in R\ |\ \alpha(x)>0 \} = \{\alpha \in R\ |\ \lambda(\alpha^{\vee})>0 \}$.
By induction $\bar{c}_R$ is locally constant, and $W(R)$-invariant for the diagonal action of $W(R)$ on $X_{\mathrm{reg}} \times X^*_{\mathrm{reg}}$.

The existence of these functions follows from Harish-Chandra's formulae and the existence of discrete series for the split semisimple groups over $\R$ having a root system as above.
However, \cite{GoKoMPh} give a direct construction.

Let $x_0 \in X_{\mathrm{reg}}$ and $\lambda_0 \in X^*_{\mathrm{reg}}$ define the same order.
For $w \in W(R)$ define $d(w)=\bar{c}_R(x_0, w(\lambda_0))=\bar{c}_R(w^{-1}(x_0), \lambda_0)$.

\begin{prop} \label{propaltsumcbar}
Let $R$ be a root system as above, and denote by $q(R)$ the integer $\left( |R|/2 + \mathrm{rank}(R) \right)/2$.
Then
$$ \sum_{w \in W(R)} d(w) = |W(R)| \text{  and  } \sum_{w \in W(R)} \epsilon(w) d(w) = (-1)^{q(R)} |W(R)|.$$
\end{prop}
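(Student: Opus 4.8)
I would prove both identities together by induction on $\mathrm{rank}(R)$, the empty system being the base case ($\bar c_{\emptyset}(0,0)=1$, $|W(\emptyset)|=1$, $q(\emptyset)=0$). Using $\bar c_R(w^{-1}(x_0),\lambda_0)=\bar c_R(x_0,w(\lambda_0))=d(w)$ I would rewrite both sums as sums over the chambers of $R$: with $C_0$ the chamber of $x_0$ (which is the $\lambda_0$-dominant one, since $x_0$ and $\lambda_0$ define the same order), $\sum_w d(w)=\sum_{v\in W(R)}\bar c_R(v(x_0),\lambda_0)$ and $\sum_w\epsilon(w)d(w)=\sum_{v}\epsilon(v)\,\bar c_R(v(x_0),\lambda_0)$. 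The second identity I would then deduce from the first together with the \emph{support statement} ``$\bar c_R(v(x_0),\lambda_0)\neq 0\ \Rightarrow\ \epsilon(v)=(-1)^{q(R)}$'': granting it, $\sum_w\epsilon(w)d(w)=(-1)^{q(R)}\sum_w d(w)=(-1)^{q(R)}|W(R)|$.

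\textbf{The first identity.} Fix $\alpha\in\Delta_{x_0}$ and partition $W(R)$ into the pairs $\{w,s_\alpha w\}$. By $W(R)$-invariance property (3) becomes $\bar c_R(x_0,\mu)+\bar c_R(x_0,s_\alpha\mu)=2\bar c_{R_\alpha}(y_0,\mu|_Y)$ for all regular $\mu$, where $Y=\ker\alpha$ and $y_0=(x_0+s_\alpha(x_0))/2$; here $y_0$ is $R_\alpha$-regular, $\mu|_Y$ is $R_\alpha^\vee$-regular, and $(s_\alpha\mu)|_Y=\mu|_Y$, so the right-hand side only depends on the pair once $\mu=w(\lambda_0)$. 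Hence $\sum_w d(w)=\sum_{\{w,s_\alpha w\}}2\,\bar c_{R_\alpha}\!\big(y_0,(w\lambda_0)|_Y\big)$. Now $W(R_\alpha)$ commutes with $s_\alpha$ and meets $\langle s_\alpha\rangle$ trivially (it fixes $\alpha^\vee$, which $s_\alpha$ negates), so it acts freely on $\langle s_\alpha\rangle\backslash W(R)$, with $|W(R)|/(2|W(R_\alpha)|)$ orbits of size $|W(R_\alpha)|$, compatibly with its action on the functionals $(w\lambda_0)|_Y$. Choosing in each orbit the representative $w^*$ for which $(w^*\lambda_0)|_Y$ is $y_0$-dominant, the inner sum over that orbit equals $\sum_{u\in W(R_\alpha)}\bar c_{R_\alpha}\!\big(y_0,u\cdot(w^*\lambda_0)|_Y\big)=|W(R_\alpha)|$ by the induction hypothesis applied to $R_\alpha$ (valid: $-\mathrm{Id}\in W(R_\alpha)$ by \cite{BouLie456}, and $y_0$, $(w^*\lambda_0)|_Y$ define the same order on $R_\alpha$). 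Summing over orbits gives $\sum_w d(w)=2\cdot\frac{|W(R)|}{2|W(R_\alpha)|}\cdot|W(R_\alpha)|=|W(R)|$.

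\textbf{The support statement.} I would prove it by induction on $\mathrm{rank}(R)$. If $\bar c_R(v(x_0),\lambda_0)\neq 0$ with $R\neq\emptyset$, property (2) in its strengthened form forces $v\neq 1$, so $v$ has a right descent $\alpha_0\in\Delta_{C_0}$, and $\alpha:=v(\alpha_0)\in\Delta_{v(x_0)}$ is $\lambda_0$-negative with $\ell(vs_{\alpha_0})=\ell(v)-1$. Property (3) relates $\bar c_R(v(x_0),\lambda_0)$ to $\bar c_R(s_\alpha v(x_0),\lambda_0)$ and to $\bar c_{R_\alpha}(y,\lambda_0|_Y)$. One shows that the descending neighbour $s_\alpha v(x_0)$ must have $\bar c_R=0$ — this is the step where the support of $\bar c_R(-,\lambda_0)$ is pinned down to the chambers at distance exactly $q(R)$ from $C_0$ — so that $\bar c_R(v(x_0),\lambda_0)=2\bar c_{R_\alpha}(y,\lambda_0|_Y)\neq 0$; the induction hypothesis for $R_\alpha$ yields $\epsilon_{R_\alpha}(v_y)=(-1)^{q(R_\alpha)}$, and one concludes by matching signs through the combinatorial identities $q(R)=q(R_\alpha)+1+|R''_\alpha|/4$ and $\ell(v)=1+\ell_{R_\alpha}(v_y)+\#\{\gamma\in R''_\alpha:\ \gamma\ \lambda_0\text{-positive},\ \gamma(v(x_0))<0\}$, where $R''_\alpha$ is the (size $\equiv 0\bmod 4$) set of roots moved by $s_\alpha$ and not equal to $\pm\alpha$, using that $s_\alpha$ acts freely on $R''_\alpha\cap R^+$.

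\textbf{Main obstacle.} The hard part is the second identity. Property (3) — the only recursion lowering the rank — is symmetric: it expresses $d(w)+d(s_\alpha w)$ via a lower-rank datum, which is exactly what drives the computation of $\sum_w d(w)$, but makes any direct induction for $\sum_w\epsilon(w)d(w)$ circular, since that sum only involves the antisymmetric combinations $d(w)-d(s_\alpha w)$, about which property (3) says nothing. Breaking the circularity forces one to use the asymmetric property (2), i.e.\ to understand the support of $\bar c_R(-,\lambda_0)$; the genuinely delicate points are showing that this support lies at a single length level (so no two chambers in it are adjacent, which is what keeps the recursion from colliding with itself) and carrying out cleanly the parity bookkeeping for $q(R)$ along each rank reduction.
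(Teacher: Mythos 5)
Your treatment of the unsigned identity $\sum_{w}d(w)=|W(R)|$ is correct, and it is essentially the paper's own argument with different bookkeeping: the paper multiplies by $\mathrm{rank}(R)$, sums over all incidences of a chamber with one of its simple roots and over all $\alpha\in R$ at once, and uses the bijection $\mathcal{C}_{\alpha}\to\mathcal{D}_{\alpha}$ together with the equidistribution of $\mathcal{D}_{\alpha}$ over the $R_{\alpha}$-chambers, whereas you fix a single $\alpha\in\Delta_{x_0}$, pair $w$ with $s_{\alpha}w$, and organise the pairs into free $W(R_{\alpha})$-orbits (for the left action, which is where your commutation remark is needed). In both versions it is property (3) plus the induction hypothesis for $R_{\alpha}$ that does the work, and your checks ($R_{\alpha}$-regularity of $y_0$ and of the restricted functionals, equivariance and freeness giving exactly one functional per $R_{\alpha}$-chamber in each orbit) are sound.

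The genuine gap is the signed identity. The paper does not extract it from the axioms at all: it invokes \cite{GoKoMPh}[Theorem 3.2] for the equivalence of the two formulae and proves only the first. Your route needs the support--sign statement ``$d(v)\neq 0\Rightarrow\epsilon(v)=(-1)^{q(R)}$'', and your sketch of it simply asserts the hard point, namely that the descending neighbour $s_{\alpha}v(x_0)=v s_{\alpha_0}(x_0)$ lies outside the support (equivalently, that the support sits at the single length level $q(R)$ -- which is in any case stronger than what you need). No mechanism is offered for this, and the two axioms, used as you use them, cannot supply it: property (3) only controls the symmetric combinations $\bar{c}_R(x,\lambda_0)+\bar{c}_R(s_{\alpha}x,\lambda_0)$ (the circularity you yourself identify), and property (2) only forces vanishing on chambers meeting the half-space $\{\lambda_0>0\}$, i.e.\ on chambers not contained in the cone spanned by the negative coroots. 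In rank $\leq 2$ every chamber of length $<q(R)$ happens to be killed by (2), which is why the claim looks plausible there (e.g.\ in type $B_2$ one still has to compute, not merely quote (2), to see that $\bar{c}_R(-C_0,\lambda_0)=0$ on the length-$4$ chamber adjacent to the two support chambers), but in general nothing in your sketch excludes the descending neighbour from being one of the chambers immune to (2) and carrying a nonzero value, at which point the recursion cannot isolate $\bar{c}_R(v(x_0),\lambda_0)$. Pinning down the support and its sign is exactly the content of the cited result \cite{GoKoMPh}[Theorem 3.2], proved there via their explicit construction of $\bar{c}_R$ rather than from the characterising properties used symmetrically. The parity bookkeeping you record ($4\mid |R''_{\alpha}|$, $q(R)=q(R_{\alpha})+1+|R''_{\alpha}|/4$) is correct but is the easy part; as written, the second identity is not established, so either cite \cite{GoKoMPh}[Theorem 3.2] as the paper does or give a genuine proof of the sign property.
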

\begin{proof}
The two formulae are equivalent by \cite{GoKoMPh}[Theorem 3.2] so let us prove the first one by induction on the rank of $R$.
The case of $R = \emptyset$ is trivial.
Assume that $R$ is not empty and that the formula holds in lower rank.
Denote $W= W(R)$.
For $\alpha \in R$ let $\mathcal{C}_{\alpha} = \{x \in Wx_0\ |\ \alpha \in \Delta_x \}$ and $\mathcal{D}_{\alpha}$ the orthogonal projection of $\mathcal{C}_{\alpha}$ on $Y = \ker \alpha$.
Geometrically, $\mathcal{C}_{\alpha}$ represents the chambers adjacent to the wall $Y$ on the side determined by $\alpha$.
For $x \in \mathcal{C}_{\alpha}$, by a computation similar to \ref{betaypos}, orthogonal projection on $Y$ maps the chamber containing $x$ onto a connected component of $Y \smallsetminus \bigcup_{\beta \in R \small\setminus \{ \pm \alpha \}} \ker \beta$, i.e. a chamber in $Y$ \emph{relative to $R$}.
Thus the projection $\mathcal{C}_{\alpha} \rightarrow \mathcal{D}_{\alpha}$ is bijective and in any $R_{\alpha}$-chamber of $Y$ there is the same number $|\mathcal{D}_{\alpha}|/|W(R_{\alpha})|$ of elements in $\mathcal{D}_{\alpha}$.
\begin{align*}
\mathrm{rank}(R) \sum_{w \in W} d(w) &= \sum_{x \in Wx_0}\ \sum_{\alpha \in \Delta_x} \bar{c}_R(x, \lambda_0) \\
& = \frac{1}{2}\sum_{\alpha \in R} \ \sum_{x \in \mathcal{C}_{\alpha}} \bar{c}_R(x, \lambda_0) + \bar{c}_R(s_{\alpha}(x), \lambda_0) \\
& = \sum_{\alpha \in R}\ \sum_{y \in \mathcal{D}_{\alpha}} \bar{c}_{R_{\alpha}}(y, \lambda_0|_Y) \\
& = \sum_{\alpha \in R}\ |\mathcal{D}_{\alpha}| = \sum_{x \in Wx_0}\ |\Delta_x| = \mathrm{rank}(R) |W|.
\end{align*}
At the second line we used the permutation $\alpha \mapsto - \alpha$ of $R$ and the fact that $x \in \mathcal{C}_{\alpha} \Leftrightarrow s_{\alpha}(x) \in \mathcal{C}_{-\alpha}$.
\end{proof}

\subsubsection{Character of averaged discrete series on non-compact tori}
\label{SectionCharDS}

In this section we consider a reductive group $\mathbf{G}$ over $\R$ which has discrete series.
To simplify notations we assume that $\mathbf{G}$ is semisimple, as it is the case for the symplectic and special orthogonal groups.
Fix a dominant weight $\lambda$ for $\mathbf{G}_{\C}$, and let $\tau = \lambda + \rho$ where $2\rho$ is the sum of the positive roots.
Let $\mathbf{M}$ be a Levi subgroup of $\mathbf{G}$ and denote by $\mathbf{A}_{\mathbf{M}}$ the biggest split central torus in $\mathbf{M}$.
If $\gamma \in \mathbf{M}(\R)$ is semisimple, $\mathbf{G}$-regular and belongs to a maximal torus anisotropic modulo $\mathbf{A}_{\mathbf{M}}$, define
$$ \Phi_{\mathbf{M}}(\gamma, \tau) := (-1)^{q\left( \mathbf{G}(\R) \right)} \left|D_{\mathbf{M}}^{\mathbf{G}}(\gamma)\right|^{1/2} \sum_{\pi_{\infty} \in \Pi_{\mathrm{disc}}(\tau)} \Theta_{\pi_{\infty}}(\gamma)$$
where $D_{\mathbf{M}}^{\mathbf{G}}(\gamma) = \det \left( \mathrm{Id} - \mathrm{Ad}(\gamma)\,|\,\mathfrak{g} / \mathfrak{m} \right)$.
Note that for $\gamma \in \mathbf{G}(\R)$ semisimple elliptic regular, $\Phi_{\mathbf{G}}(\gamma, \tau) \mu_{\mathrm{EP}, \mathbf{I}(\R)} = \mathrm{Tr} \left( \gamma | V_{\lambda} \right) \mu_{\mathrm{EP}, \mathbf{I}(\R)} = O_{\gamma}\left( f_{\lambda}(g)dg \right)$ where $f_{\lambda}(g)dg$ is the smooth compactly supported distribution of section \ref{EPinftySection}.

When $\mathbf{M} \times_{\Q} \R$ admits a maximal torus $\mathbf{T}$ anisotropic modulo $\mathbf{A}_{\mathbf{M}} \times_{\Q} \R$, Arthur shows that $\Phi_{\mathbf{M}}(\cdot, \tau)$ extends continuously to $\mathbf{T}(\R)$ (beware that the statement \cite{ArthurL2}[(4.7)] is erroneous: in general $\Phi_{\mathbf{M}}(\gamma, \tau)$ is not identically zero outside the connected components that intersect the center of $\mathbf{G}$).
Following \cite{GoKoMPh}[§4], to which we refer for details, let us write the restriction of $\Phi_{\mathbf{M}}(\cdot, \tau)$ to any connected component of $\mathbf{T}(\R)_{\mathbf{G}-\mathrm{reg}}$ as a linear combination of traces in algebraic representations of $\mathbf{M}$.

Let $R$ be the set of roots of $\mathbf{T}$ on $\mathbf{G}$ (over $\C$).
Let $R_{\mathbf{M}}$ be the set of roots of $\mathbf{T}$ on $\mathbf{M}$.
Let $\gamma \in \mathbf{T}(\R)$ be $\mathbf{G}$-regular, and let $\Gamma$ be the connected component of $\gamma$ in $\mathbf{T}(\R)$.
Let $R_{\Gamma}$ be the set of real roots $\alpha \in R$ such that $\alpha(\gamma)>0$.
As the notation suggests, it only depends on $\Gamma$.
Moreover $R_{\Gamma}$ and $R_{\mathbf{M}}$ are orthogonal sub-root systems of $R$: the coroots of $R_{\mathbf{M}}$ factor through $\mathbf{T} \cap \mathbf{M}_{\mathrm{der}}$ which is anisotropic, while the roots in $R_{\Gamma}$ factor through the biggest split quotient of $\mathbf{T}$.
Finally $\Phi_{\mathbf{M}}(\gamma, \tau)=0$ unless $\gamma$ belongs to the image of $\mathbf{G}_{\mathrm{sc}}(\R)$, and in that case the Weyl group $W(R_{\Gamma})$ of $R_{\Gamma}$ contains $-\mathrm{Id}$ and $\mathrm{rk}(R_{\Gamma}) = \dim \mathbf{A}_{\mathbf{M}}$.
In the following we assume that $\gamma \in \mathrm{Im}(\mathbf{G}_{\mathrm{sc}}(\R) \rightarrow \mathbf{G}(\R))$.

Since $\gamma$ is $\mathbf{G}$-regular, it defines a set of positive roots $R^+_{\gamma} = \left\{ \alpha \in R_{\gamma}\ |\ \alpha(\gamma)>1\right\}$ in $R_{\Gamma}$.
Choose a parabolic subgroup $\mathbf{P} = \mathbf{M} \mathbf{N}$ with unipotent radical $\mathbf{N}$ such that $R_{\gamma}^+$ is included in the set of roots of $\mathbf{T}$ on $\mathbf{N}$.
In general this choice is not unique.
Choose any set of positive roots $R^+_{\mathbf{M}}$ for $R_{\mathbf{M}}$.
There is a unique Borel subgroup $\mathbf{B} \subset \mathbf{P}$ of $\mathbf{G}$ containing $\mathbf{T}$ such that the set of roots of $\mathbf{T}$ on $\mathbf{B} \cap \mathbf{M}$ is $R_{\mathbf{M}}^+$.
Let $R^+$ be the set of positive roots in $R$ corresponding to $\mathbf{B}$.

There is a unique $x_{\gamma} \in (\R R_{\Gamma})^* = \R \otimes_{\Z} X_*(\mathbf{A}_{\mathbf{M}})$ such that for any $\alpha \in R_{\Gamma}$, $\alpha(x_{\gamma}) = \alpha(\gamma)$.
Then $x_{\gamma}$ is $R_{\Gamma}$-regular and the chamber in which $x_{\gamma}$ lies only depends on the connected component of $\gamma$ in $\mathbf{T}(\R)_{\mathbf{G}-\mathrm{reg}}$.
Denote by $\mathrm{pr}$ the orthogonal projection $\R \otimes_{\Z} X^*(\mathbf{T}) \rightarrow \R R_{\Gamma}$.
When we identify $\R R_{\Gamma}$ with $\R \otimes_{\Z} X^*(\mathbf{A}_{\mathbf{M}})$, $\mathrm{pr}$ is simply ``restriction to $\mathbf{A}_{\mathbf{M}}$''.
By \cite{GoKoMPh}[proof of Lemma 4.1 and end of §4] we have
$$ \Phi_{\mathbf{M}}(\gamma, \tau) =  \frac{\delta_{\mathbf{P}}(\gamma)^{1/2}}{\prod_{\alpha \in R^+_{\mathbf{M}}} \left( 1 - \alpha(\gamma)^{-1}\right)} \sum_{w \in W(R)} \epsilon(w) \bar{c}_{R_{\Gamma}}(x_{\gamma}, \mathrm{pr}(w(\tau_{\mathbf{B}}))) \left[ w(\tau_{\mathbf{B}})-\rho_{\mathbf{B}}\right](\gamma)$$
where
$$ \delta_{\mathbf{P}}(\gamma) = \left| \det\left( \gamma\,|\,\mathrm{Lie}(\mathbf{N}) \right) \right| = \prod_{\alpha \in R^+ - R^+_{\mathbf{M}}} \left|\alpha(\gamma) \right|. $$
Since $\rho_{\mathbf{B}}-\rho_{\mathbf{B} \cap \mathbf{M}}$ is invariant under $W(R_{\mathbf{M}})$, in the above sum we can combine terms in the same orbit under $W(R_{\mathbf{M}})$ to identify Weyl's character formula for algebraic representations of $\mathbf{M}$.
Let $E = \left\{ w \in W(R)\ |\ \forall \alpha \in R^+_{\gamma} \cup R^+_{\mathbf{M}},\ w^{-1}(\alpha) \in R^+ \right\}$, a set of representatives for the action of $W(R_{\Gamma}) \times W(R_{\mathbf{M}})$ on the left of $W(R)$.
Denoting $V_{\mathbf{M},\lambda'}$ the algebraic representation of $\mathbf{M}$ with highest weight $\lambda'$, we obtain
$$ \Phi_{\mathbf{M}}(\gamma, \tau) = \delta_{\mathbf{P}}(\gamma)^{1/2} \sum_{w_0 \in E} \sum_{w_1 \in W\left(R_{\Gamma} \right)} \epsilon(w_1w_0) d(w_1) \mathrm{Tr}\left(\gamma | V_{\mathbf{M},   w_1w_0(\tau_{\mathbf{B}})-\rho_{\mathbf{B}}} \right) $$

Furthermore $w_1w_0(\tau_{\mathbf{B}})-w_0(\tau_{\mathbf{B}}) \in \Z R_{\Gamma}$ is invariant under $W(R_{\mathbf{M}})$, hence in the above sum
$$ \mathrm{Tr}\left(\gamma | V_{\mathbf{M},   w_1w_0(\tau_{\mathbf{B}})-\rho_{\mathbf{B}}} \right) = \left[ w_1w_0(\tau_{\mathbf{B}})-w_0(\tau_{\mathbf{B}})\right](\gamma) \times \mathrm{Tr}\left(\gamma | V_{\mathbf{M},   w_0(\tau_{\mathbf{B}})-\rho_{\mathbf{B}}} \right) $$
and $\left[ w_1w_0(\tau_{\mathbf{B}})-w_0(\tau_{\mathbf{B}})\right](\gamma)$ is a positive real number, which does not really depend on $\gamma$ but only on the coset $(\mathbf{T} \cap \mathbf{M}_{\mathrm{der}})(\R) \gamma$ (equivalently, on $x_{\gamma}$).
Finally we obtain
\begin{align*}
\Phi_{\mathbf{M}}(\gamma, \tau) & = \delta_{\mathbf{P}}(\gamma)^{1/2} \sum_{w_0 \in E} \epsilon(w_0) \Biggl[ \sum_{w_1 \in W\left(R_{\Gamma} \right)} \epsilon(w_1) d(w_1) \left[ w_1w_0(\tau_{\mathbf{B}})-w_0(\tau_{\mathbf{B}})\right](\gamma) \\
& \qquad \times \mathrm{Tr}\left(\gamma | V_{\mathbf{M},   w_0(\tau_{\mathbf{B}})-\rho_{\mathbf{B}}} \right) \Biggr].
\end{align*}
This formula is valid for $\gamma$ in the closure (in $\mathbf{T}(\R)$) of a connected component of $\mathbf{T}(\R)_{\mathbf{G}-\mathrm{reg}}$.
\begin{prop} \label{charavDScpct}
If $\gamma$ is compact, i.e.\ the smallest closed subgroup of $\mathbf{G}(\R)$ containing $\gamma$ is compact, then we have
$$ \Phi_{\mathbf{M}}(\gamma, \tau)  = (-1)^{q(R_{\Gamma})}|W(R_{\Gamma})| \sum_{w_0 \in E} \epsilon(w_0) \mathrm{Tr}\left(\gamma | V_{\mathbf{M},   w_0(\tau_{\mathbf{B}})-\rho_{\mathbf{B}}} \right) .$$
\end{prop}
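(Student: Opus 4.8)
The plan is to specialise the closed formula for $\Phi_{\mathbf{M}}(\gamma,\tau)$ derived just above — valid for any $\gamma$ in the closure of a connected component $\Gamma$ of $\mathbf{T}(\R)_{\mathbf{G}-\mathrm{reg}}$ — to a compact $\gamma$, and to observe that the two $\gamma$-dependent factors multiplying the traces $\mathrm{Tr}(\gamma \mid V_{\mathbf{M},w_0(\tau_{\mathbf{B}})-\rho_{\mathbf{B}}})$ collapse to constants. Since $\gamma$ is compact, every eigenvalue of $\gamma$ in any algebraic representation has absolute value $1$; in particular $|\alpha(\gamma)| = 1$ for every root $\alpha$, so $\delta_{\mathbf{P}}(\gamma) = \prod_{\alpha \in R^+ - R^+_{\mathbf{M}}} |\alpha(\gamma)| = 1$. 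Moreover, for $\alpha \in R_{\Gamma}$ one has $\alpha(\gamma) > 0$: this holds throughout $\Gamma$ by definition of $R_{\Gamma}$ and persists on $\overline{\Gamma}$ since a root, being a character, never vanishes; combined with $|\alpha(\gamma)| = 1$ this forces $\alpha(\gamma) = 1$ for all $\alpha \in R_{\Gamma}$. As $w_1 w_0(\tau_{\mathbf{B}}) - w_0(\tau_{\mathbf{B}}) \in \Z R_{\Gamma}$ for all $w_0 \in E$, $w_1 \in W(R_{\Gamma})$, it follows that $[w_1 w_0(\tau_{\mathbf{B}}) - w_0(\tau_{\mathbf{B}})](\gamma) = 1$.

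With these two simplifications the formula reduces to
\[ \Phi_{\mathbf{M}}(\gamma,\tau) = \sum_{w_0 \in E} \epsilon(w_0) \left( \sum_{w_1 \in W(R_{\Gamma})} \epsilon(w_1) d(w_1) \right) \mathrm{Tr}\left(\gamma \mid V_{\mathbf{M},w_0(\tau_{\mathbf{B}})-\rho_{\mathbf{B}}}\right), \]
so it suffices to show that the inner sum equals $(-1)^{q(R_{\Gamma})} |W(R_{\Gamma})|$ for each $w_0$. (If $\gamma$ is not in the image of $\mathbf{G}_{\mathrm{sc}}(\R)$ then $\Phi_{\mathbf{M}}(\gamma,\tau) = 0$ by the discussion preceding the proposition, and the asserted right-hand side vanishes as well by specialising the general formula to a nearby regular element; so we may assume $-\mathrm{Id} \in W(R_{\Gamma})$ and apply Proposition \ref{propaltsumcbar} to $R_{\Gamma}$.) For this I would first match the integers $d(w_1)$ occurring here with the quantities denoted $d(w)$ in that proposition: by construction $d(w_1) = \bar{c}_{R_{\Gamma}}(x_{\gamma}, \mathrm{pr}(w_1 w_0(\tau_{\mathbf{B}})))$, where $x_{\gamma}$ is the regular point attached to the chamber of $\Gamma$, and since the orthogonal projection $\mathrm{pr}$ onto $\R R_{\Gamma}$ commutes with $W(R_{\Gamma})$ this equals $\bar{c}_{R_{\Gamma}}(x_{\gamma}, w_1(\lambda_0))$ with $\lambda_0 := \mathrm{pr}(w_0(\tau_{\mathbf{B}}))$ — exactly $d(w_1)$ in the notation of Proposition \ref{propaltsumcbar} with $R = R_{\Gamma}$, $x_0 = x_{\gamma}$.

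The point that needs care — and which I expect to be the only real subtlety, the rest being substitution into a formula already in hand — is that $x_{\gamma}$ and $\lambda_0$ define the same order on $R_{\Gamma}$. The chamber of $x_{\gamma}$ has $R^+_{\gamma}$ as its set of positive roots; on the other hand, for $w_0 \in E$ and $\alpha \in R^+_{\gamma}$ one has $w_0^{-1}(\alpha) \in R^+$, so $\lambda_0(\alpha^{\vee}) = \langle w_0(\tau_{\mathbf{B}}), \alpha^{\vee}\rangle = \langle \tau_{\mathbf{B}}, w_0^{-1}(\alpha)^{\vee}\rangle > 0$ because $\tau_{\mathbf{B}}$ is dominant and regular, and the same regularity gives $\lambda_0(\beta^{\vee}) \neq 0$ for all $\beta \in R_{\Gamma}$, hence $\lambda_0$ is $R_{\Gamma}^{\vee}$-regular. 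Since $R^+_{\gamma}$ is a positive system of $R_{\Gamma}$, this determines the order of $\lambda_0$ to be that of $x_{\gamma}$. Granting this, the second identity of Proposition \ref{propaltsumcbar} gives $\sum_{w_1 \in W(R_{\Gamma})} \epsilon(w_1) d(w_1) = (-1)^{q(R_{\Gamma})} |W(R_{\Gamma})|$, and substituting back into the displayed formula yields the statement.
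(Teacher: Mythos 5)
Your proposal is correct and follows the paper's own route: the paper's proof is exactly the observation that for compact $\gamma$ one has $\delta_{\mathbf{P}}(\gamma)=1$ and $\left[w_1w_0(\tau_{\mathbf{B}})-w_0(\tau_{\mathbf{B}})\right](\gamma)=1$, after which the inner alternating sum is evaluated by Proposition \ref{propaltsumcbar}. Your additional verifications (that $\alpha(\gamma)=1$ for $\alpha\in R_{\Gamma}$ on the closure of $\Gamma$, and that $x_{\gamma}$ and $\mathrm{pr}(w_0(\tau_{\mathbf{B}}))$ define the same order on $R_{\Gamma}$ so that the constants really are the $d(w_1)$ of that proposition) are exactly the details the paper leaves implicit.
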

\begin{proof}
This formula follows from $\left[ w_1w_0(\tau_{\mathbf{B}})-w_0(\tau_{\mathbf{B}})\right](\gamma) = 1$ and Proposition \ref{propaltsumcbar}.
\end{proof}

\subsubsection{Explicit formulae for the parabolic terms}
\label{explicitpara}

Let $\mathbf{G}$ be one of $\mathbf{SO}_{2n+1}$ or $\mathbf{Sp}_{2n}$ or $\mathbf{SO}_{4n}$ over $\Z$, let $\prod_p f_p$ be the characteristic function of $\mathbf{G}(\widehat{\Z})$ and $\prod_p dg_p$ the Haar measure on $\mathbf{G}(\A_f)$ such that $\mathbf{G}(\widehat{\Z})$ has measure one.
Let $\lambda$ be a dominant weight for $\mathbf{G}_{\C}$ and let $f_{\infty, \lambda}(g_{\infty}) dg_{\infty}$ be the distribution on $\mathbf{G}(\R)$ defined in section \ref{EPinftySection}.
Denote $f(g)dg = f_{\infty, \lambda}(g_{\infty}) dg_{\infty} \prod_p f_p(g_p) dg_p$.
Using Proposition \ref{charavDScpct} and tedious computations, we obtain explicit formulae for the geometric side $T_{\mathrm{geom}}(f(g)dg)$ of Arthur's trace formula defined in section \ref{ParaTermsSection}.
For a dominant weight $\lambda = k_1 e_1 + \dots + k_n e_n$ it will be convenient to write $T_{\mathrm{geom}}(\mathbf{G}, \underline{k})$ for $T_{\mathrm{geom}}(f(g)dg)$ to precise the group $\mathbf{G}$, and similarly for $T_{\mathrm{ell}}$.
If $\mathbf{G}$ is trivial ($\mathbf{SO_0}$ or $\mathbf{SO}_1$ or $\mathbf{Sp}_0$) then $T_{\mathrm{ell}}$ is of course simply equal to $1$.

Any Levi subgroup $\mathbf{M}$ of $\mathbf{G}$ is isomorphic to $\prod_i \mathbf{GL}_{n_i} \times \mathbf{G}'$ where $\mathbf{G}'$ is of the same type as $\mathbf{G}$.
Note that $\mathbf{M}(\R)$ has essentially discrete series (i.e.\ $\Phi_{\mathbf{M}}(\cdot, \cdot)$ is not identically zero) if and only if for all $i, n_i \leq 2$ and in case $\mathbf{G}$ is even orthogonal, $\mathbf{G}'$ has even rank.
Thus the Levi subgroups $M$ whose contribution to $T_{\mathrm{geom}}$ (that is formula \ref{formulaTgeom}) is nonzero are isomorphic to $\mathbf{GL}_1^a \times \mathbf{GL}_2^c \times \mathbf{G}'$ for some integers $a,c$.

Since $\mathbf{PGL}_2 \simeq \mathbf{SO}_3$, for $k \in \Z_{\geq 0}$ we denote $T_{\mathrm{ell}}(\mathbf{PGL}_2, k) = T_{\mathrm{ell}}(\mathbf{SO}_3, k)$.
For non-negative $k \in 1/2 \Z \smallsetminus \Z$ it is convenient to define $T_{\mathrm{ell}}(\mathbf{PGL}_2, k) = 0$, so that for any $k \in \Z_{\geq 0}$ we have $T_{\mathrm{ell}}(\mathbf{PGL}_2, k/2) = T_{\mathrm{ell}}(\mathbf{Sp}_2, k)/2$.

For $a,c,d \in \Z_{\geq 0}$, let $\Xi_{a,c,d}$ be the set of $\sigma$ in the symmetric group $S_{a+2c+d}$ such that
\begin{itemize}
\item $\sigma(1) < \dots < \sigma(a)$,
\item $\sigma(a+1)<\sigma(a+3)<\dots < \sigma(a+2c-1)$,
\item for any $1 \leq i \leq c$, $\sigma(a+2i-1) < \sigma(a+2i)$,
\item $\sigma(a+2c+1) < \dots < \sigma(n)$.
\end{itemize}

For $a \geq 0$ and $x \in \{0, \dots, a\}$, define
$$\eta^{(B)}(a,x) = \frac{(-1)^{a(a-1)/2}}{2^a} \sum_{b=0}^{\lfloor a/2 \rfloor} (-1)^b \sum_{r=0}^{2b} \binom{x}{r} \binom{a-x}{2b-r} (-1)^r.$$
It is easy to check that
$$\eta^{(B)}(a,x) = \frac{(-1)^{a(a-1)/2}}{2^{a+1}} \mathrm{Tr}_{\Q(\sqrt{-1})/\Q} \left( (1+\sqrt{-1})^{a-x}(1-\sqrt{-1})^x \right) \in \frac{1}{2^{\lfloor (a+1)/2 \rfloor}} \Z.$$
For $n \geq a$, $\sigma \in S_n$ and $\underline{k}=(k_1, \dots, k_n) \in \Z^n$, let
$$ \eta^{(B)}(a, \underline{k}, \sigma) = \eta^{(B)}\left(a, \mathrm{card}\{ i \in \{1, \dots, a\}\ |\ k_{\sigma(i)}+\sigma(i)+i = 1 \pmod{2} \}\right). $$
\begin{theo}[Parabolic terms for $\mathbf{G} = \mathbf{SO}_{2n+1}$]
Let $a,c,d \in \Z_{\geq 0}$ not all zero and $n = a+2c+d$.
The sum of the contributions to $T_{\mathrm{geom}}(\mathbf{SO}_{2n+1}, \underline{k})$ in formula \ref{formulaTgeom} of the Levi subgroups $\mathbf{M}$ in the orbit of $\mathbf{GL}_1^a \times \mathbf{GL}_2^c \times \mathbf{SO}_{2d+1}$ under the Weyl group $W(\mathbf{T}_0, \mathbf{G})$  is
$$\begin{array}{l}
\displaystyle\sum\limits_{\sigma \in \Xi_{a,c,d}} \eta^{(B)}(a,\underline{k},\sigma) \\
\quad \times \prod\limits_{i=1}^c \biggl[ T_{\mathrm{ell}}\left(\mathbf{PGL}_2,(k_{\sigma(a+2i-1)}-k_{\sigma(a+2i)}+\sigma(a+2i)-\sigma(a+2i-1)-1)/2\right) \\
\qquad - T_{\mathrm{ell}}(\mathbf{PGL}_2,(k_{\sigma(a+2i-1)}+k_{\sigma(a+2i)}-\sigma(a+2i)-\sigma(a+2i-1)+2n)/2) \Bigr] \\
\quad \times T_{\mathrm{ell}}(\mathbf{SO}_{2d+1}, (k_{\sigma(n-d+1)}+n-d+1-\sigma(n-d+1), \dots, k_{\sigma(n)}+n-\sigma(n))).
\end{array}$$
\end{theo}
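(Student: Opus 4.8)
The plan is to start from the geometric side \ref{formulaTgeom}, restrict the outer sum to the Levi subgroups $\mathbf{M}$ lying in the $W(\mathbf{T}_0,\mathbf{G})$-orbit of $\mathbf{GL}_1^a\times\mathbf{GL}_2^c\times\mathbf{SO}_{2d+1}$, and make every factor explicit. At the finite places $f_{p,\mathbf{M}}=\mathbf{1}_{\mathbf{M}(\Z_p)}$ (section \ref{ParaTermsSection}), so the product of local orbital integrals entering \ref{formulaTgeom} is literally the one entering $T_{\mathrm{ell}}$ for $\mathbf{M}$, except that the global volume is now taken modulo $\mathbf{A}_{\mathbf{M}}(\A)$; in particular every $\gamma$ with non-zero contribution is compact. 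At the real place I would invoke Proposition \ref{charavDScpct}: here the real roots positive on $\gamma$ form a root system $R_{\Gamma}$ which is an orthogonal sum of $a+c$ copies of $A_1$, namely $\{\pm e_j\}$ for each $\mathbf{GL}_1$-axis and $\{\pm(e_j+e_{j'})\}$ for each $\mathbf{GL}_2$-pair, so $q(R_{\Gamma})=a+c$ and $|W(R_{\Gamma})|=2^{a+c}$. Hence $\Phi_{\mathbf{M}}(\gamma,\tau)=(-2)^{a+c}\sum_{w_0\in E}\epsilon(w_0)\operatorname{Tr}\bigl(\gamma\mid V_{\mathbf{M},\,w_0(\tau_{\mathbf{B}})-\rho_{\mathbf{B}}}\bigr)$, and the factor $(-1/2)^{\dim\mathbf{A}_{\mathbf{M}}}=(-1/2)^{a+c}$ in \ref{formulaTgeom} cancels the $(-2)^{a+c}$ exactly.

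Next I would exploit that $\mathbf{M}=\mathbf{GL}_1^a\times\mathbf{GL}_2^c\times\mathbf{SO}_{2d+1}$ is a direct product, so $C(\mathbf{M}(\Q))$, the connected centralisers $\mathbf{I}$, the indices $\iota^{\mathbf{M}}$, the Haar measure on $\mathbf{I}(\A)/\mathbf{A}_{\mathbf{M}}(\A)$, and the character $\operatorname{Tr}(\gamma\mid V_{\mathbf{M},\mu})$ all split along the factors. A compact torsion element of $\mathbf{GL}_1(\Q)$ is $\pm1$; a compact torsion class of $\mathbf{GL}_2(\Q)$ modulo its centre is a torsion class of $\mathbf{PGL}_2(\Q)=\mathbf{SO}_3(\Q)$, contributing a value of $T_{\mathrm{ell}}(\mathbf{SO}_3,\cdot)=T_{\mathrm{ell}}(\mathbf{PGL}_2,\cdot)$ (the appearance of $T_{\mathrm{ell}}(\mathbf{PGL}_2,\cdot)$ rather than $T_{\mathrm{ell}}(\mathbf{Sp}_2,\cdot)$ absorbing one power of $2$ per $\mathbf{GL}_2$-factor); and the $\mathbf{SO}_{2d+1}$-factor contributes, by the very definition \ref{formulaTell}, a value of $T_{\mathrm{ell}}(\mathbf{SO}_{2d+1},\cdot)$ at the highest weight obtained from $w_0(\tau_{\mathbf{B}})-\rho_{\mathbf{B}}$ by restriction to the $\mathbf{SO}_{2d+1}$-torus and subtraction of $\rho_{\mathbf{SO}_{2d+1}}$, which unwinds to $(k_{\sigma(n-d+1)}+n-d+1-\sigma(n-d+1),\dots,k_{\sigma(n)}+n-\sigma(n))$. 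Since $w_0\in E$ was chosen so that $w_0(\tau_{\mathbf{B}})-\rho_{\mathbf{B}}$ is $\mathbf{M}$-dominant, no Weyl straightening intervenes and these are honest characters, so this factorisation is clean.

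The heart of the proof is the combinatorial regrouping. One runs simultaneously over the Levi subgroups $\mathbf{M}'$ in the orbit, over $E=E_{\mathbf{M}'}$, and over $C(\mathbf{M}'(\Q))$. A partition of $\{1,\dots,n\}$ into an $a$-subset, $c$ unordered pairs and a $d$-subset determines $\mathbf{M}'$ and a canonical $\sigma\in\Xi_{a,c,d}$; the residual freedom in $E_{\mathbf{M}'}$ consists of reorderings of the $\mathbf{GL}_1$- and $\mathbf{GL}_2$-slots (which reproduce equal summands, absorbing the factorials), of the two non-trivial cosets of $\langle s_{e_j-e_{j'}},s_{e_j+e_{j'}}\rangle$ inside $W(B_2)$ for each $\mathbf{GL}_2$-pair, and of the way the signs on the $\mathbf{GL}_1$-axes interact with the half-integral $\rho_{\mathbf{B}}$-shift and the values $\pm1$. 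The $\mathbf{GL}_2$-cosets, weighted by $\epsilon(w_0)$, produce the difference $T_{\mathrm{ell}}(\mathbf{PGL}_2,(k_{\sigma(a+2i-1)}-k_{\sigma(a+2i)}+\sigma(a+2i)-\sigma(a+2i-1)-1)/2)-T_{\mathrm{ell}}(\mathbf{PGL}_2,(k_{\sigma(a+2i-1)}+k_{\sigma(a+2i)}-\sigma(a+2i)-\sigma(a+2i-1)+2n)/2)$, the second term and its sign coming from the $\rho_{\mathbf{B}}$-twisted reflection in $e_{\sigma(a+2i)}$, which turns the difference of infinitesimal-character coordinates $\tau_{\sigma(a+2i-1)}-\tau_{\sigma(a+2i)}$ into their sum (whence the $+2n$), the argument being halved because $T_{\mathrm{ell}}(\mathbf{PGL}_2,k)$ is indexed by the $\mathbf{SO}_3$-highest weight $k$ while the relevant pair of coordinates fixes the infinitesimal character $2k+1$. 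The $\mathbf{GL}_1$-axes, after summing $\epsilon(w_0)$ against $\pm1$ over the relevant representatives, collapse by the identity $2^{-1}\operatorname{Tr}_{\Q(\sqrt{-1})/\Q}\bigl((1+\sqrt{-1})^{a-x}(1-\sqrt{-1})^x\bigr)=\sum_{b}(-1)^{b}\sum_{r}\binom{x}{r}\binom{a-x}{2b-r}(-1)^{r}$ to the constant $\eta^{(B)}(a,x)$, where $x=\#\{i\le a:\ k_{\sigma(i)}+\sigma(i)+i\equiv1\ (2)\}$ records which of the $a$ slots carry an odd $\mathbf{GL}_1$-weight. Collecting the leftover powers of $2$, the ratio $|W(\mathbf{T}_0,\mathbf{M})|/|W(\mathbf{T}_0,\mathbf{G})|$ and the size of the orbit then leaves coefficient $1$ in front of each $\Xi_{a,c,d}$-term, giving the stated formula.

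The main obstacle is precisely this last step: matching the minimal coset representatives $E_{\mathbf{M}'}$ for $W(R_{\Gamma})\times W(R_{\mathbf{M}})$ in $W(B_n)$ with $\Xi_{a,c,d}$, keeping track of every $\rho$-shift and every sign, handling the fact that $\mathbf{GL}_1$-components equal to $+1$ lie on a wall of $R_{\Gamma}$ (so that Proposition \ref{charavDScpct} must be applied there by continuity), and verifying that the $\mathbf{GL}_1$-sum is governed by the function $\eta^{(B)}$ — this is the long but straightforward calculation alluded to in the introduction and at the start of section \ref{explicitpara}.
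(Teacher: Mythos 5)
Your overall strategy---restrict formula \ref{formulaTgeom} to the Levi subgroups in the given orbit, use $f_{p,\mathbf{M}}=\mathbf{1}_{\mathbf{M}(\Z_p)}$ at the finite places, apply Proposition \ref{charavDScpct} at the real place, and regroup the sum over $\mathbf{M}'$, $E$ and $C(\mathbf{M}'(\Q))$ into the sum over $\Xi_{a,c,d}$---is exactly the computation the paper alludes to (no detailed proof is printed; the theorem is stated as the outcome of this ``tedious computation'' from Proposition \ref{charavDScpct}). But your execution of the crucial real-place step fails. The real roots of a maximal torus of $\mathbf{M}=\mathbf{GL}_1^a\times\mathbf{GL}_2^c\times\mathbf{SO}_{2d+1}$ anisotropic modulo $\mathbf{A}_{\mathbf{M}}$ are \emph{not} $a+c$ orthogonal copies of $A_1$: on the $\mathbf{GL}_1^a$-block the torus is entirely split, so all of $\pm e_j$ and $\pm e_j\pm e_{j'}$ with $j,j'$ among the $a$ slots are real roots, a system of type $B_a$; only each $\mathbf{GL}_2$-pair contributes an isolated $A_1=\{\pm(e_j+e_{j'})\}$. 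Hence $R_\Gamma$ depends on $\gamma$: if $p$ of the $\mathbf{GL}_1$-slots carry $+1$ and $m$ carry $-1$, the $\mathbf{GL}_1$-part of $R_\Gamma$ is $B_p\times D_m$, so $(-1)^{q(R_\Gamma)}|W(R_\Gamma)|$ is not $(-2)^{a+c}$ and does not cancel $(-1/2)^{\dim\mathbf{A}_{\mathbf{M}}}$ ``exactly'' as you claim; moreover the vanishing of $\Phi_{\mathbf{M}}(\gamma,\tau)$ when $\mathrm{rk}(R_\Gamma)<a+c$ or $-\mathrm{Id}\notin W(R_\Gamma)$ (e.g.\ exactly one $-1$ among the $\mathbf{GL}_1$-slots) is invisible in your setup.

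This is not a cosmetic slip, because the entire difficulty of the $\mathbf{GL}_1$-bookkeeping lies in this $\gamma$-dependence. If your cancellation were correct, summing over the sign patterns at the $a$ slots would leave only the characters $(\pm1)^{\ast}$ of the one-dimensional $\mathbf{GL}_1$-weights, i.e.\ factors of the form $1+(-1)^{\ast}\in\{0,2\}$ per slot; the actual constant is the weighted sum over patterns $(p,m)$ of $(-1)^{q}|W(B_p\times D_m)|$ against those characters and the parities coming from the $\rho$-shift, and it is exactly this sum that produces
$$\eta^{(B)}(a,x)=\frac{(-1)^{a(a-1)/2}}{2^{a+1}}\mathrm{Tr}_{\Q(\sqrt{-1})/\Q}\left((1+\sqrt{-1})^{a-x}(1-\sqrt{-1})^x\right)\in\frac{1}{2^{\lfloor (a+1)/2\rfloor}}\Z,$$
a quantity that cannot arise from your $A_1^{a+c}$ picture. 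So the sentence ``the $\mathbf{GL}_1$-axes collapse \dots to the constant $\eta^{(B)}(a,x)$'' is asserted rather than derived, and the derivation you do give would yield a different, incorrect constant. The $\mathbf{GL}_2$-part (the two cosets in $W(B_2)$ giving the difference of the two $T_{\mathrm{ell}}(\mathbf{PGL}_2,\cdot)$ terms, with the halving of the parameter) and the $\mathbf{SO}_{2d+1}$-factor are treated consistently with the intended computation; the gap is concentrated in the real-root/Weyl-group bookkeeping on the $\mathbf{GL}_1$-block, which is where the actual content of the theorem sits.
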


We have a similar formula for the symplectic group.
For $a \geq 0$ and $x \in \{0, \dots, a\}$, define
$$\eta^{(C)}(a,x) = \frac{(-1)^{a(a-1)/2}}{2^a} \sum_{b=0}^a (-1)^{b(a-b)} \sum_{r=0}^b \binom{x}{r} \binom{a-x}{b-r} (-1)^r.$$
Then we have
$$ \eta^{(C)}(a,x) = \begin{cases} (-1)^{a/2} & \text{if } a \text{ is even and } x=a, \\ (-1)^{(a-1)/2} & \text{if } a \text{ is odd and } x=0, \\ 0 & \text{otherwise.} \end{cases} $$
For $n \geq a$, $\sigma \in S_n$ and $\underline{k}=(k_1, \dots, k_n) \in \Z^n$, let
$$ \eta^{(C)}(a, \underline{k}, \sigma) = \eta^{(C)}\left(a, \mathrm{card}\{ i \in \{1, \dots, a\}\ |\ k_{\sigma(i)}+\sigma(i)+i = 1 \pmod{2} \}\right). $$

\begin{theo}[Parabolic terms for $\mathbf{G} = \mathbf{Sp}_{2n}$]
Let $a,c,d \in \Z_{\geq 0}$ not all zero and $n = a+2c+d$.
The sum of the contributions to $T_{\mathrm{geom}}(\mathbf{Sp}_{2n}, \underline{k})$ in formula \ref{formulaTgeom} of the Levi subgroups $\mathbf{M}$ in the orbit of $\mathbf{GL}_1^a \times \mathbf{GL}_2^c \times \mathbf{Sp}_{2d}$ under the Weyl group $W(\mathbf{T}_0, \mathbf{G})$  is
$$\begin{array}{l}
\displaystyle\sum\limits_{\sigma \in \Xi_{a,c,d}} \eta^{(C)}(a, \underline{k},\sigma) \\
\quad \times \prod\limits_{i=1}^c \biggl[ T_{\mathrm{ell}}\left(\mathbf{PGL}_2,(k_{\sigma(a+2i-1)}-k_{\sigma(a+2i)}+\sigma(a+2i)-\sigma(a+2i-1)-1)/2\right) \\
\qquad - T_{\mathrm{ell}}(\mathbf{PGL}_2,(k_{\sigma(a+2i-1)}+k_{\sigma(a+2i)}-\sigma(a+2i)-\sigma(a+2i-1)+2n+1)/2) \Bigr] \\
\quad \times T_{\mathrm{ell}}(\mathbf{Sp}_{2d}, (k_{\sigma(n-d+1)}+n-d+1-\sigma(n-d+1), \dots, k_{\sigma(n)}+n-\sigma(n)))
\end{array}$$
\end{theo}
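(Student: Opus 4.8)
The plan is to substitute Proposition~\ref{charavDScpct} into the geometric side \ref{formulaTgeom} and then factor every ingredient over the blocks of the Levi subgroups that contribute. As recalled in Section~\ref{explicitpara}, for our function at the finite places the only $\mathbf{M} \supset \mathbf{T}_0$ with $\Phi_{\mathbf{M}} \not\equiv 0$ are those conjugate to $\mathbf{M}_0 = \mathbf{GL}_1^a \times \mathbf{GL}_2^c \times \mathbf{Sp}_{2d}$ with $a + 2c + d = n$; moreover $f_{p, \mathbf{M}_0} = \mathbf{1}_{\mathbf{M}_0(\Zp)}$, so each $\gamma \in C(\mathbf{M}_0(\Q))$ contributing to \ref{formulaTgeom} is torsion, hence compact in $\mathbf{G}(\R)$, and Proposition~\ref{charavDScpct} applies to it. Since the inner sum attached to a Levi subgroup depends only on its conjugacy class, the total contribution of the orbit of $\mathbf{M}_0$ is the number of Levi subgroups in the orbit, times $|W(\mathbf{T}_0, \mathbf{M}_0)| / |W(\mathbf{T}_0, \mathbf{G})|$, times $(-1/2)^{a+c}$ (note $\dim \mathbf{A}_{\mathbf{M}_0} = a + c$), times $\sum_{\gamma \in C(\mathbf{M}_0(\Q))} \mathrm{vol}(\mathbf{I}(\Q) \backslash \mathbf{I}(\A) / \mathbf{A}_{\mathbf{M}_0}(\A)) \, \iota^{\mathbf{M}_0}(\gamma)^{-1} \, \Phi_{\mathbf{M}_0}(\gamma, \tau)$. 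The number of Levi subgroups in the orbit is $|W(\mathbf{T}_0, \mathbf{G})|$ divided by the order of the normaliser of $\mathbf{M}_0$, which is $W(\mathbf{T}_0, \mathbf{M}_0)$ extended by the outer symmetries $S_a \times S_c \times (\Z/2)^a \times (\Z/2)^c$ that permute and "flip" the $\mathbf{GL}_1$- and $\mathbf{GL}_2$-blocks.

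Next I would pin down the root system $R_{\Gamma}$ of real roots occurring in Proposition~\ref{charavDScpct}: for a maximal torus of $\mathbf{M}_{0, \R}$ anisotropic modulo $\mathbf{A}_{\mathbf{M}_0}$ and $\gamma$ compact it is of type $C_a \times A_1^c$, the $C_a$ being spanned by the roots $\pm e_i \pm e_j$ and $\pm 2 e_i$ supported on the $a$ coordinates of the $\mathbf{GL}_1$-blocks, each $\mathbf{GL}_2$-block contributing the single pair $\pm(e_p + e_q)$ (its determinant character), every other root of $\mathbf{G}$ having a non-trivial component on the anisotropic part. Hence $|W(R_{\Gamma})| = 2^a a! \cdot 2^c$ and $q(R_{\Gamma}) = a(a+1)/2 + c$, and combining the $(-1/2)^{a+c}$ of \ref{formulaTgeom}, the $(-1)^{q(R_{\Gamma})} |W(R_{\Gamma})|$ of Proposition~\ref{charavDScpct}, and the orbit count above produces the overall scalar $(-1)^{a(a-1)/2} / (c! \, 2^{a+c})$ in front of $\sum_{\gamma} \sum_{w_0 \in E} \epsilon(w_0) \, \mathrm{vol}(\cdots) \, \iota^{-1} \, \mathrm{Tr}(\gamma \,|\, V_{\mathbf{M}_0, \, w_0(\tau_{\mathbf{B}}) - \rho_{\mathbf{B}}})$, where $E$ represents $(W(R_{\Gamma}) \times W(R_{\mathbf{M}_0})) \backslash W(\mathbf{T}_0, \mathbf{G})$; note that $(-1)^{a(a-1)/2} / 2^a$ is exactly the prefactor in the closed form of $\eta^{(C)}(a, x)$.

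Then I would factor and reassemble. Write $\gamma = (g_1, \dots, g_a, h_1, \dots, h_c, s)$ with $g_j \in \{\pm 1\}$, $h_i$ elliptic torsion modulo the centre of $\mathbf{GL}_2$, and $s \in C(\mathbf{Sp}_{2d}(\Q))$. The centraliser, the index $\iota$, the volume (using $\mathbf{A}_{\mathbf{M}_0} = \prod_j \mathbf{A}_{\mathbf{GL}_1} \times \prod_i \mathbf{A}_{\mathbf{GL}_2}$ and that $\mathbf{Sp}_{2d}$ has trivial centre) and the trace $\mathrm{Tr}(\gamma \,|\, V_{\mathbf{M}_0, \lambda'})$ all split as products over the three kinds of blocks, and $E$, once combined with the outer symmetries $S_a \times S_c$, is re-indexed by the shuffle set $\Xi_{a,c,d}$. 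For a fixed $\sigma \in \Xi_{a,c,d}$, the $\mathbf{Sp}_{2d}$-block gives, by formula \ref{formulaTell}, exactly $T_{\mathrm{ell}}(\mathbf{Sp}_{2d}, (k_{\sigma(n-d+1)} + n - d + 1 - \sigma(n-d+1), \dots, k_{\sigma(n)} + n - \sigma(n)))$ once $\rho_{\mathbf{B}}$ and the permutation part of $w_0$ are accounted for. Each $\mathbf{GL}_2$-block gives a $T_{\mathrm{ell}}(\mathbf{PGL}_2, \cdot)$, since $\mathbf{GL}_2 / \mathbf{A}_{\mathbf{GL}_2} \simeq \mathbf{PGL}_2 \simeq \mathbf{SO}_3$; but exactly two cosets of $E$ survive for such a block, differing by a reflection and hence by a sign through $\epsilon(w_0)$, which produces the difference of the two $\mathbf{PGL}_2$-terms with arguments $(k_{\sigma(a+2i-1)} - k_{\sigma(a+2i)} + \sigma(a+2i) - \sigma(a+2i-1) - 1)/2$ and $(k_{\sigma(a+2i-1)} + k_{\sigma(a+2i)} - \sigma(a+2i) - \sigma(a+2i-1) + 2n+1)/2$, with a shift $2n+1$ (it is $2n$ for $\mathbf{SO}_{2n+1}$) coming from $\rho_{\mathbf{B}}$ and the factor $2$ per block absorbed by the convention $T_{\mathrm{ell}}(\mathbf{PGL}_2, k/2) = T_{\mathrm{ell}}(\mathbf{Sp}_2, k)/2$. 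Finally each $\mathbf{GL}_1$-block (where $\mathbf{GL}_1 / \mathbf{A}_{\mathbf{GL}_1}$ is trivial, so volume over index is $1$) contributes $\mathrm{Tr}(g_j \,|\, V_{\mathbf{GL}_1, m}) = g_j^{\,m} \in \{\pm 1\}$, and summing over $g_j \in \{\pm 1\}$ and over the signed permutations of the $\mathbf{GL}_1$-coordinates in $E$ yields precisely the finite character sum which, by its closed form, equals $\eta^{(C)}(a, \underline{k}, \sigma)$ with parity count $\#\{i \le a : k_{\sigma(i)} + \sigma(i) + i \equiv 1 \pmod 2\}$ after the $\rho_{\mathbf{B}}$-shift. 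Multiplying the three blocks and summing over $\sigma$ gives the asserted formula.

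The representation-theoretic identities and the measure factorisations are routine; the main obstacle is the combinatorial bookkeeping tying together the sum over Levi subgroups in the orbit, the coset set $E$, the outer symmetry group $S_a \times S_c \times (\Z/2)^{a+c}$ and the sum over conjugacy classes, so that everything collapses onto $\Xi_{a,c,d}$ with coefficient $\eta^{(C)}$, together with the careful propagation of every $\rho_{\mathbf{B}}$-shift (the $+\sigma(i)$, $-i$, $+\,n-d+1$, $-\sigma(n)$, $+\,2n+1$) with the correct signs. Compared with the $\mathbf{SO}_{2n+1}$ theorem the only changes are in the local coefficient ($\eta^{(C)}$ in place of $\eta^{(B)}$, reflecting $C_a$ versus $B_a$ in $R_{\Gamma}$) and in this shift ($2n+1$ versus $2n$).
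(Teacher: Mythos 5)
Your overall strategy — substitute Proposition \ref{charavDScpct} into formula \ref{formulaTgeom}, observe that only Levi subgroups conjugate to $\mathbf{GL}_1^a\times\mathbf{GL}_2^c\times\mathbf{Sp}_{2d}$ contribute, and factor volumes, centralisers, traces and the Weyl-group bookkeeping block by block so that everything collapses onto $\Xi_{a,c,d}$ — is the same route the paper takes (the paper states that these theorems follow from Proposition \ref{charavDScpct} by long but tedious computations and gives no further detail).

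There is, however, a genuine gap in the step that is supposed to produce $\eta^{(C)}$. You assert that for every compact $\gamma=(g_1,\dots,g_a,h_1,\dots,h_c,s)$ the real root system is $R_{\Gamma}=C_a\times A_1^c$, with $q(R_{\Gamma})=a(a+1)/2+c$ and $|W(R_{\Gamma})|=2^a a!\,2^c$, so that the weight $(-1)^{q(R_{\Gamma})}|W(R_{\Gamma})|$ is a constant prefactor and the remaining sum over the signs $g_j\in\{\pm 1\}$ is a plain character sum. This is not correct: $R_{\Gamma}$ is the set of real roots $\alpha$ with $\alpha(\gamma)>0$ and depends on the connected component of $\gamma$ in $\mathbf{T}(\R)$. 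If $b$ of the $g_j$ equal $-1$, the roots $\pm e_i\pm e_j$ with $g_ig_j=-1$ drop out and the $\mathbf{GL}_1$-part of $R_{\Gamma}$ is $C_{b}\times C_{a-b}$, not $C_a$; one has $q(C_b\times C_{a-b})=a(a+1)/2-b(a-b)$ and $|W(R_\Gamma)|=2^{a+c}\,b!\,(a-b)!$, and the coset set $E$ of Proposition \ref{charavDScpct} changes accordingly. The component-dependent sign $(-1)^{b(a-b)}$ is precisely the extra factor in the defining sum for $\eta^{(C)}(a,x)$, and it is what distinguishes $\eta^{(C)}$ from the naive character sum you describe: summing $\prod_j g_j^{m_j}$ over all $g_j\in\{\pm1\}$ with a component-independent prefactor gives something supported only at $x=0$ (for every $a$), whereas $\eta^{(C)}(a,x)$ is supported at $x=a$ when $a$ is even and at $x=0$ when $a$ is odd. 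So the bookkeeping as you set it up would not return the stated coefficient (and the analogous issue arises for $\eta^{(B)}$ in the odd orthogonal case, where the inner sum over $b$ again records the component of $\gamma$). To close the argument you must carry the dependence of $(-1)^{q(R_{\Gamma})}$, $|W(R_{\Gamma})|$ and $E$ on the component of $\gamma$ through the computation; it is exactly this dependence that generates the double sum $\sum_b(-1)^{b(a-b)}\sum_r\binom{x}{r}\binom{a-x}{b-r}(-1)^r$ defining $\eta^{(C)}(a,x)$.
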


For $a \geq 0$ and $x \in \{0, \dots, 2a\}$, define
$$\eta^{(D)}(a,x) = \frac{1}{2^{2a}} \sum_{b=0}^a \sum_{r=0}^{2b} \binom{x}{r} \binom{2a-x}{2b-r} (-1)^r.$$
We have
$$ \eta^{(D)}(a,x) = \begin{cases} 1 & \text{if } a=0, \\ 1/2 & \text{if } a>0 \text{ and } x(2a-x)=0, \\ 0 & \text{otherwise.} \end{cases} $$
For $n \geq a$, $\sigma \in S_{2n}$ and $\underline{k}=(k_1, \dots, k_{2n}) \in \Z^{2n}$, let
$$ \eta^{(D)}(a, \underline{k}, \sigma) = \eta^{(D)}\left(a, \mathrm{card}\{ i \in \{1, \dots, 2a\}\ |\ k_{\sigma(i)}+\sigma(i)+i = 1 \pmod{2} \}\right). $$

For the group $\mathbf{SO}_{4n}$, we need only consider dominant weights $\underline{k}$ with $k_{2n} \geq 0$ (i.e.\ the same inequalities as for the other two infinite families) since the end result is invariant under the outer automorphism of $\mathbf{SO}_{4n}$, that is $T_{\mathrm{geom}}(\mathbf{SO}_{4n}, (k_1, \dots, k_{2n-1}, -k_{2n})) = T_{\mathrm{geom}}(\mathbf{SO}_{4n}, (k_1, \dots, k_{2n-1}, k_{2n}))$.
\begin{theo}[Parabolic terms for $\mathbf{G} = \mathbf{SO}_{4n}$]
Let $a,c,d \in \Z_{\geq 0}$ not all zero and $n=a+c+d$.
The sum of the contributions to $T_{\mathrm{geom}}(\mathbf{SO}_{4n}, \underline{k})$ in formula \ref{formulaTgeom} of the Levi subgroups $\mathbf{M}$ in the orbit of $\mathbf{GL}_1^{2a} \times \mathbf{GL}_2^c \times \mathbf{SO}_{4d}$ under the Weyl group $W(\mathbf{T}_0, \mathbf{G})$  is
$$\begin{array}{l}
\displaystyle\sum\limits_{\sigma \in \Xi_{2a,c,2d}} \eta^{(D)}(a, \underline{k}, \sigma) \\
\quad \times \prod\limits_{i=1}^c \biggl[ T_{\mathrm{ell}}\left(\mathbf{PGL}_2,(k_{\sigma(2a+2i-1)}-k_{\sigma(2a+2i)}+\sigma(2a+2i)-\sigma(2a+2i-1)-1)/2\right) \\
\qquad + T_{\mathrm{ell}}(\mathbf{PGL}_2,(k_{\sigma(2a+2i-1)}+k_{\sigma(2a+2i)}-\sigma(2a+2i)-\sigma(2a+2i-1)+4n-1)/2) \Bigr] \\
\quad \times T_{\mathrm{ell}}(\mathbf{SO}_{4d}, (k_{\sigma(2n-2d+1)}+2n-2d+1-\sigma(2n-2d+1), \dots, k_{\sigma(2n)}+2n-\sigma(2n))).
\end{array}$$
\end{theo}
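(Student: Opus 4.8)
The plan is to specialise Arthur's geometric side \ref{formulaTgeom} for $T_{\mathrm{geom}}(\mathbf{SO}_{4n}, \underline{k})$ to the Levi subgroups in the $W(\mathbf{T}_0, \mathbf{G})$-orbit of $\mathbf{M} = \mathbf{GL}_1^{2a} \times \mathbf{GL}_2^{c} \times \mathbf{SO}_{4d}$ and to evaluate every ingredient block by block, exactly as in the $\mathbf{SO}_{2n+1}$ and $\mathbf{Sp}_{2n}$ cases. As there, every $\gamma \in C(\mathbf{M}(\Q))$ contributing nontrivially is compact at each place (it must lie in the support of $f_{p,\mathbf{M}} = \mathbf{1}_{\mathbf{M}(\Z_p)}$, which holds by the computation in section \ref{ParaTermsSection}, and it is elliptic at $\R$), hence torsion; so Proposition \ref{charavDScpct} applies and replaces $\Phi_{\mathbf{M}}(\gamma, \tau)$ by $(-1)^{q(R_{\Gamma})}|W(R_{\Gamma})| \sum_{w_0 \in E} \epsilon(w_0) \mathrm{Tr}(\gamma \mid V_{\mathbf{M}, w_0(\tau_{\mathbf{B}}) - \rho_{\mathbf{B}}})$. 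Since moreover $f_{p,\mathbf{M}} = \mathbf{1}_{\mathbf{M}(\Z_p)}$, the finite-adelic measure $\prod_p O_{\gamma}(f_{p,\mathbf{M}})$ is precisely the one entering the elliptic terms of $\mathbf{M}$, and this measure, together with $\mathrm{vol}(\mathbf{I}(\Q)\backslash\mathbf{I}(\A)/\mathbf{A}_{\mathbf{M}}(\A))$, the index $\mathrm{card}(\mathrm{Cent}(\gamma, \mathbf{M}(\Q))/\mathbf{I}(\Q))$ and the character $\mathrm{Tr}(\gamma\mid V_{\mathbf{M},\cdot})$, factors over the three types of block of $\mathbf{M}$. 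The $\gamma$-sum therefore splits into a product: each $\mathbf{GL}_1$ factor contributes only a root of unity through the character; each $\mathbf{GL}_2$ factor contributes, modulo its split centre and via $\mathbf{PGL}_2 \simeq \mathbf{SO}_3$, an elliptic term for $\mathbf{PGL}_2$; and the $\mathbf{SO}_{4d}$ factor contributes $T_{\mathrm{ell}}(\mathbf{SO}_{4d}, \cdot)$.

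The next step is to repackage the three remaining sums — over Levi subgroups in the $W(\mathbf{T}_0,\mathbf{G})$-orbit of $\mathbf{M}$, over $w_0 \in E$, and over the inner data — into a single sum over $\sigma \in \Xi_{2a,c,2d}$, which is a set of representatives for the cosets of the stabiliser of the standard $\mathbf{M}$ (a hyperoctahedral group on the $2a$ coordinates of the $\mathbf{GL}_1$'s, a wreath product permuting the $c$ pairs of the $\mathbf{GL}_2$'s, and $W(\mathbf{T}_0, \mathbf{SO}_{4d})$) inside $W(\mathbf{T}_0, \mathbf{SO}_{4n}) = S_{2n} \ltimes \{\pm1\}^{2n-1}$. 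Under this bookkeeping the $\rho$-shift $w_0(\tau_{\mathbf{B}}) - \rho_{\mathbf{B}}$ turns the slot $j$ of the highest weight into $k_{\sigma(j)} + j - \sigma(j)$, producing the arguments of the statement once the $\mathbf{GL}_2$ differences and sums are divided by $2$ (consistently with $T_{\mathrm{ell}}(\mathbf{PGL}_2, k/2) = T_{\mathrm{ell}}(\mathbf{Sp}_2, k)/2$). Each $\mathbf{GL}_2$ pair $\{\sigma(2a+2i-1), \sigma(2a+2i)\}$ contributes the \emph{combination} of two $\mathbf{PGL}_2$-elliptic terms, with arguments built from $k_{\sigma(2a+2i-1)} - k_{\sigma(2a+2i)}$ and from $k_{\sigma(2a+2i-1)} + k_{\sigma(2a+2i)}$ (and the constant $4n-1$), because in type $D_{2n}$ the rank-one real-root datum attached to that block can be generated by either $e_p - e_q$ or $e_p + e_q$ — both are roots, whereas in types $B_n$ and $C_n$ the second possibility is $e_n$, resp. $2e_n$. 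The $\mathbf{GL}_1$ coordinates, together with the factor $(-1/2)^{\dim \mathbf{A}_{\mathbf{M}}}$ from \ref{formulaTgeom}, the constant $(-1)^{q(R_{\Gamma})}|W(R_{\Gamma})|$ from Proposition \ref{charavDScpct}, and the $\epsilon(w_0)$-alternating sum over the sign changes on those coordinates, collapse to the factor $\eta^{(D)}(a, \underline{k}, \sigma)$; evaluating the resulting binomial sum $2^{-2a}\sum_{b,r}\binom{x}{r}\binom{2a-x}{2b-r}(-1)^r$, where $x$ counts the slots satisfying the parity condition $k_{\sigma(i)} + \sigma(i) + i \equiv 1 \pmod 2$, gives the closed form $1, 1/2, 0$ stated before the theorem — the type-$D$ analogue of the identities recorded for $\eta^{(B)}$ and $\eta^{(C)}$, checkable by the same generating-function manipulation with $(1+t)^{2a-x}(1-t)^{x}$.

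Finally I would verify overall consistency of signs: the Kottwitz sign $(-1)^{q(\mathbf{G}(\R))}$ built into $\Phi_{\mathbf{M}}$, the sign from $(-1/2)^{\dim \mathbf{A}_{\mathbf{M}}}$, and the signs $(-1)^{q(R_{\Gamma})}$ and the block-wise $(-1)^{q(\cdot)}$ produced by Proposition \ref{propaltsumcbar} must combine to exactly the signs displayed — in particular the \emph{plus} sign between the two $\mathbf{PGL}_2$-terms for $\mathbf{SO}_{4n}$, as opposed to the minus sign for $\mathbf{SO}_{2n+1}$ and $\mathbf{Sp}_{2n}$, which traces to $\epsilon$ of the Weyl element realising the $e_p + e_q$-reflection. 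One also checks block-wise invariance under $(k_1, \dots, k_{2n-1}, k_{2n}) \mapsto (k_1, \dots, k_{2n-1}, -k_{2n})$, since $T_{\mathrm{ell}}(\mathbf{SO}_{4d}, \cdot)$ has that symmetry and the $\mathbf{GL}_1$ and $\mathbf{GL}_2$ contributions depend on the $k$'s only through the symmetric combinations above, which justifies restricting to $k_{2n} \ge 0$. The main obstacle is precisely this sign-and-index bookkeeping, carried out simultaneously with the passage from the triple sum to the single $\Xi_{2a,c,2d}$-parametrisation: each step is the ``long but straightforward'' computation promised in the introduction, and the only genuinely new input relative to the $B$ and $C$ theorems is the availability of both $e_p \pm e_q$ as roots, which is what produces the differing sign in the $\mathbf{GL}_2$ terms.
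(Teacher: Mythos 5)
Your overall route is the same one the paper takes: the paper obtains this theorem (like the $\mathbf{SO}_{2n+1}$ and $\mathbf{Sp}_{2n}$ ones) by combining the triviality of the constant terms $f_{p,\mathbf{M}}=\mathbf{1}_{\mathbf{M}(\Z_p)}$, Proposition \ref{charavDScpct}, the block-by-block factorisation of volumes, orbital integrals and characters, and the reparametrisation of the sum over Levis and over $w_0\in E$ by $\Xi_{2a,c,2d}$, the rest being the advertised ``long but straightforward'' bookkeeping. Your evaluation of $\eta^{(D)}(a,x)$ via the generating function $(1-t)^x(1+t)^{2a-x}$ summed over even degrees is correct and reproduces the closed form $1,\ 1/2,\ 0$.

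The weak point is the mechanism you give for the feature that actually distinguishes type $D$, namely the plus sign between the two $\mathbf{PGL}_2$ terms. First, $e_p+e_q$ is a root, and is the real root attached to a $\mathbf{GL}_2$ block, in types $B$, $C$ and $D$ alike (the restriction of $e_p+e_q$ to the block torus is $z\bar z$, while $e_p$, $2e_p$, $e_p-e_q$ are not real), so ``both $e_p\pm e_q$ are available'' is not what separates $D$ from $B$ and $C$; the two configurations (difference and sum) occur for every type, and indeed both terms appear in all three theorems. Second, $s_{e_p+e_q}$ is a reflection, hence $\epsilon(s_{e_p+e_q})=-1$; attributing the sign to it would reproduce the minus of the $B$ and $C$ cases, not the plus of type $D$. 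What actually changes in $D_{2n}$ is that a single sign change on one block coordinate is not in $W(D_{2n})$: the ``sum'' configuration on a $\mathbf{GL}_2$ block is reached only by coset representatives in $E$ whose compensating sign change sits on the $\mathbf{GL}_1^{2a}$ or $\mathbf{SO}_{4d}$ coordinates, and the net relative sign $+1$ (together with the constant $4n-1$ and the halving in $\eta^{(D)}$) only emerges after resumming over those configurations, using the parity count built into $\eta^{(D)}$ and the invariance of $T_{\mathrm{ell}}(\mathbf{SO}_{4d},\cdot)$ under the outer automorphism. That resummation is exactly the ``tedious computation'' the theorem rests on; your proposal defers it to a final verification, and the heuristic you substitute for it would, as stated, give the wrong sign.
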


\section{Endoscopic decomposition of the spectral side}

\subsection{The spectral side of the trace formula}

The previous sections give an algorithm to compute the geometric side of Arthur's trace formula in \cite{ArthurL2}.
Let us recall the spectral side of this version of the trace formula.
As before $\mathbf{G}$ denotes one of the reductive groups $\mathbf{SO}_{2n+1}$, $\mathbf{Sp}_{2n}$ or $\mathbf{SO}_{4n}$ over $\Z$.
Let $K_{\infty}$ be a maximal compact subgroup of $\mathbf{G}(\R)$ and denote $\mathfrak{g} = \C \otimes_{\R} \mathrm{Lie}(\mathbf{G}(\R))$.
Let $\mathcal{A}_{\mathrm{disc}}(\mathbf{G}(\Q) \backslash \mathbf{G}(\A))$ be the space of $K_{\infty} \times \mathbf{G}(\widehat{\Z})$-finite and $Z(U(\mathfrak{g}))$-finite functions in the discrete spectrum $L^2_{\mathrm{disc}}(\mathbf{G}(\Q) \backslash \mathbf{G}(\A))$.
It is also the space of automorphic forms in the sense of \cite{BoJaCorv} which are square-integrable.
There is an orthogonal decomposition
$$ \mathcal{A}_{\mathrm{disc}}(\mathbf{G}(\Q) \backslash \mathbf{G}(\A)) = \bigoplus_{\pi \in \Pi_{\mathrm{disc}}(\mathbf{G})} m_{\pi} \pi $$
where $\Pi_{\mathrm{disc}}(\mathbf{G})$ is a countable set of distinct isomorphism classes of unitary $(\mathfrak{g}, K_{\infty}) \times \mathbf{G}(\A_f)$-modules and $m_{\pi} \in \Z_{\geq 1}$.
Denote $\Pi_{\mathrm{disc}}^{\mathrm{unr}}(\mathbf{G}) \subset \Pi_{\mathrm{disc}}(\mathbf{G})$ the set of $\pi$ such that for any prime number $p$ the representation $\pi_p$ is unramified, i.e.\ $\pi_p^{\mathbf{G}(\Zp)} \neq 0$.

Let $\lambda$ be a dominant weight for $\mathbf{G}_{\C}$, and denote $V_{\lambda}$ the corresponding algebraic representation of $\mathbf{G}(\C)$, which by restriction to $\mathbf{G}(\R)$ we see as a $(\mathfrak{g}, K_{\infty})$-module.
If $X$ is an admissible $(\mathfrak{g}, K_{\infty})$-module, define its Euler-Poincaré characteristic with respect to $V_{\lambda}$
$$ \mathrm{EP}(X \otimes V_{\lambda}^*) = \sum_{i \geq 0} (-1)^i \dim H^i((\mathfrak{g}, K_{\infty}), X \otimes V_{\lambda}^*). $$
We refer to \cite{BoWa} for definitions and essential properties of $(\mathfrak{g}, K_{\infty})$-cohomology.
By \cite{BoWa}[Chapter I, Corollary 4.2] for any irreducible $(\mathfrak{g}, K_{\infty})$-module $X$, we have that $H^{\bullet}((\mathfrak{g}, K_{\infty}), X \otimes V_{\lambda}^*)=0$ unless $X$ has the same infinitesimal character as $V_{\lambda}$.

For our particular choice of function on $\mathbf{G}(\A_f)$ the spectral side of Arthur's trace formula in \cite{ArthurL2} is
\begin{equation} \label{spectralside} \sum_{\pi \in \Pi_{\mathrm{disc}}^{\mathrm{unr}}(\mathbf{G})} m_{\pi} \mathrm{EP}(\pi_{\infty} \otimes V_{\lambda}^*). \end{equation}
By \cite{HCAuto}[Theorem 1] there is only a finite number of nonzero terms.
Vogan and Zuckerman \cite{VoZu} (see also \cite{BoWa}[Chapter VI, §5]) have classified the irreducible unitary $(\mathfrak{g}, K_{\infty})$-modules having cohomology with respect to $V_{\lambda}$, and computed this cohomology.
However, the integer \ref{spectralside} alone is not enough to recover the number $m(X)$ of $\pi \in \Pi_{\mathrm{disc}}^{\mathrm{unr}}(\mathbf{G})$ such that $\pi_{\infty}$ is isomorphic to a given irreducible unitary $(\mathfrak{g}, K_{\infty})$-module $X$ having the same infinitesimal character as $V_{\lambda}$.

Arthur's endoscopic classification of the discrete automorphic spectrum of $\mathbf{G}$ \cite{Arthur} allows to express $m(X)$ using numbers of certain \emph{self-dual} cuspidal automorphic representations of general linear groups.
Conversely these numbers can be obtained from the Euler-Poincaré characteristic \ref{spectralside} for various groups $\mathbf{G}$ and weights $\lambda$.
For explicit computations we will have to make Assumption \ref{assumAJweak} that relates the rather abstract Arthur packets at the real place with the ones previously defined by Adams and Johnson in \cite{AdJo}.

Note that it will not be necessary to use \cite{VoZu} since the Euler-Poincaré characteristic is a much simpler invariant than the whole cohomology.

\subsubsection{Arthur's endoscopic classification}

Let us review  how Arthur's very general results in \cite{Arthur} specialise in our particular situation: level one and regular infinitesimal character.
We are brief since this was done in \cite{ChRe}[§3], though with a slightly different formulation.
We refer to \cite{BorelCorvallis} for the definition of L-groups.
For $\mathbf{G}$ a reductive group over $F$ we will denote $\widehat{\mathbf{G}}$ the connected component of the neutral element in ${}^L \mathbf{G}$ (which Borel denotes ${}^L \mathbf{G}^0$).

Let $F$ be a local field of characteristic zero.
The Weil-Deligne group of $F$ is denoted by $W_F'$: if $F$ is archimedean $W_F' = W_F$, whereas in the $p$-adic case $W_F' = W_F \times \mathrm{SU}(2)$.
Consider a quasisplit special orthogonal or symplectic group $\mathbf{G}$ over $F$.
Let $\psi : W_F' \times \mathrm{SL}_2(\C) \rightarrow {}^L \mathbf{G}$ be a local Arthur parameter, i.e.\ $\psi|_{W_F'}$ is a continuous semisimple splitting of ${}^L \mathbf{G} \rightarrow W_F'$ with bounded image and $\psi|_{\mathrm{SL}_2(\C)}$ is algebraic.
If $\psi|_{\mathrm{SL}_2(\C)}$ is trivial then $\psi$ is a tempered Langlands parameter.
The general case is considered for global purposes, which we will discuss later.
Consider the group $C_{\psi} = \mathrm{Cent}(\psi, \widehat{\mathbf{G}})$ and the finite group
$$S_{\psi} = C_{\psi} / C_{\psi}^0 Z(\widehat{\mathbf{G}})^{\mathrm{Gal}(\overline{F}/F)}.$$
For the groups $\mathbf{G}$ considered here the group $S_{\psi}$ is isomorphic to a product of copies of $\{ \pm 1 \}$.
Arthur \cite{Arthur}[Theorem 1.5.1] associates with $\psi$ a finite multiset $\Pi_{\psi}$ of irreducible unitary representations of $\mathbf{G}(F)$, along with a character $\langle \cdot , \pi \rangle$ of $S_{\psi}$ for any $\pi \in \Pi_{\psi}$.
In the even orthogonal case this is not exactly true: instead of actual representations, $\Pi_{\psi}$ is comprised of orbits of the group $\mathrm{Out}(\mathbf{G}) \simeq \Z/2\Z$ of outer automorphisms of $\mathbf{G}$ on the set of isomorphism classes of irreducible representations of $\mathbf{G}(F)$.
These orbits can be described as modules over the $\mathrm{Out}(\mathbf{G})$-invariants of the Hecke algebra $\mathcal{H}(\mathbf{G}(F))$ of $\mathbf{G}(F)$, which we denote $\mathcal{H}'(\mathbf{G}(F))$.
Here we have fixed a splitting $\mathrm{Out}(\mathbf{G}) \rightarrow \mathrm{Aut}(\mathbf{G})$ defined over $F$.
Note that if $F$ is $p$-adic, $\mathbf{G}$ is unramified and $K$ is a hyperspecial subgroup of $\mathbf{G}(F)$ we can choose a splitting $\mathrm{Out}(\mathbf{G}) \rightarrow \mathrm{Aut}(\mathbf{G})$ that preserves K.
If $F$ is archimedean and $K$ is a maximal compact subgroup of $\mathbf{G}(F)$, we can also choose a splitting that preserves $K$, and $\mathcal{H}'(\mathbf{G}(F))$ is the algebra of left and right $K$-finite $\mathrm{Out}(\mathbf{G})$-invariant distributions on $\mathbf{G}(F)$ with support in K.
Note that the choice of splitting does not matter when one considers invariant objects, such as orbital integrals or traces in representations.

Denote $\mathrm{Std} : {}^L \mathbf{G} \rightarrow \mathrm{GL}_N(\C)$ the standard representation, where
$$ N = \begin{cases}
2n & \text{if }\mathbf{G}_{\bar{F}} \simeq \left(\mathbf{SO}_{2n+1}\right)_{\bar{F}}\text{, i.e.\ } \widehat{\mathbf{G}} \simeq \mathrm{Sp}_{2n}(\C), \\
2n+1 & \text{if }\mathbf{G}_{\bar{F}} \simeq \left(\mathbf{Sp}_{2n}\right)_{\bar{F}}\text{, i.e.\ } \widehat{\mathbf{G}} \simeq \mathrm{SO}_{2n+1}(\C), \\
2n & \text{if }\mathbf{G}_{\bar{F}} \simeq \left(\mathbf{SO}_{2n}\right)_{\bar{F}}\text{, i.e.\ } \widehat{\mathbf{G}} \simeq \mathrm{SO}_{2n}(\C).
\end{cases} $$
In the first two cases $\det \circ\, \mathrm{Std}$ is trivial, whereas in the third case it takes values in $\{ \pm 1 \}$ and factors through a character $\mathrm{Gal}(\overline{F} / F) \rightarrow \{ \pm 1 \}$, which by local class field theory we can also see as a character $\eta_{\mathbf{G}} : F^{\times} \rightarrow \{ \pm 1 \}$.
If $\widehat{\mathbf{G}} = \mathrm{Sp}_{2n}(\C)$ (resp.\ $\widehat{\mathbf{G}} = \mathrm{SO}_{2n+1}(\C)$), the standard representation $\mathrm{Std}$ induces a bijection from the set of conjugacy classes of Arthur parameters $\psi : W_F' \times \mathrm{SL}_2(\C) \rightarrow \widehat{\mathbf{G}}$ to the set of conjugacy classes of Arthur parameters $\psi' : W_F' \times \mathrm{SL}_2(\C) \rightarrow \mathrm{GL}_N(\C)$ such that $\det \circ\, \psi'$ is trivial and there exists a non-degenerate alternate (resp.\ symmetric) bilinear form on $\C^N$ preserved by $\mathrm{Im}(\psi')$.
The third case, where $\mathbf{G}$ is an even special orthogonal group, induces a small complication.
Composing with $\mathrm{Std}$ still induces a surjective map from the set of conjugacy classes of Arthur parameters $\psi : W_F' \times \mathrm{SL}_2(\C) \rightarrow {}^L \mathbf{G}$ to the set of conjugacy classes of Arthur parameters $\psi' : W_F' \times \mathrm{SL}_2(\C) \rightarrow \mathrm{GL}_N(\C)$ having determinant $\eta_{\mathbf{G}}$ and such that there exists a non-degenerate bilinear form on $\C^N$ preserved by $\mathrm{Im}(\psi')$.
However, the fibers of this map can have cardinality one or two, the latter case occurring if and only if all the self-dual irreducible constituents of $\psi'$ have even dimension.
The Arthur packet $\Pi_{\psi}$ along with the characters $\langle \cdot, \pi \rangle$ of $S_{\psi}$ are characterised \cite{Arthur}[Theorem 2.2.1] using the representation of $\mathbf{GL}_N(F)$ associated with $\mathrm{Std} \circ \psi$ by the local Langlands correspondence, and twisted and ordinary endoscopic character identities.
The characters $(\langle \cdot, \pi \rangle)_{\pi \in \Pi_{\psi}}$ of $S_{\psi}$ are well-defined only once we have fixed an equivalence class of Whittaker datum for $\mathbf{G}$, since this choice has to be made to normalise the transfer factors involved in the ordinary endoscopic character identities.

In the $p$-adic case, we will mainly be interested in \emph{unramified} Arthur parameters $\psi$, i.e.\ such that $\psi|_{W_F'}$ is trivial on the inertia subgroup and on $\mathrm{SU}(2)$.
Of course these exist only if $\mathbf{G}$ is unramified, so let us make this assumption.
We refer to \cite{CasSha} for the definition of unramified Whittaker data with respect to a choice of hyperspecial maximal compact subgroup.
Note that several conjugacy classes of Whittaker data can correspond to the same conjugacy class of hyperspecial subgroups, and that $\mathbf{G}_{\mathrm{ad}}(F)$ acts transitively on both sets of conjugacy classes.

The following lemma is implicit in \cite{Arthur}.
Note that a weak version of it is needed to make sense of the main global theorem \cite{Arthur}[Theorem 1.5.2].
\begin{lemm} \label{unrApacket}
Let $\psi : W_F' \times \mathrm{SL}_2(\C) \rightarrow {}^L \mathbf{G}$ be an Arthur parameter for the $p$-adic field $F$.
Then $\Pi_{\psi}$ contains an unramified representation if and only if $\psi$ is unramified.
In that case, $\Pi_{\psi}$ contains a unique unramified representation $\pi$, which satisfies $\langle \cdot, \pi \rangle = 1$.
\end{lemm}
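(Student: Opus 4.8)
The plan is to transport the whole question to $\GL_N$ through $\mathrm{Std}$, where "unramified'' is transparent, and then to read the statement off Arthur's endoscopic character identities \cite{Arthur}[Theorem 2.2.1] by feeding in the unit $\mathbf 1_K$ of a spherical Hecke algebra; the argument is an induction on $\dim\mathbf G$. First I would record the elementary equivalence: $\mathrm{Std}$ is faithful on $\widehat{\mathbf G}$ (in the even orthogonal case on ${}^L\mathbf G$, and the size-two fibres occurring there consist of parameters conjugate under $\mathrm O_{2n}(\C)\smallsetminus\mathrm{SO}_{2n}(\C)$, hence with the same restriction to inertia and to $\mathrm{SU}(2)$), so $\psi$ is unramified if and only if $\mathrm{Std}\circ\psi$ is trivial on inertia and on $\mathrm{SU}(2)$, if and only if the representation $\pi_N$ of $\GL_N(F)$ attached to the Arthur parameter $\mathrm{Std}\circ\psi$ by the local Langlands correspondence is unramified. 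Note $\pi_N$ is self-dual, hence $\theta$-stable (with $\theta$ the outer automorphism defining the twisted group); when $\pi_N$ is unramified the space $\pi_N^{K_N}$ is one-dimensional and $\theta$ acts on it by a sign. Fix a hyperspecial $K\subset\mathbf G(F)$ (taken $\mathrm{Out}(\mathbf G)$-stable in the even case) together with the associated unramified Whittaker datum used to normalise the characters $\langle\cdot,\pi\rangle$, and normalise $\mathrm{vol}(K)=1$, so that $\mathrm{tr}\,\pi(\mathbf 1_K)=\dim\pi^K\in\Z_{\geq 0}$ for every unitary $\pi$.

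The core step uses the twisted endoscopic character identity for the standard base-change datum: for matching $f$ and its twisted transfer $f^N$ on $\GL_N(F)\rtimes\theta$, the sum over the multiset $\Pi_\psi$ of $\mathrm{tr}\,\pi(f)$ equals, up to an overall sign, $\mathrm{tr}\,\pi_N(f^N)$. The twisted fundamental lemma (available unconditionally for these groups) gives $(\mathbf 1_K)^N=\pm\mathbf 1_{K_N\rtimes\theta}$, so evaluating the identity at $f=\mathbf 1_K$ yields
\[ \sum_{\pi\in\Pi_\psi}\dim\pi^K \;=\; \pm\,\mathrm{tr}\big(\theta\mid\pi_N^{K_N}\big). \]
If $\Pi_\psi$ has an unramified member the left-hand side is a positive integer, so the right-hand side is nonzero, so $\pi_N$ — hence $\psi$ — is unramified: this is the "only if'' direction. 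Conversely, if $\psi$ is unramified the right-hand side is $\pm 1$; the left-hand side is then a nonzero sum of non-negative integers, so exactly one element $\pi_0$ of the multiset $\Pi_\psi$ has $\pi_0^K\neq 0$, it occurs with multiplicity one, and $\dim\pi_0^K=1$. (Uniqueness can also be seen structurally: every member of $\Pi_\psi$ is a constituent of a standard module with the unramified Langlands parameter $\phi_\psi$, $w\mapsto\psi\big(w,\mathrm{diag}(|w|^{1/2},|w|^{-1/2})\big)$, so the spherical Hecke algebra acts on it through the single Satake character of $\phi_\psi$, forcing an unramified such constituent to be the spherical representation with Satake parameter $\phi_\psi(\mathrm{Frob})$.)

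Finally, for $\langle\cdot,\pi_0\rangle$ I would feed $\mathbf 1_K$ into the ordinary endoscopic identities. For semisimple $s\in S_\psi$ the associated endoscopic datum has group $\mathbf H$ — equal to $\mathbf G$ when $s=1$, and otherwise a product of strictly smaller classical groups, unramified since $\psi$ is and $s$ centralises its image — with an unramified Arthur parameter $\psi_{\mathbf H}$, and \cite{Arthur}[Theorem 2.2.1] equates $\sum_{\pi\in\Pi_\psi}\langle s s_\psi,\pi\rangle\,\mathrm{tr}\,\pi(f)$ with a stable combination of characters of $\Pi_{\psi_{\mathbf H}}$ against the transfer $f^{\mathbf H}$. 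By the ordinary fundamental lemma $(\mathbf 1_K)^{\mathbf H}=\mathbf 1_{K_{\mathbf H}}$, and by the inductive hypothesis for $(\mathbf H,\psi_{\mathbf H})$ (trivially the stable identity itself when $s=1$) the packet $\Pi_{\psi_{\mathbf H}}$ has a unique unramified member, of multiplicity one and trivial character, so that combination evaluates to $1$; the left-hand side equals $\langle s s_\psi,\pi_0\rangle$, since only $\pi_0$ contributes, with multiplicity one. Hence $\langle s s_\psi,\pi_0\rangle=1$ for every $s$, i.e.\ $\langle\cdot,\pi_0\rangle$ is the trivial character of $S_\psi$. (Equivalently, this is the statement that the spherical member of an L-packet is the base point for the unramified Whittaker normalisation, cf.\ \cite{CasSha}.)

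The conceptual content is light; the genuine work behind this plan is to know that all the transfers in play — twisted base change and ordinary endoscopy for $\mathbf G$, the latter including its elliptic singular data — carry $\mathbf 1_K$ to a unit multiple of the spherical unit in the normalisation compatible with \cite{Arthur}, these being the fundamental lemmas reduced to the regular semisimple case by the references already cited in this text. The even orthogonal case costs only the cosmetic replacement of representations by $\mathrm{Out}(\mathbf G)$-orbits and $\mathcal H(\mathbf G(F))$ by $\mathcal H'(\mathbf G(F))$, harmless once $K$ and the Whittaker datum are $\mathrm{Out}(\mathbf G)$-stable. I expect the subtlest point to be making precise that every member of $\Pi_\psi$ carries the Satake data of $\psi$ — used in the structural form of uniqueness — which should be cited from the compatibility of Arthur's construction with the Langlands classification rather than reproved here.
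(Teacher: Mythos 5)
Your route is the paper's own: its proof is precisely this reduction — Arthur's computation (in the proof of \cite{Arthur}[Lemma 7.3.4]) that the twisted character $\widetilde{f}_N(\psi)$ of the unit at $\GL_N(\mathcal{O}_F) \rtimes \theta$ equals $1$ for unramified $\psi$ and $0$ otherwise, combined with the twisted fundamental lemma and the endoscopic character identities of \cite{Arthur}[Theorem 2.2.1], the trivial-character claim coming from the ordinary identities plus induction exactly as in your last paragraph. One caution: in the twisted identity the summands carry the individual signs $\langle s_{\psi}, \pi \rangle$ rather than an overall sign, so your positivity step is not literally valid as written and needs the uniqueness input you supply parenthetically (or an averaging over $S_{\psi}$ via the ordinary identities), but this is the same bookkeeping the paper's sketch leaves implicit.
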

\begin{proof}
This is a consequence of the proof of \cite{Arthur}[Lemma 7.3.4].
We borrow Arthur's notations for this (sketch of) proof.
Let $\widetilde{f}$ be the characteristic function of $\mathbf{GL}_N(\mathcal{O}_F) \rtimes \theta \subset \widetilde{\mathbf{GL}}_N(F)$.
Arthur shows that $\widetilde{f}_N(\psi) = 1$ if $\psi$ is unramified.
If $\psi$ is ramified, the representation of $\mathbf{GL}_N(F)$ associated with $\mathrm{Std} \circ\, \psi$ is ramified, thus $\widetilde{f}_N(\psi) = 0$.
The statement of the lemma follows easily from these two identities, the characterization \cite{Arthur}[Theorem 2.2.1] of the local Arthur packets by endoscopic character relations, and the twisted fundamental lemma (which applies even when the residual characteristic of $F$ is small!) proved in \cite{Arthur}[Lemma 7.3.4].
\end{proof}

To state Arthur's global theorem we only consider the split groups $\mathbf{SO}_{2n+1}$, $\mathbf{Sp}_{2n}$ and $\mathbf{SO}_{2n}$ over $\Q$.
From now on $\mathbf{G}$ denotes one of these groups.
By \cite{Arthur}[Theorem 1.4.1], any self-dual cuspidal automorphic representation $\pi$ of $\mathbf{GL}_M$ over a number field has a sign $s(\pi) \in \{ \pm 1 \}$, which intuitively is the type of the conjectural Langlands parameter of $\pi$: $s(\pi) = 1$ (resp.\ $-1$) if this parameter is orthogonal (resp.\ symplectic).
Unsurprisingly if $M$ is odd then $s(\pi)=1$, and if $M$ is even and $s(\pi)=-1$ then the central character $\chi_{\pi}$ of $\pi$ is trivial.
Moreover Arthur characterises $s(\pi)$ using $\mathrm{Sym}^2$ and $\bigwedge^2$ L-functions \cite{Arthur}[Theorem 1.5.3].
This partition of the set of self-dual cuspidal automorphic representations of general linear groups allows to define substitutes for discrete Arthur-Langlands parameters for the group $\mathbf{G}$.
Define $s(\mathbf{G}) = -1$ in the first case ($\widehat{\mathbf{G}} = \mathrm{Sp}_{2n}(\C)$) and $s(\mathbf{G}) = 1$ in the last two cases ($\widehat{\mathbf{G}} = \mathrm{SO}_{2n+1}(\C)$ or $\mathrm{SO}_{2n}(\C)$).
Define $\Psi(\mathbf{G})$ as the set of formal sums $\psi = \boxplus_{i \in I} \pi_i[d_i]$ where
\begin{enumerate}
\item for all $i \in I$, $\pi_i$ is a self-dual cuspidal automorphic representation of $\mathbf{GL}_{n_i}/\Q$,
\item for all $i \in I$, $d_i \in \Z_{\geq 1}$ is such that $s(\pi_i)(-1)^{d_i-1} = s(\mathbf{G})$,
\item $N = \sum_{i \in I} n_id_i$,
\item the pairs $(\pi_i, d_i)$ are distinct,
\item $\prod_{i \in I} \chi_{\pi_i}^{d_i} = 1$, where $\chi_{\pi_i}$ is the central character of $\pi_i$.
\end{enumerate}
The last condition is automatically satisfied if $\widehat{\mathbf{G}} = \mathrm{Sp}_{2n}(\C)$.
The notation $\pi_i[d_i]$ suggests taking the tensor product of the putative Langlands parameter of $\pi_i$ with the $d_i$-dimensional algebraic representation of $\mathrm{SL}_2(\C)$.
Each factor $\pi_i[d_i]$ corresponds to a discrete automorphic representation of $\mathbf{GL}_{n_id_i}$ over $\Q$ by \cite{MoeWal}.

Let $v$ denote a place of $\Q$.
Thanks to the local Langlands correspondence for general linear groups applied to the $(\pi_i)_v$'s, for $\psi \in \Psi(\mathbf{G})$, $\psi$ specialises into a local Arthur parameter $\psi_v : W'_{\Q_v} \times \mathrm{SL}_2(\C) \rightarrow \mathrm{GL}_N(\C)$.
By \cite{Arthur}[Theorem 1.4.2] we can see $\psi_v$ as a genuine local Arthur parameter $W'_{\Q_v} \times \mathrm{SL}_2(\C) \rightarrow {}^L \mathbf{G}$, but in the even orthogonal case $\psi_v$ is well-defined only up to outer automorphism.
To be honest it is not known in general that $\psi_v(W_{\Q_v}')$ is bounded (this would be the Ramanujan-Petersson conjecture), but we will not comment more on this technicality and refer to the discussion preceding \cite{Arthur}[Theorem 1.5.2] for details.
Thus we have a finite multiset $\Pi_{\psi_v}$ of irreducible unitary representations of $\mathbf{G}(\Q_v)$, each of these representations being well-defined only up to outer conjugacy in the even orthogonal case.

As in the local case we want to define $C_{\psi} = \mathrm{Cent}(\psi, \widehat{\mathbf{G}})$ and
$$S_{\psi} = C_{\psi} / C_{\psi}^0Z(\widehat{\mathbf{G}})^{\mathrm{Gal}(\Qbar / \Q)} = C_{\psi}/Z(\widehat{\mathbf{G}}). $$
Observe that this can be done formally for $\psi = \boxplus_{i \in I} \pi_i[d_i]$.
An element $s$ of $C_{\psi}$ is described by $J \subset I$ such that $\sum_{i \in J} n_id_i$ is even, and $s$ corresponds formally to $-\mathrm{Id}$ on the space of $\boxplus_{i \in J} \pi_i[d_i]$ and $\mathrm{Id}$ on the space of $\boxplus_{i \in I \smallsetminus J} \pi_i[d_i]$.
Thus one can define a finite $2$-group $S_{\psi}$ along with a natural morphism $S_{\psi} \rightarrow S_{\psi_v}$ for any place $v$ of $\Q$.
The last ingredient in Arthur's global theorem is the character $\epsilon_{\psi}$ of $S_{\psi}$.
It is defined in terms of the root numbers $\epsilon(\pi_i \times \pi_j, 1/2)$ just after \cite{Arthur}[Theorem 1.5.2].
If all the $d_i$'s are equal to $1$, in which case we say that $\psi$ is formally tempered, then $\epsilon_{\psi} = 1$.

Fix a global Whittaker datum for $\mathbf{G}$, inducing a family of Whittaker data for $\mathbf{G}_{\Q_v}$ where $v$ ranges over the places of $\Q$.
Our reductive group is defined over $\Z$, and the global Whittaker datum can be chosen so that for any prime number $p$ it induces an unramified Whittaker datum on $\mathbf{G}(\Qp)$ with respect to the hyperspecial subgroup $\mathbf{G}(\Zp)$.
Let $K_{\infty}$ be a maximal compact subgroup of $\mathbf{G}(\R)$, and denote $\mathfrak{g} = \C \otimes_{\R} \mathrm{Lie}(\mathbf{G}(\R))$.
The following is a specialization of the general theorem \cite{Arthur}[Theorem 1.5.2] to the ``everywhere unramified'' case, using Lemma \ref{unrApacket}.
\begin{theo} \label{theoArthurunr}
Recall that $\mathcal{A}_{\mathrm{disc}}(\mathbf{G}(\Q) \backslash \mathbf{G}(\A))$ is the space of $K_{\infty} \times \mathbf{G}(\widehat{\Z})$-finite and $Z(U(\mathfrak{g}))$-finite functions in the discrete spectrum $L^2_{\mathrm{disc}}(\mathbf{G}(\Q) \backslash \mathbf{G}(\A))$.
Let $\Psi(\mathbf{G})^{\mathrm{unr}}$ be the set of $\psi = \boxplus_i \pi_i[d_i] \in \Psi(\mathbf{G})$ such that for any $i$, $\pi_i$ is unramified at every prime.
There is a $\mathcal{H}'(\mathbf{G}(\R))$-equivariant isomorphism
$$ \mathcal{A}_{\mathrm{disc}}(\mathbf{G}(\Q) \backslash \mathbf{G}(\A))^{\mathbf{G}(\widehat{\Z})} \simeq \bigoplus_{\psi \in \Psi(\mathbf{G})^{\mathrm{unr}}}\ \bigoplus_{\substack{\pi_{\infty} \in \Pi_{\psi_{\infty}} \\ \langle \cdot, \pi_{\infty} \rangle = \epsilon_{\psi}}} m_{\psi} \pi_{\infty} $$
where $m_{\psi} = 1$ except if $\mathbf{G}$ is even orthogonal and for all $i$ $n_id_i$ is even, in which case $m_{\psi} = 2$.

For $\pi_{\infty} \in \Pi_{\psi_{\infty}}$ the character $\langle \cdot, \pi_{\infty} \rangle$ of $S_{\psi_{\infty}}$ induces a character of $S_{\psi}$ using the morphism $S_{\psi} \rightarrow S_{\psi_{\infty}}$, and the inner direct sum ranges over the $\pi_{\infty}$'s such that this character of $S_{\psi}$ is equal to $\epsilon_{\psi}$.
\end{theo}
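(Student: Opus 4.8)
The plan is to obtain this statement from Arthur's classification of the discrete spectrum \cite{Arthur}[Theorem 1.5.2] by taking $\mathbf{G}(\widehat{\Z})$-invariants; essentially all the work lies in \cite{Arthur} and in Lemma \ref{unrApacket}, and what remains is bookkeeping. First I would note that the set $\Psi(\mathbf{G})$ defined above by conditions (1)--(5) is exactly Arthur's set $\Psi_2(\mathbf{G})$ of square-integrable formal parameters (condition (2) says that each $\pi_i[d_i]$ is of the type dual to $\mathbf{G}$, and condition (5) that $\det \circ\, \mathrm{Std} \circ \psi = \eta_{\mathbf{G}}$), so that \cite{Arthur}[Theorem 1.5.2] provides an isomorphism
$$ L^2_{\mathrm{disc}}(\mathbf{G}(\Q) \backslash \mathbf{G}(\A)) \simeq \bigoplus_{\psi \in \Psi(\mathbf{G})} \ \bigoplus_{\substack{\pi \in \Pi_{\psi} \\ \langle \cdot, \pi \rangle = \epsilon_{\psi}}} m_{\psi}\, \pi, $$
where $\Pi_{\psi}$ consists of the restricted tensor products $\pi = \pi_{\infty} \otimes \bigotimes'_p \pi_p$ with $\pi_v \in \Pi_{\psi_v}$ for all places $v$, where $\langle \cdot, \pi \rangle$ is the character $\prod_v \langle \cdot, \pi_v \rangle$ of $S_{\psi}$ (each local character pulled back along $S_{\psi} \rightarrow S_{\psi_v}$), and $m_{\psi} \in \{1,2\}$ is as in the statement; in the even orthogonal case this is understood with $\mathrm{Out}(\mathbf{G})$-orbits of representations and with the invariant Hecke algebra, as in \cite{Arthur}. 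This decomposition is $\mathbf{G}(\A_f)$-equivariant, so taking $\mathbf{G}(\widehat{\Z})$-invariants distributes over both direct sums; retaining, as in the definition of $\mathcal{A}_{\mathrm{disc}}$, the $K_{\infty}$- and $Z(U(\mathfrak{g}))$-finite vectors then yields $\mathcal{A}_{\mathrm{disc}}(\mathbf{G}(\Q) \backslash \mathbf{G}(\A))^{\mathbf{G}(\widehat{\Z})}$ on the left.

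Next I would determine, for a fixed $\psi = \boxplus_i \pi_i[d_i] \in \Psi(\mathbf{G})$, the $\mathbf{G}(\widehat{\Z})$-invariant part of its summand. A member $\pi = \pi_{\infty} \otimes \bigotimes'_p \pi_p$ satisfies $\pi^{\mathbf{G}(\widehat{\Z})} \neq 0$ exactly when $\pi_p$ is unramified for every prime $p$. For each prime $p$, the local parameter $\psi_p : W'_{\Qp} \times \mathrm{SL}_2(\C) \rightarrow {}^L\mathbf{G}$ is built from the local Langlands parameters of the $(\pi_i)_p$, hence $\psi_p$ is unramified if and only if every $\pi_i$ is unramified at $p$ (an unramified representation of $\mathrm{GL}_{n_i}(\Qp)$ has unramified Langlands parameter, and the Arthur $\mathrm{SL}_2$ does not affect the restriction to $W'_{\Qp}$). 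By Lemma \ref{unrApacket}, $\Pi_{\psi_p}$ contains an unramified representation if and only if $\psi_p$ is unramified, and in that case a unique one, $\pi_p^{\mathrm{unr}}$, which has a one-dimensional space of $\mathbf{G}(\Zp)$-fixed vectors and satisfies $\langle \cdot, \pi_p^{\mathrm{unr}} \rangle = 1$. Therefore the $\psi$-summand has non-zero $\mathbf{G}(\widehat{\Z})$-invariants precisely when $\psi \in \Psi(\mathbf{G})^{\mathrm{unr}}$, and for such $\psi$ the surviving members are those $\pi$ with $\pi_p = \pi_p^{\mathrm{unr}}$ for all $p$, for which $\pi^{\mathbf{G}(\widehat{\Z})} \simeq \pi_{\infty}$ as $(\mathfrak{g}, K_{\infty})$-modules (as $\mathcal{H}'(\mathbf{G}(\R))$-modules in the even orthogonal case).

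Finally I would rewrite the character condition: for $\psi \in \Psi(\mathbf{G})^{\mathrm{unr}}$ and $\pi$ with $\pi_p = \pi_p^{\mathrm{unr}}$ for all $p$, the equality $\langle \cdot, \pi_p^{\mathrm{unr}} \rangle = 1$ shows that $\langle \cdot, \pi \rangle$ is the pullback of $\langle \cdot, \pi_{\infty} \rangle$ along $S_{\psi} \rightarrow S_{\psi_{\infty}}$, so the condition $\langle \cdot, \pi \rangle = \epsilon_{\psi}$ becomes exactly the condition appearing in the inner sum of the theorem. Combining the three steps gives the claimed $\mathcal{H}'(\mathbf{G}(\R))$-equivariant isomorphism, with $m_{\psi}$ inherited verbatim from \cite{Arthur}[Theorem 1.5.2]. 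I expect no real obstacle beyond invoking Arthur's results correctly: the only points requiring care are the equivalence ``$\psi \in \Psi(\mathbf{G})^{\mathrm{unr}}$'' $\Leftrightarrow$ ``$\psi_p$ unramified for all $p$'' and the reduction of the global character condition to its archimedean part — both consequences of $\langle \cdot, \pi_p^{\mathrm{unr}} \rangle = 1$ — together with the consistent use, in the even orthogonal case, of $\mathrm{Out}(\mathbf{G})$-orbits and of the algebra $\mathcal{H}'$ throughout, and the correct handling of $m_{\psi}$.
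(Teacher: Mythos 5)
Your proposal is correct and follows essentially the same route as the paper, which presents Theorem \ref{theoArthurunr} precisely as a specialization of \cite{Arthur}[Theorem 1.5.2] to the everywhere-unramified case via Lemma \ref{unrApacket}: taking $\mathbf{G}(\widehat{\Z})$-invariants, using the uniqueness of the unramified member of each local packet and the triviality of its character to reduce the global character condition to its archimedean part, with $m_{\psi}$ inherited from Arthur. Your bookkeeping (including the even orthogonal conventions with $\mathrm{Out}(\mathbf{G})$-orbits and $\mathcal{H}'$) matches what the paper intends.
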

In the even orthogonal case, $\pi_{\infty}$ is only an $\mathrm{Out}(\mathbf{G}_{\R})$-orbit of irreducible representations, and it does not seem possible to resolve this ambiguity at the moment.
Nevertheless it disappears in the global setting.
There is a splitting $\mathrm{Out}(\mathbf{G}) \rightarrow \mathrm{Aut}(\mathbf{G})$ such that $\mathrm{Out}(\mathbf{G})$ preserves $\mathbf{G}(\widehat{\Z})$, and thus if $\{X_1, X_2 \}$ is an $\mathrm{Out}(\mathbf{G}_{\R})$-orbit of isomorphism classes of irreducible unitary $(\mathfrak{g}, K_{\infty})$-modules, then $X_1$ and $X_2$ have the same multiplicity in $ \mathcal{A}_{\mathrm{disc}}(\mathbf{G}(\Q) \backslash \mathbf{G}(\A))^{\mathbf{G}(\widehat{\Z})}$.

\subsubsection{The spectral side from an endoscopic perspective}

We keep the notations from the previous section.
Suppose now that $\mathbf{G}(\R)$ has discrete series, i.e.\ $\mathbf{G}$ is not $\mathbf{SO}_{2n}$ with $n$ odd.
Let $\lambda$ be a dominant weight for $\mathbf{G}_{\C}$.
Using Theorem \ref{theoArthurunr} we can write the spectral side of the trace formula \ref{spectralside} as
\begin{equation} \label{spectralsideendo}
\sum_{\psi \in \Psi(\mathbf{G})^{\mathrm{unr}}}\ \sum_{\substack{\pi_{\infty} \in \Pi_{\psi_{\infty}} \\ \langle \cdot, \pi_{\infty} \rangle = \epsilon_{\psi}}} m_{\psi} \mathrm{EP}(\pi_{\infty} \otimes V_{\lambda}^*).
\end{equation}
We need to be cautious here since $\mathrm{EP}(\pi_{\infty} \otimes V_{\lambda}^*)$ is not well-defined in the even orthogonal case.
If $\pi_{\infty}$ is the restriction to $\mathcal{H}'(\mathbf{G}(\R))$ of two non-isomorphic $(\mathfrak{g}, K_{\infty})$-modules $\pi_{\infty}^{(1)}$ and $\pi_{\infty}^{(2)}$, we \emph{define}
$$ \mathrm{EP}(\pi_{\infty} \otimes V_{\lambda}^*) = \frac{1}{2} \mathrm{EP}\left((\pi_{\infty}^{(1)} \oplus \pi_{\infty}^{(2)}) \otimes V_{\lambda}^*\right). $$
In \ref{spectralsideendo} we can restrict the sum to $\pi_{\infty}$'s whose infinitesimal character equals that of $V_{\lambda}$ (up to outer automorphism in the even orthogonal case), which is $\lambda + \rho$ via Harish-Chandra's isomorphism, where $2\rho$ is the sum of the positive roots.
Thanks to the work of Mezo, we can identify the infinitesimal character of the elements of $\Pi_{\psi_{\infty}}$.
To lighten notation, we drop the subscript $\infty$ temporarily and consider an archimedean Arthur parameter $\psi : W_{\R} \times \mathrm{SL}_2(\C) \rightarrow {}^L \mathbf{G}$.
Recall that $W_{\C} = \C^{\times}$, $W_{\R} = W_{\C} \sqcup j W_{\C}$ where $j^2 = -1 \in W_{\C}$ and for any $z \in W_{\C}$, $jzj^{-1} = \bar{z}$.
Define a Langlands parameter $\varphi_{\psi}$ by composing $\psi$ with $W_{\R} \rightarrow W_{\R} \times \mathrm{SL}_2(\C)$ mapping $w \in W_{\R}$ to
$$ \left( w, \begin{pmatrix} ||w||^{1/2} & 0 \\ 0 & ||w||^{-1/2} \end{pmatrix}\right) $$
where $||\cdot|| : W_{\R} \rightarrow \R_{>0}$ is the unique morphism mapping $z \in W_{\C}$ to $z \bar{z}$.
Let $\mathcal{T}$ be a maximal torus in $\widehat{\mathbf{G}}$.
Conjugating by an element of $\widehat{\mathbf{G}}$ if necessary, we can assume that $\varphi_{\psi}(W_{\C}) \subset \mathcal{T}$ and write $\varphi_{\psi}(z) = \mu_1(z) \mu_2(\bar{z})$ for $z \in W_{\C}$, where $\mu_1, \mu_2 \in \C \otimes_{\Z} X_*(\mathcal{T})$ are such that $\mu_1 - \mu_2 \in X_*(\mathcal{T})$.
The conjugacy class of $(\mu_1, \mu_2)$ under the Weyl group $W(\mathcal{T}, \widehat{\mathbf{G}})$ is well-defined.
Note that for any maximal torus $\mathbf{T}$ of $\mathbf{G}_{\C}$ we can see $\mu_1, \mu_2$ as elements of $\C \otimes_{\Z} X^*(\mathbf{T})$, again canonically up to the action of the Weyl group.
\begin{lemm} \label{lemmCharInfArthur}
The Weyl group orbit of $\mu_1$ is the infinitesimal character of any element of $\Pi_{\psi}$.
\end{lemm}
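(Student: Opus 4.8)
The plan is to obtain the infinitesimal character of the members of $\Pi_\psi$ from Arthur's characterisation of this packet by endoscopic character identities, using that endoscopic transfer respects infinitesimal characters; the genuinely archimedean input is the spectral twisted transfer from $\widetilde{\mathbf{GL}}_N$ realised by Mezo.

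First I would treat the $\mathbf{GL}_N$ side. Let $\pi'$ be the representation of $\mathbf{GL}_N(\R)$ attached to $\mathrm{Std}\circ\psi$ by the local Langlands correspondence, i.e.\ the Langlands quotient built from the parameter $\mathrm{Std}\circ\varphi_\psi$. Conjugating so that $\varphi_\psi(W_\C)\subset\mathcal{T}$ and writing $\varphi_\psi(z)=\mu_1(z)\mu_2(\bar z)$ as in the statement, the restriction of $\mathrm{Std}\circ\varphi_\psi$ to $W_\C$ is $z\mapsto\mathrm{Std}(\mu_1)(z)\,\mathrm{Std}(\mu_2)(\bar z)$, and by the standard normalisation of the archimedean correspondence for $\mathbf{GL}_N$ the infinitesimal character of $\pi'$ is the $S_N$-orbit of $\mathrm{Std}(\mu_1)\in\C^N$ (the Harish-Chandra parameters of the irreducible constituents of the corresponding standard module are the permutations of the ``holomorphic exponents'' $\mathrm{Std}(\mu_1)$). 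I would then record that $\mathrm{Std}$ restricts to an embedding $\mathcal{T}\hookrightarrow\mathcal{T}_{\mathrm{GL}}$ of maximal tori, hence an embedding $X_*(\mathcal{T})\otimes\C\hookrightarrow\C^N$; identifying $X_*(\mathcal{T})$ with $X^*(\mathbf{T})$ for a maximal torus $\mathbf{T}$ of $\mathbf{G}_\C$ and using Harish-Chandra's isomorphisms, this is exactly the map $(X^*(\mathbf{T})\otimes\C)/W(\mathbf{T},\mathbf{G})\to\C^N/S_N$ induced on infinitesimal characters by the algebra homomorphism $Z(U(\mathfrak{g}))\to Z(U(\mathfrak{gl}_N))^\theta$ attached to $\mathrm{Std}$, and moreover (a short case-by-case check using the explicit form of $\mathcal{T}\hookrightarrow\mathcal{T}_{\mathrm{GL}}$ for $\mathbf{SO}_{2n+1}$, $\mathbf{Sp}_{2n}$, $\mathbf{SO}_{2n}$) a point of $X^*(\mathbf{T})\otimes\C$ is determined up to $W(\mathbf{T},\mathbf{G})$ by the $S_N$-orbit of its image, except that in the even orthogonal case one recovers it only up to the outer automorphism.

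Next I would invoke \cite{Arthur}[Theorem 2.2.1]: the packet $\Pi_\psi$ --- or, in the even orthogonal case, its image in the representations of $\mathcal{H}'(\mathbf{G}(\R))$ --- is characterised by identities expressing the transfer to $\mathbf{G}$ of the $\theta$-twisted character of $\pi'$, and more generally of its twists by elements $s\in S_\psi$, as a sum $\sum_{\pi\in\Pi_\psi}c_s(\pi)\,\Theta_\pi$ with all coefficients $c_s(\pi)\in\{\pm1\}$, together with ordinary endoscopic identities. The $\theta$-twisted character of $\pi'$ transforms under $Z(U(\mathfrak{gl}_N))^\theta$ through the infinitesimal character of $\pi'$; since transfer intertwines the action of $Z(U(\mathfrak{g}))$ with that of $Z(U(\mathfrak{gl}_N))^\theta$ via the homomorphism above, each distribution $\sum_{\pi\in\Pi_\psi}c_s(\pi)\,\Theta_\pi$ transforms under $Z(U(\mathfrak{g}))$ through a single character, the image of the infinitesimal character of $\pi'$, which by the first step is the $W(\mathbf{T},\mathbf{G})$-orbit of $\mu_1$. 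As the $\Theta_\pi$ for $\pi\in\Pi_\psi$ are linearly independent and every $c_s(\pi)$ is nonzero, every member of $\Pi_\psi$ has infinitesimal character $W(\mathbf{T},\mathbf{G})\cdot\mu_1$.

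The main obstacle will be making rigorous the statement that real twisted endoscopic transfer from $\widetilde{\mathbf{GL}}_N$ to the split classical group $\mathbf{G}$ respects infinitesimal characters through the map attached to $\mathrm{Std}$: this belongs to the spectral theory of real (twisted) endoscopy developed by Shelstad and Mezo and is already tacit in the formulation of \cite{Arthur}[Theorem 2.2.1], so I would cite it rather than reprove it. A secondary point is the bookkeeping in the even orthogonal case, where $\psi$ is defined only up to the outer automorphism of ${}^L\mathbf{G}$, so $\mu_1$ is defined only modulo $W(\mathcal{T},\widehat{\mathbf{G}})$, and a ``member of $\Pi_\psi$'' is an $\mathrm{Out}(\mathbf{G}_\R)$-orbit whose common infinitesimal character is itself defined only modulo $W(\mathbf{T},\mathbf{G})$; one checks the two ambiguities coincide. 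As a sanity check, when $\psi|_{\mathrm{SL}_2(\C)}=1$ one has $\varphi_\psi=\psi$ and $\Pi_\psi$ is the $L$-packet $\Pi_\varphi$, and the assertion reduces to the classical description of the infinitesimal character of the members of a real $L$-packet.
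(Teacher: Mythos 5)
Your proposal is correct and follows essentially the same route as the paper: the paper's proof likewise invokes the characterisation of $\Pi_{\psi}$ by the twisted (and standard) endoscopic character identities of \cite{Arthur}[Theorem 2.2.1] involving the $\mathbf{GL}_N(\R)$-representation with parameter $\mathrm{Std}\circ\varphi_{\psi}$, and then cites the equivariance of twisted endoscopic transfer for the actions of the centers of the enveloping algebras (\cite{Mezo}[Lemma 24], see also \cite{WalTTF1}[Corollaire 2.8]), which is exactly the Shelstad--Mezo input you propose to cite. Your additional bookkeeping (computing the $\mathbf{GL}_N$ infinitesimal character as the $S_N$-orbit of $\mathrm{Std}(\mu_1)$, the linear-independence argument, and the outer-automorphism ambiguity in the even orthogonal case) just spells out details the paper leaves implicit.
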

\begin{proof}
Recall \cite{Arthur}[Theorem 2.2.1] that the packet $\Pi_{\psi}$ is characterised by twisted and standard endoscopic character identities involving the representation of $\mathbf{GL}_N(\R)$ having Langlands parameter $\mathrm{Std} \circ \varphi_{\psi}$.
The lemma follows from \cite{Mezo}[Lemma 24] (see also \cite{WalTTF1}[Corollaire 2.8]), which establishes the equivariance of twisted endoscopic transfer for the actions of the centers of the envelopping algebras.
\end{proof}
Attached to $\lambda$ is a unique (up to $\widehat{\mathbf{G}}$-conjugacy) \emph{discrete} parameter $\varphi_{\lambda} : W_{\R} \rightarrow {}^L \mathbf{G}$ having infinitesimal character $\lambda + \rho$.
We explicit the $\mathrm{GL}_N(\C)$-conjugacy class of $\mathrm{Std} \circ \varphi_{\lambda}$ in each case.
For $w \in \frac{1}{2}\Z_{\geq 0}$ it is convenient to denote the Langlands parameter $W_{\R} \rightarrow \mathrm{GL}_2(\C)$
$$ I_w = \mathrm{Ind}_{W_{\C}}^{W_{\R}} \left(z \mapsto (z/|z|)^{2w}\right) :\ z \in W_{\C} \mapsto \begin{pmatrix}
(z/|z|)^{2w} & 0 \\ 0 & (z/|z|)^{-2w} \end{pmatrix},\ j \mapsto \begin{pmatrix} 0 & (-1)^{2w} \\ 1 & 0 \end{pmatrix}. $$
Note that this was denoted $I_{2w}$ in \cite{ChRe} to emphasise motivic weight in a global setting.
We choose to emphasise Hodge weights, i.e.\ eigenvalues of the infinitesimal character: our $I_w$ has Hodge weights $w$ and $-w$.
Let $\epsilon_{\C / \R}$ be the non-trivial continuous character $W_{\R} \rightarrow \{ \pm 1 \}$, so that $I_0 = 1 \oplus \epsilon_{\C / \R}$.
If $\mathbf{G} = \mathbf{SO}_{2n+1}$, we can write $\lambda = k_1 e_1 + \dots + k_n e_n$ where $k_1 \geq \dots \geq k_n \geq 0$ are integers, and $\rho = (n-\frac{1}{2}) e_1 + (n-\frac{3}{2}) e_2 + \dots + \frac{1}{2}e_n$.
In this case $\mathrm{Std} \circ \varphi_{\lambda}$ is
$$ \bigoplus_{r=1}^n I_{k_r + n + 1/2 - r}.  $$
If $\mathbf{G} = \mathbf{Sp}_{2n}$, we can write $\lambda = k_1 e_1 + \dots + k_n e_n$ where $k_1 \geq \dots \geq k_n \geq 0$ are integers, and $\rho = n e_1 + (n-1) e_2 + \dots + e_n$.
Then $\mathrm{Std} \circ \varphi_{\lambda}$ is
$$ \epsilon_{\C / \R}^n \oplus \bigoplus_{r=1}^n I_{k_r + n + 1 - r}.  $$
Finally, if $\mathbf{G} = \mathbf{SO}_{4n}$, we can write $\lambda = k_1 e_1 + \dots + k_{2n} e_{2n}$ where $k_1 \geq \dots \geq k_{2n-1} \geq |k_{2n}|$ are integers, and $\rho = (2n-1) e_1 + (2n-2) e_2 + \dots + e_{2n-1}$.
Then $\mathrm{Std} \circ \varphi_{\lambda}$ is
$$ \bigoplus_{r=1}^{2n} I_{k_r + 2n - r}.  $$
Replacing $(k_1, \dots, k_{2n-1}, k_{2n})$ with $(k_1, \dots, k_{2n-1}, -k_{2n})$ yields the same conjugacy class under $\mathrm{GL}_N(\C)$.

From this explicit description one can deduce several restrictions on the global parameters $\psi \in \Psi(\mathbf{G})^{\mathrm{unr}}$ contributing non-trivially to the spectral side \ref{spectralsideendo}.
These observations were already made in \cite{ChRe}, using a different formulation.
We define $\Psi(\mathbf{G})^{\lambda}$ as the subset of $\Psi(\mathbf{G})$ consisting of $\psi$ such that the infinitesimal character of $\psi_{\infty}$ is equal to $\lambda + \rho$.
Define also $\Psi(\mathbf{G})^{\mathrm{unr}, \lambda} = \Psi(\mathbf{G})^{\mathrm{unr}} \cap \Psi(\mathbf{G})^{\lambda}$.
\begin{enumerate}
\item In the first two cases ($\mathbf{G} = \mathbf{SO}_{2n+1}$ of $\mathbf{Sp}_{2n}$) the infinitesimal character of $\mathrm{Std} \circ \varphi_{\lambda}$ is algebraic and regular in the sense of Clozel \cite{Clozel}.
Clozel's definition of ``algebraic'' is ``C-algebraic'' in the sense of \cite{BuzGee}, and we will also use the term ``C-algebraic'' to avoid confusion.
In the third case ($\mathbf{G} = \mathbf{SO}_{4n}$) we have that $||\cdot||^{1/2} \otimes \left(\mathrm{Std} \circ \varphi_{\lambda}\right)$ is C-algebraic, but not always regular.
It is regular if and only if $k_{2n} \neq 0$.
In all cases, Clozel's purity lemma \cite{Clozel}[Lemme 4.9] implies that if $\psi = \boxplus_i \pi_i[d_i] \in \Psi(\mathbf{G})^{\lambda}$, then for all $i$ the self-dual cuspidal automorphic representation $\pi_i$ of $\mathbf{GL}_{n_i} / \Q$ is \emph{tempered} at the real place.
Equivalently, $\psi_{\infty}(W_{\R})$ is bounded.
\item Let $\Psi(\mathbf{G})_{\mathrm{sim}}$ be the set of \emph{simple} formal Arthur parameters in $\Psi(\mathbf{G})$, i.e.\ those $\psi = \boxplus_{i \in I} \pi_i[d_i]$ such that $I = \{i_0\}$ and $d_{i_0} = 1$.
Denote $\Psi(\mathbf{G})_{\mathrm{sim}}^{\lambda} = \Psi(\mathbf{G})_{\mathrm{sim}} \cap \Psi(\mathbf{G})^{\lambda}$.
Then $\Psi(\mathbf{G})_{\mathrm{sim}}^{\lambda}$ is the set of self-dual cuspidal automorphic representations of $\mathbf{GL}_N / \Q$ such that the central character of $\pi$ is trivial and the local Langlands parameter of $\pi_{\infty}$ is $\mathrm{Std} \circ \varphi_{\lambda}$.
Indeed in all three cases $\mathrm{Std} \circ \varphi_{\lambda}$ is either orthogonal or symplectic, and thus $\pi_{\infty}$ \emph{determines} $s(\pi)$.
\item Let $m \geq 1$ and consider a self-dual cuspidal automorphic representation $\pi$ of $\mathbf{GL}_{2m} / \Q$ such that $|\det|^{1/2} \otimes \pi$ is C-algebraic regular.
Self-duality implies that the central character $\chi_{\pi}$ of $\pi$ is quadratic, i.e.\ $\chi_{\pi} : \A^{\times} / \Q^{\times} \rightarrow \{ \pm 1 \}$.
Since $|\det|^{1/2} \otimes \pi$ is C-algebraic and regular, there are unique integers $w_1 > \dots > w_m > 0$ such that the local Langlands parameter of $\pi_{\infty}$ is
$$ \bigoplus_{r=1}^m I_{w_r}, $$
which implies that $\chi_{\pi}|_{\R^{\times}}(-1) = (-1)^m$.
If moreover we assume that $\pi$ is everywhere unramified, then $\chi_{\pi}$ is trivial on $\prod_p \Zp^{\times}$.
Since $\A^{\times} = \Q^{\times} \R_{>0} \prod_p \Zp^{\times}$, this implies that $\chi_{\pi}$ is trivial, and thus $m$ must be even.
\item The previous point has the following important consequence for our inductive computations.
Let $\mathbf{G}$ be a split symplectic or special orthogonal group admitting discrete series at the real place, and $\lambda$ a dominant weight for $\mathbf{G}$.
Let $\psi = \boxplus_i \pi_i[d_i] \in \Psi(\mathbf{G})^{\mathrm{unr}, \lambda}$.
Then for any $i$, there is a split symplectic or special orthogonal group $\mathbf{G}'$ admitting discrete series at the real place and a dominant weight $\lambda'$ for $\mathbf{G}'$ such that $\pi_i \in \Psi(\mathbf{G}')^{\mathrm{unr}, \lambda'}_{\mathrm{sim}}$.
We emphasise that this holds even if $\mathbf{G} = \mathbf{SO}_{4n}$ and $\lambda = k_1 e_1 + \dots + k_{2n} e_{2n}$ with $k_{2n} = 0$.
To be precise, we have the following classification:
\begin{enumerate}
\item $\mathbf{G} = \mathbf{SO}_{2n+1}$ and thus $\widehat{\mathbf{G}} = \mathrm{Sp}_{2n}(\C)$.
For a dominant weight $\lambda$ and $\psi = \boxplus_{i \in I} \pi_i[d_i] \in \Psi(\mathbf{G})^{\mathrm{unr}, \lambda}$, there is a canonical decomposition $I = I_1 \sqcup I_2 \sqcup I_3$ where
\begin{enumerate}
\item for all $i \in I_1$, $d_i$ is odd, $n_i$ is even and $\pi_i \in \Psi(\mathbf{SO}_{n_i+1})^{\mathrm{unr}, \lambda'}_{\mathrm{sim}}$,
\item for all $i \in I_2$, $d_i$ is even, $n_i$ is divisible by $4$ and $\pi_i \in \Psi(\mathbf{SO}_{n_i})^{\mathrm{unr}, \lambda'}_{\mathrm{sim}}$,
\item $\mathrm{card}(I_3) \in \{0,1\}$ and if $I_3 = \{ i \}$, $d_i$ is even, $n_i$ is odd and $\pi_i \in \Psi(\mathbf{Sp}_{n_i-1})^{\mathrm{unr}, \lambda'}_{\mathrm{sim}}$.
\end{enumerate}
\item $\mathbf{G} = \mathbf{Sp}_{2n}$ and thus $\widehat{\mathbf{G}} = \mathrm{SO}_{2n+1}(\C)$.
For a dominant weight $\lambda$ and $\psi = \boxplus_{i \in I} \pi_i[d_i] \in \Psi(\mathbf{G})^{\mathrm{unr}, \lambda}$, there is a canonical decomposition $I = I_1 \sqcup I_2 \sqcup I_3$ where
\begin{enumerate}
\item $I_1 = \{ j \}$, $d_j$ is odd, $n_j$ is odd and $\pi_j \in \Psi(\mathbf{Sp}_{n_j-1})^{\mathrm{unr}, \lambda'}_{\mathrm{sim}}$,
\item for all $i \in I_2$, $d_i$ is odd, $n_i$ is divisible by $4$ and $\pi_i \in \Psi(\mathbf{SO}_{n_i})^{\mathrm{unr}, \lambda'}_{\mathrm{sim}}$,
\item for all $i \in I_3$, $d_i$ is even, $n_i$ is even and $\pi_i \in \Psi(\mathbf{SO}_{n_i+1})^{\mathrm{unr}, \lambda'}_{\mathrm{sim}}$.
\end{enumerate}
Note that $n_jd_j = 2n+1 \mod 4$.
\item $\mathbf{G} = \mathbf{SO}_{4n}$ and thus $\widehat{\mathbf{G}} = \mathrm{SO}_{4n}(\C)$.
For a dominant weight $\lambda$ and $\psi = \boxplus_{i \in I} \pi_i[d_i] \in \Psi(\mathbf{G})^{\mathrm{unr}, \lambda}$, there is a canonical decomposition $I = I_1 \sqcup I_2 \sqcup I_3$ where
\begin{enumerate}
\item for all $i \in I_1$, $d_i$ is odd, $n_i$ is divisible by $4$ and $\pi_i \in \Psi(\mathbf{SO}_{n_i})^{\mathrm{unr}, \lambda'}_{\mathrm{sim}}$,
\item for all $i \in I_2$, $d_i$ is even, $n_i$ is even and $\pi_i \in \Psi(\mathbf{SO}_{n_i+1})^{\mathrm{unr}, \lambda'}_{\mathrm{sim}}$,
\item $\mathrm{card}(I_3) \in \{0,2\}$.
If $I_3 = \{ i,j \}$ and up to exchanging $i$ and $j$, $d_i=1$ and $d_j$ is odd, $n_i$ and $n_j$ are odd, and $\pi_i \in \Psi(\mathbf{Sp}_{n_i-1})^{\mathrm{unr}, \lambda'}_{\mathrm{sim}}$ and $\pi_j \in \Psi(\mathbf{Sp}_{n_j-1})^{\mathrm{unr}, \lambda'}_{\mathrm{sim}}$.
\end{enumerate}
\end{enumerate}
Note that in all three cases, if $\lambda$ is \emph{regular} then for any $\psi = \boxplus_{i \in I} \pi_i[d_i] \in \Psi(\mathbf{G})^{\mathrm{unr}, \lambda}$ we have that $\psi_{\infty} = \varphi_{\lambda}$ and thus all $d_i$'s are equal to $1$ (i.e.\ $\psi$ is formally tempered) and moreover in the third case $I_3 = \emptyset$.
\end{enumerate}
As in the introduction, it will be convenient to have a more concrete notation for the sets $\Psi(\mathbf{G})^{\mathrm{unr}, \lambda}_{\mathrm{sim}}$.
\begin{enumerate}
\item For $n \geq 1$, the dominant weights for $\mathbf{G} = \mathbf{SO}_{2n+1}$ are the characters $\lambda = k_1 e_1 + \dots + k_n e_n$ such that $k_1 \geq \dots \geq k_n \geq 0$.
Then $\lambda + \rho = w_1 e_1 + \dots + w_n e_n$ where $w_r = k_r +n+\frac{1}{2}-r$, so that $w_1 > \dots > w_n > 0$ belong to $\frac{1}{2}\Z \smallsetminus \Z$.
Define $S(w_1, \dots, w_n) = \Psi(\mathbf{SO}_{2n+1})^{\mathrm{unr}, \lambda}_{\mathrm{sim}}$, that is the set of self-dual automorphic cuspidal representations of $\mathbf{GL}_{2n} / \Q$ which are everywhere unramified and with Langlands parameter at the real place
$$ I_{w_1} \oplus \dots \oplus I_{w_n}. $$
Equivalently we could replace the last condition by ``with infinitesimal character having eigenvalues $\{ \pm w_1, \dots, \pm w_n \}$''.
Here $S$ stands for ``symplectic'', as $\widehat{\mathbf{G}} = \mathrm{Sp}_{2n}(\C)$.
\item For $n \geq 1$, the dominant weights for $\mathbf{G} = \mathbf{Sp}_{2n}$ are the characters $\lambda = k_1 e_1 + \dots + k_n e_n$ such that $k_1 \geq \dots \geq k_n \geq 0$.
Then $\lambda + \rho = w_1 e_1 + \dots + w_n e_n$ where $w_r = k_r +n+1-r$, so that $w_1 > \dots > w_n > 0$ are integers.
Define $O_o(w_1, \dots, w_n) = \Psi(\mathbf{Sp}_{2n})^{\mathrm{unr}, \lambda}_{\mathrm{sim}}$, that is the set of self-dual automorphic cuspidal representations of $\mathbf{GL}_{2n+1} / \Q$ which are everywhere unramified and with Langlands parameter at the real place
$$ I_{w_1} \oplus \dots \oplus I_{w_n} \oplus \epsilon_{\C / \R}^n. $$
Equivalently we could replace the last condition by ``with infinitesimal character having eigenvalues $\{ \pm w_1, \dots, \pm w_n , 0\}$''.
Here $O_o$ stands for ``odd orthogonal'', as $\widehat{\mathbf{G}} = \mathrm{SO}_{2n+1}(\C)$.
\item For $n \geq 1$, the dominant weights for $\mathbf{G} = \mathbf{SO}_{4n}$ are the characters $\lambda = k_1 e_1 + \dots + k_{2n} e_{2n}$ such that $k_1 \geq \dots \geq k_{2n-1} \geq |k_{2n}|$.
Since we only consider quantities invariant under outer conjugation we assume $k_{2n} \geq 0$.
Then $\lambda + \rho = w_1 e_1 + \dots + w_{2n} e_{2n}$ where $w_r = k_r +n-r$, so that $w_1 > \dots > w_{2n-1} > w_{2n} \geq 0$ are integers.
Define $O_e(w_1, \dots, w_{2n}) = \Psi(\mathbf{SO}_{4n})^{\mathrm{unr}, \lambda}_{\mathrm{sim}}$, that is the set of self-dual automorphic cuspidal representations of $\mathbf{GL}_{4n} / \Q$ which are everywhere unramified and with Langlands parameter at the real place
$$ I_{w_1} \oplus \dots \oplus I_{w_{2n}}. $$
In this case also we could replace the last condition by ``with infinitesimal character having eigenvalues $\{ \pm w_1, \dots, \pm w_{2n} \}$'', even when $k_{2n}=0$.
Here $O_e$ stands for ``even orthogonal'', as $\widehat{\mathbf{G}} = \mathrm{SO}_{4n}(\C)$.
\end{enumerate}

It is now natural to try to compute the cardinality of $\Psi(\mathbf{G})^{\mathrm{unr}, \lambda}_{\mathrm{sim}}$, inductively on the dimension of $\mathbf{G}$.
Observe that for $\psi \in \Psi(\mathbf{G})_{\mathrm{sim}}$, the group $S_{\psi}$ is trivial.
Thus the contribution of any $\psi \in \Psi(\mathbf{G})^{\mathrm{unr}, \lambda}_{\mathrm{sim}}$ to the spectral side \ref{spectralsideendo} is simply
$$ \sum_{\pi_{\infty} \in \Pi_{\psi_{\infty}}} \mathrm{EP}\left( \pi_{\infty} \otimes V_{\lambda}^* \right). $$
Recall that for such a $\psi$, the local Arthur parameter $\psi_{\infty}$ is $\varphi_{\lambda}$.
In that case Arthur defines $\Pi_{\varphi_{\lambda}}$ as the L-packet that Langlands \cite{Langlands} associates with $\varphi_{\lambda}$.
In the next section we will review these packets in more detail, in particular Shelstad's definition of $\langle \cdot, \pi_{\infty} \rangle$ for $\pi_{\infty} \in \Pi_{\varphi_{\lambda}}$, but since $S_{\psi}$ is trivial all that matters for now is that $\mathrm{card}(\Pi_{\varphi_{\lambda}})$ is positive (and easily computed) and that all the representations in $\Pi_{\varphi_{\lambda}}$ are discrete series.
By \cite{BoWa}[ch. III, Thm. 5.1] for any $\pi_{\infty} \in \Pi_{\varphi_{\lambda}}$,
$$ \mathrm{EP} \left(\pi_{\infty} \otimes V_{\lambda}^* \right) = (-1)^{q(\mathbf{G}(\R))} $$
and thus to compute the cardinality of $\Psi(\mathbf{G})^{\mathrm{unr}, \lambda}_{\mathrm{sim}}$ we want to compute the contribution of $\Psi(\mathbf{G})^{\mathrm{unr}, \lambda} \smallsetminus \Psi(\mathbf{G})^{\mathrm{unr}, \lambda}_{\mathrm{sim}}$ to the spectral side \ref{spectralsideendo}.

This is particularly easy if $\lambda$ is regular, since as we observed above in that case any $\psi \in \Psi(\mathbf{G})^{\mathrm{unr}, \lambda}$ is ``formally tempered'' or ``formally of Ramanujan type'', i.e.\ $\psi_{\infty} = \varphi_{\lambda}$.
Moreover $\epsilon_{\psi}$ is trivial.
Shelstad's results reviewed in the next section allow the explicit determination of the number of $\pi_{\infty} \in \Pi_{\varphi_{\lambda}}$ such that $\langle \cdot, \pi_{\infty} \rangle$ is equal to a given character of $S_{\psi_{\infty}}$.

The general case is more interesting.
The determination of $\epsilon_{\psi}$ in the ``conductor one'' case was done in \cite{ChRe}, and the result is simple since it involves only epsilon factors at the real place of $\Q$.
In all three cases, for any $\psi = \boxplus_{i \in I} \pi_i[d_i] \in \Psi(\mathbf{G})^{\mathrm{unr}, \lambda}$ the abelian $2$-group $S_{\psi}$ is generated by $(s_i)_{i \in J}$ where $J = \{ i \in I\,|\, n_id_i \text{ is even}\}$ and $s_i \in C_{\psi}$ is formally $-\mathrm{Id}$ on the space of $\pi_i[d_i]$ and $\mathrm{Id}$ on the space of $\pi_j[d_j]$ for $j \neq i$.
By \cite{ChRe}[(3.10)]
$$ \epsilon_{\psi}(s_i) = \prod_{j \in I \smallsetminus \{i\}} \epsilon(\pi_i \times \pi_j)^{\mathrm{min}(d_i,d_j)} $$
and since $\pi_i$ and $\pi_j$ are everywhere unramified $\epsilon(\pi_i \times \pi_j)$ can be computed easily from the tensor product of the local Langlands parameters of $(\pi_i)_{\infty}$ and $(\pi_j)_{\infty}$.
Note that by \cite{Arthur}[Theorem 1.5.3] $\epsilon(\pi_i \times \pi_j) = 1$ if $s(\pi_i)s(\pi_j)=1$.
The explicit computation of $\Pi_{\psi_{\infty}}$, along with the map $\Pi_{\psi_{\infty}} \rightarrow S_{\psi_{\infty}}^{\wedge}$, does not follow directly from Arthur's work, even in our special case where the infinitesimal character of $\psi_{\infty}$ is that of an algebraic representation $V_{\lambda}$.
We will need to make an assumption (Assumption \ref{assumAJweak}) relating Arthur's packet $\Pi_{\psi_{\infty}}$ to the packets constructed by Adams and Johnson in \cite{AdJo}.
The latter predate Arthur's recent work, in fact \cite{AdJo} has corroborated Arthur's general conjectures: see \cite{ArthurUnip}[§5].
Under this assumption, we will also be able to compute the Euler-Poincaré characteristic of any element of $\Pi_{\psi_{\infty}}$ in section \ref{sectionAJ}.

\begin{rema}
Our original goal was to compute, for a given group $\mathbf{G}/\Q$ as above, dominant weight $\lambda$ and simple $(\mathfrak{g}, K_{\infty})$-module module $X$ with infinitesimal character $\lambda + \rho$, the multiplicity of $X$ in $\mathcal{A}_{\mathrm{disc}}(\mathbf{G}(\Q) \backslash \mathbf{G}(\A))^{\mathbf{G}(\widehat{\Z})}$.
This is possible once the cardinalities of $\Psi(\mathbf{G}')^{\mathrm{unr}, \lambda'}_{\mathrm{sim}}$ are computed, under Assumption \ref{assumAJ} if we do not assume that $\lambda$ is regular.
However, Arthur's endoscopic classification shows that computing $\mathrm{card} \left(\Psi(\mathbf{G}')^{\mathrm{unr}, \lambda'}_{\mathrm{sim}} \right)$ is a more interesting problem from an arithmetic perspective, since conjecturally we are counting the number of self-dual motives over $\Q$ with conductor $1$ and given Hodge weights.
\end{rema}
\begin{rema}
Except in the even orthogonal case with $\lambda = k_1 e_1 + \dots + k_{2n}e_{2n}$ and $k_{2n}=0$, it is known that any $\psi \in \Psi(\mathbf{G})^{\mathrm{unr}, \lambda}_{\mathrm{sim}}$ is tempered also at the finite places by \cite{ClozelPurity}.
\end{rema}
\begin{rema}
If $\mathbf{G}$ is symplectic or even orthogonal, it has non-trivial center $\mathbf{Z}$ isomorphic to $\mu_2$.
Thus $\mathbf{Z}(\R) \subset \mathbf{Z}(\Q) \mathbf{Z}(\widehat{\Z})$, and $\mathbf{Z}(\R)$ acts trivially on $\mathcal{A}_{\mathrm{disc}}(\mathbf{G}(\Q) \backslash \mathbf{G}(\A))^{\mathbf{G}(\widehat{\Z})}$.
This implies that $\Psi(\mathbf{G})^{\mathrm{unr}, \lambda}_{\mathrm{sim}}$ is empty if $\lambda|_{\mathbf{Z}(\R)}$ is not trivial, since $\mathbf{Z}(\R)$ acts by $\lambda$ on any discrete series representation with infinitesimal character $\lambda + \rho$.
Using the concrete description above, it is elementary to deduce that in fact $\Psi(\mathbf{G})^{\mathrm{unr}, \lambda}$ is empty if $\lambda|_{\mathbf{Z}(\R)}$ is not trivial.
\end{rema}

\subsection{Euler-Poincaré characteristic of cohomological archimedean Arthur packets}
\subsubsection{Tempered case: Shelstad's parametrization of L-packets}
\label{sectionDS}

For archimedean local fields in the tempered case the A-packets $\Pi_{\psi}$ in \cite{Arthur} are not defined abstractly using the global twisted trace formula.
Rather, Arthur \emph{defines} $\Pi_{\varphi_{\lambda}}$ as the L-packet that Langlands \cite{Langlands} associates with $\varphi_{\lambda}$, and the map $\Pi_{\varphi_{\lambda}} \rightarrow S_{\varphi_{\lambda}}^{\wedge}, \pi \mapsto \langle \cdot , \pi \rangle$ is defined by Shelstad's work, which we review below.
Mezo \cite{Mezo2} has shown that these Langlands-Shelstad L-packets satisfy the twisted endoscopic character relation \cite{Arthur}[Theorem 2.2.1 (a)], and Shelstad's work contains the ``standard'' endoscopic character relations \cite{Arthur}[Theorem 2.2.1 (b)].

In this section we will only be concerned with the local field $\R$ and thus we drop the subscripts $\infty$, and we denote $\mathrm{Gal}(\C / \R) = \{1, \sigma\}$.
Let $\mathbf{G}$ be a reductive group over $\R$, and denote by $\mathbf{A}_{\mathbf{G}}$ the biggest split torus in the connected center $\mathbf{Z}_{\mathbf{G}}$ of $\mathbf{G}$.
Let us assume that $\mathbf{G}$ has a maximal torus (defined over $\R$) which is anisotropic modulo $\mathbf{A}_{\mathbf{G}}$, i.e.\ $\mathbf{G}(\R)$ has essentially discrete series.
Consider a dominant weight $\lambda_0$ for $(\mathbf{G}_{\mathrm{der}})_{\C}$ defining an algebraic representation $V_{\lambda_0}$ of $\mathbf{G}_{\mathrm{der}}(\C)$ and a continuous character $\chi_0 : \mathbf{Z}_{\mathbf{G}}(\R) \rightarrow \C^{\times}$ such that $\chi_0$ and $\lambda_0$ coincide on $\mathbf{Z}_{\mathbf{G}}(\R) \cap \mathbf{G}_{\mathrm{der}}(\C)$.
Let $\Pi_{\mathrm{disc}}(\lambda_0, \chi_0)$ be the finite set of essentially discrete series representations $\pi$ of $\mathbf{G}(\R)$ such that
\begin{itemize}
\item 
$\pi|_{\mathbf{G}_{\mathrm{der}}(\R)}$ has the same infinitesimal character as $V_{\lambda_0}|_{\mathbf{G}_{\mathrm{der}}(\R)}$,
\item
$\pi|_{\mathbf{Z}_{\mathbf{G}}(\R)} = \chi_0$.
\end{itemize}
Harish-Chandra has shown that inside this L-packet of essentially discrete series, the representations are parameterised by the conjugacy classes (under $\mathbf{G}(\R)$) of pairs $(\mathbf{B}, \mathbf{T})$ where $\mathbf{T}$ is a maximal torus of $\mathbf{G}$ anisotropic modulo $\mathbf{A}_{\mathbf{G}}$ and $\mathbf{B}$ is a Borel subgroup of $\mathbf{G}_{\C}$ containing $\mathbf{T}_{\C}$.
For such a pair $(\mathbf{B}, \mathbf{T})$, $\chi_0$ and the character $\lambda_0$ of $\mathbf{T}_{\mathrm{der}}(\R)$ which is dominant for $\mathbf{B}$ extend uniquely to a character $\lambda_{\mathbf{B}}$ of $\mathbf{T}(\R)$.
If we fix such a pair $(\mathbf{B}, \mathbf{T})$, the pairs $(\mathbf{B}', \mathbf{T})$ which are in the same conjugacy class form an orbit under the subgroup $W_c := W(\mathbf{G}(\R), \mathbf{T}(\R))$ of $W := W(\mathbf{G}(\C), \mathbf{T}(\C))$.
Concretely, if $\pi \in \Pi_{\mathrm{disc}}(\lambda_0, \chi_0)$ is the representation associated with this conjugacy class, then for any $\gamma \in \mathbf{T}(\R)_{\mathbf{G}-\mathrm{reg}}$,
$$ \Theta_{\pi}(\gamma) = (-1)^{q(\mathbf{G})} \sum_{w \in W_c} \frac{\lambda_{w \mathbf{B} w^{-1}}(\gamma)}{\Delta_{w \mathbf{B} w^{-1}}(\gamma)}$$
where $\Theta_{\pi}$ is Harish-Chandra's character for $\pi$, and $\Delta_{\mathbf{B}}(\gamma) = \prod_{\alpha \in R(\mathbf{T}, \mathbf{B})} (1 - \alpha(\gamma)^{-1})$.
Therefore the choice of $(\mathbf{B}, \mathbf{T})$ as a base point identifies the set of conjugacy classes with $W_c \backslash W$, by $g \in N(\mathbf{G}(\C), \mathbf{T}(\C)) \mapsto (g \mathbf{B} g^{-1}, \mathbf{T})$.

Langlands \cite{Langlands} and Shelstad \cite{She1}, \cite{She2}, \cite{She3} gave another formulation for the parameterisation inside an L-packet, more suitable for writing endoscopic character relations.
By definition of the L-group we have a splitting $(\mathcal{B}, \mathcal{T}, ( \mathcal{X}_{\alpha} )_{\alpha \in \Delta})$ of $\widehat{\mathbf{G}}$ which defines a section of $\mathrm{Aut}(\widehat{\mathbf{G}}) \rightarrow \mathrm{Out}(\widehat{\mathbf{G}})$ and ${}^L \mathbf{G} = \widehat{\mathbf{G}} \rtimes W_{\R}$.
Let $(\mathbf{B}, \mathbf{T})$ be as above.
Thanks to $\mathbf{B}$ we have a canonical isomorphism $\widehat{\mathbf{T}} \rightarrow \mathcal{T}$, which can be extended into an embedding of L-groups $\iota : {}^L \mathbf{T} \rightarrow {}^L \mathbf{G}$ as follows.
For $z \in W_{\C}$, define $\iota(z) = \prod_{\alpha \in R_{\mathcal{B}}} \alpha^{\vee}(z/|z|) \rtimes z$ where $R_{\mathcal{B}}$ is the set of roots of $\mathcal{T}$ in $\mathcal{B}$.
Define $\iota(j) = n_0 \rtimes j$ where $n_0 \in N(\widehat{\mathbf{G}}, \mathcal{T}) \cap \widehat{\mathbf{G}}_{\mathrm{der}}$ represents the longest element of the Weyl group $W(\widehat{\mathbf{G}}, \mathcal{T})$ for the order defined by $\mathcal{B}$.
Then $\iota$ is well-defined thanks to \cite{Langlands}[Lemma 3.2].
Since conjugation by $n_0 \rtimes j$ acts by $t \mapsto t^{-1}$ on $\mathcal{T} \cap \widehat{\mathbf{G}}_{\mathrm{der}}$, the conjugacy class of $\iota$ does not depend on the choice of $n_0$.
The character $\lambda_{\mathbf{B}}$ of $\mathbf{T}(\R)$ corresponds to a Langlands parameter $\varphi_{\lambda_{\mathbf{B}}} : W_{\R} \rightarrow {}^L \mathbf{T}$.
If $\mathbf{G}$ is semisimple, $\lambda_{\mathbf{B}}$ is the restriction to $\mathbf{T}(\R)$ of an element of $X^*(\mathbf{T}) = X_*(\mathcal{T})$ and for any $z \in W_{\C}$, $\varphi_{\lambda_{\mathbf{B}}}(z) = \lambda_{\mathbf{B}}(z/|z|)$.
Composing $\varphi_{\lambda_{\mathbf{B}}}$ with $\iota$ we get a Langlands parameter $\varphi : W_{\R} \rightarrow {}^L \mathbf{G}$, whose conjugacy class under $\widehat{\mathbf{G}}$ does not depend on the choice of $(\mathbf{B}, \mathbf{T})$.
Langlands has shown that the map $(\lambda_0, \chi_0) \mapsto \varphi$ is a bijection onto the set of conjugacy classes of discrete Langlands parameters, i.e.\ Langlands parameters $\varphi$  such that $S_{\varphi} := \mathrm{Cent}(\varphi, \widehat{\mathbf{G}})/Z(\widehat{\mathbf{G}})^{\mathrm{Gal}(\C/\R)}$ is finite.

Consider a discrete Langlands parameter $\varphi$, and denote by $\Pi_{\varphi} = \Pi(\lambda_0, \chi_0)$ the corresponding L-packet.
Assume that $\mathbf{G}$ is quasisplit and fix a Whittaker datum (see \cite{Kal} for the general case).
Then Shelstad defines an injective map $\Pi_{\varphi} \rightarrow S_{\varphi}^{\wedge}$, $\pi \mapsto \langle \cdot, \pi \rangle$.
It has the property that $\langle \cdot, \pi \rangle$ is trivial if $\pi$ is the unique generic (for the given Whittaker datum) representation in the L-packet.

Recall the relation between these two parametrizations of the discrete L-packets.
Let $(\mathbf{B}, \mathbf{T})$ be as above, defining an embedding $\iota : {}^L \mathbf{T} \rightarrow {}^L \mathbf{G}$ and recall that $W$ and $W_c$ denote the complex and real Weyl groups.
Let $C_{\varphi} = \mathrm{Cent}(\varphi, \widehat{\mathbf{G}})$, so that $S_{\varphi} = C_{\varphi}/Z(\widehat{\mathbf{G}})^{\mathrm{Gal}(\C/\R)}$.
Using $\iota$ we have an isomorphism between $H^1(\R, \mathbf{T})$ and $\pi_0(C_{\varphi})^{\wedge}$.
We have a bijection
$$ W_c \backslash W \rightarrow \ker \left( H^1(\R, \mathbf{T}) \rightarrow H^1(\R, \mathbf{G}) \right)$$
mapping $g \in N_{\mathbf{G}(\C)}(\mathbf{T}(\C))$ to $(\sigma \mapsto g^{-1} \sigma(g))$.
Kottwitz \cite{KottEllSing} has defined a natural morphism $H^1(\R, \mathbf{G}) \rightarrow \pi_0\left(Z(\widehat{\mathbf{G}})^{\mathrm{Gal}(\C/\R)}\right)^{\wedge}$ and thus the above bijection yields an injection $\eta : W_c \backslash W \rightarrow S_{\varphi}^{\wedge}$.
If $\pi \in \Pi_{\varphi}$ corresponds to (the conjugacy class of) $(\mathbf{B}, \mathbf{T})$ and $\pi' \in \Pi_{\varphi}$ corresponds to $(g\mathbf{B}g^{-1}, \mathbf{T})$, then for any $s \in S_{\varphi}$,
$$ \frac{\langle s, \pi \rangle}{\langle s, \pi' \rangle} = \eta(g)(s). $$
Finally, the generic representation in $\Pi_{\varphi}$ corresponds to a pair $(\mathbf{B}, \mathbf{T})$ as above such that all the simple roots for $\mathbf{B}$ are noncompact.
This is a consequence of \cite{Kostant}[Theorem 3.9] and \cite{Vogan}[Theorem 6.2].
In particular there \emph{exists} such a pair $(\mathbf{B}, \mathbf{T})$.
We will make use of the converse in the non-tempered case.
\begin{lemm} \label{lemmRoppqs}
Let $\mathbf{H}$ be a reductive group over $\R$.
Assume that $\mathbf{T}$ is a maximal torus of $\mathbf{H}$ which is anisotropic modulo $\mathbf{A}_{\mathbf{H}}$, and assume that there exists a Borel subgroup $\mathbf{B} \supset \mathbf{T}_{\C}$ of $\mathbf{H}_{\C}$ such that all the simple roots of $\mathbf{T}$ in $\mathbf{B}$ are non-compact.
Then $\mathbf{H}$ is quasisplit.
\end{lemm}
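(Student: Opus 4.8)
The plan is to deduce the statement by comparing $\mathbf{H}$ with its quasisplit inner form $\mathbf{H}^{*}$; it suffices to prove $\mathbf{H}\simeq\mathbf{H}^{*}$. The hypothesis on $\mathbf{T}$ says exactly that $\mathbf{H}(\R)$ has (essentially) discrete series, and the existence of a maximal torus anisotropic modulo the central torus depends only on the inner class of $\mathbf{H}$ (equivalently, on ${}^{L}\mathbf{H}={}^{L}\mathbf{H}^{*}$); hence $\mathbf{H}^{*}$ also has (essentially) discrete series. Applying the Kostant--Vogan result recalled just before the lemma to the \emph{quasisplit} group $\mathbf{H}^{*}$ (via the generic member of a discrete series L-packet) yields a maximal torus $\mathbf{T}^{*}\subset\mathbf{H}^{*}$ anisotropic modulo $\mathbf{A}_{\mathbf{H}^{*}}$ together with a Borel $\mathbf{B}^{*}\supset\mathbf{T}^{*}_{\C}$ of $\mathbf{H}^{*}_{\C}$ all of whose simple roots are non-compact. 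Passing to adjoint quotients, which affects neither quasisplitness nor the compactness of any root, I may assume throughout that $\mathbf{H}$ and $\mathbf{H}^{*}$ are adjoint, so that the simple roots form a $\Z$-basis of $X^{*}(\mathbf{T})=X^{*}(\mathbf{T}^{*})$.

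Next I would choose an inner twist $\psi:\mathbf{H}_{\C}\xrightarrow{\sim}\mathbf{H}^{*}_{\C}$ and, composing it with inner automorphisms by $\mathbf{H}^{*}(\C)$ and then by $N_{\mathbf{H}^{*}}(\mathbf{T}^{*})(\C)$, arrange that $\psi(\mathbf{T}_{\C})=\mathbf{T}^{*}_{\C}$ and $\psi(\mathbf{B})=\mathbf{B}^{*}$ (possible since any two maximal tori, resp.\ Borels containing a fixed torus, are conjugate over $\C$). Let $\sigma_{\mathbf{H}}$ and $\sigma_{\mathbf{H}^{*}}$ denote the conjugations attached to the two $\R$-structures and set $z_{\sigma}:=\psi\,\sigma_{\mathbf{H}}\,\psi^{-1}\,\sigma_{\mathbf{H}^{*}}^{-1}$. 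This $z_{\sigma}$ is an inner automorphism of $\mathbf{H}^{*}_{\C}$ stabilising $\mathbf{T}^{*}_{\C}$; since both $\mathbf{T}$ and $\mathbf{T}^{*}$ are anisotropic, $\sigma_{\mathbf{H}}$ and $\sigma_{\mathbf{H}^{*}}$ both act by $-1$ on the respective character lattices, so $z_{\sigma}$ acts trivially on $X^{*}(\mathbf{T}^{*})$, whence $z_{\sigma}=\mathrm{Int}(t_{\sigma})$ for a unique $t_{\sigma}\in\mathbf{T}^{*}(\C)$. The condition that $z_{\sigma}\sigma_{\mathbf{H}^{*}}$ be an involution gives the cocycle identity $t_{\sigma}\cdot\sigma_{\mathbf{H}^{*}}(t_{\sigma})=1$, which together with anisotropy of $\mathbf{T}^{*}$ (so $\sigma_{\mathbf{H}^{*}}(v)=\overline{v}^{-1}$) forces $t_{\sigma}=\overline{t_{\sigma}}$; in particular $\chi(t_{\sigma})\in\R^{\times}$ for every $\chi\in X^{*}(\mathbf{T}^{*})$.

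Then I would compare compactness of roots across $\psi$. For a root $\alpha$ of an anisotropic maximal torus the sign $\varepsilon_{\alpha}:=\mathrm{sign}\,B\bigl(E_{\alpha},\sigma(E_{\alpha})\bigr)$, with $B$ the Killing form and $0\neq E_{\alpha}\in\mathfrak{h}_{\alpha}$, is well defined and equals $+1$ precisely when $\alpha$ is non-compact (the standard description of the compact real form on imaginary root vectors). Transporting this identity through $\psi$, and using that $\sigma_{\mathbf{H}}$ is carried to $\mathrm{Int}(t_{\sigma})\circ\sigma_{\mathbf{H}^{*}}$ while $\mathrm{Int}(t_{\sigma})$ acts on $\mathfrak{h}^{*}_{-\alpha}$ by the real scalar $\alpha(t_{\sigma})^{-1}$, one obtains $\varepsilon_{\alpha}(\mathbf{H})=\mathrm{sign}\bigl(\alpha(t_{\sigma})\bigr)\,\varepsilon_{\alpha}(\mathbf{H}^{*})$. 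Since $\psi$ matches the simple roots $\alpha_{1},\dots,\alpha_{\ell}$ of $(\mathbf{B},\mathbf{T})$ with those of $(\mathbf{B}^{*},\mathbf{T}^{*})$, the hypothesis on $\mathbf{B}$ and the choice of $\mathbf{B}^{*}$ give $\varepsilon_{\alpha_{i}}(\mathbf{H})=\varepsilon_{\alpha_{i}}(\mathbf{H}^{*})=+1$, hence $\mathrm{sign}(\alpha_{i}(t_{\sigma}))=+1$, for all $i$. As $\chi\mapsto\mathrm{sign}(\chi(t_{\sigma}))$ is a homomorphism $X^{*}(\mathbf{T}^{*})\to\{\pm1\}$ and the $\alpha_{i}$ form a $\Z$-basis, it is trivial, i.e.\ $\chi(t_{\sigma})>0$ for all $\chi$. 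Therefore $t_{\sigma}=u\cdot\overline{u}$ for some $u\in\mathbf{T}^{*}(\C)$ (take $u(\chi)=\chi(t_{\sigma})^{1/2}$), so $z_{\sigma}$ is a coboundary in $Z^{1}(\Gal(\C/\R),\mathbf{T}^{*}(\C))$; a fortiori its image in $H^{1}(\R,\mathbf{H}^{*})$ is trivial, which means $\mathbf{H}\simeq\mathbf{H}^{*}$, so $\mathbf{H}$ is quasisplit.

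I expect the only delicate point to be the bookkeeping in the second paragraph: arranging the inner twist $\psi$ to be simultaneously compatible with the anisotropic tori and with the chosen Borels, and then identifying the twisting cocycle with an honest torus element $t_{\sigma}$ satisfying $\overline{t_{\sigma}}=t_{\sigma}$. The sign computation $\varepsilon_{\alpha}(\mathbf{H})=\mathrm{sign}(\alpha(t_{\sigma}))\,\varepsilon_{\alpha}(\mathbf{H}^{*})$ and the final descent of the cohomology class are routine once this framework is in place; the one thing to watch is that the non-compactness of \emph{all} simple roots transports, unchanged, to the corresponding statement for $\mathbf{H}^{*}$.
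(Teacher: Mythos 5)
Your argument is correct, but it runs along a different track than the paper's. Both proofs start the same way: compare $\mathbf{H}$ with the quasisplit form $\mathbf{H}^{*}$ in its inner class, which (by the Kostant--Vogan fact recalled just before the lemma) also carries a pair $(\mathbf{B}^{*},\mathbf{T}^{*})$ with $\mathbf{T}^{*}$ anisotropic and all simple roots non-compact, and both ultimately rest on the same sign invariant attached to an imaginary root: your $\varepsilon_{\alpha}=\mathrm{sign}\,B\bigl(E_{\alpha},\sigma(E_{\alpha})\bigr)$ is, up to the positive factor $B(X_{\alpha},Y_{\alpha})$, the paper's scalar $y$ in $\sigma(X_{\alpha})=yY_{\alpha}$. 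The difference is in how the conclusion is extracted. The paper constructs an explicit $\R$-rational isomorphism: non-compactness of the simple roots lets one normalise $\sigma(X_{\alpha})=Y_{\alpha}$ (Shelstad's $\R$-opp splittings) on both sides, and the unique isomorphism matching the two pinned pairs is then Galois-equivariant because $\mathbf{T}_{\C}$ and the simple root subgroups generate. You instead compute the inner-twist cocycle: after passing to adjoint groups and aligning $(\mathbf{B},\mathbf{T})$ with $(\mathbf{B}^{*},\mathbf{T}^{*})$, the twist is $\mathrm{Int}(t_{\sigma})$ with $t_{\sigma}\in\mathbf{T}^{*}(\C)$ real, the relation $\varepsilon_{\alpha}(\mathbf{H})=\mathrm{sign}(\alpha(t_{\sigma}))\,\varepsilon_{\alpha}(\mathbf{H}^{*})$ forces $\alpha_{i}(t_{\sigma})>0$ on the simple roots, and since these form a $\Z$-basis of $X^{*}(\mathbf{T}^{*})$ in the adjoint case, $t_{\sigma}$ is a square in the torus and the class in $H^{1}(\R,\mathbf{H}^{*}_{\mathrm{ad}})$ dies. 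Your route makes the obstruction to quasisplitness completely explicit (the image of $t_{\sigma}$ in $\mathbf{T}^{*}(\C)$ modulo squares, detected on simple roots), at the cost of the adjoint reduction and the cocycle bookkeeping you flag yourself; the paper's route avoids Galois cohomology altogether and produces the $\R$-isomorphism directly, which is why it can stay with semisimple (not necessarily adjoint) groups. Both, like the paper, take as given the standard fact that the quasisplit form of the inner class again has an elliptic maximal torus.
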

\begin{proof}
We can assume that $\mathbf{H}$ is semisimple.
We use the ``$\R$-opp splittings'' of \cite{She3}[§12].
Let $\Delta$ be the set of simple roots of $\mathbf{T}$ in $\mathbf{B}$.
For any $\alpha \in \Delta$ we can choose an $\mathfrak{sl}_2$-triple $(H_{\alpha}, X_{\alpha}, Y_{\alpha})$ in $\mathfrak{h} = \C \otimes_{\R} \mathrm{Lie}(\mathbf{H}(\R))$.
The pair $(X_{\alpha}, Y_{\alpha})$ is not unique: it could be replaced by $(xX_{\alpha}, x^{-1} Y_{\alpha})$ for any $x \in \C^{\times}$.
Since $\sigma(\alpha) = -\alpha$, $\sigma(X_{\alpha}) = y Y_{\alpha}$ for some $y \in \C^{\times}$, and $y \in \R^{\times}$ because $\sigma$ is an involution.
The sign of $y$ does not depend on the choice of $(X_{\alpha}, Y_{\alpha})$, and making some other choice if necessary, we can assume that $y = \pm 1$.
It is easy to check that $\alpha$ is non-compact if and only if $y>0$.
Thus the hypotheses imply the existence of an $\R$-opp splitting, that is a splitting $(X_{\alpha})_{\alpha \in \Delta}$ such that $\sigma(X_{\alpha})=Y_{\alpha}$ for any $\alpha$.
Note that this splitting is unique up to the action of $\mathbf{T}(\R)$.

Let $\mathbf{H}'$ be the quasisplit reductive group over $\R$ such that $\mathbf{H}'$ admits an anisotropic maximal torus and $\mathbf{H}_{\C} \simeq \mathbf{H}'_{\C}$.
We know that $\mathbf{H}'$ admits a pair $(\mathbf{B}', \mathbf{T}')$ where $\mathbf{T}'$ is an anisotropic maximal torus and all the simple roots of $\mathbf{B}'$ are non-compact.
Therefore there exists an $\R$-opp splitting $(X_{\alpha}')_{\alpha' \in \Delta'}$ for $(\mathbf{B}', \mathbf{T}')$.

There is a unique isomorphism $f : \mathbf{H}_{\C} \rightarrow \mathbf{H}'_{\C}$ identifying $(\mathbf{B}, \mathbf{T}_{\C}, (X_{\alpha})_{\alpha \in \Delta})$ with $(\mathbf{B}', \mathbf{T}'_{\C}, (X'_{\alpha})_{\alpha \in \Delta'})$ and to conclude we only have to show that it is defined over $\R$, i.e.\ that it is Galois-equivariant.
It is obviously the case on $\mathbf{T}$, since any automorphism of $\mathbf{T}_{\C}$ is defined over $\R$.
Moreover by construction $f(\sigma(X_{\alpha})) = \sigma(X'_{f(\alpha)})$ for any $\alpha \in \Delta$.
Since $\mathbf{T}_{\C}$ and the one-dimensional unipotent groups corresponding to $\pm \alpha$ for $\alpha \in \Delta$ generate $\mathbf{H}_{\C}$, $f$ is $\sigma$-equivariant.
\end{proof}
There are as many conjugacy classes of such pairs $(\mathbf{B}, \mathbf{T})$ such that all the simple roots are non-compact as there are conjugacy classes of Whittaker datum.
For the adjoint group $\mathbf{SO}_{2n+1}$ there is a single conjugacy class, whereas for $\mathbf{G} = \mathbf{Sp}_{2n}$ or $\mathbf{SO}_{4n}$ there are two.
However, for our purposes it will fortunately not be necessary to precise which pair $(\mathbf{B}, \mathbf{T})$ corresponds to each conjugacy class of Whittaker datum.

For the quasi-split group $\mathbf{G} = \mathbf{SO}(V,q)$ where $\dim V \geq 3$ and $\mathrm{disc}(q) > 0$, $\mathbf{T}$ is the stabiliser of a direct orthogonal sum
$$ P_1 \oplus \dots \oplus P_n $$
where each $P_i$ is a definite plane and $n = \lfloor \dim V / 2 \rfloor$.
Let $I_+$ (resp.\ $I_-$) be the set of $i \in \{1, \dots, n\}$ such that $P_i$ is positive (resp.\ negative), $V_- = \bigoplus_{i \in I_-} P_i$ and $V_+ = V_-^{\perp}$.
The group $K$ of real points of
$$ \mathbf{S} \left( \mathbf{O}(V_+,q) \times \mathbf{O}(V_-,q) \right) $$
is the maximal compact subgroup of $\mathbf{G}(\R)$ containing $\mathbf{T}(\R)$.
For each $i$, choose an isomorphism $e_i : \mathbf{SO}(P_i,q)_{\C} \rightarrow \mathbf{G}_m$ arbitrarily.
For $\dim V$ even, the roots $e_1 - e_2, \dots, e_{n-1}-e_n, e_{n-1}+e_n$ are all noncompact if and only if
$$ \{ I_+, I_- \} = \left\{ \{1,3,5,\dots\}, \{2,4,\dots\} \right\} $$
and modulo conjugation by $W_c = N(K, \mathbf{T}(\R))/\mathbf{T}(\R)$ there are two Borel subgroups $\mathbf{B} \supset \mathbf{T}_{\C}$ whose simple roots are all noncompact.
For $\dim V$ odd the roots $e_1 - e_2, \dots, e_{n-1}-e_n, e_n$ are all noncompact if and only if
$$ I_- = \{n, n-2, n-4, \dots\} \text{ and } I_+ = \{n-1, n-3, \dots \} $$
and there is just one conjugacy class of such Borel subgroups.
In both cases
$$ \ker \left( H^1(\R, \mathbf{T}) \rightarrow H^1(\R, \mathbf{G}) \right) $$
is isomorphic to the set of $(\epsilon_i)_{1 \leq i \leq n}$ where $\epsilon_i \in \{\pm 1 \}$ and
$$ \mathrm{card} \{ i \in I_+\ |\ \epsilon_i = -1 \} = \mathrm{card} \{ i \in I_-\ |\ \epsilon_i = -1 \}. $$

For the symplectic group $\mathbf{G} = \mathbf{Sp}(V, a)$ (where $a$ is a non-degenerate alternate form) $H^1(\R, \mathbf{G})$ is trivial, so that the set of $\langle \pi, \cdot \rangle$ ($\pi \in \Pi_{\varphi}$) is simply the whole group $S_{\varphi}^{\wedge}$.
However, for the non-tempered case and for the application to Siegel modular forms it will be necessary to have an explicit description of the pairs $(\mathbf{B}, \mathbf{T})$ as for the special orthogonal groups.
There exists $J \in \mathbf{G}(\R)$ such that $J^2 = - \mathrm{Id}$ and for any $v \in V \smallsetminus \{0\}$, $a(Jv,v)>0$.
Then $J$ is a complex structure on $V$ and
$$ h(v_1,v_2) := a(Jv_1, v_2) + i a(v_1,v_2) $$
defines a positive definite hermitian form $h$ on $V$.
Choose an orthogonal (for $h$) decomposition $ V = \bigoplus_{i=1}^n P_i $ where each $P_i$ is a complex line, then we can define $\mathbf{T}$ as the stabiliser of this decomposition.
The maximal compact subgroup of $\mathbf{G}(\R)$ containing $\mathbf{T}(\R)$ is $K = \mathbf{U}(V, h)(\R)$, and $W_c \simeq S_n$.
Thanks to the complex structure there are canonical isomorphisms $e_i : \mathbf{U}(P_i, h) \rightarrow \mathbf{U}_1$ (for $i \in \{1, \dots, n\}$).
Modulo conjugation by $W_c$, the two Borel subgroups containing $\mathbf{T}_{\C}$ and having non-compact simple roots correspond to the sets of simple roots
$$ \{e_1 + e_2, -e_2-e_3, \dots, (-1)^n (e_{n-1}+e_n), (-1)^{n+1}2e_n \},$$
$$\{-e_1 - e_2, e_2+e_3, \dots, (-1)^{n-1}(e_{n-1}+e_n), (-1)^n2e_n \}. $$

\subsubsection{Adams-Johnson packets and Euler-Poincaré characteristics}
\label{sectionAJ}

Let us now consider the general case, which as we observed above is necessary only when the dominant weight $\lambda$ is not regular.
For a quasisplit special orthogonal or symplectic group $\mathbf{G}$ and an Arthur parameter $\psi : W_{\R} \times \mathrm{SL}_2(\C) \rightarrow {}^L \mathbf{G}$ having infinitesimal character $\lambda + \rho$, we would like to describe explicitly the multiset $\Pi_{\psi}$ along with the map $\Pi_{\psi} \rightarrow S_{\psi}^{\wedge}$.
We would also like to compute the Euler-Poincaré characteristic $\mathrm{EP}(\pi \otimes V_{\lambda}^*)$ for any $\pi \in \Pi_{\psi}$.
Unfortunately it does not seem possible to achieve these tasks directly from Arthur's characterisation \cite{Arthur}[Theorem 2.2.1].
We will review Adams and Johnson's construction of packets $\Pi_{\psi}^{\mathrm{AJ}}$ using Arthur's formulation, which will lead us naturally to Assumption \ref{assumAJweak} relating Arthur's $\Pi_{\psi}$ with $\Pi_{\psi}^{\mathrm{AJ}}$.
This review was done in \cite{ArthurUnip}, \cite{KottAA} and \cite{ChRe} but we need to recall Adams and Johnson's results precisely in order to compute Euler-Poincaré characteristics.
Moreover we will uncover a minor problem in \cite{ArthurUnip}[§5].
Finally, \cite{AdJo} was written before Shahidi's conjecture \cite{Shahidi}[Conjecture 9.4] was formulated, and thus we need to adress the issue of normalization of transfer factors by Whittaker datum.
This is necessary to get a precise and explicit formulation of \cite{AdJo} in our setting, which is a prerequisite for writing an algorithm.

As in the previous section $\mathbf{G}$ could be any reductive algebraic group over $\R$ such that $\mathbf{G}(\R)$ has essentially discrete series.
To simplify notations we assume that $\mathbf{G}$ is semisimple.
To begin with, we consider general Arthur parameters $ \psi : W_{\R} \times \SL_2(\C) \rightarrow {}^L \mathbf{G}$, i.e.\ continuous morphisms such that
\begin{itemize}
\item composing with ${}^L \mathbf{G} \rightarrow W_{\R}$, we get $\mathrm{Id}_{W_{\R}}$,
\item $\psi|_{W_{\C}}$ is semisimple and bounded,
\item $\psi|_{\SL_2(\C)}$ is algebraic.
\end{itemize}
As before we fix a $\Gal(\C/\R)$-invariant splitting $(\mathcal{B}, \mathcal{T}, ( \mathcal{X}_{\alpha} )_{\alpha \in \Delta})$ in $\widehat{\mathbf{G}}$.
Assume that $\psi$ is \emph{pure}, i.e.\ the restriction of $\psi$ to $\R_{>0} \subset W_{\C}$ is trivial.
Otherwise $\psi$ would factor through a Levi subgroup of ${}^L \mathbf{G}$.
After conjugating by an element of $\widehat{\mathbf{G}}$ we have a $\mathcal{B}$-dominant $\tau_0 \in \frac{1}{2} X_*(\mathcal{T})$ such that for any $z \in W_{\C}$, $\psi(z) = (2 \tau_0)(z/|z|)$.
The set of roots $\alpha \in R(\mathcal{T}, \widehat{\mathbf{G}})$ such that $\langle \tau_0, \alpha \rangle \geq 0$ defines a parabolic subgroup $\mathcal{Q} = \mathcal{L} \mathcal{U}$ of $\widehat{\mathbf{G}}$ with Levi $\mathcal{L} = \mathrm{Cent}(\psi(W_{\C}), \widehat{\mathbf{G}})$ and $\psi(\SL_2(\C)) \subset \mathcal{L}_{\mathrm{der}}$.
After conjugating we can assume that
$$ z \in \C^{\times} \mapsto \psi\left( \begin{pmatrix} z & 0 \\ 0 & z^{-1} \end{pmatrix} \in \SL_2(\C) \right) $$
takes values in $\mathcal{T} \cap \mathcal{L}_{\mathrm{der}}$ and is dominant with respect to $\mathcal{B} \cap \mathcal{L}_{\mathrm{der}}$.
Let us restrict our attention to parameters $\psi$ such that $\psi|_{\SL_2(\C)} : \SL_2(\C) \rightarrow \mathcal{L}_{\mathrm{der}}$ is the principal morphism.
After conjugating we can assume that
$$\mathrm{d} \left(\psi|_{\SL_2(\C)}\right) \left( \begin{pmatrix} 0 & 1 \\ 0 & 0 \end{pmatrix} \in \mathfrak{sl}_2 \right) = \sum_{\alpha \in \Delta_{\mathcal{L}}} \mathcal{X}_{\alpha}.$$

We claim that $\psi(j) \in \widehat{\mathbf{G}} \rtimes \{j\}$ is now determined modulo left multiplication by $Z(\mathcal{L})$.
Let $n : W(\widehat{\mathbf{G}}, \mathcal{T}) \rtimes W_{\R} \rightarrow N({}^L \mathbf{G}, \mathcal{T}) =  N(\widehat{\mathbf{G}}, \mathcal{T}) \rtimes W_{\R}$ be the set-theoretic section defined in \cite{LanShe}[§2.1].
Let $w_0 \in W(\widehat{\mathbf{G}}, \mathcal{T})$ be the longest element in the Weyl group (with respect to $\mathcal{B}$).
Since $\mathbf{G}$ has an anisotropic maximal torus, conjugation by (any representative of) $w_0 \rtimes j$ acts by $t \mapsto t^{-1}$ on $\mathcal{T}$.
Let $w_1$ be the longest element of the Weyl group $W(\mathcal{L}, \mathcal{T})$.
Then $w_1w_0 \rtimes j$ preserves $\Delta_{\mathcal{L}}$ and acts by $t \mapsto t^{-1}$ on $Z(\mathcal{L})$.
By \cite{Springer}[Proposition 9.3.5] $n( w_1w_0 \rtimes j) = n(w_1w_0) \rtimes j$ preserves the splitting $(\mathcal{X}_{\alpha})_{\alpha \in \Delta_{\mathcal{L}}}$, and thus commutes with $\psi(\SL_2(\C))$.
The following lemma relates $\psi(j)$ and $n(w_1w_0 \rtimes j)$.

\begin{lemm}
There is a unique element $a \in Z(\mathcal{L}) \backslash \left( \widehat{\mathbf{G}} \rtimes \{j\} \right)$ commuting with $\psi(\SL_2(\C))$ and such that for any $z \in W_{\C}$, $a \psi(z) a^{-1} = \psi(z^{-1})$.
\end{lemm}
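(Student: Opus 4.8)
The plan is to prove existence and uniqueness separately, the uniqueness being where the real content lies. For existence I would take $a$ to be the class of $\psi(j)$. Since $\psi$ is a homomorphism out of the \emph{direct} product $W_{\R}\times\SL_2(\C)$, the element $\psi(j)$ commutes with $\psi(\SL_2(\C))$; and since $jzj^{-1}=\bar z$ in $W_{\R}$ while $\psi(z)=(2\tau_0)(z/|z|)$ for $z\in W_{\C}=\C^{\times}$ and $z/|z|$ lies on the unit circle, one has
\[ \psi(j)\psi(z)\psi(j)^{-1}=\psi(\bar z)=(2\tau_0)\!\left((z/|z|)^{-1}\right)=\psi(z)^{-1}. \]
Both conditions are invariant under left translation by $Z(\mathcal{L})$, which centralises $\psi(W_{\C})$ (indeed $\psi(W_{\C})\subset\mathcal{T}\subset\mathcal{L}$, hence $\psi(W_{\C})\subset Z(\mathcal{L})$) and $\psi(\SL_2(\C))\subset\mathcal{L}$; so they make sense on the coset $Z(\mathcal{L})\psi(j)$ and hold for it. One also notes that the element $n(w_1w_0\rtimes j)$ of the preceding discussion commutes with $\psi(\SL_2(\C))$, and, using that $w_0\rtimes j$ inverts $\mathcal{T}$ and $w_1$ fixes $\tau_0$, it inverts $\psi(W_{\C})$ as well; the uniqueness below then shows it lies in the same coset, which is exactly the assertion stated just before the lemma.

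For uniqueness, let $g,g'\in\widehat{\mathbf{G}}\rtimes\{j\}$ both satisfy the two conditions. The two factors $j$ cancel, so $gg'^{-1}\in\widehat{\mathbf{G}}$. From $g'\psi(z)g'^{-1}=\psi(z^{-1})$ one deduces $g'^{-1}\psi(z)g'=\psi(z^{-1})$, whence $gg'^{-1}\psi(z)(gg'^{-1})^{-1}=g\psi(z^{-1})g^{-1}=\psi(z)$, so $gg'^{-1}$ centralises $\psi(W_{\C})$ and therefore lies in $\mathrm{Cent}(\psi(W_{\C}),\widehat{\mathbf{G}})=\mathcal{L}$. Being a product of two elements commuting with $\psi(\SL_2(\C))$, it lies in $\mathrm{Cent}_{\mathcal{L}}(\psi(\SL_2(\C)))$. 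The crux is the identity $\mathrm{Cent}_{\mathcal{L}}(\psi(\SL_2(\C)))=Z(\mathcal{L})$, valid because $\psi|_{\SL_2(\C)}$ is the \emph{principal} morphism into $\mathcal{L}_{\mathrm{der}}$: the adjoint action of $\mathcal{L}$ on $\mathcal{L}_{\mathrm{der}}$ factors through $\mathcal{L}_{\mathrm{ad}}=\mathcal{L}/Z(\mathcal{L})$, the image of $\psi(\SL_2(\C))$ in $\mathcal{L}_{\mathrm{ad}}$ contains a regular unipotent element together with the cocharacter $\bar\chi$ coming from $\mathrm{diag}(t,t^{-1})$, for which $\langle\bar\chi,\alpha\rangle\neq 0$ for every root; the centraliser of $\bar\chi(\C^{\times})$ in $\mathcal{L}_{\mathrm{ad}}$ is the maximal torus $\mathcal{T}_{\mathrm{ad}}$, and an element of $\mathcal{T}_{\mathrm{ad}}$ centralising a regular unipotent must act trivially on each simple root space of $\mathcal{L}_{\mathrm{ad}}$, hence is trivial. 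Thus $\mathrm{Cent}_{\mathcal{L}}(\psi(\SL_2(\C)))\subset\ker(\mathcal{L}\to\mathcal{L}_{\mathrm{ad}})=Z(\mathcal{L})$, the reverse inclusion being obvious.

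Combining the two parts, $gg'^{-1}\in Z(\mathcal{L})$, so $g$ and $g'$ define the same element of $Z(\mathcal{L})\backslash(\widehat{\mathbf{G}}\rtimes\{j\})$; together with existence this shows the set of cosets satisfying both conditions is the single element $Z(\mathcal{L})\psi(j)$. I expect the only genuine obstacle to be stating and proving the centraliser computation $\mathrm{Cent}_{\mathcal{L}}(\psi(\SL_2(\C)))=Z(\mathcal{L})$ cleanly — in particular being careful that only the \emph{image} of $\SL_2(\C)$ in $\mathcal{L}$ matters, so that the reduction to $\mathcal{L}_{\mathrm{ad}}$ is legitimate; the remaining manipulations are routine in the semidirect product ${}^L\mathbf{G}=\widehat{\mathbf{G}}\rtimes W_{\R}$.
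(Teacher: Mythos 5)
Your proof is correct and takes essentially the same route as the paper: the paper's own proof is exactly your uniqueness argument (commuting with $\psi(W_{\C})$ forces the quotient into $\mathcal{L}$, and commuting with the principal $\psi(\SL_2(\C))$ then forces it into $Z(\mathcal{L})$), with existence via $\psi(j)$ and the principal-$\SL_2$ centraliser computation left implicit. One cosmetic quibble: $\psi(W_{\C}) \subset Z(\mathcal{L})$ holds because $\mathcal{L}$ is by definition the centraliser of $\psi(W_{\C})$ (so every element of $\mathcal{L}$ commutes with it), not because $\psi(W_{\C}) \subset \mathcal{T}$.
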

\begin{proof}
If $a$ and $b$ are two such elements, $ab^{-1} \in \widehat{\mathbf{G}}$ commutes with $\psi(W_{\C})$, thus $ab^{-1} \in \mathcal{L}$.
Furthermore $ab$ commutes with $\psi(\SL_2(\C))$, hence $ab^{-1} \in Z(\mathcal{L})$.
\end{proof}
Since $n(w_1w_0 \rtimes j)$ and $\psi(j)$ satisfy these two conditions, they coincide modulo $Z(\mathcal{L})$.
In particular conjugation by $\psi(j)$ acts by $t \mapsto t^{-1}$ on $Z(\mathcal{L})$, and thus the group
$$C_{\psi} := \mathrm{Cent}(\psi, \widehat{\mathbf{G}}) = \{t \in Z(\mathcal{L})\ |\ t^2 = 1 \}$$
is finite, and so is $S_{\psi} := C_{\psi}/Z(\widehat{\mathbf{G}})^{\Gal(\C/\R)}$.
In addition, $(2 \tau_0)(-1) = \psi(j)^2 = n(w_1 w_0 \rtimes j)^2$ only depends on $\mathcal{L}$.
By \cite{LanShe}[Lemma 2.1.A], $n(w_1 w_0 \rtimes j)^2 = \prod_{\alpha \in R_{\mathcal{Q}}} \alpha^{\vee}(-1)$ where $R_{\mathcal{Q}}$ is the set of roots of $\mathcal{T}$ occurring in the unipotent radical $\mathcal{U}$ of $\mathcal{Q}$.
Thus
$$ \tau_0 \in X_*(Z(\mathcal{L})^0) + \frac{1}{2}\sum_{\alpha \in R_{\mathcal{Q}}} \alpha^{\vee}. $$
Conversely, using the element $n(w_1w_0 \rtimes j)$ we see that for any standard parabolic subgroup $\mathcal{Q} =\mathcal{L} \mathcal{U} \supset \mathcal{B}$ of $\widehat{\mathbf{G}}$ and any strictly dominant (for $R_{\mathcal{Q}}$) $ \tau_0 \in X_*(Z(\mathcal{L})^0) + \frac{1}{2}\sum_{\alpha \in R_{\mathcal{Q}}} \alpha^{\vee}$, there is at least one Arthur parameter mapping $z \in W_{\C}$ to $(2 \tau_0)(z/|z|)$ and $\begin{pmatrix} 0 & 1 \\ 0 & 0\end{pmatrix} \in \mathfrak{sl}_2$ to $\sum_{\alpha \in \Delta_{\mathcal{L}}} \mathcal{X}_{\alpha}$.
Finally, for any $u \in Z(\mathcal{L})$, we can form another Arthur parameter $\psi'$ by imposing $\psi'|_{W_{\C} \times \SL_2(\C)} = \psi|_{W_{\C} \times \SL_2(\C)}$ and $\psi'(j) = u \psi(j)$.
It follows that the set of conjugacy classes of Arthur parameters $\psi'$ such that $\psi'|_{W_{\C} \times \SL_2(\C)}$ is conjugated to $\psi|_{W_{\C} \times \SL_2(\C)}$ is a torsor under
$$Z(\mathcal{L})/\{t^2\ |\ t \in Z(\mathcal{L})\} = H^1(\Gal(\C/\R), Z(\mathcal{L})) \text{ where } \sigma \text{ acts by } w_1w_0 \rtimes j \text{ on } Z(\mathcal{L}).$$

Recall the norm $|| \cdot || : W_{\R} \rightarrow \R_{>0}$ which maps $j$ to $1$ and $z \in W_{\C}$ to $z \bar{z}$, which is used to define the morphism $W_{\R} \rightarrow W_{\R} \times \SL_2(\C)$ mapping $w$ to
$$\left(w, \begin{pmatrix} ||w||^{1/2} & 0 \\ 0 & ||w||^{-1/2} \end{pmatrix} \right).$$
Composing $\psi$ with this morphism we get a Langlands parameter $\varphi_{\psi} : W_{\R} \rightarrow {}^L \mathbf{G}$ which is not tempered in general.
For $z \in W_{\C}$, $\varphi_{\psi}(z) = (\tau - \tau')(z/|z|)(\tau+\tau')(|z|)$ (formally $\tau(z) \tau'(\bar{z})$) where
$$ \tau = \tau_0 + \frac{1}{2} \sum_{\alpha \in R_{\mathcal{B} \cap \mathcal{L}}} \alpha^{\vee} \quad \text{ and } \quad \tau' = -\tau_0 + \frac{1}{2} \sum_{\alpha \in R_{\mathcal{B} \cap \mathcal{L}}} \alpha^{\vee}. $$
Then $\tau \in \frac{1}{2} \sum_{\alpha \in R_{\mathcal{B}}} \alpha^{\vee} + X_*(\mathcal{T})$ and the following are equivalent:
\begin{enumerate}
\item $\tau$ is regular,
\item $\tau - \frac{1}{2} \sum_{\alpha \in R_{\mathcal{B}}} \alpha^{\vee}$ is dominant with respect to $R_{\mathcal{B}}$,
\item $\tau_0 - \frac{1}{2} \sum_{\alpha \in R_{\mathcal{Q}}} \alpha^{\vee}$ is dominant with respect to $R_{\mathcal{Q}}$.
\end{enumerate}
In fact for any pure Arthur parameter $\psi$, without assuming a priori that $\psi|_{\SL_2(\C)} \rightarrow \mathcal{L}$ is principal, if the holomorphic part $\tau$ of $\varphi_{\psi}|_{W_{\C}}$ is regular, then $\psi|_{\SL_2(\C)} \rightarrow \mathcal{L}$ is principal.
The orbit of $\tau$ under the Weyl group is the infinitesimal character associated with $\psi$, and we have seen that it is the infinitesimal character of any representation in the packet $\Pi_{\psi}$ associated with $\psi$ (Lemma \ref{lemmCharInfArthur}).
For quasisplit special orthogonal or symplectic groups we checked this (up to outer conjugacy in the even orthogonal case) in Lemma \ref{lemmCharInfArthur}.

From now on we also assume that the infinitesimal character $\tau$ of $\psi$ is regular.
Note that $\tau$ is then the infinitesimal character of the restriction to $\mathbf{G}(\R)$ of the irreducible algebraic representation $V_{\lambda}$ of $\mathbf{G}_{\C}$, where $\tau = \lambda + \rho$.
Let us describe the set of representations $\Pi_{\psi}^{\mathrm{AJ}}$ that Adams and Johnson associate with $\psi$ as well as the pairing $\Pi_{\psi} \rightarrow S_{\psi}^{\wedge}$.
To be honest Adams and Johnson do not consider parameters $\psi$, they only work with representations, but \cite{ArthurUnip}[§5] interpreted their construction in terms of parameters.
We will only add details concerning Whittaker normalisation.
As in the tempered case we begin by considering pairs $(\mathbf{B}, \mathbf{T})$ where $\mathbf{T}$ is an anisotropic maximal torus of $\mathbf{G}$ and $\mathbf{B}$ a Borel subgroup of $\mathbf{G}_{\C}$ containing $\mathbf{T}_{\C}$.
We have a canonical isomorphism between the based root data
$$(X^*(\mathbf{T}_{\C}), \Delta_{\mathbf{B}}, X_*(\mathbf{T}_{\C}), \Delta_{\mathbf{B}}^{\vee}) \quad\text{ and }\quad (X_*(\mathcal{T}), \Delta_{\mathcal{B}}^{\vee}, X^*(\mathcal{T}), \Delta_{\mathcal{B}})$$
and we can associate with $(\mathcal{Q}, \mathcal{L})$ a parabolic subgroup $\mathbf{Q} \supset \mathbf{B}$ of $\mathbf{G}_{\C}$ and a Levi subgroup $\mathbf{L}_{\C} \supset \mathbf{T}_{\C}$ of $\mathbf{G}_{\C}$.
As the notation suggests $\mathbf{L}_{\C}$ is defined over $\R$ (for any root $\alpha$ of $\mathbf{T}_{\C}$ in $\mathbf{G}_{\C}$, $\sigma(\alpha)=-\alpha$), and we denote this real subgroup of $\mathbf{G}$ by $\mathbf{L}$.
Consider the set $\Sigma_{\mathcal{Q}}$ of conjugacy classes of pairs $(\mathbf{Q}, \mathbf{L})$ ($\mathbf{Q}$ a parabolic subgroup of $\mathbf{G}_{\C}$ and $\mathbf{L}$ a real subgroup of $\mathbf{G}$ such that $\mathbf{L}_{\C}$ is a Levi subgroup of $\mathbf{Q}$) obtained this way.
The finite set $\Sigma_{\mathcal{B}}$ of conjugacy classes of pairs $(\mathbf{B}, \mathbf{T})$ surjects to $\Sigma_{\mathcal{Q}}$.
If we fix a base point $(\mathbf{B}, \mathbf{T})$, we have seen that $\Sigma_{\mathcal{B}}$ is identified with $W_c \backslash W$.
This base point allows to identify $\Sigma_{\mathcal{Q}}$ with $W_c \backslash W / W_{\mathbf{L}}$ where $W_{\mathbf{L}} = W(\mathbf{L}(\C), \mathbf{T}(\C))$, and
$$W_c \backslash W / W_{\mathbf{L}} \simeq \ker \left( H^1(\R, \mathbf{L}) \rightarrow H^1(\R, \mathbf{G}) \right).$$
For any $\mathrm{cl}(\mathbf{Q}, \mathbf{L}) \in \Sigma_{\mathcal{Q}}$ there is a canonical isomorphism $\widehat{\mathbf{L}} \simeq \mathcal{L}$ identifying the splittings.
Given another $\mathrm{cl}(\mathbf{Q}', \mathbf{L}') \in \Sigma_{\mathcal{Q}}$, there is a unique $g \in \mathbf{G}(\C)/\mathbf{L}(\C)$ conjugating $(\mathbf{Q}, \mathbf{L})$ into $(\mathbf{Q}', \mathbf{L}')$, yielding a canonical isomorphism of L-groups ${}^L \mathbf{L} \simeq {}^L \mathbf{L}'$.
As in the tempered case we want to extend $\widehat{\mathbf{L}} \simeq \mathcal{L}$ into an embedding $\iota : {}^L \mathbf{L} \rightarrow {}^L \mathbf{G}$ as follows.
For $z \in W_{\C}$, define $\iota(z) = \prod_{\alpha \in R_{\mathcal{Q}}} \alpha^{\vee}(z/|z|) \rtimes z$.
Define $\iota(j) = n(w_1w_0 \rtimes j)$.
We have computed $n(w_1w_0 \rtimes j)^2 = \prod_{\alpha \in R_{\mathcal{Q}}} \alpha^{\vee}(-1)$ above and thus $\iota$ is well-defined.
Note that contrary to the tempered case, there are other choices for $\iota(j)$ even up to conjugation by $Z(\mathcal{L})$: we could replace $\iota(j)$ by $u \iota(j)$ where $u \in Z(\mathcal{L})$, and it can happen that $u$ is not a square in $Z(\mathcal{L})$.
This issue seems to have been overlooked in \cite{ArthurUnip}[§5].
We will not try to determine whether $n(w_1w_0 \rtimes j)$ is the correct choice here and we will consider this problem in a separate note, since for our present purpose this choice does not matter.

For any class $\mathrm{cl}(\mathbf{Q}, \mathbf{L}) \in \Sigma_{\mathcal{Q}}$ there is a unique Arthur parameter
$$ \psi_{\mathbf{Q}, \mathbf{L}} : W_{\R} \times \SL_2(\C) \rightarrow {}^L \mathbf{L} $$
such that up to conjugation by $\widehat{\mathbf{G}}$, $\psi = \iota \circ \psi_{\mathbf{Q}, \mathbf{L}}$.
Now $\psi_{\mathbf{Q}, \mathbf{L}}|_{\SL_2(\C)} : \SL_2(\C) \rightarrow \widehat{\mathbf{L}}$ is the principal morphism.
Thus $\psi_{\mathbf{Q}, \mathbf{L}}|_{W_{\R}}$ takes values in $Z(\widehat{\mathbf{L}}) \rtimes W_{\R}$, and the conjugacy class of $\psi_{\mathbf{Q}, \mathbf{L}}$ is determined by the resulting element of $H^1(W_{\R}, Z(\widehat{\mathbf{L}}))$, which has compact image.

Recall that for any real reductive group $\mathbf{H}$ there is a natural morphism
$$ \nu_{\mathbf{H}} : H^1(W_{\R}, Z(\widehat{\mathbf{H}})) \rightarrow \Hom_{\mathrm{cont}}(\mathbf{H}(\R), \C^{\times}) $$
which is surjective and maps cocyles with compact image to unitary characters of $\mathbf{H}(\R)$.
To define this morphism we can use the same arguments as \cite{KottEllSing}[§1].
If $\mathbf{H}$ is simply connected, then $\widehat{\mathbf{H}}$ is adjoint and $\mathbf{H}(\R)$ is connected.
More generally, if $\mathbf{H}_{\mathrm{der}}$ is simply connected then the torus $\mathbf{C} = \mathbf{H} / \mathbf{H}_{\mathrm{der}}$ is such that $Z(\widehat{\mathbf{H}}) = \widehat{\mathbf{C}}$ and
$$\mathbf{H}(\R)^{\mathrm{ab}} = \ker \left( \mathbf{C}(\R) \rightarrow H^1(\R, \mathbf{H}_{\mathrm{der}}) \right).$$
Finally if $\mathbf{H}$ is arbitrary there exists a z-extension $\mathbf{C} \hookrightarrow \widetilde{\mathbf{H}} \twoheadrightarrow \mathbf{H}$ where $\mathbf{C}$ is an induced torus and $\widetilde{\mathbf{H}}_{\mathrm{der}}$ is simply connected.
Then $H^1(\Gal(\C/\R), \mathbf{C}(\C))$ is trivial, thus $\widetilde{\mathbf{H}}(\R) \twoheadrightarrow \mathbf{H}(\R)$ and
$$ \Hom_{\mathrm{cont}}(\mathbf{H}(\R), \C^{\times}) = \ker \left( \Hom_{\mathrm{cont}}(\widetilde{\mathbf{H}}(\R), \C^{\times}) \rightarrow \Hom_{\mathrm{cont}}(\mathbf{C}(\R), \C^{\times})\right). $$
Parallelly, $\widehat{\mathbf{C}}^{W_{\R}}$ is connected so that $\widehat{\mathbf{C}}^{W_{\R}} \rightarrow H^1(W_{\R}, Z(\widehat{\mathbf{H}}))$ is trivial and thus
$$ H^1(W_{\R}, Z(\widehat{\mathbf{H}})) = \ker \left( H^1(W_{\R}, Z(\widehat{\widetilde{\mathbf{H}}})) \rightarrow H^1(W_{\R}, \widehat{\mathbf{C}}) \right). $$
As in \cite{KottEllSing}[§1] the morphism $\nu_{\mathbf{H}}$ obtained this way does not depend on the choice of a z-extension.
Note that when $\mathbf{H}$ is quasi-split, $\nu_{\mathbf{H}}$ is an isomorphism, by reduction to the case where $\mathbf{H}_{\mathrm{der}}$ is simply connected and using the fact that a maximally split maximal torus in a simply connected quasi-split group is an induced torus.
It is not injective in general, e.g.\ when $\mathbf{H}$ is the group of invertible quaternions.

Hence $\psi_{\mathbf{Q}, \mathbf{L}}$ defines a one-dimensional unitary representation $\pi^0_{\psi, \mathbf{Q}, \mathbf{L}}$ of $\mathbf{L}(\R)$, and applying cohomological induction as defined by Zuckerman, Adams and Johnson define the representation $\pi_{\psi, \mathbf{Q}, \mathbf{L}} = R^i_{\mathfrak{q}}(\pi^0_{\psi, \mathbf{Q}, \mathbf{L}})$ of $\mathbf{G}(\R)$, where $\mathfrak{q} = \mathrm{Lie} (\mathbf{Q})$ and $i=q(\mathbf{G})-q(\mathbf{L})$.
Vogan has shown that this representation is unitary.
They define the set $\Pi_{\psi}^{\mathrm{AJ}}$ in bijection with $\Sigma_{\mathcal{Q}}$:
$$ \Pi_{\psi}^{\mathrm{AJ}} = \left\{ \pi_{\psi, \mathbf{Q}, \mathbf{L}}\ |\ \mathrm{cl}(\mathbf{Q}, \mathbf{L}) \in \Sigma_{\mathcal{Q}} \right\}. $$
The endoscopic character relations that they prove \cite{AdJo}[Theorem 2.21] allow to identify the map $\Pi_{\psi} \rightarrow S_{\psi}^{\wedge}$, as Arthur did in \cite{ArthurUnip}[§5].
Assume that $\mathbf{G}$ is quasisplit (this is probably unnecessary as in the tempered case using the constructions of \cite{Kal}), and fix a Whittaker datum for $\mathbf{G}$.
Then any $\mathrm{cl}(\mathbf{B}, \mathbf{T}) \in \Sigma_{\mathcal{B}}$ determines an element of $S_{\varphi}^{\wedge}$ (here $\varphi$ could be any discrete parameter, the group $S_{\varphi}$ is described in terms of $\mathcal{B}, \mathcal{T}$ independently).
It is easy to check that if $(\mathbf{B}, \mathbf{T})$ and $(\mathbf{B}', \mathbf{T}')$ give rise to pairs $(\mathbf{Q}, \mathbf{L})$ and $(\mathbf{Q}', \mathbf{L}')$ which are conjugated under $\mathbf{G}(\R)$, then the restrictions to $S_{\psi}$ of the characters of $S_{\varphi}$ associated with $(\mathbf{B}, \mathbf{T})$ and $(\mathbf{B}', \mathbf{T}')$ coincide.
We get a map $\Pi_{\psi}^{\mathrm{AJ}} \rightarrow S_{\psi}^{\wedge}$ which is not injective in general.

Adams and Johnson (\cite{AdJo}[Theorem 8.2], reformulating the main result of \cite{Johnson}) give a resolution of $\pi_{\psi, \mathbf{Q}, \mathbf{L}}$ by direct sums of standard modules 
\begin{equation} \label{resoJohnson} 0 \rightarrow \pi_{\psi, \mathbf{Q}, \mathbf{L}} \rightarrow X^{q(\mathbf{L})} \rightarrow \dots \rightarrow X^0 \rightarrow 0. \end{equation}
Recall that a standard module is a parabolic induction of an essentially tempered representation of a Levi subgroup of $\mathbf{G}$, with a certain positivity condition on its central character.
Johnson's convention is opposite to that of Langlands, so that $\pi_{\psi, \mathbf{Q}, \mathbf{L}}$ \emph{embeds} in a standard module.
Apart from its length, the only two properties of this resolution that we need are
\begin{enumerate}
\item $X^0$ is the direct sum of the discrete series representations of $\mathbf{G}(\R)$ having infinitesimal character $\tau$ and corresponding to the $\mathrm{cl}(\mathbf{B}, \mathbf{T}) \in \Sigma_{\mathcal{B}}$ mapping to $\mathrm{cl}(\mathbf{Q}, \mathbf{L}) \in \Sigma_{\mathcal{Q}}$,
\item for any $i>0$, $X^i$ is a direct sum of standard modules induced from \emph{proper} parabolic subgroups of $\mathbf{G}$, therefore $\mathrm{EP}(X^i \otimes V_{\lambda}^*) = 0$.
\end{enumerate}
Thus we have the simple formula
$$ \mathrm{EP}(\pi_{\psi, \mathbf{Q}, \mathbf{L}} \otimes V_{\lambda}^*) = (-1)^{q(\mathbf{G})-q(\mathbf{L})} \mathrm{card} \left( \text{fiber of } \mathrm{cl}(\mathbf{Q}, \mathbf{L}) \text{ by } \Sigma_{\mathcal{B}} \rightarrow \Sigma_{\mathcal{Q}} \right). $$
Note that $\pi_{\psi, \mathbf{Q}, \mathbf{L}}$ is a discrete series representation if and only if $\mathbf{L}$ is anisotropic.

Let us be more precise about the endoscopic character relations afforded by Adams-Johnson representations, since Shahidi's conjecture was only formulated after both \cite{AdJo} and \cite{ArthurUnip}.
Let $s_{\psi}$ be the image by $\psi$ of $-1 \in \mathrm{SL}_2(\C)$, which we will see as an element of $S_{\psi}$.
Arthur and Kottwitz have shown that for $\mathrm{cl}(\mathbf{Q}, \mathbf{L}), \mathrm{cl}(\mathbf{Q}', \mathbf{L}') \in \Sigma_{\mathcal{Q}}$, we have $\langle s_{\psi}, \pi_{\psi, \mathbf{Q}, \mathbf{L}} \rangle = (-1)^{q(\mathbf{L})-q(\mathbf{L}')} \langle s_{\psi}, \pi_{\psi, \mathbf{Q}', \mathbf{L}'} \rangle$.
Let $(\mathbf{B}_0, \mathbf{T}_0)$ be a pair in $\mathbf{G}$ corresponding to the base point (i.e.\ the generic representation for our fixed Whittaker datum) for any discrete L-packet.
It determines a pair $(\mathbf{Q}_0, \mathbf{L}_0)$ such that $\mathrm{cl}(\mathbf{Q}_0, \mathbf{L}_0) \in \Sigma_{\mathcal{Q}}$.
The simple roots of $\mathbf{B}_0$ are all non-compact and thus the same holds for the Borel subgroup $\mathbf{B}_0 \cap (\mathbf{L}_0)_{\C}$ of $(\mathbf{L}_0)_{\C}$.
By Lemma \ref{lemmRoppqs} the group $\mathbf{L}_0$ is quasisplit.
Thus for any $\mathrm{cl}(\mathbf{Q}, \mathbf{L}) \in \Sigma_{\mathcal{Q}}$ we have $\langle s_{\psi}, \pi_{\psi, \mathbf{Q}, \mathbf{L}} \rangle = (-1)^{q(\mathbf{L}_0) - q(\mathbf{L})}$.
Note that if $(\mathbf{B}_1, \mathbf{T}_1)$ corresponds to the generic element in tempered L-packets for \emph{another} Whittaker datum, the pair $(\mathbf{L}_1, \mathbf{Q}_1)$ that it determines also has the property that $\mathbf{L}_1$ is quasisplit.
Since $\mathbf{L}_0$ and $\mathbf{L}_1$ are inner forms of each other, they are isomorphic and $q(\mathbf{L}_0) = q(\mathbf{L}_1)$.
This shows that the map
$$ f(g)dg \mapsto \sum_{\pi \in \Pi_{\psi}^{\mathrm{AJ}}} \langle s_{\psi}, \pi \rangle \mathrm{Tr} \left( \pi(f(g)dg)\right), $$
defined on smooth compactly supported distributions on $\mathbf{G}(\R)$, is canonical: it does not depend on the choice of a Whittaker datum for the quasisplit group $\mathbf{G}$.
By \cite{AdJo}[Theorem 2.13] it is \emph{stable}, i.e.\ it vanishes if all the stable orbital integrals of $f(g)dg$ vanish.
Consider an arbitrary element $x \in S_{\psi}$.
It determines an endoscopic group $\mathbf{H}$ of $\mathbf{G}$ and an Arthur parameter $\psi_{\mathbf{H}} : W_{\R} \times \mathrm{SL}_2(\C) \rightarrow {}^L \mathbf{H}$ whose infinitesimal character is regular.
Thanks to the choice of a Whittaker datum we have a well-defined \emph{transfer map} $f(g)dg \mapsto f^{\mathbf{H}}(h)dh$ from smooth compactly supported distributions on $\mathbf{G}(\R)$ to smooth compactly supported distributions on $\mathbf{H}(\R)$.
Adams and Johnson have proved \cite{AdJo}[Theorem 2.21] that there is some $t \in \C^{\times}$ such that
\begin{equation} \label{endorelAJ}
\sum_{\pi \in \Pi_{\psi}^{\mathrm{AJ}}} \langle s_{\psi}x, \pi \rangle \mathrm{Tr} \left( \pi(f(g)dg)\right) = t \sum_{\pi \in \Pi_{\psi_{\mathbf{H}}}^{\mathrm{AJ}}} \langle s_{\psi}, \pi \rangle \mathrm{Tr} \left( \pi(f^{\mathbf{H}}(h)dh)\right)
\end{equation}
for any smooth compactly supported distribution $f(g)dg$ on $\mathbf{G}(\R)$.
We check that $t=1$.
Let $\varphi : W_{\R} \rightarrow {}^L \mathbf{G}$ be the discrete Langlands parameter having infinitesimal character $\tau$.
Conjugating if necessary, we can assume that the holomorphic parts of $\varphi|_{W_{\C}}$ and $\varphi_{\psi}|_{W_{\C}}$ are equal and not just conjugated.
In this way we see $S_{\psi}$ as a subgroup of $S_{\varphi}$.
We restrict to distributions $f(g)dg$ whose support is contained in the set of semisimple regular elliptic elements of $\mathbf{G}(\R)$.
In that case by Johnson's resolution \ref{resoJohnson}
\begin{eqnarray*}
\sum_{\pi \in \Pi_{\psi}^{\mathrm{AJ}}} \langle s_{\psi}x, \pi \rangle \mathrm{Tr} \left( \pi(f(g)dg)\right) & = & (-1)^{q(\mathbf{L}_0)} \sum_{\pi \in \Pi_{\varphi}} \langle x, \pi \rangle \mathrm{Tr} \left( \pi(f(g)dg)\right) \\
& = & (-1)^{q(\mathbf{L}_0)} \sum_{\pi \in \Pi_{\varphi_{\mathbf{H}}}} \mathrm{Tr} \left( \pi(f^{\mathbf{H}}(h)dh)\right)
\end{eqnarray*}
where the second equality is the endoscopic character relation for $(\varphi, x)$.
Let $(\mathbf{B}_0^{\mathbf{H}}, \mathbf{T}_0^{\mathbf{H}})$ be a pair for $\mathbf{H}$ such that the simple roots of $\mathbf{B}_0^{\mathbf{H}}$ are all non-compact.
Then the pair $(\mathbf{Q}_0^{\mathbf{H}}, \mathbf{L}_0^{\mathbf{H}})$ that it determines is such that $\mathbf{L}_0^{\mathbf{H}}$ is quasisplit and has same Langlands dual group as $\mathbf{L}_0$, thus $\mathbf{L}_0^{\mathbf{H}} \simeq \mathbf{L}_0$.
In particular $q(\mathbf{L}_0^{\mathbf{H}}) = q(\mathbf{L}_0)$ and
$$ (-1)^{q(\mathbf{L}_0)} \sum_{\pi \in \Pi_{\varphi_{\mathbf{H}}}} \mathrm{Tr} \left( \pi(f^{\mathbf{H}}(h)dh) \right)= \sum_{\pi \in \Pi^{\mathrm{AJ}}_{\psi_{\mathbf{H}}}} \langle s_{\psi}, \pi \rangle \mathrm{Tr} \left( \pi(f^{\mathbf{H}}(h)dh)\right). $$
Therefore the endoscopic character relation \ref{endorelAJ} holds with $t=1$ for such distributions $f(g)dg$.
By choosing $f(g)dg$ positive with small support around a well-chosen semisimple regular elliptic element we can ensure that both sides do not vanish, so that $t=1$.

This concludes the precise determination of the map $\pi \mapsto \langle \cdot, \pi \rangle$, normalised using Whittaker datum as in the tempered case.
Note that this normalised version of \cite{AdJo}[Theorem 2.21] is completely analogous to \cite{Arthur}[Theorem 2.2.1(b)].
We are led to make the following assumption.

\begin{assu} \label{assumAJ}
Let $\mathbf{G}$ be a quasisplit special orthogonal or symplectic group over $\R$ having discrete series.
Fix a Whittaker datum for $\mathbf{G}$.
Let $\psi$ be an Arthur parameter for $\mathbf{G}$ with regular infinitesimal character $\tau = \lambda + \rho$.
Then for any $\chi \in S_{\psi}^{\wedge}$,
\begin{equation} \label{equassumAJ} \bigoplus_{\substack{\pi \in \Pi_{\psi}^{\mathrm{AJ}} \\ \langle \cdot, \pi \rangle = \chi}} \pi \simeq \bigoplus_{\substack{\pi \in \Pi_{\psi} \\ \langle \cdot, \pi \rangle = \chi}} \pi .\end{equation}
Note that in the even orthogonal case, this only assumes an isomorphism of $\mathcal{H}'(\mathbf{G}(\R))$-modules.
\end{assu}

To compute Euler-Poincaré characteristics we only need the character of the direct sum appearing in Assumption \ref{assumAJ} on an anisotropic maximal torus.
This follows from the fact that the standard modules form a basis of the Grothendieck group of finite length $(\mathfrak{g}, K)$-modules.
Using also the fact that Arthur and Adams-Johnson packets satisfy the same endoscopic relations, we can formulate a weaker assumption which is enough to compute the Euler-Poincaré characteristic of the right hand side of \ref{equassumAJ} for any $\chi \in S_{\psi}^{\wedge}$.
\begin{assu} \label{assumAJweak}
Let $\mathbf{G}$ be a quasisplit special orthogonal or symplectic group over $\R$ having discrete series.
Let $\psi$ be an Arthur parameter for $\mathbf{G}$ with regular infinitesimal character $\tau = \lambda + \rho$, and let $\mathbf{T}$ be a maximal torus of $\mathbf{G}$ which is anisotropic.
Let $\mathbf{L}_0$ denote the quasisplit reductive group defined in the discussion above.
If $\mathbf{G}$ is symplectic or odd orthogonal, the assumption is that for any $\gamma \in \mathbf{T}_{\mathrm{reg}}(\R)$,
$$ \sum_{\pi \in \Pi_{\psi}} \langle s_{\psi}, \pi \rangle \Theta_{\pi}(\gamma) = (-1)^{q(\mathbf{G})-q(\mathbf{L}_0)} \mathrm{Tr}( \gamma | V_{\lambda}).$$
In the even orthogonal case, this identity takes the following meaning.
Let $\gamma \in \mathbf{T}_{\mathrm{reg}}(\R)$ and consider a $\gamma' \in \mathbf{G}(\R)$ outer conjugated to $\gamma$.
For $\pi$ in $\Pi_{\psi}$, which is only an $\mathrm{Out}(\mathbf{G})$-orbit of representations, we still denote by $\pi$ any element of this orbit.
The assumption is
$$ \sum_{\pi \in \Pi_{\psi}} \langle s_{\psi}, \pi \rangle \left(\Theta_{\pi}(\gamma) + \Theta_{\pi}(\gamma') \right) = (-1)^{q(\mathbf{G})-q(\mathbf{L}_0)} \left(\mathrm{Tr}( \gamma | V_{\lambda}) + \mathrm{Tr}( \gamma' | V_{\lambda}) \right).$$
Of course it does not depend on the choice made in each orbit.
\end{assu}

Thus under this assumption we have an algorithm to compute inductively the cardinality of each $\Psi(\mathbf{G})^{\mathrm{unr}, \lambda}_{\mathrm{sim}}$.

\begin{rema}
For this algorithm it is not necessary to enumerate the sets
$$ W_c \backslash W / W_{\mathbf{L}} \simeq \ker \left( H^1(\R, \mathbf{L}) \rightarrow H^1(\R, \mathbf{G}) \right) $$
parametrizing the elements of each $\Pi_{\psi}$.
It is enough to compute, for each discrete series $\pi$ represented by a collection of signs as in the previous section, the restriction of $\langle \cdot, \pi \rangle$ to $S_{\psi}$ and the sign $(-1)^{q(\mathbf{L})}$.
\end{rema}

See the tables in section \ref{tablestwGL} for some values for $\mathrm{card}\left( \Psi(\mathbf{G})^{\mathrm{unr}, \lambda}_{\mathrm{sim}} \right)$ in low weight $\lambda$ ordered lexicographically.

\section{Application to vector-valued Siegel modular forms}

Let us give a classical application of the previous results, to the computation of dimensions of spaces $S_r(\Gamma_n)$ of vector-valued Siegel cusp forms in genus $n \geq 1$, weight $r$ and level one.
It is certainly well-known that, under a natural assumption on the weight $r$, this dimension is equal to the multiplicity in $L^2_{\mathrm{disc}}(\mathbf{PGSp}_{2n}(\Q) \backslash \mathbf{PGSp}_{2n}(\A) / \mathbf{PGSp}_{2n}(\widehat{\Z}))$ of the holomorphic discrete series representation corresponding to $r$.
Although \cite{AsgariSchmidt} contains ``half'' of the argument, we could not find a complete reference for the full statement.
To set our mind at rest we give details for the other half.
We begin with a review of holomorphic discrete series.
We do so even though it is redundant with \cite{Knapp} and \cite{AsgariSchmidt}, in order to give precise references, to set up notation and to identify the holomorphic discrete series in Shelstad's parametrisation.

Note that it is rather artificial to restrict our attention to symplectic groups.
For any $n \geq 3$ such that $n \neq 2 \mod 4$, the split group $\mathbf{G} = \mathbf{SO}_n$ has an inner form $\mathbf{H}$ which is split at all the finite places of $\Q$ and such that
\begin{itemize}
\item if $n = -1,0,1 \mod 8$, $\mathbf{H}(\R)$ is compact,
\item if $n = 3,4,5 \mod 8$, $\mathbf{H}(\R) \simeq \mathrm{SO}(n-2,2)$.
\end{itemize}
In the second case $\mathbf{H}(\R)$ has holomorphic discrete series which can be realised on a hermitian symmetric space of complex dimension $n-2$.
In the first case $\mathbf{H}(\R)$ also has holomorphic discrete series which can be realised on a zero-dimensional hermitian symmetric space.

\subsection{Bounded symmetric domains of symplectic type and holomorphic discrete series}

Let us recall Harish-Chandra's point of view on bounded symmetric domains and his construction of holomorphic discrete series (see \cite{BruhatHC}, \cite{HCRepIV}, \cite{HCRepV}, \cite{HCRepVI}) in the case of symplectic groups.
Let $n \geq 1$ and $\mathbf{G} = \mathbf{Sp}_{2n}$, over $\R$ in this section, and denote $G = \mathbf{G}(\R)$, $\mathfrak{g}_0 = \mathrm{Lie}(G)$ and $\mathfrak{g} = \C \otimes_{\R} \mathfrak{g}_0$.
Then $\mathbf{G}$ is the stabiliser of a non-degenerate alternate form $a$ on a $2n$-dimensional real vector space $V$.
As before choose $J \in G$ such that $J^2 = -1$ and for any $v \in V \smallsetminus \{0\}$, $a(Jv,v)>0$, which endows $V$ with a complex structure and realises $a$ as the imaginary part of the positive definite hermitian form $h$ defined by
$$ h(v_1,v_2) = a(Jv_1,v_2)+ia(v_1,v_2). $$
Then $\mathbf{K} = \mathbf{U}(V,h)$ is a reductive subgroup of $\mathbf{G}$, and $K = \mathbf{K}(\R)$ is a maximal compact subgroup of $G$.
Note that both $G$ and $K$ are connected.
The center $\mathbf{Z}_{\mathbf{K}}$ of $\mathbf{K}$ is one-dimensional and anisotropic, and the complex structure $J$ yields a canonical isomorphism $\mathbf{Z}_{\mathbf{K}} \simeq \mathbf{U}_1$.
Let $\mathfrak{u}_+$ (resp.\ $\mathfrak{u}_-$) be the subspace of $\mathfrak{g}$ such that the adjoint action of $z \in \mathbf{Z}_{\mathbf{K}}(\R)$ on $\mathfrak{u}_+$ (resp.\ $\mathfrak{u}_-$) is by multiplication by $z^2$ (resp.\ $z^{-2}$).
Then $\mathfrak{g} = \mathfrak{u}_+ \oplus \mathfrak{k} \oplus \mathfrak{u}_-$ and $[\mathfrak{u}_+, \mathfrak{u}_+] = [\mathfrak{u}_-, \mathfrak{u}_-] = 0$.
Moreover $\mathfrak{u}_+ \oplus \mathfrak{u}_- = \C \otimes_{\R} \mathfrak{p}_0$ where $\mathfrak{p}_0$ is the subspace of $\mathfrak{g}_0 = \mathrm{Lie}(G)$ on which $J$ acts by $-1$, i.e.\ $\mathfrak{g}_0 = \mathfrak{p}_0 \oplus \mathfrak{k}_0$ is the Cartan decomposition of $\mathfrak{g}_0$ for the Cartan involution $\theta = \mathrm{Ad}(J)$.
There are unipotent abelian subgroups $\mathbf{U}_+, \mathbf{U}_-$ of $\mathbf{G}_{\C}$ associated with $\mathfrak{u}_+, \mathfrak{u}_-$, and the subgroups $\mathbf{K}_{\C} \mathbf{U}_+$ and $\mathbf{K}_{\C} \mathbf{U}_-$ are opposite parabolic subgroups of $\mathbf{G}_{\C}$ with common Levi subgroup $\mathbf{K}_{\C}$.
It follows that the multiplication map $\mathbf{U}_+ \times \mathbf{K}_{\C} \times \mathbf{U}_- \rightarrow \mathbf{G}_{\C}$ is an open immersion.
Furthermore $G \subset \mathbf{U}_+(\C) \mathbf{K}(\C) \mathbf{U}_-(\C)$.
For $g \in G$, we can thus write $g = g_+ g_0 g_-$ where $(g_+,g_0,g_-) \in \mathbf{U}_+(\C) \times \mathbf{K}(\C) \times \mathbf{U}_-(\C)$, and Harish-Chandra showed that $g \mapsto \log(g_+)$ identifies $G/K$ with a bounded domain $D \subset \mathfrak{u}_+$.
This endows $G/K$ with a structure of complex manifold, and for any $g \in G$, left multiplication by $g$ yields a holomorphic map $G/K \rightarrow G/K$.

\begin{rema}
Let us compare this point of view with the classical one.
Let $V = \R^{2n}$ and choose the alternate form $a(\cdot, \cdot)$ having matrix $ A = \begin{pmatrix} 0 & 1_n \\ -1_n & 0 \end{pmatrix}$, that is $a(v_1,v_2) = {}^t v_1 A v_2$.
The complex structure $J$ whose matrix is also $A$ satisfies the above conditions, and the resulting maximal compact subgroup $K$ is the stabiliser of $i1_n$ for the usual action of $G$ on the Siegel upper half plane $\mathcal{H}_g = \{ \tau \in M_n(\C)\ |\ {}^t \tau = \tau \text{ and } \mathrm{Im}(\tau)>0 \}$: for $a,b,c,d \in M_n(\R)$ such that $g = \begin{pmatrix} a & b \\ c & d \end{pmatrix} \in G$ and $\tau \in \mathcal{H}_g$, $g(\tau) = (a \tau +b)(c \tau + d)^{-1} $.
We now have two identifications of $G/K$ with domains, $D$ and $\mathcal{H}_n$, and they differ by the Cayley transform $\mathcal{H}_n \rightarrow D$, $\tau \mapsto (\tau - i1_n)(\tau + i 1_n)^{-1}$.
\end{rema}

Observe that $G \mathbf{K}(\C) \mathbf{U}_-(\C) = \exp(D) \mathbf{K}(\C) \mathbf{U}_-(\C)$ is open in $\mathbf{G}(\C)$.
Consider an irreducible unitary representation $r : K \rightarrow \GL(W)$, i.e.\ an irreducible algebraic representation of $\mathbf{K}_{\C}$ endowed with a $K$-invariant positive definite hermitian form.
Harish-Chandra considered the space of holomorphic functions $f : G \mathbf{K}(\C) \mathbf{U}_-(\C) \rightarrow W$ such that
\begin{enumerate}
\item for any $(s,k,n) \in G \mathbf{K}(\C) \mathbf{U}_-(\C) \times \mathbf{K}(\C) \times \mathbf{U}_-(\C)$, $f(skn) = r(k)^{-1}f(s)$,
\item $\int_G ||f(g)||^2 dg < \infty$.
\end{enumerate}
It has an action of $G$ defined by $(g \cdot f)(s) = f(g^{-1} s)$, and we get a unitary representation of $G$ on a Hilbert space $\mathscr{H}_{r}$.
Since $G/K \simeq G \mathbf{K}(\C) \mathbf{U}_-(\C) / \mathbf{K}(\C) \mathbf{U}_-(\C)$, $\mathscr{H}_{r}$ is isomorphic to the space of $f \in L^2(G, W)$ such that
\begin{enumerate}
\item for any $(g,k) \in G \times K$, $f(gk) = r(k)^{-1}f(g)$,
\item the function $G/K \rightarrow W,\ g \mapsto r(g_0)f(g)$ is holomorphic.
\end{enumerate}
Harish-Chandra proved that $\mathscr{H}_r$ is zero or irreducible, by observing that in any closed invariant subspace, there is an $f$ such that $G/K \rightarrow W,\ g \mapsto r(g_0)f(g)$ is constant and nonzero.
Actually this a special case of \cite{HCRepV}[Lemma 12, p. 20]).
Hence when $\mathscr{H}_r \neq 0$, there is a $K$-equivariant embedding $\phi : W \rightarrow \mathscr{H}_r$, and any vector in its image is $\mathfrak{u}_+$-invariant.
More generally, using the simple action of $\mathbf{Z}_{\mathbf{K}}(\R)$ on $\mathbf{U}_+$ we see that when $\mathscr{H}_r \neq 0$ the $K$-finite vectors of $\mathscr{H}_r$ are exactly the polynomial functions on $D$.
Note that when $\mathscr{H}_r \neq 0$ it is square-integrable by definition, i.e.\ it belongs to the discrete series of $G$.

Harish-Chandra determined necessary and sufficient conditions for $\mathscr{H}_r \neq 0$.
Let $\mathbf{T}$ be a maximal torus of $\mathbf{K}$, and choose an order on the roots of $\mathbf{T}$ in $\mathbf{K}$.
This determines a unique order on the roots of $\mathbf{T}$ in $\mathbf{G}$ such that the parabolic subgroup $\mathbf{K}_{\C} \mathbf{U}_+$ is standard, i.e.\ contains the Borel subgroup $B$ of $\mathbf{G}_{\C}$ such that the positive roots are the ones occurring in $\mathbf{B}$.
To be explicit in the symplectic case, $\mathbf{T}$ is determined by a decomposition of $V$ as an orthogonal (for the hermitian form $h$) direct sum $V = V_1 \oplus \dots \oplus V_n$ where each $V_k$ is a line over $\C$.
For any $k$ we have a canonical isomorphism $e_k : \mathbf{U}(V_k,h) \simeq \mathbf{U}_1$.
We can choose the order on the roots so that the simple roots are $e_1-e_2, \dots, e_{n-1}-e_n, 2e_n$.
Note that among these simple roots, only $2e_n$ is noncompact.
Let $\lambda = m_1 e_1 + \dots + m_n e_n$ be the highest weight of $r$, so that $m_1 \geq \dots \geq m_n$.
This means that up to multiplication by a scalar there is a unique highest weight vector $v \in W \smallsetminus \{0\}$, that is such that for any $b \in \mathbf{K}(\C) \cap \mathbf{B}(\C)$, $r(b)v = \lambda(b) v$.
Let $\rho = ne_1 + \dots + e_n$ be half the sum of the positive roots of $\mathbf{T}$ in $\mathbf{G}$.
Then $\mathscr{H}_r \neq 0$ if and only if for any root $\alpha$ of $\mathbf{T}$ in $\mathbf{U}_+$, $\langle \alpha^{\vee}, \lambda + \rho \rangle < 0$ (see \cite{HCRepVI}[Lemma 29, p. 608]).
In our case this condition is equivalent to $m_1+n<0$.

Assume that $\mathscr{H}_r \neq 0$.
Note that $\phi(v)$ is a highest weight in the $\mathfrak{g}$-module $(\mathscr{H}_r)_{K \mathrm{-fin}}$, i.e.\ the Lie algebra of the unipotent radical of $\mathbf{B}$ cancels $\phi(v)$.
Since $\mathscr{H}_r$ is irreducible and unitary, $(\mathscr{H}_r)_{K \mathrm{-fin}}$ is a simple $\mathfrak{g}$-module whose isomorphism class determines that of $\mathscr{H}_r$ (see \cite{Knapp}[chapter VIII]), and thus it is the unique simple quotient of the Verma module defined by $\mathbf{B}$ and $\lambda$.
In particular, $\lambda+\rho$ is a representative for the infinitesimal character of $\mathscr{H}_r$.
One can show that $(\mathscr{H}_r)_{K \mathrm{-fin}} = U(\mathfrak{g}) \otimes_{U(\mathfrak{k} \oplus \mathfrak{u}_+)} W$, where $W$ is seen as a $\mathfrak{k} \oplus \mathfrak{u}_+$-module by letting $\mathfrak{u}_+$ act trivially.

\begin{rema}
Before Harish-Chandra realised these holomorphic discrete series concretely, in \cite{HCRepIV} he considered the simple quotient of the Verma module defined by $\lambda$ and $\mathbf{B}$, for $\lambda$ an arbitrary dominant weight for $\mathbf{K}_{\C} \cap \mathbf{B}$.
He determined a necessary condition for this $\mathfrak{g}$-module to be unitarisable \cite{HCRepIV}[Corollary 1 p.768]: for any root $\alpha$ of $\mathbf{T}$ in $\mathbf{U}_+$, $\langle \alpha^{\vee}, \lambda \rangle \leq 0$ (in our case this is equivalent to $m_1 \leq 0$).
He also determined a sufficient condition \cite{HCRepIV}[Theorem 3 p.770]: for any root $\alpha$ of $\mathbf{T}$ in $\mathbf{U}_+$, $\langle \alpha^{\vee}, \lambda + \rho \rangle \leq 0$ (in our case this is equivalent to $m_1 +n \leq 0$).
For classical groups Enright and Parthasarathy \cite{EnriPart} gave a necessary and sufficient condition for unitarisability.
In our symplectic case, this condition is
$$ -m_1 \geq \min_{1 \leq j \leq n} \left( n-i + \sum_{2 \leq j \leq i} \frac{m_1-m_j}{2} \right). $$
It would be interesting to determine whether all these unitary representations are globally relevant, i.e.\ belong to some Arthur packet.
\end{rema}

The character of $\mathscr{H}_r$ was computed explicitely in \cite{Schmid}, \cite{Martens} and \cite{Hecht}.
There exists a unique Borel subgroup $\mathbf{B}' \supset \mathbf{T}_{\C} \mathbf{U}_-$ of $\mathbf{G}_{\C}$ such that $\mathbf{B}' \cap \mathbf{K}_{\C} = \mathbf{B} \cap \mathbf{K}_{\C}$.
The order on the roots defined by $\mathbf{B}'$ is such that $\lambda + \rho$ is strictly dominant, i.e.\ for any root $\alpha$ occurring in $\mathbf{B}'$, $\langle \alpha^{\vee}, \lambda + \rho \rangle > 0$.
Let $W_c = W(\mathbf{T}(\R), G) = W(\mathbf{T}(\R), K)$.
Then among the discrete series of $G$ with infinitesimal character $\lambda + \rho$, $\mathscr{H}_r$ is determined by the $G$-conjugacy class of the pair $(\mathbf{B}', \mathbf{T})$ (see section \ref{sectionDS}).
In our case the simple roots for $\mathbf{B}'$ are $e_1 - e_2, \dots, e_{n-1}-e_n$ and $-2e_1$.
\begin{rema}
This characterisation of the holomorphic discrete series in their L-packet is enough to determine which Adams-Johnson representations are holomorphic discrete series.
Using the notations of section \ref{sectionAJ}, the representation $\pi_{\psi, \mathbf{Q}, \mathbf{L}}$ is a holomorphic discrete series if and only if $\mathbf{Q} \supset \mathbf{B}'$ and $\mathbf{L}$ is anisotropic.
By \cite{ChRe}[Lemma 9.4] the packet $\Pi_{\psi}^{\mathrm{AJ}}$ contains a holomorphic discrete series representation if and only if $\mathrm{Std} \circ \psi$ does not contain $[d]$ or $\epsilon_{\C / \R}[d]$ as a factor for some $d>1$ (necessarily odd).
\end{rema}

We have made an arbitrary choice between $\mathbf{U}_+$ and $\mathbf{U}_-$.
We could have also identified $G/K$ with a bounded domain $D' \subset \mathfrak{u}_-$:
$$G/K \subset \mathbf{U}_-(\C) \mathbf{K}(\C) \mathbf{U}_+(\C) / \mathbf{K}(\C) \mathbf{U}_+(\C) \simeq \mathbf{U}_-(\C).$$
The resulting isomorphism of manifolds $D \simeq D'$ is antiholomorphic.
Given an infinitesimal character $\tau$ which occurs in a finite-dimensional representation of $G$, we have a discrete series representations of $G$ in the L-packet associated with $\tau$, $\pi_{\tau,+}^{\mathrm{hol}} := \left( \mathscr{H}_r\right)_{K\mathrm{-fin}}$ (resp.\ $\pi_{\tau,-}^{\mathrm{hol}}$).
It is characterised among irreducible unitary representations having infinitesimal character $\tau$ by the fact that it has a nonzero $K$-finite vector cancelled by $\mathfrak{u}_+$ (resp.\ $\mathfrak{u}_-$).
Since $K$ stabilises $\mathfrak{u}_+$ and $\mathfrak{u}_-$, $\pi_{\tau,+}^{\mathrm{hol}} \not\simeq \pi_{\tau,-}^{\mathrm{hol}}$.

Let us now define holomorphic discrete series for the group $G' = \mathrm{PGSp}(V,a)$.
Assume that $\sum_{k=1}^n m_k$ is even, i.e.\ the center of $G$ acts trivially in $\pi_{\tau,+}^{\mathrm{hol}}$ (and $\pi_{\tau,-}^{\mathrm{hol}}$).
The image of $G$ in $G'$ has index two, and there is an element of $G'$ normalizing $K$ and exchanging $\mathbf{U}_+$ and $\mathbf{U}_-$.
Thus if $\tau$ is such that the kernel of $\pi_{\tau,\pm}^{\mathrm{hol}}$ contains the center of $G$, $\pi_{\tau}^{\mathrm{hol}} := \Ind_G^{G'} \left( \pi_{\tau,+}^{\mathrm{hol}} \right)$ is irreducible and isomorphic to $\Ind_G^{G'} \left( \pi_{\tau,-}^{\mathrm{hol}} \right)$.
Among irreducible unitary representations having infinitesimal character $\tau$, $\pi_{\tau}^{\mathrm{hol}}$ is characterised by the fact that it has a nonzero $K$-finite vector cancelled by $\mathfrak{u}_+$.
Of course we could replace $\mathfrak{u}_+$ by $\mathfrak{u}_-$.

\subsection{Siegel modular forms and automorphic forms}

Let us recall the link between Siegel modular forms and automorphic cuspidal representations for the group $\mathbf{PGSp}$.
Almost all that we will need is contained in \cite{AsgariSchmidt}, in which the authors construct an isometric Hecke-equivariant map from the space of cuspidal Siegel modular forms to a certain space of cuspidal automorphic forms.
We will simply add a characterisation of the image of this map.

For the definitions and first properties of Siegel modular forms, see \cite{123mod} or \cite{Freitag}.
We will use the classical conventions and consider the alternate form $a$ on $\Z^{2n}$ whose matrix is $A = \begin{pmatrix} 0 & 1_n \\ -1_n & 0 \end{pmatrix} \in M_{2n}(\Z)$ for some integer $n \geq 1$.
Let $\mu : \mathbf{GSp}(A) \rightarrow \mathbf{GL}_1$ be the multiplier, defined by the relation $a(g(v_1), g(v_2)) = \mu(g) a(v_1,v_2)$.
Let $\mathbf{G} = \mathbf{Sp}(A) = \ker(\mu)$ and $\mathbf{G}' = \mathbf{PGSp}(A) = \mathbf{G}_{\mathrm{ad}}$, both reductive over $\Z$.

Recall the automorphy factor $j(g,\tau) = c \tau +d \in \mathrm{GL}_n(\C)$ for $g = \begin{pmatrix} a & b \\ c & d \end{pmatrix} \in \mathbf{GSp}(A, \R)$ and $\tau \in \mathcal{H}_n$.
As in the previous section denote by $K$ the stabiliser of $i 1_n \in \mathcal{H}_n$ under the action of $\mathbf{G}(\R)$.
Let $K'$ be the maximal compact subgroup of $\mathbf{G}'(\R)$ containing the image of $K$ by the natural morphism $\mathbf{G}(\R) \rightarrow \mathbf{G}'(\R)$.
Observe that the map $k = \begin{pmatrix} a & b \\ -b & a \end{pmatrix} \in K \mapsto j(k, i1_n) = a-ib$ is an isomorphism between $K$ and the unitary group $\mathrm{U}(1_n)$.
In the previous section, using the complex structure $J$ whose matrix is equal to $A$, we have identified $K$ with the unitary group $\mathrm{U}(h)$ for a positive definite hermitian form $h$ on $\R^{2n}$ with the complex structure $J$.
We emphasise that the the resulting isomorphism $\mathrm{U}(1_n) \simeq \mathrm{U}(h)$ is \emph{not} induced by an isomorphism between the hermitian spaces: one has to compose with the outer automorphism $x \mapsto {}^t x^{-1}$ on one side.

Let $(V, r)$ be an algebraic representation of $\mathbf{GL}_n$.
We can see the highest weight of $r$ as $(m_1, \dots, m_g)$ where $m_1 \geq \dots m_g$ are integers.
The representation $k \in K \mapsto r(j(k, i 1_n))$ is the restriction to $K$ of an algebraic representation $r'$ of $\mathbf{K}_{\C}$.
As in the previous section we choose a Borel pair $(\mathbf{B}_c, \mathbf{T})$ in $\mathbf{K}$ and denote by $e_1-e_2, \dots, e_{n-1}-e_n$ the corresponding simple roots.
Then the highest weight of $r'$ is $-m_ne_1 - \dots -m_1e_n$.

Let $\Gamma_n = \mathrm{Sp}(A, \Z)$, and denote by $S_r(\Gamma_n)$ the space of vector-valued Siegel modular forms of weight $r$.
When $m_1 = \dots = m_g$, that is when $r$ is one-dimensional, this is the space of scalar Siegel modular forms of weight $m_1$.
Asgari and Schmidt associate with any $f \in S_r(\Gamma_n)$ a function $\widetilde{\Phi}_f \in L^2( \mathbf{G}'(\Q) \backslash \mathbf{G}'(\A), V)$ such that
\begin{enumerate}
\item $\widetilde{\Phi}_f$ is right $\mathbf{G}'(\widehat{\Z})$-invariant,
\item for any $g \in \mathbf{G}'(\A)$, the function $\mathbf{G}'(\R) \rightarrow W, h \mapsto \widetilde{\Phi}_f(gh)$ is smooth,
\item for any $X \in \mathfrak{u}_-$ and any $g \in \mathbf{G}'(\A)$, $(X \cdot \widetilde{\Phi}_f)(g) = 0$,
\item for any $g \in \mathbf{G}'(\A)$ and any $k \in K$, $\widetilde{\Phi}_f(g k) = r(j(k, i1_n)) \widetilde{\Phi}_f(g)$,
\item $\widetilde{\Phi}_f$ is cuspidal.
\end{enumerate}
The third condition translates the Cauchy-Riemann equation for the holomorphy of $f$ into a condition on $\widetilde{\Phi}_f$.
If the measures are suitably normalised, $f \mapsto \widetilde{\Phi}_f$ is isometric for the Petersson hermitian product on $S_r(\Gamma_n)$.
Finally, $f \mapsto \widetilde{\Phi}_f$ is equivariant for the action of the unramified Hecke algebra at each finite place.

Let $\mathbf{N}_c$ be the unipotent radical of $\mathbf{B}_c$, let $\mathfrak{n}_c$ be its Lie algebra and let $\mathfrak{h}_0$ be the Lie algebra of $\mathbf{T}$.
The representation $r'$ allows to see $V$ as a simple $\mathfrak{k}$-module, and $\mathfrak{n}_c V$ has codimension one in $V$.
Let $L$ be a linear form on $V$ such that $\ker(L) = \mathfrak{n}_c V$.
We can see $X^*(\mathbf{T})$ as a lattice in $\mathrm{Hom}_{\R}(\mathfrak{h}_0, i\R) \subset \mathfrak{h}^*$.
Let $\lambda = m_1 e_1 + \dots + m_n e_n$ which we can see as an element of $(\mathfrak{h} \oplus \mathfrak{n}_c \oplus \mathfrak{u}_-)^*$ trivial on $\mathfrak{n}_c \oplus \mathfrak{u}_-$.
For any $v \in V$ and any $X \in \mathfrak{h} \oplus \mathfrak{n}_c \oplus \mathfrak{u}_-$, $L(-r(X)v) = \lambda(X)$.
For $g \in \mathbf{G}'(\A)$, define $\Phi_f(g)= L(\widetilde{\Phi}_f(g))$.
Then $\Phi_f \in L^2(\mathbf{G}'(\Q) \backslash \mathbf{G}'(\A))$ satisfies the following properties
\begin{enumerate}
\item $\Phi_f$ is right $\mathbf{G}'(\widehat{\Z})$-invariant and right $K'$-finite,
\item for any $g \in \mathbf{G}'(\A)$, the function $\mathbf{G}'(\R) \rightarrow W, h \mapsto \Phi_f(gh)$ is smooth,
\item for any $X \in \mathfrak{h} \oplus \mathfrak{n}_c \oplus \mathfrak{u}_-$ and any $g \in \mathbf{G}'(\A)$, $(X \cdot \Phi_f)(g) = \lambda(X) \Phi_f(g)$,
\item $\Phi_f$ is cuspidal.
\end{enumerate}
Again $f \mapsto \Phi_f$ is equivariant for the action of the unramified Hecke algebras at the finite places, and is isometric (up to a scalar).
The third condition implies that $\Phi_f$ is an eigenvector for $Z(U(\mathfrak{g}))$ and the infinitesimal character $\lambda + \rho_{\mathfrak{n}_c \oplus \mathfrak{u}_-} = (m_1-1)e_1 + \dots + (m_n-n) e_n$.
In particular $\Phi_f$ is a cuspidal automorphic form in the sense of \cite{BoJaCorv}, which we denote by $\Phi_f \in \mathcal{A}_{\mathrm{cusp}}(\mathbf{G}'(\Q) \backslash \mathbf{G}'(\A))$.

\begin{lemm} \label{lemmSiegelcusp}
Any $\Phi \in \mathcal{A}_{\mathrm{cusp}}(\mathbf{G}'(\Q) \backslash \mathbf{G}'(\A))$ satisfying the four conditions above is equal to $\Phi_f$ for a unique $f \in S_r(\Gamma_n)$.
\end{lemm}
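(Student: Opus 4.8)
The plan is to construct an inverse to the Asgari--Schmidt map $f \mapsto \widetilde{\Phi}_f$ on the subspace of $\mathcal{A}_{\mathrm{cusp}}(\mathbf{G}'(\Q)\backslash\mathbf{G}'(\A))$ cut out by the four listed conditions. Given such a $\Phi$, I would first reconstruct a $V$-valued function $\widetilde{\Phi}$ on $\mathbf{G}'(\A)$ having the five properties that characterise $\widetilde{\Phi}_f$, then produce a holomorphic form $f$ from $\widetilde{\Phi}$, and finally check that $\Phi_f = \Phi$ and that $f$ is unique.

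For the reconstruction of $\widetilde{\Phi}$: the third condition says that $\Phi$ is annihilated by $\mathfrak{n}_c$ and by $\mathfrak{u}_-$ and is a $\mathfrak{h}$-weight vector of weight $\lambda$, which is dominant for $\mathbf{B}_c$ since $m_1 \geq \dots \geq m_n$. Together with the right $K'$-finiteness from the first condition, this forces the $U(\mathfrak{k})$-submodule $W$ of $\mathcal{A}_{\mathrm{cusp}}(\mathbf{G}'(\Q)\backslash\mathbf{G}'(\A))$ generated by $\Phi$ to be finite-dimensional and generated by the highest weight vector $\Phi$ of dominant weight $\lambda$; since $\mathbf{K}_{\C}$ is connected reductive, $W$ is then the irreducible $\mathbf{K}_{\C}$-module of highest weight $\lambda$, hence $\mathbf{K}_{\C}$-equivariantly isomorphic to $V^*$ (with $\mathbf{K}_{\C}$ acting through $r'$), and under such an isomorphism $\Phi$ corresponds to a scalar multiple of the covector $L$. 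I would then define $\widetilde{\Phi}(g) \in (V^*)^* = V$ to be the image of the evaluation functional $\phi \mapsto \phi(g)$ under the induced isomorphism $W^* \simeq V$. The transformation law $\widetilde{\Phi}(gk) = r(j(k,i1_n))\widetilde{\Phi}(g)$ for $k \in K$ follows from $\mathbf{K}_{\C}$-equivariance of the isomorphism, once one applies the dictionary between the two descriptions of $K$ recalled in the previous section; $L(\widetilde{\Phi}(g)) = \Phi(g)$ is immediate; and left $\mathbf{G}'(\Q)$-invariance, right $\mathbf{G}'(\widehat{\Z})$-invariance, smoothness and cuspidality of $\widetilde{\Phi}$ are inherited componentwise from $\Phi$ since $U(\mathfrak{k})$ acts at the archimedean place. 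The key remaining point is the Cauchy--Riemann equation $X\cdot\widetilde{\Phi} = 0$ for $X \in \mathfrak{u}_-$: it holds because $\mathfrak{u}_-$ annihilates all of $W$, since for $\phi = R_k\Phi$ ($k \in K$) one has $X\cdot\phi = R_k\big( (\mathrm{Ad}(k^{-1})X)\cdot\Phi \big)$ with $\mathrm{Ad}(k^{-1})X \in \mathfrak{u}_-$ (as $K$ normalises $\mathfrak{u}_-$), which vanishes by the third condition, and $W$ is spanned by such $R_k\Phi$. Hence $\widetilde{\Phi}$ satisfies exactly the five defining properties of $\widetilde{\Phi}_f$.

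Next I would pass from $\widetilde{\Phi}$ to a classical form by setting $f(\tau) = r(j(g,i1_n))\widetilde{\Phi}(g)$ for any $g \in \mathbf{G}(\R)$ with $g(i1_n) = \tau$, using strong approximation $\mathbf{G}(\A) = \mathbf{G}(\Q)\,\mathbf{G}(\R)\,\mathbf{G}(\widehat{\Z})$ to regard $\mathbf{G}(\R) \hookrightarrow \mathbf{G}'(\A)$. Independence of the choice of $g$ uses the $K$-transformation law of $\widetilde{\Phi}$; invariance of $f$ under $\Gamma_n = \mathbf{G}(\Z)$ with the automorphy factor $j(\cdot,\cdot)$ uses the left $\mathbf{G}'(\Q)$- and right $\mathbf{G}'(\widehat{\Z})$-invariance of $\widetilde{\Phi}$; holomorphy of $f$ is equivalent to $\mathfrak{u}_-\cdot\widetilde{\Phi} = 0$; and the growth condition at the cusps, giving $f \in S_r(\Gamma_n)$ with $f$ cuspidal, follows from cuspidality of $\widetilde{\Phi}$ together with the Koecher principle when $n \geq 2$ (and the Fourier expansion directly when $n = 1$). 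This construction is the exact reverse of the one in \cite{AsgariSchmidt}, so $\widetilde{\Phi}_f = \widetilde{\Phi}$, whence $\Phi_f = L\circ\widetilde{\Phi}_f = L\circ\widetilde{\Phi} = \Phi$.

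Finally, uniqueness of $f$ is clear: it is recovered from $\Phi$ by the explicit formula above, or equivalently $f \mapsto \widetilde{\Phi}_f$ is injective (being an isometry for the Petersson product) and $\widetilde{\Phi}_f$ is pinned down by $\Phi = L\circ\widetilde{\Phi}_f$ through the reconstruction of $W$. The only genuine work is the last step, the passage from the adèlic $\widetilde{\Phi}$ back to a holomorphic Siegel cusp form --- well-definedness on $\mathcal{H}_n$, $\Gamma_n$-modularity via strong approximation, holomorphy, and the growth condition at the boundary --- which is precisely the ``other half'' of the Asgari--Schmidt dictionary that this lemma is meant to supply; the representation-theoretic input (identifying $W$ with $V^*$ and transporting the $\mathfrak{u}_-$-equation) is purely formal.
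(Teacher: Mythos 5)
Your formal reconstruction of $\widetilde{\Phi}$ from $\Phi$, and of a holomorphic $\Gamma_n$-modular form $f$ from $\widetilde{\Phi}$, is fine and is exactly what the paper dismisses in one sentence as ``completely formal'' (with the Koecher principle for $n\geq 2$, resp.\ moderate growth for $n=1$, giving $f\in M_r(\Gamma_n)$). The genuine gap is in your last step: you assert that $f$ is \emph{cuspidal} because ``cuspidality of $\widetilde{\Phi}$ together with the Koecher principle'' gives the growth condition at the boundary. But the Koecher principle has nothing to do with cuspidality --- it only yields holomorphy/boundedness at the boundary, i.e.\ membership in $M_r(\Gamma_n)$ --- and the implication ``adelic cuspidality of $\Phi$ implies $f$ is a cusp form'' is precisely the non-formal content of the lemma, which your proposal never establishes. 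It is also not simply ``the exact reverse'' of the Asgari--Schmidt construction: their direction (cuspidal $f$ gives cuspidal $\widetilde{\Phi}_f$) does not formally invert, and the converse needs its own argument; the paper explicitly isolates this as the remaining point (``We are left to show that $f$ is cuspidal'') and devotes essentially the whole proof to it.

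Concretely, what is missing is the following computation. Writing $f(\tau)=\sum_{s\in\mathrm{Sym}_n} c(s)e^{2i\pi\mathrm{Tr}(s\tau)}$, one must show $c\left(\begin{smallmatrix}0&0\\0&s'\end{smallmatrix}\right)=0$ for all $s'\in\mathrm{Sym}_{n-1}$ (by $\mathrm{GL}_n(\Z)$-equivariance of the coefficients this kills all singular $s$, hence gives $f\in S_r(\Gamma_n)$). The paper does this by taking the specific Klingen-type parabolic $\mathbf{P}\subset\mathbf{G}$ over $\Z$ with unipotent radical $\mathbf{N}=\mathbf{N}_0\rtimes\mathbf{N}_1$, using $\mathbf{N}_i(\Q)\backslash\mathbf{N}_i(\A)\simeq\mathbf{N}_i(\Z)\backslash\mathbf{N}_i(\R)$ to turn the adelic vanishing of the constant term of $\widetilde{\Phi}$ into a real-analytic double integral, evaluating at $g$ with $g(i1_n)=\left(\begin{smallmatrix}iT&0\\0&\tau'\end{smallmatrix}\right)$, and then checking two non-obvious points: the inner integral over $\mathbf{N}_0$ isolates exactly the degenerate coefficients above, and the resulting expression is independent of $n_1\in\mathbf{N}_1(\R)$ because $r(j(n_1,\tau))^{-1}$ fixes those coefficients; only then does the vanishing of the iterated integral force $c\left(\begin{smallmatrix}0&0\\0&s'\end{smallmatrix}\right)=0$. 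Without this unfolding along a suitable parabolic (or an equivalent substitute), your conclusion that $f$ lands in $S_r(\Gamma_n)$ rather than merely in $M_r(\Gamma_n)$ is unsupported, and that is the one step the lemma is actually there to supply.
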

\begin{proof}
Since $\Phi$ is $K'$-finite and transforms under $\mathfrak{h} \oplus \mathfrak{n}_c$ according to $\lambda$, $\Phi = L(\widetilde{\Phi})$ for a unique function $\widetilde{\Phi} : \mathbf{G}'(\Q) \backslash \mathbf{G}'(\A) \rightarrow V$ such that for $k \in K$, $\widetilde{\Phi}(gk) = r(j(k, i1_n))^{-1} \widetilde{\Phi}(g)$.
It is completely formal to check that there is a unique $f \in M_r(\Gamma_n)$ such that $\widetilde{\Phi} = \widetilde{\Phi}_f$, and thanks to the Koecher principle we only need to use that $\Phi$ has moderate growth when $n=1$.
We are left to show that $f$ is cuspidal.
Write $f(\tau) = \sum_{s \in \mathrm{Sym}_n} c(s) e^{2 i \pi \mathrm{Tr}(s \tau)}$ where $c_s \in V$ and the sum ranges over the set $\mathrm{Sym}_n$ of symmetric half-integral semi-positive definite $n \times n$.
We need to show that for any $s' \in \mathrm{Sym}_{n-1}$, $c\left(\begin{pmatrix} 0 & 0 \\ 0 & s' \end{pmatrix}\right) = 0.$
We use the cuspidality condition on $\Phi$ for the parabolic subgroup $\mathbf{P}$ of $\mathbf{G}$ defined over $\Z$ by
$$ \mathbf{P} = \left\{ \bordermatrix{ & 1 & n-1 & 1 & n-1 \cr 1 & * & * & * & * \cr n-1 & 0 & * & * & * \cr 1 & 0 & 0 & * & 0 \cr n-1 & 0 & * & * & * } \in \mathbf{G} \right\}. $$
Denote $\mathbf{N}$ the unipotent radical of $\mathbf{P}$, and observe that $\mathbf{N} = \mathbf{N}_0 \rtimes \mathbf{N}_1$ where
$$ \mathbf{N}_0 = \left\{  \begin{pmatrix} 1 & 0 & t_1 & t_2 \\ 0 & 1_{n-1} & {}^tt_2 & 0 \\ 0 & 0 & 1 & 0 \\ 0 & 0 & 0 & 1_{n-1} \end{pmatrix} \right\} \qquad \text{and} \qquad \mathbf{N}_1 = \left\{  \begin{pmatrix} 1 & t_3 & 0 & 0 \\ 0 & 1_{n-1} & 0 & 0 \\ 0 & 0 & 1 & 0 \\ 0 & 0 & -{}^tt_3 & 1_{n-1} \end{pmatrix} \right\} $$
are vector groups.
Moreover $\mathbf{N}_0(\Q) \backslash \mathbf{N}_0(\A) \simeq \mathbf{N}_0(\Z) \backslash \mathbf{N}_0(\R)$ and similarly for $\mathbf{N}_1$.
Therefore for any $g \in \mathbf{G}(\R)$,
$$ \int_{\mathbf{N}_1(\Z) \backslash \mathbf{N}_1(\R)} \int_{\mathbf{N}_0(\Z) \backslash \mathbf{N}_0(\R)} \widetilde{\Phi}(n_0n_1g) dn_0 dn_1 = 0. $$
By definition of $\widetilde{\Phi}_{\cdot}$, for some $m \in \R$ depending only on $r$,
$$ \widetilde{\Phi}(n_0n_1g) = \mu(g)^m r(j(n_0n_1g, i1_n))^{-1} f(n_0n_1g(i1_n)). $$
Fix $\tau \in \mathcal{H}_n$ of the form $\begin{pmatrix} i T & 0 \\ 0 & \tau' \end{pmatrix}$ where $T \in \R_{> 0}$ and $\tau' \in \mathcal{H}_{n-1}$, and let $g \in \mathbf{G}(\R)$ be such that $\tau = g(i1_n)$.
We will evaluate the inner integral first.
Fix $n_1 \in \mathbf{N}_1(\R)$ determined by $t_3 \in \R^{n-1}$ as above.
For any $n_0 \in \mathbf{N}(\R)$ determined by $(t_1,t_2) \in \R \times \R^{n-1}$ as above, $j(n_0n_1g, i1_n) = j(n_1g, i1_n)$ and we have the Fourier expansion
\begin{eqnarray*} \widetilde{\Phi}(n_0n_1g) &=& \mu(g)^m r(j(n_1g, i1_n))^{-1} \sum_{s_1 \in \Z, s_2 \in 1/2\Z^{n-1}} \left( \sum_{s' \in \mathrm{Sym}_{n-1}} c\left( \begin{pmatrix} s_1 & s_2 \\ {}^t s_2 & s' \end{pmatrix} \right) e^{2 i \pi \mathrm{Tr}(s' \tau')} \right) \\ & & \qquad \times \exp\left(2 i \pi (s_1 (t_3 \tau' {}^t t_3 + iT + t_1) + 2s_2(\tau' {}^t t_3 {}^tt_2))\right) \end{eqnarray*}
and thus
\begin{eqnarray*} \int_{\mathbf{N}_0(\Z) \backslash \mathbf{N}_0(\R)} \widetilde{\Phi}(n_0n_1g) dn_0 &=& \mu(g)^m r(j(n_1g, i1_n))^{-1} \sum_{s' \in \mathrm{Sym}_{n-1}} c\left(\begin{pmatrix} 0 & 0 \\ 0 & s' \end{pmatrix} \right) e^{2 i \pi \mathrm{Tr}(s' \tau')} \\
& = & \mu(g)^m r(j(g, i1_n))^{-1} \sum_{s' \in \mathrm{Sym}_{n-1}} c\left(\begin{pmatrix} 0 & 0 \\ 0 & s' \end{pmatrix} \right) e^{2 i \pi \mathrm{Tr}(s' \tau')} \end{eqnarray*}
does not depend on $n_1$.
Note that to get the last expression we used
$$ r(j(n_1, \tau))^{-1} c\left(\begin{pmatrix} 0 & 0 \\ 0 & s' \end{pmatrix} \right) = c\left(\begin{pmatrix} 1 & 0 \\ {}^t t_3 & 1 \end{pmatrix} \begin{pmatrix} 0 & 0 \\ 0 & s' \end{pmatrix} \begin{pmatrix} 1 & t_3 \\ 0 & 1 \end{pmatrix}\right) = c\left(\begin{pmatrix} 0 & 0 \\ 0 & s' \end{pmatrix} \right) .$$
Hence we can conclude that for any $s' \in \mathrm{Sym}_{n-1}$, $c\left(\begin{pmatrix} 0 & 0 \\ 0 & s' \end{pmatrix} \right) = 0$.
\end{proof}

Assume that $m_n \geq n+1$, i.e.\ that $\lambda + \rho_{\mathfrak{n}_c \oplus \mathfrak{u}_-}$ is the infinitesimal character of an L-packet of discrete series for $\mathbf{G}'(\R)$.
Assume also that $\sum_{k=1}^n m_k$ is even, since otherwise $S_r(\Gamma_n) = 0$.
By the theorem of Gelfand, Graev and Piatetski-Shapiro
$$\mathcal{A}_{\mathrm{cusp}}(\mathbf{G}'(\Q) \backslash \mathbf{G}'(\A)) \simeq \bigoplus_{\pi \in \Pi_{\mathrm{cusp}}(\mathbf{G}')} m_{\pi} \pi$$
where $\Pi_{\mathrm{cusp}}(\mathbf{G}')$ is the set of isomorphism classes of irreducible admissible $(\mathfrak{g}, K') \times \mathbf{G}'(\A_f)$-modules occurring in $\mathcal{A}_{\mathrm{cusp}}(\mathbf{G}'(\Q) \backslash \mathbf{G}'(\A))$ and $m_{\pi} \in \Z_{\geq 1}$.
Consider a $\pi \in \Pi_{\mathrm{cusp}}(\mathbf{G}')$.
For any prime $p$, $\pi_p^{\mathbf{G}'(\Zp)} \neq 0$ if and only if $\pi_p$ is unramified, and in that case $\dim_{\C} \pi_p^{\mathbf{G}'(\Zp)} = 1$.
Since $\pi_{\infty}$ is unitary, it has a highest weight vector for $(\lambda, \mathfrak{n}_c \oplus \mathfrak{u}_-)$ if and only if $\pi_{\infty}$ is the holomorphic discrete series with infinitesimal character $(m_1-1)e_1 + \dots + (m_n-n)e_n$, and in that case the space of highest weight vectors has dimension one.
Thus $\dim S_r(\Gamma_n)$ is equal the sum of the $m_{\pi}$ for $\pi = \otimes'_v \pi_v \in \Pi_{\mathrm{cusp}}(\mathbf{G}')$ such that $\pi_{\infty}$ is a holomorphic discrete series with infinitesimal character $(m_1-1)e_1 + \dots + (m_n-n) e_n$ and for any prime number $p$, $\pi_p$ is unramified.
By \cite{Wallach} any $\pi \in \Pi_{\mathrm{disc}}(\mathbf{G}') \smallsetminus \Pi_{\mathrm{cusp}}(\mathbf{G}')$ is such that $\pi_{\infty}$ is not tempered.
Therefore $\dim S_r(\Gamma_n)$ is equal to the sum of the multiplicities $m_{\pi}$ for $\pi \in \Pi_{\mathrm{disc}}(\mathbf{G}')$ such that
\begin{itemize}
\item for any prime number $p$, $\pi_p$ is unramified,
\item $\pi_{\infty}$ is the holomorphic discrete series representation $\pi_{\tau}^{\mathrm{hol}}$ with infinitesimal character $\tau = (m_1-1)e_1 + \dots + (m_n-n)e_n$.
\end{itemize}
Recall that $\mathbf{G} = \mathbf{Sp}_{2n}$.
Thanks to \cite{ChRe}[Proposition 4.7] we have that $\dim S_r(\Gamma_n)$ is also equal to the sum of the multiplicities $m_{\pi}$ for $\pi \in \Pi_{\mathrm{disc}}(\mathbf{G})$ such that $\pi$ is unramified everywhere and $\pi_{\infty} \simeq \pi_{\tau,+}^{\mathrm{hol}}$.
\begin{rema}
For any central isogeny $\mathbf{G} \rightarrow \mathbf{G}'$ between semisimple Chevalley groups over $\Z$, the integer denoted $[\pi_{\infty}, \pi_{\infty}']$ in \cite{ChRe}[Proposition 4.7] is always equal to $1$.
This follows from the fact that $\mathbf{G}'(\R) / \mathbf{G}(\R)$ is a finite abelian group.
\end{rema}
Thus we have an algorithm to compute $\dim S_r(\Gamma_n)$ from the cardinalities of $S(\cdot)$, $O_o(\cdot)$ and $O_e(\cdot)$, under Assumption \ref{assumAJ} if $m_1, \dots, m_n$ are not distinct.
Note that since the Adams-Johnson packets $\Pi_{\psi}^{\mathrm{AJ}}$ have multiplicity one, under Assumption \ref{assumAJ} the multiplicites $m_{\pi}$ for $\pi$ as above are all equal to $1$, and thus Siegel eigenforms in level one and weight $r$ satisfying $m_n \geq n+1$ have multiplicity one: up to a scalar they are determined by their Hecke eigenvalues at primes in a set of density one.
This was already observed in \cite{ChRe}[Corollary 4.10].

\begin{rema}
Without assuming that $m_n \geq n+1$, the construction in \cite{AsgariSchmidt} shows that $f \mapsto \Phi_f$ is an isometry from the space of square-integrable modular forms (for the Petersson scalar product) to the space of square-integrable automorphic forms which are $\lambda$-equivariant under $\mathfrak{n}_c \oplus \mathfrak{u}_-$ and $\mathbf{G}'(\Z)$-invariant.

In fact for $m_n \geq n +1$ (even $m_n \geq n$) we could avoid using \cite{Wallach} and Lemma \ref{lemmSiegelcusp} and use the fact \cite{Weissauer}[Satz 3] that for $m_n \geq n$ square-integrable Siegel modular forms are cusp forms.
\end{rema}

\subsection{Example: genus $4$}

Let us give more details in case $n=4$, which is interesting because there an endoscopic contribution from the group $\mathbf{SO}_8$ (the formal parameter $O_e(w_1,w_2,w_3,w_4) \boxplus 1$ below) which cannot be explained using lower genus Siegel eigenforms.
First we list the possible Arthur parameters for the group $\mathbf{Sp}_8$ in terms of the sets $S(w_1, \dots)$, $O_o(w_1, \dots)$ and $O_e(w_1, \dots)$.
The non-tempered ones only occur when $\lambda' = (m_1-n-1)e_1 + \dots + (m_n-n-1)e_n$ is orthogonal to a non-empty subset of the simple coroots $\{e_1^*-e_2^*, \dots, e_{n-1}^*-e_n^*, e_n^* \}$.
The convention in the following table is that the weights $w_i \in \frac{1}{2}\Z_{\geq 0}$ are decreasing with $i$.
For example $S(w_3)[2] \boxplus O_o(w_1,w_2)$ occurs only if $m_3 = m_4$, and if this is the case then
$$ (m_1,m_2,m_3,m_4) = \left(w_1+1, w_2+2, w_3+\frac{7}{2}, w_3+\frac{7}{2}\right). $$
\begin{table}[H] \centering
\renewcommand*{\arraystretch}{1.2}
\caption{Unramified cohomological Arthur parameters for $\mathbf{Sp}_8$}
\makebox[\linewidth]{
\begin{tabular}{c|c|c}
\hline
$O_o(w_1,w_2,w_3,w_4)$ & $O_e(w_1,w_2,w_3,w_4) \boxplus 1$ & $O_e(w_1, w_4) \boxplus O_e(w_2, w_3) \boxplus 1$ \\
$O_e(w_2, w_3) \boxplus O_o(w_1, w_4)$ & $O_e(w_1, w_4) \boxplus O_o(w_2, w_3)$ & $O_e(w_1, w_3) \boxplus O_e(w_2, w_4) \boxplus 1$ \\
$O_e(w_2, w_4) \boxplus O_o(w_1, w_3)$ & $O_e(w_1, w_3) \boxplus O_o(w_2, w_4)$ & $O_e(w_1, w_2) \boxplus O_e(w_3, w_4) \boxplus 1$ \\
$O_e(w_3, w_4) \boxplus O_o(w_1, w_2)$ & $O_e(w_1, w_2) \boxplus O_o(w_3, w_4)$ & $O_e(w_1, w_2) \boxplus S(w_3)[2] \boxplus 1$ \\
$S(w_3)[2] \boxplus O_o(w_1, w_2)$ & $O_e(w_1, w_4) \boxplus S(w_2)[2] \boxplus 1$ & $S(w_2)[2] \boxplus O_o(w_1, w_4)$ \\
$O_e(w_3, w_4) \boxplus S(w_1)[2] \boxplus 1$ & $S(w_1)[2] \boxplus O_o(w_3, w_4)$ & $S(w_1, w_3)[2] \boxplus 1$ \\
$S(w_1)[2] \boxplus S(w_3)[2] \boxplus 1$ & $S(w_1)[4] \boxplus 1$ & $S(w_1)[2] \boxplus [5]$ \\
$O_e(w_1, w_2) \boxplus [5]$ & $O_o(w_1)[3]$ & $[9]$ \\
\hline
\end{tabular}
}
\end{table}

Among these $24$ types for $\psi \in \Psi(\mathbf{Sp}_8)^{\mathrm{unr},\lambda'}$, some never yield Siegel modular forms.
In the last four cases ($S(w_1)[2] \boxplus [5]$, $O_e(w_1, w_2) \boxplus [5]$, $O_o(w_1)[3]$ and $[9]$), $\Pi_{\psi_{\infty}}$ does not contain the holomorphic discrete series.
In the other $20$ cases, $\Pi_{\psi_{\infty}}$ contains the holomorphic discrete series representation $\pi_{\tau,+}^{\mathrm{hol}}$ but it can happen that $\langle \cdot, \pi_{\tau,+}^{\mathrm{hol}} \rangle |_{S_{\psi}}$ never equals $\epsilon_{\psi}$.
For example if $\psi$ is tempered (the first $11$ cases) $\epsilon_{\psi}$ is always trivial, whereas $\langle \cdot, \pi_{\tau,+}^{\mathrm{hol}} \rangle |_{S_{\psi}}$ is trivial if and only if $\psi$ does not contain $O_e(w_1,w_2)$ or $O_e(w_1,w_4)$ or $O_e(w_2,w_3)$ as a factor.

In the following table we list the $11$ types that yield Siegel modular forms for some dominant weight $\lambda'$ for $\mathbf{Sp}_8$.
In the last column we give a necessary and sufficient condition on the weights for having $\langle \cdot, \pi_{\tau,+}^{\mathrm{hol}} \rangle |_{S_{\psi}} = \epsilon_{\psi}$.
\begin{table}[H] \centering
\renewcommand*{\arraystretch}{1.4}
\caption{The $11$ possible Arthur parameters of Siegel eigenforms for $\Gamma_4$}
\makebox[\linewidth]{
\begin{tabular}{c|c|c}
Type & $(m_1,m_2,m_3,m_4)$ & Occurs iff \\
\hline
$O_o(w_1,w_2,w_3,w_4)$                      & $(w_1+1,w_2+2,w_3+3,w_4+4)$ & always \\
$O_e(w_1,w_2,w_3,w_4) \boxplus 1$             & $(w_1+1,w_2+2,w_3+3,w_4+4)$ & always \\
$O_e(w_1,w_3) \boxplus O_e(w_2,w_4) \boxplus 1$ & $(w_1+1,w_2+2,w_3+3,w_4+4)$ & always \\
$O_e(w_2,w_4) \boxplus O_o(w_1,w_3)$          & $(w_1+1,w_2+2,w_3+3,w_4+4)$ & always \\
$O_e(w_1,w_3) \boxplus O_o(w_2,w_4)$          & $(w_1+1,w_2+2,w_3+3,w_4+4)$ & always \\
$S(w_3)[2] \boxplus O_o(w_1,w_2)$             & $(w_1+1,w_2+2,w_3+\frac{7}{2},w_3+\frac{7}{2})$ & $w_3+\frac{1}{2}$ is odd \\
$S(w_2)[2] \boxplus O_e(w_1,w_4) \boxplus 1$    & $(w_1+1,w_2+\frac{5}{2},w_2+\frac{5}{2},w_4+4)$ & $w_2+\frac{1}{2}$ is even \\
$S(w_2)[2] \boxplus O_o(w_1,w_4)$             & $(w_1+1,w_2+\frac{5}{2},w_2+\frac{5}{2},w_4+4)$ & $w_2+\frac{1}{2}$ is even \\
$S(w_1)[2] \boxplus O_o(w_3,w_4)$             & $(w_1+\frac{3}{2},w_1+\frac{3}{2},w_3+3,w_4+4)$ & $w_1+\frac{1}{2}$ is odd \\
$S(w_1,w_3)[2] \boxplus 1$                             & $(w_1+\frac{3}{2},w_1+\frac{3}{2},w_3+\frac{7}{2},w_3+\frac{7}{2})$ & $w_1+w_3$ is odd \\
$S(w_1)[4] \boxplus 1$                                 & $(w_1+\frac{3}{2},w_1+\frac{3}{2},w_1+\frac{3}{2},w_1+\frac{3}{2})$ & $w_1+\frac{1}{2}$ is even \\
\hline
\end{tabular}
}
\end{table}

\subsection{Some dimensions in the scalar case}
\label{scalarSiegel}

In genus $n$ greater than $4$ the enumeration of the possible Arthur parameters of Siegel eigenforms is best left to a computer.
Our implementation currently allows to compute $\dim S_r(\Gamma_n)$ for $n \leq 7$ and any algebraic representation $r$ of $\mathbf{GL}_n$ such that its highest weight $m_1 \geq \dots \geq m_n$ satisfies $m_n \geq n+1$.

Table \ref{tableScalarSiegel} displays the dimensions of some spaces of \emph{scalar} Siegel cusp forms.
Note that our method does \emph{not} allow to compute $\dim S_k(\Gamma_n)$ when $k \leq n$ (question marks in the bottom left corner), and that for scalar weights is is necessary to make Assumption \ref{assumAJ}.
We do not include the values $\dim S_k(\Gamma_n)$ when $n+1 \leq k \leq 7$ because they all vanish.
The question marks on the right side could be obtained simply by computing more traces in algebraic representations ($\mathrm{Tr}(\gamma\,|\,V_{\lambda})$ in the geometric side of the trace formula).
For more data see \url{http://www.math.ens.fr/~taibi/dimtrace/}.
For $n \geq 8$ we have not (yet) managed to compute the masses for $\mathbf{Sp}_{2n}$.
Nevertheless we can enumerate some endoscopic parameters, and thus give lower bounds for $\dim S_k(\Gamma_n)$: these are the starred numbers.
\begin{table}[H] \centering
\caption{Dimensions of spaces of scalar Siegel cusp forms}
\makebox[\linewidth]{
\begin{tabular}{r|ccccccccccccccc}\label{tableScalarSiegel}
$k$                    & $8$ & $9$ & $10$ & $11$ & $12$ & $13$ & $14$ & $15$  & $16$ & $17$ & $18$ & $19$ & $20$ & $21$ & $22$ \\ \hline
$\dim S_k(\Gamma_1)$   & $0$ & $0$ & $0$  & $0$  & $1$  & $0$  & $0$  & $0$   & $1$  & $0$  & $1$  & $0$  & $1$  & $0$  & $1$  \\
$\dim S_k(\Gamma_2)$   & $0$ & $0$ & $1$  & $0$  & $1$  & $0$  & $1$  & $0$   & $2$  & $0$  & $2$  & $0$  & $3$  & $0$  & $4$  \\
$\dim S_k(\Gamma_3)$   & $0$ & $0$ & $0$  & $0$  & $1$  & $0$  & $1$  & $0$   & $3$  & $0$  & $4$  & $0$  & $6$  & $0$  & $9$  \\
$\dim S_k(\Gamma_4)$   & $1$ & $0$ & $1$  & $0$  & $2$  & $0$  & $3$  & $0$   & $7$  & $0$  & $12$ & $1$  & $22$ & $1$  & $38$ \\
$\dim S_k(\Gamma_5)$   & $0$ & $0$ & $0$  & $0$  & $2$  & $0$  & $3$  & $0$   & $13$ & $0$  & $28$ & $0$  & $76$ & $0$  & $186$\\
$\dim S_k(\Gamma_6)$   & $0$ & $0$ & $1$  & $0$  & $3$  & $0$  & $9$  & $0$   & $33$ & $0$  & $117$& $1$  & $486$& ?    & ?    \\
$\dim S_k(\Gamma_7)$   & $0$ & $0$ & $0$  & $0$  & $3$  & $0$  & $9$  & $0$   & $83$ & $0$  & ?    & $0$  & ?    & $0$  & ?    \\ \hline
$\dim S_k(\Gamma_8)$   & ?   & $0^*$ & $1^*$ & $0^*$ & $4^*$  & $1^*$ & $23^*$ & $2^*$ & $234^*$ \\
$\dim S_k(\Gamma_9)$   & ?   & ?     & $0^*$ & $0^*$ & $2^*$  & $0^*$ & $25^*$ & $0^*$ & $843^*$ \\
$\dim S_k(\Gamma_{10})$& ?   & ?     & ?     & $0^*$ & $2^*$  & $0^*$ & $43^*$ & $1^*$ & $1591^*$ \\
$\dim S_k(\Gamma_{11})$& ?   & ?     & ?     & ?     & $1^*$  & $0^*$ & $32^*$ & $0^*$ & $6478^*$ \\
\hline
\end{tabular}
}
\end{table}

In principle for $n \leq 7$ one can compute the generating series $\sum_{k \geq n+1} \left(\dim S_k(\Gamma_n)\right) T^k$.
We have not attempted to do so for $n \geq 4$.

\section{Reliability}

The complete algorithm computing the three families of numbers
\begin{itemize}
\item $\mathrm{card} \left(S(w_1, \dots, w_n) \right)$ for $n \geq 1$, $w_i \in \frac{1}{2}\Z \smallsetminus \Z$ and $w_1 > \dots > w_n > 0$,
\item $\mathrm{card}\left(O_o(w_1, \dots, w_n)\right)$ for $n \geq 1$, $w_i \in \Z$ and $w_1 > \dots > w_n > 0$,
\item $\mathrm{card}\left(O_e(w_1, \dots, w_{2n})\right)$ for $n \geq 1$, $w_i \in \Z$ and $w_1 > \dots > w_{2n} \geq 0$,
\end{itemize}
is long and complicated.
Our implementation consists of more than 5000 lines of source code (mainly in Python, using Sage \cite{sage}), therefore it certainly contains errors.
There are several mathematically meaningful checks suggesting that the tables produced by our program are valid:
\begin{enumerate}
\item When computing the geometric side of the trace formula we obviously always find a rational number.
The trace formula asserts that it is equal to the spectral side, which is an integer, being an Euler-Poincaré characteristic.
The first check that our tables pass is thus that the geometric sides are indeed integral.
\item With a one-line modification, our algorithm can be used to compute global orbital integrals for special orthogonal groups $\mathbf{G}/ \Q$ which are split at every finite place and such that $\mathbf{G}(\R)$ is compact.
On a space of dimension $d$ such a group exists if and only $d = -1,0,1 \mod 8$.
Recall that for $d \in \{7,8,9\}$, up to isomorphism there is a unique regular and definite positive quadratic form $q : \Z^d \rightarrow \Z$.
These are the lattices $E_7$, $E_8$ and $E_8 \oplus A_1$.
Each one of these three lattices defines a reductive group $\mathbf{G}$ over $\Z$ such that $\mathbf{G}_{\Q}$ is as above, and their uniqueness is equivalent to the fact that the arithmetic genus $\mathbf{G}(\A_f) / \mathbf{G}(\widehat{\Z})$ has one element.
Chenevier and Renard \cite{ChRe} computed the geometric side of the trace formula, which is elementary and does not depend on Arthur's work in the anisotropic case, to \emph{count} level one automorphic representations for these groups.
This is possible because $\mathbf{G}(\Z)$ is closely related to the Weyl groups of the root systems $E_7$ and $E_8$, for which Carter \cite{Carter} described the conjugacy classes and their orders.
We checked that we obtain the same ``masses'' (see section \ref{SummaryAlgo}).
\item The numbers $\mathrm{card} \left(S(w_1, \dots, w_n) \right)$, $\mathrm{card}\left(O_o(w_1, \dots, w_n)\right)$ and $\mathrm{card}\left(O_e(w_1, \dots, w_{2n})\right)$ belong to $\Z_{\geq 0}$.
Our tables pass this check.
\item In low rank there are exceptional isogenies between the groups that we consider: $\mathbf{PGSp}_2 \simeq \mathbf{SO}_3$, $\mathbf{PGSp}_4 \simeq \mathbf{SO}_5$, $\left(\mathbf{SO}_4\right)_{\mathrm{sc}} \simeq \mathbf{SL}_2 \times \mathbf{SL}_2$, which by \cite{ChRe}[Proposition 4.7] imply:
\begin{enumerate}
\item For any odd $w_1 \in \Z_{>0}$, $\mathrm{card} \left(S(w_1/2) \right) = \mathrm{card}\left(O_o(w_1)\right)$.
Note that $\mathrm{card}\left(O_o(w_1)\right)=0$ if $w_1$ is even.
\item For any integers $w_1 > w_2 > 0$ such that $w_1 + w_2$ is odd,
$$\mathrm{card} \left(S\left(\frac{w_1+w_2}{2}, \frac{w_1-w_2}{2}\right) \right) = \mathrm{card}\left(O_o(w_1,w_2)\right).$$
Note that $\mathrm{card}\left(O_o(w_1,w_2)\right) = 0$ if $w_1+w_2$ is even.
\item For any integers $w_1 > w_2 > 0$ such that $w_1+w_2$ is odd,
$$\mathrm{card} \left(S\left(\frac{w_1+w_2}{2}\right)\right) \times  \mathrm{card}\left(S\left(\frac{w_1-w_2}{2}\right) \right) = \mathrm{card}\left(O_e(w_1,w_2)\right),$$
and for any odd integer $w>0$,
$$\binom{\mathrm{card}\left(S(\frac{w}{2}) \right)}{2} = O_e(w,0).$$
Note that $\mathrm{card}\left(O_e(w_1,w_2)\right) = 0$ if $w_1+w_2$ is even.
\end{enumerate}
\item By results of Mestre \cite{Mestre}, Fermigier \cite{Fermigier} and Miller \cite{Miller}, in low motivic weight (that is $2w_1$) some of the cardinalities of $S(w_1, \dots)$, $O_o(w_1, \dots)$ and $O_e(w_1, \dots)$ are known to vanish.
In forthcoming work, Chenevier and Lannes improve their method to show that if $n \geq 1$ and $\pi$ is a self-dual cuspidal automorphic representation of $\mathbf{GL}_n / \Q$ such that
\begin{itemize}
\item for any prime number $p$, $\pi_p$ is unramified,
\item the local Langlands parameter $\varphi$ of $\pi_{\infty}$ is either
\begin{itemize}
\item a direct sum of copies of $1$, $\epsilon_{\C / \R}$ and $I_r$ for integers $1 \leq r \leq 10$, or
\item a direct sum of copies of $I_r$ for $r \in \frac{1}{2}\Z \smallsetminus \Z$ and $\frac{1}{2} \leq r \leq \frac{19}{2}$.
\end{itemize}
\end{itemize}
then $\varphi$ belongs to the following list:
\begin{itemize}
\item $1$,
\item $ I_{11/2},\ I_{15/2},\ I_{17/2},\ I_{19/2} $,
\item $\epsilon_{\C / \R} \oplus I_{10},\ \epsilon_{\C / \R} \oplus I_9 $,
\item $I_{r/2} \oplus I_{19/2}$ with $r \in \{5, 7, 9, 11, 13\}$,
\item $I_4 \oplus I_9$, $I_r \oplus I_{10}$ with $r \in \{ 2, 3, 4, 5, 6, 7 \}$,
\item $1 \oplus I_6 \oplus I_{10},\ 1 \oplus I_7 \oplus I_{10}$.
\end{itemize}
Note that they make no regularity assumption.
This implies the vanishing of $2521$ values in our tables for groups of rank $\leq 6$.
In our tables, the only non-vanishing $\mathrm{card}\left(S(w_1, \dots)\right)$, $\mathrm{card}\left(O_o(w_1, \dots)\right)$ or $\mathrm{card}\left(O_e(w_1, \dots)\right)$ with $w_1 \leq 10$ are the following.
\begin{itemize}
\item For $w_1 \in \left\{\frac{11}{2}, \frac{15}{2}, \frac{17}{2}, \frac{19}{2} \right\}$, $\mathrm{card}\left(S(w_1)\right) = 1$.
These are the well-known modular forms.
\item $\mathrm{card}\left(S\left(\frac{19}{2}, \frac{7}{2}\right)\right) = 1$.
\end{itemize}
\item Finally, we can compare the values that we obtain for the dimensions of spaces of Siegel modular forms with known ones.
Our formulae coincide with those given in \cite{Igusa} (genus two, scalar) and \cite{TsuExp} and \cite{TsuPf} (genus two, vector-valued).
Tsuyumine \cite{Tsuyumine} gave a dimension formula in the scalar case in genus $3$.
There seems to be a typographical error in the formula on page 832 of \cite{Tsuyumine}, the denominator should be
$$ (1-T^4)(1-T^{12})^2(1-T^{14})(1-T^{18})(1-T^{20})(1-T^{30}) $$
instead of
$$ (1-T^4)(1-T^{12})^3(1-T^{14})(1-T^{18})(1-T^{20})(1-T^{30}). $$
With this correction we find the same formula as Tsuyumine.
In \cite{BFG} Bergström, Faber and van der Geer conjecture a formula for the cohomology of local systems on the moduli space $\mathcal{A}_3$ in terms of motives conjecturally associated with Siegel cusp forms.
As a corollary they obtain a conjectural formula for $\dim S_r(\Gamma_3)$ where $r$ is an algebraic representation of $\mathbf{GL}_3$ of highest weight $m_1 \geq m_2 \geq m_3 \geq 4$.
For $m_1 \leq 24$ ($1771$ values) we have checked that our values coincide.
We have also checked that our tables agree with Nebe and Venkov's theorem and conjecture in weight $12$ \cite{NebeVenkov} and Poor and Yuen's results in low weight \cite{PoorYuen}.
\end{enumerate}

\section{Tables}

\subsection{Masses}
\label{tablesMasses}

\begin{table}[H] \centering
\renewcommand*{\arraystretch}{1.2}
\caption{Masses for the group $\mathbf{SO}_3$}
\makebox[\linewidth]{

}
\end{table}

\subsection{Some essentially self-dual, algebraic, level one, automorphic cuspidal representations of $\GL_n$ for $n \leq 13$}
\label{tablestwGL}

The following tables list the \emph{non-zero}
$$\mathrm{card}(S(w_1,\dots, w_n)),\ \mathrm{card}(O_o(w_1,\dots,w_n)) \text{ and } \mathrm{card}(O_e(w_1,\dots,w_{2n}))$$
as defined in the introduction.
These values depend on Assumption \ref{assumAJweak} when $w_i = w_{i+1} + 1$ for some $i$ or
\begin{itemize}
\item $w_n = \frac{1}{2}$ for $\mathrm{card}(S(w_1,\dots, w_n))$,
\item $w_n = 1$ for $\mathrm{card}(O_o(w_1,\dots,w_n))$,
\item $w_n = 0$ for $\mathrm{card}(O_e(w_1,\dots,w_{2n}))$.
\end{itemize}
Much more data is available at \url{http://www.math.ens.fr/~taibi/dimtrace/}.

\begin{table}[H] \centering
\caption{$\mathrm{card}\left(S(w)\right)$}

\end{table}

\newpage

\bibliographystyle{amsalpha}
\bibliography{dimtrace}

\end{document}